\documentclass[11pt]{amsart}
 
\usepackage{amssymb, amscd, txfonts}
\usepackage{graphicx}


\numberwithin{equation}{section}

\sloppy

\newtheorem{theorem}{Theorem}[section]
\newtheorem{proposition}[theorem]{Proposition}
\newtheorem{lemma}[theorem]{Lemma}
\newtheorem{corollary}[theorem]{Corollary}

\theoremstyle{definition}
\newtheorem{definition}[theorem]{Definition}
\newtheorem{example}[theorem]{Example}

\theoremstyle{remark}
\newtheorem{remark}[theorem]{Remark}

\renewcommand{\hom}{\operatorname{Hom}}

\newcommand{\Z}{\mathbb{Z}}
\newcommand{\Q}{\mathbb{Q}}
\newcommand{\R}{\mathbb{R}}
\newcommand{\C}{\mathbb{C}}
\newcommand{\F}{\mathbb{F}}

\newcommand{\proj}{{\mathbb P}}

\newcommand{\moduli}{\mathcal{M}_{r,a}}
\newcommand{\cover}{\widetilde{\mathcal{M}}_{r,a}}
\newcommand{\cove}{\widetilde{\mathcal{M}}}
\newcommand{\Mcub}{\mathcal{M}_{{\rm cub}}}
\newcommand{\Mmcub}{\widetilde{\mathcal{M}}_{{\rm cub}}}
\newcommand{\lift}{\widetilde{\mathcal{P}}}
\newcommand{\SL}{{\rm SL}}

\newcommand{\PGL}{{\rm PGL}}

\newcommand{\Oline}{\mathcal{O}_{{\mathbb P}^{1}}}
\newcommand{\Oplane}{\mathcal{O}_{{\mathbb P}^{2}}}
\newcommand{\Ospace}{\mathcal{O}_{{\mathbb P}^{3}}}

\newcommand{\OHirze}{\mathcal{O}_{\mathbb{F}_{n}}}
\newcommand{\sheaf}{\mathcal{O}}

\newcommand{\Pic}{{\rm Pic}}
\newcommand{\Or}{{\rm O}}

\DeclareMathOperator{\aut}{Aut}

\begin{document}

\title[]{Rationality of the moduli spaces of Eisenstein $K3$ surfaces}
\author[]{Shouhei Ma}
\thanks{S. M. was supported by Grant-in-Aid for JSPS fellows [21-978] and Grant-in-Aid for Scientific Research (S), No 22224001.}  
\address{Graduate School of Mathematics, Nagoya University, Nagoya 464-8602, Japan}
\email{ma@math.nagoya-u.ac.jp}
\author[]{Hisanori Ohashi}
\thanks{H. O. was supported by Grant-in-Aid for Scientific Research (S), No 22224001 and for Young Scientists (B) 23740010.}  
\address{Department of Mathematics, Faculty of Science and Technology, Tokyo University of Science, 2641 Yamazaki, Noda, Chiba, 278-8510, Japan}
\email{ohashi@ma.noda.tus.ac.jp}
\author[]{Shingo Taki}
\address{School of Information Environment, Tokyo Denki University, 2-1200 Muzai Gakuendai, Inzai-shi, Chiba 270-1382, Japan}
\email{staki@mail.dendai.ac.jp}
\subjclass[2000]{Primary 14J28, Secondary 14G35, 14J26, 14E08}
\keywords{K3 surface, non-symplectic automorphism of order 3, moduli space, rationality, Eisenstein lattice, ball quotient} 
\maketitle 

\begin{abstract}
$K3$ surfaces with non-symplectic symmetry of order $3$ are classified by 
open sets of twenty-four complex ball quotients associated to Eisenstein lattices. 
We show that twenty-two of those moduli spaces are rational. 
\end{abstract}

\maketitle


\section{Introduction}

The study of $K3$ surfaces with non-symplectic symmetry has arisen as an application of the Torelli theorem, 
and by now, it has been recognized as closely related to classical geometry and special arithmetic quotients. 
They were first studied systematically in the involution case by Nikulin \cite{Ni}, 
who classified their topological types using the lattices of $2$-cycles (anti-)invariant under the involutions. 
The anti-invariant lattices also provide the period domains, which are Hermitian symmetric of type IV and 
whose quotients by the orthogonal groups give the moduli spaces.

What comes next to the involution case is 
the case of automorphisms of order $3$. 
Kond\=o, Dolgachev, and van Geemen \cite{D-G-K}, \cite{Ko} studied two moduli spaces of 
$K3$ surfaces with such symmetry, in connection with genus $4$ curves and cubic surfaces. 
Subsequently, Artebani-Sarti \cite{A-S} and the third-named author \cite{Ta} gave a topological classification of such automorphisms. 
Let $X$ be a $K3$ surface with a non-symplectic symmetry $G\subset{\aut}(X)$, $G\simeq{\Z}/3{\Z}$. 
Let $L(X, G)\subset H^2(X, {\Z})$ be the lattice of $G$-invariant cycles, 
and $E(X, G)\subset H^2(X, {\Z})$ be its orthogonal complement. 
These lattices have 3-elementary discriminant groups, analogous to the 2-elementary property in the involution case. 
What is more crucial is that $E(X, G)$ is endowed with the structure of an Eisenstein lattice, namely a Hermitian form over the ring of Eisenstein integers. 
Then a result of \cite{A-S} and \cite{Ta} says that 
the topological types of such pairs $(X, G)$ are, by associating $E(X, G)$, 
in one-to-one correspondence with certain Eisenstein lattices embeddable in the $K3$ lattice. 
In view of this, we shall call such a pair $(X, G)$ an \textit{Eisenstein $K3$ surface}. 
According to \cite{A-S}, \cite{Ta}, $E(X, G)$ is in turn encoded in the pair $(r, a)$ 
where $r$ is the rank of $L(X, G)$ and $a$ is the length of its discriminant group, and there are exactly twenty-four such $(r, a)$.

For each $(r, a)$, the period domain for Eisenstein $K3$ surfaces $(X, G)$ of that type 
is the complex ball associated to $E(X, G)$. 
One obtains the moduli space ${\moduli}$ of those Eisenstein $K3$ surfaces as the quotient of the ball by the unitary group of $E(X, G)$, 
with a Heegner divisor removed. 
This story is similar to the involution case, but note that the types of period domains are different.

In this article we study the birational types of ${\moduli}$. 
The spaces $\mathcal{M}_{2,2}$ and $\mathcal{M}_{12,5}$, studied in \cite{Ko} and \cite{A-C-T}, \cite{D-G-K} respectively, 
have been known to be rational by the corresponding results for the moduli of genus $4$ curves 
(\cite{SB}) and of cubic surfaces (classical). 
We show that this property actually holds for most ${\moduli}$. 

\begin{theorem}\label{main}
The moduli space ${\moduli}$ of Eisenstein $K3$ surfaces of type $(r, a)$ is rational, possibly except for $(r, a)=(8, 7)$ and $(10, 6)$. 
\end{theorem}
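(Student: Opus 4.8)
The plan is to realize, for each of the twenty-four types $(r,a)$, a generic Eisenstein $K3$ surface $(X,G)$ of that type as a cyclic triple cover of a rational surface, and thereby reduce the rationality of ${\moduli}$ to the rationality of a quotient of a projective space by an algebraic group. Let $\sigma$ be a generator of $G$ and let $Y$ be the minimal resolution of the quotient $X/\sigma$. The fixed-locus data of $\sigma$, which the classification of \cite{A-S} and \cite{Ta} expresses in terms of $(r,a)$, pins down $Y$ as an explicit rational surface — ${\proj}^1\times{\proj}^1$, a Hirzebruch surface, or a blow-up of one of these or of ${\proj}^2$ — and pins down the class $D$ of the (strict transform of the) branch curve. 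The surface $X$ is recovered, via the standard cyclic-cover construction, from a section of $\mathcal{O}_Y(D)$ together with a choice of cube root $L$ of $\mathcal{O}_Y(D)$ in $\Pic(Y)$. First I would check that this construction is dominant onto ${\moduli}$ and generically injective: dominance by a dimension count against the dimension of the associated ball, generic injectivity from the Torelli theorem for Eisenstein $K3$ surfaces. This identifies ${\moduli}$ birationally with
\[
 {\proj}\,H^0(Y, \mathcal{O}_Y(D))\,/\,\Gamma ,
\]
where $\Gamma$ is the stabilizer in $\aut(Y)$ of the cube-root datum, the residual deck action and the finite set of cube roots contributing only harmless finite quotients.

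The heart of the argument is then to prove rationality of this last quotient. The connected group $\aut(Y)^{\circ}$ acts linearly on $H^0(Y, \mathcal{O}_Y(D))$; when $Y$ is rigid, the no-name lemma relates the rationality of ${\proj}H^0(Y, \mathcal{O}_Y(D))/\aut(Y)^{\circ}$ to that of the quotient of a smaller linear representation of $\aut(Y)^{\circ}$, which for the groups occurring here — products of copies of $\PGL_2$, $\PGL_3$, $\GL_2$, tori, and their unipotent extensions — follows from the stable rationality of their classifying spaces (immediate for the special groups, and, for $\PGL_2$ and $\PGL_3$, from the rationality theorems of Bogomolov and Katsylo). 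When $Y$ is instead a blow-up of ${\proj}^2$ at $k$ points, I would fiber the total moduli space over the rational variety $(({\proj}^2)^k)/\PGL_3$ with fibers the linear systems $|D|$, and conclude again by the no-name lemma for $\PGL_3$-linearized vector bundles over this rational base; generic freeness of the relevant action is checked by producing a single branch curve with trivial automorphism group in $Y$. Finally, in each type the residual finite quotient $\Gamma/\aut(Y)^{\circ}$ — typically trivial, cyclic, or a small permutation group acting on the components of the branch divisor — must be disposed of by a direct linearization-and-rationality argument.

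The two cases already known to be rational fit this scheme and serve as consistency checks: for $(r,a)=(2,2)$ one takes $Y={\proj}^1\times{\proj}^1$ and $D$ of bidegree $(3,3)$, recovering the genus-$4$ canonical-curve picture of \cite{Ko}, and for $(12,5)$ one recovers the cubic-surface picture of \cite{A-C-T}, \cite{D-G-K}. The step I expect to be the main obstacle is the last rationality input in those types where $\aut(Y)$ is small relative to $|D|$: there the no-name lemma yields only that ${\moduli}$ is \emph{unirational}, and upgrading this to rationality requires a genuinely separate argument — an explicit unirational parametrization cut down to a birational one by exhibiting enough invariant functions, or a Katsylo-type slice construction. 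I expect that precisely for $(r,a)=(8,7)$ and $(10,6)$ the residual quotient is by a finite group of Weyl-group type large enough to resist this last step, which is why those two types are left open, while for the remaining twenty-two the argument goes through.
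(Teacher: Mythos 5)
Your overall strategy is the paper's: realize general members as cyclic triple covers of rational surfaces and reduce to the rationality of a quotient of the branch-data space by ${\aut}(Y)$, finishing with slice/no-name/Miyata/Katsylo arguments. But two of your steps, which you treat as routine, are in fact the main content, and as stated they do not work. First, generic injectivity of the construction does \emph{not} follow from the Torelli theorem alone. Equality of periods in ${\moduli}$ only gives a ${\Z}/3{\Z}$-equivariant Hodge isometry of the $K3$ lattices; to conclude that two branch data lie in one ${\aut}(Y)$-orbit you must recover the projection $X\to Y$ from $(X,G)$, which requires the isometry to preserve the pullback polarization and the classes of the exceptional and fixed curves. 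The failure of this is measured exactly by ${\Or}(A_E)$, and the period map from a naive parameter space generally has degree $>1$. The paper's entire degree calculus (\S 4.3) exists to control this: one passes to the cover ${\cover}$ of lattice-marked surfaces (which already needs the surjectivity of $\mathrm{U}(E)\to{\Or}(A_E)$, an arithmetic statement proved separately, plus geometric arguments for two lattices), builds labeling covers $\widetilde{U}\to U$ of the branch configurations, and verifies case by case that the degree of $\widetilde{U}/{\aut}(Y)\to U/{\aut}(Y)$ matches $|{\Or}(A_E)|/2$. Your closing guess about where this fails is also off: residual groups as large as $W(E_6)$ and $W(F_4)$ \emph{are} handled (types $(8,5)$, $(10,4)$, via marked cubic surfaces and Eckardt points); the obstruction at $(8,7)$, $(10,6)$ is the absence of any canonical degree-one cover model, tied to the fixed locus being (almost) only isolated points.

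Second, the identification ${\moduli}\sim{\proj}H^0(Y,\mathcal{O}_Y(D))/\Gamma$ is wrong for most types. A general member of ${\moduli}$ has $n=(r-2)/2>0$ isolated fixed points, so on the minimal model the branch curve is forced to be singular (nodes, tacnodes, ramphoid cusps) or reducible with prescribed components, and for five types one even needs fractional ``mixed branches'' $B_1+\frac12 B_2$; the relevant parameter spaces $U$ are therefore non-linear strata of $|-\frac32 K_{\F_{2N}}|$ (three-nodal curves, tangency loci, fixed reducible configurations), not projectivized linear representations, and their rationality is obtained by additional geometry (elementary transformations between Hirzebruch surfaces, the averaging map $\varphi$ of \eqref{average}, slice methods over flag spaces, cubic-surface models), not by a formal no-name argument. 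If instead you take $Y$ to be the minimal resolution of $X/\sigma$, as you propose, then $Y$ has large Picard number, essentially no connected automorphisms, and itself moves in moduli, so the displayed quotient does not parametrize anything; moreover the constraint $3K_Y\in2{\Pic}(Y)$ (needed for a pure $-\frac32 K_Y$ branch) restricts the minimal models to ${\F}_{2N}$, which is exactly why the paper accepts singular branch curves there rather than smooth sections of a fixed line bundle.
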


A similar rationality result is known in the involution case (\cite{Ko0}, \cite{Ma2}, \cite{D-K}).  
It is natural to expect analogous results for other non-symplectic symmetry, 
and the present article goes into the Eisenstein case. 
In fact, it appears that automorphisms of order $2$ and $3$ cover a wide range of non-symplectic automorphisms: 
as the order grows, there seem to be only a few number of moduli spaces, of rather small dimension 
(though the classification is not yet completed).  

We will prove Theorem \ref{main} case-by-case. 
A basic strategy is to first find a \textit{canonical} triple cover construction of general members of ${\moduli}$ 
using $-\frac{3}{2}K_Y$-curves on a Hirzebruch surface $Y={\F}_{2N}$, $0\leq N\leq3$. 
More precisely, we consider an explicit locus $U\subset|-\frac{3}{2}K_Y|$ parametrizing 
curves with prescribed type of singularities and irreducible decomposition. 
We obtain a period map 
\begin{equation*}
\mathcal{P} : U/{\aut}({\F}_{2N}) \dashrightarrow {\moduli} 
\end{equation*}
by taking the resolutions of cyclic triple covers of ${\F}_{2N}$ branched over $B\in U$. 
We can calculate the degree of such maps $\mathcal{P}$ in a systematic manner (see \S \ref{ssec: recipe}). 
If we could find those $U$ with $\deg(\mathcal{P})=1$, 
the problem is reduced to the rationality of $U/{\aut}({\F}_{2N})$ which we prove by studying the ${\aut}({\F}_{2N})$-action.

This strategy is analogous to the one in the involution case \cite{Ma2}, 
but hidden behind the similarity are some subtle features in the present case. 
The first is the existence of isolated fixed points of $(X, G)=\mathcal{P}(B)$, 
which appear over the singular points of $B$. 
By the above construction, we keep away from such fixed points, in a sense. 
Secondly, asking the triple cover to have canonical singularities is a strong demand, 
so that the singularities of $B$ are quite limited (at worst ramphoid cusps). 
Finally, smooth rational surfaces $Y$ with $3K_Y\in2{\rm Pic}(Y)$ are rare: 
they are only ${\F}_{2N}$.

The above easy construction offers period maps of degree $1$ for as many as seventeen ${\moduli}$, 
but does not cover all cases. 
To analyze the remaining five 
($\mathcal{M}_{4,3}$, $\mathcal{M}_{6,4}$,  $\mathcal{M}_{8,5}$,  $\mathcal{M}_{10,4}$ and $\mathcal{M}_{12,3}$), 
we develop a theory of branch \textit{curves} that deals with isolated fixed points more substantially. 
This is the notion of \textit{mixed branch}. 
It contains and is more flexible than $-\frac{3}{2}K_Y$-curves, 
and using it we can work with fixed curves and isolated fixed points quite satisfactorily. 
Those five ${\moduli}$ are provided with birational period maps using mixed branch.

The rationality problem is open for $\mathcal{M}_{8,7}$ and $\mathcal{M}_{10,6}$. 
They are unirational by the constructions in \cite{A-S}, \cite{A-S-T}. 
Unfortunately, for those two we failed to find a canonical and effective construction as above, 
due to which we could not approach them.

The rest of the article is as follows. 
\S \ref{sec: preliminary} contains the preliminaries on Eisenstein lattices and automorphisms of ${\F}_{n}$. 
In \S \ref{sec: EK3} we recall/reformulate basic results on Eisenstein $K3$ surfaces. 
We introduce mixed branches in \S \ref{ssec:mixed branch}, 
and then study $-\frac{3}{2}K_{{\F}_{2N}}$-curves in \S \ref{ssec:pure branch}. 
The method of degree calculation is explained in \S \ref{ssec: recipe}. 
After these preliminaries, the proof of Theorem \ref{main} begins in \S \ref{sec:g=5}. 
We proceed according to the maximal genus $g$ of fixed curves: the cases with genus $g$ are treated in \S $10-g$. 
We adopt this division policy because it exhibits the degeneration relations among the moduli spaces with a common $g$.

Throughout this article we shall denote by $A_n$, $D_m$, $E_l$ 
the \textit{negative}-definite root lattice of type $A_n$, $D_m$, $E_l$ respectively. 
We denote by $U$ the even indefinite unimodular lattice of rank $2$. 

\vspace{0.3cm}

\noindent 
\textbf{Acknowledgement.}
H.~O. is grateful to Professor Kond\=o for his encouragements.


\section{Preliminaries}\label{sec: preliminary}

In this section we prepare some results on Eisenstein lattices (\S \ref{ssec: E lattice}) 
and automorphisms of Hirzebruch surfaces (\S \ref{ssec: Hirze}). 
They are a technical basis for the rest of the article. 
The reader may skip for the moment and return when necessary.


\subsection{Eisenstein lattices}\label{ssec: E lattice}

Let $E$ be an {\em{even lattice}}, namely a free $\mathbb{Z}$-module endowed 
with a nondegenerate integral symmetric bilinear form $(\ ,\ )$ such that
$(l,l)\in 2\mathbb{Z}$ for every $l\in E$. 
A structure of {\em{Eisenstein lattice}} on $E$ is a self-isometry $\rho$ of $E$ of order $3$, 
such that $\rho (l) \neq l$ for any $0\neq l\in E$. 
Equivalently, a self-isometry $\rho$ gives an Eisenstein structure if 
it satisfies $\rho^2+\rho+\mathrm{id}=0$.
In this subsection we study some properties of such a pair $(E, \rho)$.

First we justify the naming ''Eisenstein lattice''. 
Let $R={\Z}[\zeta]$, $\zeta=e^{2\pi i/3}$, be the ring of Eisenstein integers.  
For an Eisenstein lattice $(E,\rho )$ as above, 
the ${\Z}$-module $E$ is naturally equipped with an $R$-module structure by $\zeta \cdot l = \rho (l)$. 
Then we have an $R$-valued Hermitian form on $E$ by 
\begin{equation}\label{eqn:Eisen lattice I}
(l,l')_{\mathcal{E}} := (l,l')+\zeta (l,\rho (l')) + \zeta^2 (l,\rho^2 (l')) \in R. 
\end{equation}
If we decompose $E\otimes_{\Z}{\C}=V\oplus\overline{V}$ by the $\rho$-action where $\rho|_V=\zeta$, 
and consider the projection $\pi: E\to V$, 
then we have $(l,l')_{\mathcal{E}}=3(\pi(l), \overline{\pi(l')})$. 

Conversely, if $E$ is a free $R$-module equipped with a Hermitian form $(\ ,\ )_{\mathcal{E}}$, 
the symmetric bilinear form 
\begin{equation}\label{eqn:Eisen lattice II} 
(l,l') := \frac{2}{3} \Re e ((l,l')_{\mathcal{E}})
\end{equation}
defines (in general not integral) a lattice structure 
on the $\mathbb{Z}$-module $E$ which naturally has an Eisenstein structure $\rho$ defined by the action of $\zeta$. 
One checks that the constructions \eqref{eqn:Eisen lattice I} and \eqref{eqn:Eisen lattice II} 
are converse to each other. 
The bilinear form $(\ ,\ )$ is even if and only if the Hermitian form $(\ ,\ )_{\mathcal{E}}$ satisfies 
\begin{equation}\label{eqn:Eisen lattice III}
(l, l)_{\mathcal{E}}\in3{\Z}
\end{equation} 
for all $l\in E$. 
Thus Eisenstein lattices in our sense naturally correspond to Hermitian lattices over $R$ 
with the property \eqref{eqn:Eisen lattice III}. 
In this article we will work rather in the category of quadratic forms. 
Note that the signature of $E$ as a quadratic form is twice that of $E$ as a Hermitian form. 

We denote by $E^{\vee}={\hom}_{\Z}(E, {\Z})$ the dual quadratic form (inside $E\otimes_{\Z}{\Q}$), 
and $E^{\ast}={\hom}_R(E, R)$ the dual Hermitian form (inside $E\otimes_R{\Q}(\zeta)$). 
If we identify $E\otimes_{\Z}{\Q}$ with $E\otimes_R{\Q}(\zeta)$ naturally, 
then $E^{\vee}$ is equal to $\sqrt{-3}E^{\ast}$. 
Let $A_E=E^{\vee}/E$ be the discriminant group of $E$, 
which is endowed with the discriminant form $q_A: A_E\to{\Q}/2{\Z}$.

\begin{example}\label{eee}
(1) A fundamental example is the root lattice $E=A_2$. Up to taking square, it has a
unique isometry $\rho$ of order $3$ which gives $E$ the structure of an Eisenstein lattice.
The corresponding Hermitian form of rank $1$ is $\langle -3\rangle$. \\
(2) 
Since we have an isometry 
$A_2\oplus A_2(-1)\simeq U \oplus U(3)$ 
of quadratic forms, 
by (1) the lattice $E=U\oplus U(3)$ has the structure of an Eisenstein lattice 
which corresponds to the Hermitian form $\langle3, -3\rangle$. 
Moreover, since $\rho$ acts trivially on the 
discriminant group $A_E$, it preserves the overlattices of $E$ which are isomorphic to $U\oplus U$. 
Hence we also obtain an Eisenstein structure on $U\oplus U$. \\
(3) Since the root lattices $E_6$ and $E_8$ both can be obtained as overlattices of some direct 
sum of $A_2$, by the same reasoning as (2), 
these have the structure of an Eisenstein lattice, too. 

We shall fix the above Eisenstein structures on $U\oplus U$, $U\oplus U(3)$, $E_6$ and $E_8$. 
\end{example}

The unitary group $\mathrm{U} (E)$ of an Eisenstein lattice $(E, \rho )$ is 
naturally embedded in the orthogonal group ${\rm O}(E)$ by 
\begin{equation*}
\mathrm{U}(E)= \{\gamma \in \mathrm{O} (E) \mid \gamma \circ \rho = \rho \circ \gamma\}.
\end{equation*}
In particular, we have a natural homomorphism ${\rm U}(E)\to{\rm O}(A_E)$ 
to the orthogonal group of the discriminant form. 
We prove that it is surjective for some special Eisenstein lattices, 
as an analogue of the surjectivity property of \cite{Ni0} for orthogonal groups.

\begin{proposition}\label{surj I}
(1) Let $E$ be the indefinite Eisenstein lattice $A_2(-1)\oplus A_2^n$. 
Then the homomorphism $\mathrm{U} (E)\rightarrow {\rm O}(A_E)$ is surjective.\\
(2) Let $E$ be the definite Eisenstein lattice $A_2^n$ with $n\leq3$. 
Then the homomorphism $\mathrm{U} (E)\rightarrow {\rm O}(A_E)$ is surjective. 
\end{proposition}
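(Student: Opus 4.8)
The plan is to reduce the surjectivity of $\mathrm{U}(E)\to{\rm O}(A_E)$ to the explicit structure of the discriminant form and then produce enough unitary isometries by hand. Since $E=A_2(-1)\oplus A_2^n$ (resp. $A_2^n$), the discriminant group is $A_E\simeq(\Z/3)^{n+1}$ (resp. $(\Z/3)^n$), and the discriminant form $q_A$ takes values in $\frac{2}{3}\Z/2\Z$; as an $\F_3$-quadratic space of the relevant Witt type, ${\rm O}(A_E)$ is one of the finite orthogonal groups $O^{\pm}_m(\F_3)$ (possibly times a scalar). First I would recall (from Example \ref{eee}) the fixed Eisenstein structures and compute, for the single generator coming from each $A_2$-summand, that its image in $A_E$ generates a copy of $\Z/3$ on which $q_A$ restricts to $\pm\frac{2}{3}$; this pins down $q_A$ as an orthogonal direct sum of rank-one forms and fixes which $O^{\pm}_m$ occurs.

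Next I would exhibit generators of ${\rm O}(A_E)$ and lift each to $\mathrm{U}(E)$. The orthogonal group of a nondegenerate $\F_3$-quadratic space is generated by reflections in anisotropic vectors, so it suffices to lift each such reflection. For that I would use the Eisenstein analogue of Eichler transvections / reflections: in the hyperbolic-type summands (present because of the $A_2(-1)$ factor in part (1), or because two copies of $A_2$ with opposite... — more precisely, because $A_2(-1)\oplus A_2\simeq U\oplus U(3)$ carries an isotropic Eisenstein vector), one has isotropic vectors $e$ with $(e,e)_{\mathcal E}=0$, and for any $x\in E$ orthogonal to $e$ one gets an Eichler-type unitary transvection $t_{e,x}$; these already act transitively enough on $A_E$ to realize the reflections. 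Concretely, I would: (a) show a primitive isotropic vector exists in the Eisenstein lattice $A_2(-1)\oplus A_2$; (b) check the Eichler transvection formula defines an element of $\mathrm{U}(E)$ and compute its action on $A_E$; (c) combine with the obvious permutation symmetries of the $A_2^n$ factors and the $-1$ scalar $\rho^{?}$-type isometries to hit a generating set of ${\rm O}(A_E)$.

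For part (2), where $E=A_2^n$ with $n\le 3$ is definite and has no isotropic vectors, the Eichler-transvection argument is unavailable, so I would argue differently: I would use the (very small) explicit nature of ${\rm O}(A_E)$ for $n\le 3$. For $n=1$, ${\rm O}(A_E)=\{\pm 1\}$ on $\Z/3$, realized by $-\mathrm{id}$ on $A_2$ (which is a unitary isometry, being $-\rho^0$ composed with nothing, or simply the isometry $l\mapsto -l$, which commutes with $\rho$). For $n=2,3$ I would either invoke that $\mathrm{U}(A_2^n)$ is the complex reflection group $G_{25}$-type / Weyl-group-like object acting on the $\F_3$-space and already surjects, or more robustly, explicitly list coset representatives of $O^{\pm}_n(\F_3)$ and lift each: permutations of the $A_2$-summands, sign changes $l_i\mapsto -l_i$ on individual summands, and the "rotation" isometries inside a single $A_2$ that induce on its discriminant $\Z/3$ the map $\times(-1)$ — these generate, and all are visibly in $\mathrm{U}$.

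The main obstacle will be part (2): for the definite lattices there is no Eichler transvection to invoke, so one must verify by direct inspection that the manifestly available unitary isometries (summand permutations, $\pm 1$ on summands, and the order-$2$ isometry of a single $A_2$ commuting with $\rho$) actually generate the full finite orthogonal group $O^{\pm}_n(\F_3)$ for $n=1,2,3$ — this is a small but genuine finite-group computation, and getting the Witt type right (hence knowing whether the target is $O^+$, $O^-$, or $O$ together with the $-\mathrm{id}$ center) is where care is needed. In part (1) the analogous step is easier because the isotropic vector makes the transvection machinery available, reducing everything to the standard fact that reflections generate and Eichler transvections realize enough of them; the bookkeeping there is routine once the isotropic vector in $A_2(-1)\oplus A_2$ is identified.
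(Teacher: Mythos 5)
Your part (2) is essentially fine: for $E=A_2^n$ with $n\le 3$ the discriminant form is (a unit times) $x_1^2+\cdots+x_n^2$ over $\F_3$, and the signed permutation group $(\Z/2)^n\rtimes\frak{S}_n$, which visibly lifts to ${\rm U}(E)$ (permutations of the $A_2$-summands and $-\mathrm{id}$ on individual summands, all commuting with $\rho$), has order $2$, $8$, $48$ for $n=1,2,3$, matching $|{\rm O}(A_E)|$; so the finite check you flag does close. Note that it is $-\mathrm{id}$ on a summand, not any ``rotation'' of $A_2$, that induces $-1$ on the corresponding $\Z/3$: $\rho$ itself acts trivially on $A_{A_2}$, and the genuine reflections of the $A_2$ quadratic lattice do not commute with $\rho$, hence are not available in ${\rm U}(E)$.

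Part (1), however, has a genuine gap, and it is the main case since $n$ is arbitrary there. Eichler-type transvections $t_{e,x}$ with $e$ isotropic and $x\in E$, $x\perp e$, lie in the \emph{stable} unitary group: for $y\in E^{\vee}$ the difference $t_{e,x}(y)-y$ involves only the integers $(y,e)$, $(y,x)$ times lattice vectors, so $t_{e,x}$ acts trivially on $A_E=E^{\vee}/E$. They therefore cannot ``realize the reflections'' of ${\rm O}(A_E)$, no matter how transitively they act on $E$ itself. The only nontrivial action on $A_E$ your construction produces comes from the monomial isometries (permutations of the $A_2$-factors and sign changes), which on $A_E\simeq(\Z/3)^{n+1}$ with form $-x_0^2+x_1^2+\cdots+x_n^2$ generate a subgroup of order at most $2^{n+1}\,n!$; this is a proper subgroup of ${\rm O}(A_E)$ already for $n=2$ (order $16$ versus $|{\rm GO}(3,3)|=48$), because no reflection in an anisotropic vector with two or more nonzero coordinates is reached. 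Some device is needed to produce unitary isometries inducing such mixed reflections, and quadratic $(\pm2)$-reflections of $E$ are ruled out since they do not commute with $\rho$. The paper's solution is exactly such a device: write $E=L\otimes A_2$ with $L$ the odd unimodular lattice $\langle1\rangle^n\oplus\langle-1\rangle$ (resp.\ $\langle1\rangle^n$), lift any $g\in{\rm O}(L)$ to $g\otimes\mathrm{id}_{A_2}\in{\rm U}(E)$, identify the induced map on $A_E$ with the reduction ${\rm O}(L)\to{\rm O}(L/3L)$, and then prove an arithmetic lemma lifting every anisotropic class of $L/3L$ to a vector $l\in L$ with $(l,l)\in\{\pm1,\pm2\}$, so that $r_l\otimes\mathrm{id}$ realizes the corresponding reflection. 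You would need either this tensor trick or some comparable source of ${\rm U}(E)$-elements acting as non-monomial reflections on $A_E$; as written, your argument for (1) does not provide one.
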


\begin{proof}
The groups ${\rm O}(A_E)$ are in fact full orthogonal groups in characteristic $3$. 
Our proof relies on the fundamental fact that 
they are generated by reflections in non-isotropic vectors (see, e.g., \cite{Ki} Chapter 1.2). 

Let $L$ be the odd unimodular lattice $\langle1\rangle^n\oplus\langle-1\rangle$ (resp. $\langle1\rangle^n$) in the case (1) (resp. (2)). 
Then $E$ can be identified with the tensor product $L\otimes A_2$, including the correspondence of Gram matrices. 
Using this tensor notation, the Eisenstein structure of $E$ has the form $\mathrm{id}_L \otimes \rho$, 
where $\rho$ is from Example \ref{eee} (1). 
Now for $g\in \mathrm{O} (L)$, we can define an element of ${\rm U}(E)$ by $\alpha (g) = g\otimes \mathrm{id}_{A_2}$. 
This defines an injective homomorphism $\alpha \colon \mathrm{O} (L)\rightarrow \mathrm{U}(E)$. 
Consider the composite of $\alpha$ and $\mathrm{U} (E)\rightarrow {\rm O}(A_E)$. 
By taking a natural basis of $A_E=A_{L\otimes A_2}$, 
it is identified with the reduction map $\beta\colon {\rm O}(L)\to{\rm O}(L/3L)$, 
where $L/3L$ is naturally equipped with a quadratic form over ${\Z}/3{\Z}$.

To prove the proposition, now
it suffices to show that the reduction map $\beta$ is surjective.
Let $(\ ,\ )$ be the bilinear form on $L$. Then the bilinear form on $L/3L$ is just given by 
$(\ ,\ ) \mod 3$, hence we use the same notation $(\ ,\ )$ for them. 

Since ${\rm O}(L/3L)$ is an orthogonal group in odd characteristic, it is generated by 
reflections $r_a$ for non-isotropic elements $a\in L/3L$, where
\begin{equation}\label{reflection}
r_a \colon x\mapsto x-\frac{2(x,a)}{(a, a)}a. 
\end{equation}
If $l\in L$ satisfies $(l, l)\in \{\pm 1, \pm 2\}$, 
then the reflection $r_l$ defined by the same formula as \eqref{reflection} gives an element of ${\rm O}(L)$, 
and its image in ${\rm O}(L/3L)$ is the reflection in $[l]\in L/3L$. 
Thus our surjectivity assertion is reduced to the "liftability of reflection vectors", 
that is, the following problem: for any non-isotropic element $a \in L/3L$, 
find a lift $l\in L$ of $a$ (or $2a$, since they define the same reflections) 
such that $(l, l)\in \{\pm 1, \pm 2\}$.
This purely arithmetic step is realized in the next lemma.
\end{proof}

\begin{lemma}
Let $L$ be the odd unimodular lattice of signature $(n,1)$ for (1), or of signature $(n,0)$ for
(2) respectively. In case (2) suppose $n\leq 3$. 
Then for any non-isotropic element $a\in L/3L$, there exists a lift $l\in L$ of $a$ or $2a$ such that 
$(l, l)\in \{\pm 1, \pm 2\}$. 
\end{lemma}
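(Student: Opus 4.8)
The plan is to reduce everything to an explicit computation in the orthogonal coordinates of $L$. Write $L=\langle 1\rangle^n\oplus\langle-1\rangle$ in case (1) and $L=\langle 1\rangle^n$ in case (2), with standard orthogonal basis $e_1,\dots,e_{n+1}$ (resp. $e_1,\dots,e_n$). The sign changes $e_i\mapsto-e_i$ and the permutations of the positive basis vectors are isometries of $L$, and they act on $L/3L$ without altering the norms of lifts; so after applying such an isometry we may assume $a\equiv(1,\dots,1,0,\dots,0)$ in case (2), and $a\equiv(1,\dots,1,0,\dots,0;\delta)$ with $\delta\in\{0,1\}$ in case (1), where there are $k$ entries equal to $1$ and $0\le k\le n$. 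Non-isotropy of $a$ gives $3\nmid k$ in case (2) and $k-\delta\not\equiv0\pmod 3$ in case (1).

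Case (2) is then immediate: $k\le n\le3$ together with $3\nmid k$ force $k\in\{1,2\}$, so the lift $l=(1,\dots,1,0,\dots,0)$ has $(l,l)=k\in\{1,2\}$. This is exactly where the hypothesis $n\le3$ enters: for $n\ge4$ the vector $a\equiv(1,1,1,1,0,\dots,0)$ has no lift of $a$ or of $2a$ with norm in $\{\pm1,\pm2\}$, since any such lift has norm $\sum_{i=1}^{4}(\varepsilon+3m_i)^2\ge4$ with $\varepsilon=\pm1$.

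In case (1), start from the lift $l_0=(1,\dots,1,0,\dots,0;\delta)$, of norm $k-\delta$. If $k-\delta\in\{-1,1,2\}$ we are done; otherwise, since $k-\delta\ge-1$ and $k-\delta\not\equiv0\pmod3$, necessarily $k-\delta\ge4$, hence $n\ge k\ge4$. In this range I would modify $l_0$: keep the last $n-k$ entries zero, replace $j$ of the $k$ ones by $-2$ (still $\equiv1\pmod3$) and leave the other $k-j$ equal to $1$, and put in the last coordinate an integer $m\equiv\delta\pmod3$; the resulting lift $l$ has $(l,l)=k+3j-m^2$. I would then take $m^2=s$ to be the largest perfect square with $s\equiv\delta\pmod3$ and $s\le4k$, choose $t\in\{1,2\}$ with $t\equiv k-\delta\pmod3$, and set $j=(s+t-k)/3$, so that $(l,l)=t$.

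The substantive point — and the step I expect to be the main obstacle — is that this prescription really produces a legitimate integer $j$ with $0\le j\le k$, i.e. $k\le s+t\le4k$; one also needs $s+t\equiv k\pmod3$ (so that $j\in\Z$) and that $m$ exists in the class $\delta\bmod3$ (clear for $\delta=0$, and for $\delta=1$ take the appropriate sign of $\sqrt{s}$). The congruence and the inequality $s+t\le4k$ follow quickly from $s\equiv\delta\not\equiv k\pmod3$. The inequality $s\ge k$ is the crux: letting $s'$ be the next perfect square after $s$ in the class $\delta\bmod3$, maximality of $s$ forces $s'>4k$, while an elementary inspection shows that consecutive perfect squares in a fixed residue class modulo $3$ — discarding $0$ — lie within a factor $4$ of one another (the worst cases being $4/1$, $16/4$, $36/9$), so $s'\le4s$; combining, $4s\ge s'>4k$ gives $s>k$. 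Here $k\ge4$ guarantees $s\ge9$, which is what makes the factor-$4$ estimate applicable; the two smallest cases $k=4,5$ may be quickest to confirm directly. This yields $(l,l)=t\in\{1,2\}\subset\{\pm1,\pm2\}$ and finishes case (1).
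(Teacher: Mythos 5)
Your proposal is correct and follows essentially the same route as the paper: lift each nonzero positive-definite coordinate to $1$ or $-2$ (so its square is $1$ or $4$) and then tune the negative coordinate $x_0$ to bring the norm into $\{\pm1,\pm2\}$, the only difference being bookkeeping — the paper pins down $x_0$ directly by explicit quadratic inequalities in $[n/3]$, while you fix $x_0^2$ as the largest admissible square $\le 4k$ in the right class mod $3$ and solve for the number $j$ of entries equal to $-2$, using the factor-$4$ gap bound on consecutive squares in a residue class. Your verification of $0\le j\le k$ (via $s+t\equiv k\pmod 3$ and $4s\ge s'>4k$) is sound, so the argument is complete.
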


\begin{proof}
Case (2) is easily done by hand, so we prove only (1). We take the coordinates for $L$ so that 
the quadratic form on $L$ is given by 
\[q (x_0, \cdots, x_n)=-x_0^2+x_1^2+\cdots +x_n^2.\]
Let $(y_0,\cdots, y_n)\in L/3L$ be a given non-isotropic element. 
We have to show the existence of $l=(x_0, \cdots, x_n)\in L$ such that 
$(l, l)\in \{\pm 1, \pm 2\}$ and $x_i \mod 3$ is equal to the given $y_i$. 
This is purely an arithmetic problem. One solution is given as follows.

First we ignore the zero coordinates $y_i\equiv 0 (i>0)$ by using $x_i=0$.
Moreover for $y_i\equiv 1$ or $\equiv 2$, 
we can use $x_i=1, -2$ or $=-1, 2$ respectively so that 
$x_i^2$ takes the value $1$ or $4$ at any rate.
These two steps reduce the equation to 
\[ -x_0^2+(1+1+\cdots +4+4+\cdots)\in \{\pm 1, \pm 2\} \text{(exactly $n$ terms in the parentheses)}.\] 
 
When $y_0\equiv 0$, take the positive integer $s$ such that 
\[ 3s^2-6s+4 \leq [n/3] < 3(s+1)^2-6(s+1)+4.\]
(If $[n/3]=0$ then we take $s=0$.) 
Then putting $x_0=3s$ gives one solution to the above equation
\[-(3s)^2+1\cdot ([n/3]+n-3s^2)+4\cdot (3s^2-[n/3])=1\text{ or }2.\] 
(We can see that $[n/3]+n-3s^2\geq 4(3(s-1)^2+1)-3s^2=(3s-4)^2\geq 0$, and so on.)
When $y_0\equiv 1$, take the positive integer $s$ such that 
\[ 3s^2-4s+2 \leq [(n-1)/3] < 3(s+1)^2-4(s+1)+2.\]
(If $[(n-1)/3]=0$ then we take $s=0$.)
Then putting $x_0=3s+1$ gives one solution to the above equation
\[-(3s+1)^2+1\cdot ([(n-1)/3]+n-3s^2-2s)+4\cdot (3s^2+2s-[(n-1)/3])=1 \text{ or } 2.\]
Finally when $y_0\equiv -1$, we can find $x$ with $x\equiv -y$ by previous argument. 
All the cases are covered and the lemma is proved.
\end{proof}

As a consequence of Proposition \ref{surj I}, we have the following. 

\begin{corollary}\label{surj II}
Let $E$ be one of the following Eisenstein lattices: 
\begin{equation*}
A_2(-1)\oplus A_2^n\oplus E_8^m, \quad U^2\oplus A_2^l\oplus E_8^k \; (l\leq3). 
\end{equation*}
Then the natural homomorphism ${\rm U}(E)\to {\rm O}(A_E)$ is surjective. 
\end{corollary}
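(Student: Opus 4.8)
The plan is to deduce Corollary \ref{surj II} from Proposition \ref{surj I} by reducing each listed lattice to one of the two forms $A_2(-1)\oplus A_2^n$ or $A_2^n$ ($n\le 3$) for which surjectivity of $\mathrm{U}(E)\to\mathrm{O}(A_E)$ has already been established. The key observation is that adding an $E_8$-summand does not change the discriminant group, and that $U^2$ and $A_2(-1)$ carry the same (trivial) discriminant. So I would first record the elementary fact: if $E=E_0\oplus E_1$ is an orthogonal direct sum of Eisenstein lattices and $E_1$ is unimodular, then $A_E\cong A_{E_0}$ canonically, every isometry of $A_{E_0}$ extends to $A_E$ as the same map, and $\mathrm{U}(E_0)\oplus\{\mathrm{id}_{E_1}\}\subset\mathrm{U}(E)$; hence surjectivity of $\mathrm{U}(E_0)\to\mathrm{O}(A_{E_0})$ implies that of $\mathrm{U}(E)\to\mathrm{O}(A_E)$. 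Recall that $E_8$ (with its fixed Eisenstein structure from Example \ref{eee}) is unimodular, so this applies to the summands $E_8^m$ and $E_8^k$ directly.

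Next I would treat the two families separately. For $E=A_2(-1)\oplus A_2^n\oplus E_8^m$, writing $E_0=A_2(-1)\oplus A_2^n$ and $E_1=E_8^m$, the reduction above gives the result immediately from Proposition \ref{surj I}(1). For $E=U^2\oplus A_2^l\oplus E_8^k$ with $l\le 3$, I first peel off $E_8^k$ as before, reducing to $E'=U^2\oplus A_2^l$. Now I need to relate $U^2\oplus A_2^l$ to $A_2(-1)\oplus A_2^l$ (if $l\ge 1$) or to handle $l=0$ directly. For $l=0$ the discriminant group is trivial and there is nothing to prove. For $1\le l\le 3$: using the isometry $A_2\oplus A_2(-1)\simeq U\oplus U$ of Eisenstein lattices fixed in Example \ref{eee}(2), I can rewrite $U^2\oplus A_2^l\simeq A_2\oplus A_2(-1)\oplus A_2^l=A_2(-1)\oplus A_2^{l+1}$ as Eisenstein lattices, and since $l+1\le 4$ this still falls under Proposition \ref{surj I}(1) (which has no upper bound on $n$). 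Thus $\mathrm{U}(E')\to\mathrm{O}(A_{E'})$ is surjective, and peeling back $E_8^k$ finishes the case.

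The only real subtlety — and the step I would check most carefully — is that the isometry $A_2\oplus A_2(-1)\simeq U\oplus U$ from Example \ref{eee}(2) is an isometry of \emph{Eisenstein} lattices, i.e. it intertwines the respective $\rho$-actions, not merely an isometry of the underlying quadratic forms; the excerpt asserts exactly this (the $\rho$ on $U\oplus U$ is \emph{defined} as the one transported from $A_2\oplus A_2(-1)$ through the chosen overlattice description). Granting that, the $R$-module structures match and $\mathrm{U}(U^2\oplus A_2^l)$ contains the image of $\mathrm{U}(A_2(-1)\oplus A_2^{l+1})$ under this identification. I do not anticipate a genuine obstacle here, only bookkeeping: one must make sure the fixed Eisenstein structures on $U^2$ and on $E_8$ are used consistently so that the direct-sum decompositions are decompositions of Eisenstein lattices, and that the canonical isomorphism $A_{E_0\oplus E_1}\cong A_{E_0}$ respects discriminant forms so that $\mathrm{O}(A_{E_0})$ and $\mathrm{O}(A_E)$ are literally the same group. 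With these routine identifications in place the corollary follows formally from Proposition \ref{surj I}.
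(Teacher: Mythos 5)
Your reduction for the first family is fine, but the second family contains a genuine error. You invoke ``the isometry $A_2\oplus A_2(-1)\simeq U\oplus U$ of Eisenstein lattices fixed in Example \ref{eee}(2)''; no such isometry exists, and the example does not assert one. What Example \ref{eee}(2) says is $A_2\oplus A_2(-1)\simeq U\oplus U(3)$ as quadratic forms, and the Eisenstein structure on $U\oplus U$ is only obtained by extending $\rho$ from $U\oplus U(3)$ to its unimodular \emph{overlattice} $U\oplus U$ --- an overlattice embedding, not an isometry. Indeed the discriminant groups already rule out your identification: $A_{U^2\oplus A_2^l}\simeq({\Z}/3{\Z})^l$ while $A_{A_2(-1)\oplus A_2^{l+1}}\simeq({\Z}/3{\Z})^{l+2}$, so $U^2\oplus A_2^l\not\simeq A_2(-1)\oplus A_2^{l+1}$ and the step ``rewrite $U^2\oplus A_2^l$ as $A_2(-1)\oplus A_2^{l+1}$ and apply Proposition \ref{surj I}(1)'' collapses. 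A telltale warning sign is that your argument never uses the hypothesis $l\leq3$ and would therefore ``prove'' surjectivity for $U^2\oplus A_2^4$ and $U^2\oplus A_2^5$ as well, whereas the paper explicitly excludes exactly these two lattices from the corollary and establishes their surjectivity later by separate geometric arguments (Propositions \ref{birat (8,5)} and \ref{birat (10,4)}).

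The correct route for the second family is the same peeling-off device you already set up, applied to a larger unimodular summand: write $E=A_2^l\oplus(U^2\oplus E_8^k)$, where $U^2\oplus E_8^k$ with its fixed Eisenstein structures is unimodular, so $A_E\simeq A_{A_2^l}$ and any element of ${\rm U}(A_2^l)$ extends by the identity on $U^2\oplus E_8^k$. Then Proposition \ref{surj I}(2) gives surjectivity of ${\rm U}(A_2^l)\to{\rm O}(A_{A_2^l})$ precisely because $l\leq3$ --- this is where the hypothesis enters. With that replacement (and your first-family argument unchanged) the corollary follows as intended.
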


Among the twenty-four Eisenstein lattices associated to Eisenstein $K3$ surfaces, 
twenty-two excepting $U^2\oplus A_2^4$ and $U^2\oplus A_2^5$ may be written in the above form (see \S \ref{ssec: EK3}). 
We will see in \S \ref{sec:g=1} that the surjectivity property also holds for those two, 
by geometric arguments.


\subsection{Automorphisms of Hirzebruch surfaces}\label{ssec: Hirze}

We recall some basic facts about Hirzebruch surfaces 
(see, e.g., \cite{Ma2} \S 3 for more detail). 
For $n\geq0$ let ${\F}_n={\proj}({\Oline}(n)\oplus {\Oline})$ be the $n$-th Hirzebruch surface 
with the natural projection $\pi\colon{\F}_n \to {\proj}^1$. 
The ${\proj}^1$-fibration $\pi$ has a $(-n)$-section $\Sigma$ (which is unique in case $n>0$), 
and also a section $H_0$ with $(H_0, H_0)=n$ that is disjoint from $\Sigma$. 
The Picard group of ${\F}_n$ is freely generated by $H_0$ and a fiber $F$ of $\pi$. 
We shall denote $L_{a,b}={\OHirze}(aH_0+bF)$. 
For example, $\Sigma$ belongs to $|L_{1,-n}|$; 
the canonical bundle $K_{{\F}_n}$ is isomorphic to $L_{-2,n-2}$.  

We take two distinct $\pi$-fibers $F_0$, $F_{\infty}$ and set 
\begin{eqnarray*}
U_1={\F}_n\backslash (F_{\infty}+H_0), & &   U_2={\F}_n\backslash (F_0+H_0), \\ 
U_3={\F}_n\backslash (F_{\infty}+\Sigma),    & &   U_4={\F}_n\backslash (F_0+\Sigma).    
\end{eqnarray*} 
These open sets are isomorphic to ${\C}^2$ and form a covering of ${\F}_n$. 
Each $U_i$ has a coordinate $(x_i, y_i)$ with the transformation rules  
\begin{equation*}
x_1=x_3=x_2^{-1}=x_4^{-1}, 
\end{equation*}
\begin{equation*}
y_3=y_1^{-1}, \quad y_4=y_2^{-1}, \quad y_2=x_1^ny_1, \quad y_4=x_3^{-n}y_3, 
\end{equation*}
and such that $\pi$ is given (inhomogeneously) by $(x_i, y_i)\mapsto x_i$. 

The restriction to $U_3$ of a curve $C\subset{\F}_n$ is defined by $F(x_3, y_3)=0$ for a polynomial $F$ of $x_3, y_3$. 
This identifies $H^0(L_{a,b})$ for $a, b\geq0$ with the following linear space of polynomials, up to constant: 
\begin{equation}\label{def eq}
\left\{ \sum_{i=0}^{a}f_i(x_3)y_3^{a-i}, \: {\rm deg}f_i\leq b+in \right\}.
\end{equation} 
If $C\in|L_{a,b}|$ is defined by $\sum_if_i(x_3)y_3^{a-i}=0$ on $U_3$,  
then on $U_1$  (resp. $U_4$, $U_2$) it is defined by 
$\sum_if_i(x_1)y_1^i=0$  
(resp. $\sum_if_i(x_4^{-1})x_4^{b+in}y_4^{a-i}=0$, 
$\sum_if_i(x_2^{-1})x_2^{b+in}y_2^i=0$).

\textit{In the rest of this section we assume $n>0$.} 
Then 
we have the exact sequence 
\begin{equation}\label{Aut(Hir)}
1 \to R \to {\aut}({\F}_n) \to {\aut}(\Sigma) \to 1 
\end{equation} 
where $R={\aut}({\Oline}(n)\oplus {\Oline})/{\C}^{\times}$. 
This sequence splits when $n$ is even. 
The group $R$ is isomorphic to ${\C}^{\times}\ltimes H^0({\Oline}(n))$ and consists of the automorphisms  
\begin{equation}\label{$R$-action in coordinate}
g_{\alpha, s} : U_3\ni(x_3, y_3) \mapsto (x_3, \alpha y_3 + \sum_{i=0}^{n}\lambda_ix_3^i)\in U_3,  
\end{equation}
where $\alpha\in{\C}^{\times}$ and $s=\sum_{i=0}^{n}\lambda_ix^i\in H^0({\Oline}(n))$. 
Later we will also use the following automorphisms: 

\begin{equation}\label{auto coord 1}
h_{\beta} : U_3\ni(x_3, y_3) \mapsto (\beta x_3, y_3)\in U_3, \quad \beta\in{\C}^{\times}, 
\end{equation}
\begin{equation}\label{auto coord 3}
\iota : U_3\ni(x_3, y_3) \mapsto (x_3, y_3)\in U_4, 
\end{equation}
\begin{equation}\label{auto coord 4}
i_{\lambda} : U_2\ni(x_2, y_2) \mapsto (x_2+\lambda, y_2) \in U_2, \quad \lambda\in{\C}. 
\end{equation}
%
These rational maps actually extend to automorphisms of ${\F}_n$. 

We will need to know the action of ${\aut}({\F}_n)$ on some spaces.

\begin{lemma}\label{stab of (pt, *)}
The group ${\aut}({\F}_n)$ acts on ${\F}_n$ (resp. ${\F}_n\times\Sigma$, ${\F}_n\times{\F}_n$) 
almost transitively with the stabilizer $G$ of a general point being connected and solvable. 
\end{lemma}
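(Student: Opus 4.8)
The plan is to use the explicit coordinate description of $\aut(\F_n)$ from \eqref{$R$-action in coordinate}--\eqref{auto coord 4} together with the exact sequence \eqref{Aut(Hir)}, and to handle the three target spaces $\F_n$, $\F_n\times\Sigma$, $\F_n\times\F_n$ in turn, in increasing order of difficulty. In each case I would exhibit a point whose orbit is dense (hence the action is almost transitive, the complement of the orbit being a proper closed subset) and then compute the stabilizer directly, checking that it is connected and solvable. Connectedness and solvability will both be visible from the coordinate formulas: the relevant stabilizers will turn out to be built from copies of $\C^\times$ and $\C$ (additive group) via semidirect products, and such groups are connected and solvable.

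First I would treat $\F_n$ itself. Take the point $p_0\in U_3$ with coordinates $(1,0)$, say (any point off $\Sigma$ and off the two chosen fibers $F_0,F_\infty$ and $H_0$). Using $h_\beta$ from \eqref{auto coord 1} one moves the $x_3$-coordinate anywhere in $\C^\times$, using $i_\lambda$ (conjugated into the $U_3$ chart) one reaches the remaining fibers, and using $g_{\alpha,s}$ from \eqref{$R$-action in coordinate} one moves the $y_3$-coordinate arbitrarily; so the orbit of $p_0$ is all of $U_3\setminus(\text{two fibers})$ together with nearby translates, in particular a dense open set. The stabilizer $G$ of $p_0$ then consists of those $g_{\alpha,s}$ (composed with suitable $h_\beta$) fixing $(1,0)$: the condition is an affine-linear constraint on $(\alpha,\beta,\lambda_0,\dots,\lambda_n)$, cutting out a connected solvable subgroup of the connected solvable group $\C^\times\ltimes(\C^\times\ltimes H^0(\Oline(n)))$. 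The key point is that $\aut(\F_n)$ acts on $\F_n$ with a dense orbit because $\F_n$ is a toric/rational surface with large automorphism group, and stabilizers in a connected solvable group are connected and solvable (a standard fact, using that the stabilizer is an extension of subgroups of $\C^\times$ and $\C$).

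For $\F_n\times\Sigma$ and $\F_n\times\F_n$ the argument is the same in spirit: pick a general point $(p,q)$, show the diagonal action is still almost transitive by a dimension count (the stabilizer of a general point of $\F_n$ already acts almost transitively on $\Sigma$, resp.\ on $\F_n$, via the induced action on the base $\Sigma\simeq\proj^1$ through \eqref{Aut(Hir)} and via $R$ on the fibers), and read off the stabilizer of $(p,q)$ from the coordinate formulas as a further affine-linear restriction of the previous stabilizer, hence again connected and solvable. The main obstacle I anticipate is verifying almost-transitivity on $\F_n\times\F_n$: one must check that the stabilizer $G_p$ of a general $p\in\F_n$ — which is connected solvable of dimension $\dim\aut(\F_n)-2$ — still has a dense orbit on $\F_n$, i.e.\ that imposing one point does not drop the orbit dimension below $2$. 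This follows because the $(-n)$-section $\Sigma$ and the fiber through $p$ are the only invariant curves forced, and $G_p$ still contains enough unipotent and torus directions (the $i_\lambda$'s moving the base point of $q$, and a residual $\C^\times$ or $H^0$-translation moving $q$ within its fiber) to sweep out an open set; I would make this precise by exhibiting explicit one-parameter subgroups in $G_p$ moving $q$ in two independent directions.
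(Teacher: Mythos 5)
Your plan is essentially the paper's proof: normalize the point into the explicit charts, use the exact sequence \eqref{Aut(Hir)} restricted to the stabilizer, i.e. $1\to G\cap R\to G\to \mathrm{Im}(G\to\aut(\Sigma))\to 1$, and read off both pieces from the coordinate formulas \eqref{$R$-action in coordinate}--\eqref{auto coord 4} as groups of the form ${\C}^{\times}\ltimes{\C}^{k}$ (the paper gets $G\cap R\simeq{\C}^{\times}\ltimes{\C}^{n}$, resp. ${\C}^{\times}\ltimes{\C}^{n-1}$, and image ${\C}^{\times}\ltimes{\C}$, resp. ${\C}^{\times}$), so the argument and the conclusion agree. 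One caveat: the parenthetical ``standard fact'' that stabilizers of points under a connected solvable group are connected is false in general (e.g. ${\C}^{\times}$ acting on ${\C}^{\times}$ by $t\cdot z=t^{2}z$ has stabilizer $\{\pm1\}$), so connectedness must indeed be extracted from the explicit linear description of the stabilizer, as your main computation (and the paper's) does; do not lean on that general claim.
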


\begin{proof}
The almost transitivity is checked immediately. 
Let $p_i$ denote the point $(x_i, y_i)=(0, 0)$ in $U_i$. 
We may normalize a general point of ${\F}_n$ (resp. ${\F}_n\times\Sigma$, ${\F}_n\times{\F}_n$) 
to be $p_3$ (resp. $(p_3, p_2)$, $(p_3, p_4)$). 
In view of the exact sequence 
\begin{equation}\label{seq:stab of pt}
0\to G\cap R \to G \to {\rm Im}(G\to{\aut}(\Sigma))\to 1, 
\end{equation}
it suffices to show that both $G_1=G\cap R$ and $G_2={\rm Im}(G\to{\aut}(\Sigma))$ are connected and solvable. 
In the case of ${\F}_n$, 
$G_2$ is the stabilizer in ${\aut}(\Sigma)$ of $p_1$ and hence isomorphic to ${\C}^{\times}\ltimes{\C}$, 
while $G_1$ is $\{ g_{\alpha,s}\in R  \, | \,  \lambda_0=0\}$ which is isomorphic to ${\C}^{\times}\ltimes{\C}^n$. 
In the case of ${\F}_n\times\Sigma$, 
$G_2$ is the stabilizer of the two ordered points $(p_1, p_2)$ and thus isomorphic to ${\C}^{\times}$, 
while $G_1$ is the same as the case of ${\F}_n$. 
Finally, in the case of ${\F}_n\times{\F}_n$, 
$G_2$ is the same as the case of ${\F}_n\times\Sigma$, 
and $G_1$ is $\{ g_{\alpha,s}\in R \, | \, \lambda_0=\lambda_n=0\}$ which is isomorphic to ${\C}^{\times}\ltimes{\C}^{n-1}$. 
\end{proof}



\begin{lemma}[cf. \cite{Ma2}]\label{linear system}
We have the following. 

$(1)$ ${\aut}({\F}_n)$ acts on $|L_{0,1}|\simeq\Sigma$ transitively with connected and solvable stabilizer. 

$(2)$ ${\aut}({\F}_n)$ acts transitively on the open locus in $|L_{1,0}|$ of smooth curves. 
If $G$ is the stabilizer of $H_0\in|L_{1,0}|$, the natural homomorphism $G\to{\aut}(\Sigma)$ is surjective with 
the kernel $\{ g_{\alpha,0} \, | \, \alpha\in{\C}^{\times}\}$.  

$(3)$ Let $U\subset|L_{2,0}|$ be the open locus of smooth curves. 
A geometric quotient $U/{\aut}({\F}_n)$ exists 
and is naturally isomorphic to the moduli space $\mathcal{H}_{n-1}$ of hyperelliptic curves of genus $n-1$. 
\end{lemma}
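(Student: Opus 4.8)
The plan is to handle the three parts separately, in each case leaning on the exact sequence \eqref{Aut(Hir)}: writing $\Gamma=\aut(\F_n)$, the subgroup $R=\ker(\Gamma\to\aut(\Sigma))\cong\C^\times\ltimes H^0(\Oline(n))$ is connected and solvable, and $\Gamma\to\aut(\Sigma)\cong\PGL_2$ is surjective. For $(1)$ I would first observe that every $g_{\alpha,s}\in R$ fixes each $\pi$-fibre, so the $\Gamma$-action on $|L_{0,1}|\simeq\Sigma$ is just the $\PGL_2$-action on $\proj^1$, which is transitive with Borel stabiliser; the stabiliser in $\Gamma$ of a point of $|L_{0,1}|$ is then an extension of that Borel by $R$, hence connected and solvable.

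For $(2)$ I would use \eqref{def eq} to write a member of $|L_{1,0}|$ as $f_0y_3+f_1(x_3)=0$ with $\deg f_1\le n$; checking that $f_0=0$ forces the divisor to contain $\Sigma$ (so be reducible), a smooth member is a graph $\{y_3=h(x_3)\}$ with $\deg h\le n$, which the element $g_{1,-h}\in R$ carries to $H_0=\{y_3=0\}$. So in fact $R$ already acts transitively on the smooth locus, hence $\Gamma=R\cdot G$ with $G=\mathrm{Stab}(H_0)$; this formally gives that $G$ maps onto $\Gamma/R=\aut(\Sigma)$, while $\ker(G\to\aut(\Sigma))=G\cap R$ consists of the $g_{\alpha,s}$ fixing $\{y_3=0\}$, i.e. those with $s\equiv 0$, which is exactly $\{g_{\alpha,0}\}$.

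For $(3)$ the first step is to identify, again via \eqref{def eq}, a member of $|L_{2,0}|$ with $f_0y_3^2+f_1y_3+f_2=0$ ($\deg f_i\le in$), to note that a general such curve $C$ is smooth, and to compute by adjunction — using $C^2=4n$ and $C\cdot K_{\F_n}=-2n-4$ — that $C$ has genus $n-1$, with $\pi|_C\colon C\to\proj^1$ of degree $C\cdot F=2$ the hyperelliptic pencil. This gives a $\Gamma$-invariant morphism $\Psi\colon U\to\mathcal{H}_{n-1}$, $C\mapsto[C]$; surjectivity of $\Psi$ comes from the converse construction, since any hyperelliptic curve of genus $n-1$ is the double cover of $\proj^1$ cut out by its branch divisor $B\in|\Oline(2n)|$ via the unique square root $\Oline(n)$ of $\Oline(B)$, and this cover lives in the total space of $\Oline(n)$, hence inside $\F_n$ as a member of $|L_{2,0}|$.

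The main work — and the step I expect to be the real obstacle — is to show that the fibres of $\Psi$ are exactly the $\Gamma$-orbits and that $\Psi$ realises $U/\Gamma$ as a geometric quotient isomorphic to $\mathcal{H}_{n-1}$. For the orbit description I would argue: $f_0\ne 0$ on $U$ (otherwise $C$ is reducible, as in $(2)$), so completing the square by an element of $R$ (legitimate since $\deg f_1\le n$) and rescaling $y_3$ brings every $C\in U$ to a normal form $\{y_3^2=g(x_3)\}$ with $g$ a squarefree binary form of degree $2n$; for $n\ge 3$ the $g^1_2$ on a hyperelliptic curve of genus $\ge 2$ is unique, so an abstract isomorphism $C\cong C'$ is compatible with the two pencils up to an element of $\aut(\Sigma)$, which lifts to $\Gamma$; and the uniqueness of the square root line bundle on $\proj^1$ forces two normal forms with projectively equal branch divisor to differ only by a rescaling $g_{\sqrt c,0}\in R$. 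To finish, I would send $C$ to its branch divisor, getting a $\Gamma$-equivariant morphism $q\colon U\to U''$ onto the variety $U''\subset\proj^{2n}$ of $2n$ distinct points of $\proj^1$, whose fibres are the $R$-orbits (so $U''=U/R$); since every point of $U''$ is stable for the $\SL_2$-action on binary $2n$-forms, $U''/\PGL_2$ is a geometric quotient and is the classical model of $\mathcal{H}_{n-1}$, whence $U/\Gamma=(U/R)/(\Gamma/R)=U''/\PGL_2\cong\mathcal{H}_{n-1}$. The remaining case $n=2$ (genus $1$) goes the same way, reading ``$j$-invariant of the $4$ branch points'' for ``isomorphism class of the curve'' and ``$j$-line'' for $\mathcal{H}_1$; there the non-uniqueness of the pencil is harmless because $\mathcal{H}_1$ parametrises pairs (curve, degree-$2$ map) in any case. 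I expect the fiddly points to be the verification that $q$ is genuinely a geometric quotient for $R$ (the $R$-action is not free, so this is not merely an orbit map) and the local analysis over $x_3=\infty$ needed to pin down smoothness of $C$.
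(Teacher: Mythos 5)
The paper gives no proof of this lemma at all --- it is stated with ``cf.~\cite{Ma2}'' and the argument is deferred to that reference --- so there is nothing internal to compare against; judged on its own, your proof is correct and is essentially the expected coordinate argument. Parts $(1)$ and $(2)$ are fine: the identification of smooth members of $|L_{1,0}|$ with graphs $\{y_3=h(x_3)\}$, $\deg h\le n$, via \eqref{def eq}, the transitivity of $R$ on these, and the computation $G\cap R=\{g_{\alpha,0}\}$ all check out (note $H_0=\{y_3=0\}$ in the $U_3$-chart, consistent with the description of $\iota_C$ at the end of \S\ref{ssec: Hirze}). In $(3)$ the normal form $y_3^2=g(x_3)$, the genus count, the orbit description via uniqueness of the $g^1_2$ for $n\ge 3$ together with surjectivity of $\aut(\F_n)\to\aut(\Sigma)$, and the identification of the branch-divisor space $U''$ with the stable locus of binary $2n$-forms are all sound. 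The one point you flag --- that $q\colon U\to U''$ is a geometric quotient for $R$ and not merely an orbit map --- can be closed cheaply: over each standard affine chart of $|\Oline(2n)|$ the assignment $g\mapsto\{y_3^2=g\}$ is a regular local section of $q$, and a surjective invariant morphism whose fibres are single orbits and which admits local sections onto a normal target is a geometric quotient; alternatively, equidimensionality of the orbits (each is $R/\{g_{\pm1,0}\}$) gives flatness of $q$ by miracle flatness and one concludes by faithfully flat descent. With that sentence added the argument is complete.
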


Finally, let $C\subset{\F}_n$ be a curve in $|L_{2,0}|$ disjoint from $\Sigma$ (not necessarily smooth nor irreducible). 
We let 
\begin{equation}\label{eqn: HE invol}
\iota_C :  {\F}_n \to {\F}_n 
\end{equation}
be the involution of ${\F}_n$ which on each $\pi$-fiber $F$ 
exchanges the two points $C|_F$ (or fixes $C|_F$ when they coincide) 
and fixes the one point $\Sigma|_F$. 
This extends the hyperelliptic involution of $C$. 
The fixed locus of $\iota_C$ is written as $H+\Sigma$ for a smooth $H\in|L_{1,0}|$. 
We thus have the ${\aut}({\F}_n)$-equivariant map 
\begin{equation}\label{average}
\varphi : |L_{2,0}|\dashrightarrow |L_{1,0}|, \quad C\mapsto H, 
\end{equation}
which will be used repeatedly in this article. 
The section $H$ must pass through the singular points of $C$. 
If we normalize $H$ to be $H_0$, 
the involution $\iota_C$ is given by $(x_3, y_3)\mapsto(x_3, -y_3)$ in the coordinate. 
Therefore, we have $\varphi(C)=H_0$ if and only if the equation $\sum_{i=0}^{2}f_i(x_3)y_3^{2-i}=0$ of $C$ satisfies $f_1\equiv0$.


\section{Eisenstein $K3$ surfaces}\label{sec: EK3}

\subsection{Eisenstein $K3$ surfaces}\label{ssec: EK3}

Let $X$ be a complex $K3$ surface with an automorphism group $G\subset{\aut}(X)$ of order $3$ 
which acts on $H^0(K_X)$ faithfully. 
We shall call such a pair $(X, G)$ an \textit{Eisenstein $K3$ surface}. 
We first review the basic theory of Eisenstein $K3$ surfaces 
following \cite{A-S}, \cite{Ta} and \cite{A-S-T}. 
Let 
\begin{equation}\label{eqn: inv lattice}
L(X, G) = H^2(X, {\Z})^G
\end{equation}
be the lattice of $G$-invariant cycles, and let 
\begin{equation}\label{eqn: anti-inv lattice}
E(X, G) = L(X, G)^{\perp}\cap H^2(X, {\Z})
\end{equation}
be its orthogonal complement. 
The presence of $G$ automatically implies that $X$ is algebraic, 
so that $L(X, G)$ is a hyperbolic lattice. 
We shall denote by $r$ the rank of $L(X, G)$. 
By the relation \eqref{eqn: anti-inv lattice}, 
the discriminant forms of $L(X, G)$ and $E(X, G)$ are canonically anti-isometric (\cite{Ni0}): 
\begin{equation}\label{eqn: isom disc form}
(A_{L(X, G)}, q_{L(X, G)}) \simeq (A_{E(X, G)}, -q_{E(X, G)}). 
\end{equation}
By \cite{A-S}, \cite{Ta} these discriminant groups are 3-elementary, namely 
$A_{L(X, G)}\simeq({\Z}/3{\Z})^a$ for some $a\geq0$. 

By the definition, the group $G$ acts on $E(X, G)$ with no non-zero invariant vector. 
Therefore, by choosing the distinguished generator $\rho\in G$ acting on $H^0(K_X)$ by $e^{2\pi i/3}$, 
the even lattice $E(X, G)$ is canonically endowed with the structure of an Eisenstein lattice 
in the sense of \S \ref{ssec: E lattice}. 
Moreover, since $G$ acts on $L(X, G)$ trivially, 
it acts on $A_{E(X, G)}$ trivially by \eqref{eqn: isom disc form}. 
Our usage of the terminology ''Eisenstein $K3$ surface'' comes from 
the viewpoint that $E(X, G)$ plays a fundamental role in the theory of such $K3$ surfaces. 

Artebani-Sarti \cite{A-S} and the third-named author \cite{Ta} classified Eisenstein $K3$ surfaces 
in terms of the pair $(r, a)$.

\begin{proposition}[\cite{A-S}, \cite{Ta}]\label{fixed locus}
The fixed locus $X^G$ of an Eisenstein $K3$ surface $(X, G)$ is of the form 
\begin{equation*}\label{eqn: fixed locus}
X^G = C^g \sqcup F_1 \sqcup\cdots\sqcup F_k\sqcup \{ p_1,\cdots, p_n\}
\end{equation*}
where $C^g$ is a genus $g$ curve, $F_i$ are $(-2)$-curves, and $p_j$ are isolated points with 
\begin{equation}\label{eqn: (g,k,n)}
g=\frac{22-r -2a}{4}, \qquad k=\frac{2+r-2a}{4}, \qquad n=\frac{r-2}{2}. 
\end{equation}
In the case $(r, a)=(8, 7)$ for which $(g, k)=(0, -1)$, 
this means fixed locus consisting of $3$ isolated points and no curve component. 
\end{proposition}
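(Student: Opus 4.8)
The plan is to pin down $X^{G}$ by a pointwise analysis of the $G$-action near its fixed locus, then by the topological and holomorphic Lefschetz fixed point formulas, and finally to bring in the length $a$ through the cyclic triple cover $\pi\colon X\to X/G$ (equivalently, through the case-by-case classification of the $3$-elementary Eisenstein lattices $E(X,G)$ in \cite{A-S}, \cite{Ta}). First I would treat the local structure. Write $\rho\in G$ for the generator acting on $H^{0}(K_{X})$ by $\zeta:=e^{2\pi i/3}$. At a fixed point $p$ the derivative $d\rho_{p}$ acts on $T_{p}X$ with eigenvalues $\mu_{1},\mu_{2}\in\{1,\zeta,\zeta^{2}\}$; since $\rho$ acts on the line $(K_{X})_{p}$ by $\zeta$ and this scalar equals $(\mu_{1}\mu_{2})^{-1}$, we get $\mu_{1}\mu_{2}=\zeta^{2}$. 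Up to order the only solutions are $(\mu_{1},\mu_{2})=(1,\zeta^{2})$, giving a local branch of a smooth fixed curve with normal eigenvalue $\zeta^{2}$, and $(\mu_{1},\mu_{2})=(\zeta,\zeta)$, an isolated fixed point; moreover two fixed curves cannot meet, since a common point would force $\mu_{1}=\mu_{2}=1$. Hence $X^{G}=C_{1}\sqcup\cdots\sqcup C_{M}\sqcup\{p_{1},\dots,p_{n}\}$ with the $C_{j}$ smooth and pairwise disjoint, and by adjunction on the K3 surface $C_{j}^{2}=2g(C_{j})-2$; in particular every rational fixed curve is automatically a $(-2)$-curve.

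Next I would run the two Lefschetz formulas. The invariant part of $H^{2}(X,\mathbb{Q})$ contributes $r$ to $\tr(\rho^{*})$ and each conjugate pair contributes $\zeta+\zeta^{2}=-1$, so $\tr(\rho^{*}\mid H^{2}(X,\mathbb{Q}))=r-\tfrac{22-r}{2}$ and the topological formula gives $\chi(X^{G})=2M-2G+n=\tfrac{3r-18}{2}$, where $G:=\sum_{j}g(C_{j})$. The holomorphic Lefschetz formula has left-hand side $1+\zeta^{2}$ (Serre duality: $H^{2}(\mathcal{O}_{X})\cong H^{0}(K_{X})^{\vee}$) and one Atiyah--Bott term per isolated point and per fixed curve, the curve term built from $g(C_{j})$ and $C_{j}^{2}$; substituting $C_{j}^{2}=2g(C_{j})-2$ and simplifying with $1+\zeta+\zeta^{2}=0$ collapses it to the single relation $n-M+G=3$. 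Together these yield
\[
n=\frac{r-2}{2},\qquad G-M=4-\frac{r}{2}.
\]
To separate $G$ and $M$ individually I would use $a$: the discriminant group $A_{L(X,G)}\simeq A_{E(X,G)}\simeq(\mathbb{Z}/3)^{a}$ is determined by the local monodromy of $\pi$ along the branch curves $D_{j}=\pi(C_{j})$ and at the points $p_{i}$ --- equivalently it is tabulated in \cite{A-S}, \cite{Ta} --- and this gives a further relation expressing $a$ in terms of $r$ and $M$ (explicitly $a=3+\tfrac{r}{2}-2M$). Combined with the two identities above this produces $g=\tfrac{22-r-2a}{4}$ and $k=\tfrac{2+r-2a}{4}$ under the convention $M=k+1$, so that the value $k=-1$ at $(r,a)=(8,7)$ records that there is no fixed curve.

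What remains, and where I expect the real difficulty, is the geometric statement that at most one $C_{j}$ has positive genus, so that the decomposition $X^{G}=C^{g}\sqcup F_{1}\sqcup\cdots\sqcup F_{k}$ with $(-2)$-curves $F_{i}$ is legitimate. The natural device is the minimal resolution $\widetilde{Y}$ of $X/G$, which is a rational surface: total ramification of $\pi$ along $C_{j}$ gives $\pi^{*}D_{j}=3C_{j}$, hence the strict transforms satisfy $\widetilde{C}_{j}^{2}=3C_{j}^{2}=6g(C_{j})-6$ and remain pairwise disjoint. If $C_{j}$ and $C_{j'}$ were distinct fixed curves of genus $\ge1$, then $\widetilde{C}_{j}^{2},\widetilde{C}_{j'}^{2}\ge0$ while $\widetilde{C}_{j}\cdot\widetilde{C}_{j'}=0$, so the Hodge index theorem --- the Picard lattice of $\widetilde{Y}$ has signature $(1,\ast)$ --- forces both classes to be isotropic and proportional, whence $g(C_{j})=g(C_{j'})=1$ and both would be fibres of a genus-one fibration $\widetilde{Y}\to\mathbb{P}^{1}$. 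Excluding this last configuration is precisely the point at which the numerics are insufficient and one must invoke the explicit enumeration of the twenty-four Eisenstein lattices in \cite{A-S}, \cite{Ta}, which shows it does not arise.
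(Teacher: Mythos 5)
The paper does not prove this proposition at all --- it is imported wholesale from \cite{A-S} and \cite{Ta} --- so the only meaningful comparison is with the argument in those references, and your proposal is essentially that argument: local eigenvalue analysis at fixed points, the two Lefschetz formulas, and a lattice-theoretic input to bring in $a$. Your numerics check out: $n-M+G=3$ and $2M-2G+n=\tfrac{3r-18}{2}$ are consistent with \eqref{eqn: (g,k,n)}, and the extra relation $r-a=2k+n$ you need is exactly the index computation the paper itself records in Proposition \ref{gene gene L} and Lemma \ref{1kodake} (the index of $\hat{f}^{\ast}NS_{\hat{Y}}$ in $L(\hat{X},G)$ is $3^{(r+n-a)/2}=3^{k+n}$, via \cite{A-S-T} Lemma 2.5); note only that the paper derives that index \emph{from} \eqref{eqn: (g,k,n)}, so to avoid circularity you must obtain it independently, e.g.\ by Smith theory as in \cite{A-S}. (A minor convention slip: $\rho^{\ast}\omega=\zeta\omega$ forces $\mu_1\mu_2=\det d\rho_p=\zeta$, not $\zeta^2$; this swaps the labels of the two local types but changes nothing structurally.)

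The one place you genuinely stop short --- excluding two fixed curves of positive genus --- can in fact be closed without appealing to the classification. Your Hodge index argument (which works directly on $NS_X$, no need to pass to $\widetilde{Y}$) reduces to two disjoint fixed curves $C_1,C_2$ of genus $1$, necessarily proportional isotropic classes, hence fibres of a $G$-invariant elliptic fibration $X\to\proj^1$. If $G$ acts trivially on the base, then either it acts on the generic fibre by translation, making it symplectic (it preserves $dt\wedge dz$), contradicting non-symplecticity; or it has fixed points on the generic fibre, producing a horizontal component of $X^G$ that meets the fixed fibre $C_1$, contradicting the smoothness and disjointness you already established. If $G$ acts nontrivially on the base, the normal eigenvalue along a pointwise-fixed fibre equals the derivative of the base action at the corresponding fixed point of $\proj^1$; the local analysis forces this eigenvalue to be the \emph{same} nontrivial cube root of unity at both $C_1$ and $C_2$, whereas an order-$3$ automorphism of $\proj^1$ has mutually inverse derivatives at its two fixed points. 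So at most one fibre can be pointwise fixed, and the configuration is impossible. With that insertion your proof is complete and self-contained up to the Smith-theoretic input for $a$.
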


\begin{theorem}[\cite{A-S}, \cite{Ta}]\label{AST classification} 
The deformation type of an Eisenstein $K3$ surface $(X, G)$ is determined by the invariant $(r, a)$. 
All possible $(r, a)$ are shown in Figure \ref{geography}. 
\end{theorem}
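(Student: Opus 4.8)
The plan is to translate the statement into lattice theory via the global Torelli theorem and the surjectivity of the period map, and then to settle the resulting lattice-theoretic classification. First I would observe that the deformation type of an Eisenstein $K3$ surface $(X,G)$ is governed by the $\Or(\Lambda_{K3})$-orbit of the pair $(E(X,G)\hookrightarrow\Lambda_{K3},\ \rho)$, where $\Lambda_{K3}=U^{3}\oplus E_{8}^{2}$ is the $K3$ lattice and $\rho$ is the Eisenstein structure on $E(X,G)$ induced by the distinguished generator of $G$: once such a primitive embedding together with $\rho$ is fixed, the periods of marked pairs $(X,G)$ fill out the complex ball $\mathbb{B}\subset\proj(V)$ attached to the $\zeta$-eigenspace $V\subset E(X,G)\otimes\C$; this ball is connected, so by the surjectivity of the period map any two such periods are realised by fibres of a single family of marked Eisenstein $K3$ surfaces over $\mathbb{B}$, and the global Torelli theorem identifies these fibres with the given surfaces. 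Hence it suffices to prove: (i) the datum $(E(X,G)\hookrightarrow\Lambda_{K3},\rho)$ is determined up to isometry by $(r,a)$; and (ii) the set of realisable $(r,a)$ is exactly the one displayed in Figure \ref{geography}.

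For (i) I would argue through the invariant lattice $L=L(X,G)$. It is a hyperbolic even lattice of signature $(1,r-1)$ whose discriminant group is $3$-elementary of length $a$; by Nikulin's classification of $p$-elementary lattices \cite{Ni0} such an $L$ is unique up to isometry once $(r,a)$ is fixed, its discriminant form being pinned down by $a$ together with the congruence modulo $8$ imposed on the signature by Milgram's formula. Next, $L$ admits a primitive embedding into $\Lambda_{K3}$ that is unique up to $\Or(\Lambda_{K3})$ by Nikulin's uniqueness criterion, whose numerical hypotheses one checks for every admissible $(r,a)$ (the borderline cases treated by hand); therefore $E=L^{\perp}$ is determined up to isometry. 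Finally the Eisenstein structure on $E$ is canonical up to $\Or(E)$: since $G$ acts trivially on $L$ it acts trivially on $A_{L}$, hence trivially on $A_{E}$ by \eqref{eqn: isom disc form}, and for the lattices at hand — outside two exceptional ones, $E$ is of the form $A_{2}(-1)\oplus A_{2}^{n}\oplus E_{8}^{m}$ or $U^{2}\oplus A_{2}^{l}\oplus E_{8}^{k}$, assembled from blocks carrying the fixed Eisenstein structures of Example \ref{eee} — any two fixed-point-free order-$3$ isometries of $E$ acting trivially on $A_{E}$ are conjugate under $\Or(E)$, Corollary \ref{surj II} (together with the surjectivity of $\Or(L)\to\Or(A_{L})$) supplying the matching along the discriminant. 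Combining these, $(E(X,G)\hookrightarrow\Lambda_{K3},\rho)$ depends only on $(r,a)$.

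For (ii) I would intersect the numerical constraints of Proposition \ref{fixed locus}: $r$ must be even (equivalently $n=\frac{r-2}{2}$ is a non-negative integer), and $g=\frac{22-r-2a}{4}$ and $k=\frac{2+r-2a}{4}$ must be non-negative integers, reading $k=-1$ as the degenerate case with no fixed curve. This leaves a finite list; for each surviving $(r,a)$ one exhibits an honest Eisenstein lattice $E$ of rank $22-r$, signature $(2,20-r)$ and $3$-elementary discriminant of length $a$, built from the blocks $U$, $A_{2}$, $A_{2}(-1)$, $E_{6}$, $E_{8}$, embeds it primitively into $\Lambda_{K3}$, and reads off $(X,G)$ from the surjectivity of the period map. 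Plotting the outcome in the $(r,a)$-plane produces Figure \ref{geography} and the count $24$. The step I expect to be the main obstacle is precisely the lattice bookkeeping at the extremes: verifying that Nikulin's uniqueness criterion for the embedding $L\hookrightarrow\Lambda_{K3}$ (equivalently, the uniqueness of $E$ in its genus) really does apply for every admissible $(r,a)$, and analysing by hand the cases where it is only borderline — notably $(r,a)=(8,7)$, where $(g,k)=(0,-1)$ so there is no fixed curve and $E$ is a somewhat special lattice — together with confirming that the two lattices $U^{2}\oplus A_{2}^{4}$ and $U^{2}\oplus A_{2}^{5}$, which are not of the shape covered by Corollary \ref{surj II}, do carry the required Eisenstein structures (the surjectivity of $\mathrm{U}\to\Or(A_{E})$ for them being supplied later in \S\ref{sec:g=1}). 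Everything else is a finite, if tedious, verification.
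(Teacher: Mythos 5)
The paper does not actually prove this statement: Theorem \ref{AST classification} is imported wholesale from \cite{A-S} and \cite{Ta}, so there is no internal proof to compare your route against. Judged on its own terms, your outline has the right architecture (reduce to the $\Or(\Lambda_{K3})$-orbit of the pair $(E\hookrightarrow\Lambda_{K3},\rho)$ via connectedness of the ball, surjectivity of the period map and Torelli; then classify the lattice data), and indeed the paper's own Lemma \ref{condition E lattice} and the remark following it run exactly this reduction in the other direction, taking the theorem as input.

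However, there is a genuine gap at the decisive step of part (i): the uniqueness of the Eisenstein structure. You assert that ``any two fixed-point-free order-$3$ isometries of $E$ acting trivially on $A_E$ are conjugate under $\Or(E)$'' and offer Corollary \ref{surj II} as the justification. That corollary says something different — that $\mathrm{U}(E)\to\Or(A_E)$ is surjective for a \emph{fixed} Eisenstein structure — and it gives no information about whether two \emph{distinct} order-$3$ isometries of the quadratic form $E$ are conjugate. Exhibiting one Eisenstein structure by assembling the blocks of Example \ref{eee} likewise does not show that every such isometry is conjugate to it. The conjugacy statement is equivalent to the uniqueness, given the underlying quadratic form and discriminant data, of the Hermitian $\Z[\zeta]$-lattice structure on $E$; for indefinite $E$ this is a genuine theorem about Hermitian forms over the Eisenstein integers (uniqueness in the genus, via strong approximation or the classification of indefinite Hermitian lattices), and it is precisely the mathematical content that \cite{A-S} and \cite{Ta} supply. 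Without it, your argument shows only that each $(r,a)$ admits \emph{some} deformation class, not that it admits exactly one. The rest of the proposal — Nikulin's uniqueness for $L$ and for the primitive embedding $L\hookrightarrow\Lambda_{K3}$, the numerical sieve of Proposition \ref{fixed locus}, and the case-by-case realisation — is sound finite bookkeeping, as you say.
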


\begin{figure}[hbtp]
\begin{center}
\includegraphics[width=9cm]{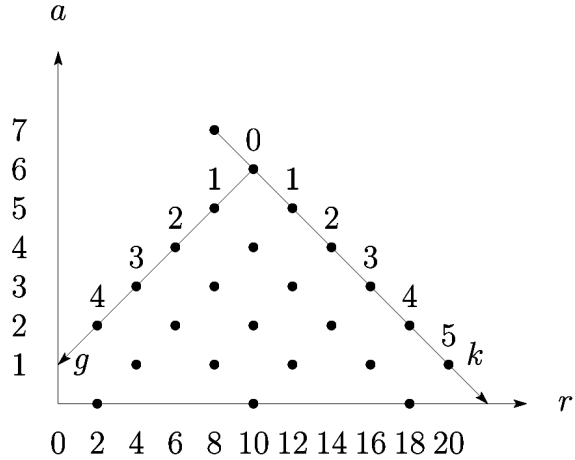} 
\caption{Distribution of invariants $(r,a)$} 
\label{geography}
\end{center}
\end{figure}

In other terms, Theorem \ref{AST classification} says that 
the deformation type of an Eisenstein $K3$ surface $(X, G)$ is determined by the Eisenstein lattice $E(X, G)$, 
which in turn is determined by the signature $(2, 20-r)$ and $a=l(A_{E(X, G)})$. 

\begin{lemma}[cf.~\cite{A-S}]\label{condition E lattice}
An indefinite Eisenstein lattice $(E, \rho)$ is isomorphic to $E(X, G)$ for an Eisenstein $K3$ surface $(X, G)$ if and only if 
$E$ can be primitively embedded into the $K3$ lattice $\Lambda_{K3}=U^3\oplus E_8^2$ as an even lattice, and $\rho$ acts trivially on $A_E$. 
\end{lemma}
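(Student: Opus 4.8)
The plan is to run the standard three–step argument for $K3$ surfaces — lattice theory, surjectivity of the period map, and the Torelli theorem — the only feature special to the present setting being the bookkeeping of the order-$3$ isometry $\rho$. The ``only if'' direction is immediate: if $(E,\rho)\simeq E(X,G)$ as Eisenstein lattices, then by \eqref{eqn: anti-inv lattice} the lattice $E$ is primitively embedded in $H^2(X,{\Z})\simeq\Lambda_{K3}$, and we have already recorded, following \cite{A-S}, \cite{Ta}, that $G$ acts trivially on $L(X,G)$ and hence, through the anti-isometry \eqref{eqn: isom disc form}, trivially on $A_{E(X,G)}\simeq A_E$; under this identification the latter action is the action of $\rho$.

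For ``if'', fix a primitive embedding $E\hookrightarrow\Lambda_{K3}$ with $\rho$ trivial on $A_E$ and set $L=E^{\perp}$ in $\Lambda_{K3}$. As $E$ is indefinite, its signature has the form $(2p,2q)$ with $p$ even and $p\le 3$, so $p=2$ and $L$ is hyperbolic of signature $(1,r-1)$ with $r=22-{\rm rank}(E)$. I would first extend $\rho$ to an isometry of the full $K3$ lattice: by Nikulin's gluing criterion (\cite{Ni0}, Cor.~1.5.2) the isometry $\rho\oplus{\rm id}_L$ of $E\oplus L$ extends over $\Lambda_{K3}$, since $\rho$ and ${\rm id}_L$ both act trivially on the respective discriminant groups and so the gluing condition is vacuous. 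This produces $g\in{\rm O}(\Lambda_{K3})$ of order $3$ with $g|_E=\rho$, $g|_L={\rm id}$, and with $\Lambda_{K3}^g=L$ because $\rho$ fixes no nonzero vector.

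Next I would choose the Hodge structure. Decomposing $E\otimes{\C}=V\oplus\overline{V}$ into $\rho$-eigenspaces as in \S\ref{ssec: E lattice}, the subspace $V$ is isotropic for the bilinear form and carries a Hermitian form of signature $(1,q)$ with $q\ge 1$, so the associated ball $\mathbb{B}\subset{\proj}(V)$ is nonempty; for very general $[\omega]\in\mathbb{B}$, setting $H^{2,0}={\C}\omega$ gives a Hodge structure of $K3$ type on $\Lambda_{K3}$ with $\omega^{\perp}\cap\Lambda_{K3}=L$. By surjectivity of the period map it is realized by an isometry $\phi\colon H^2(X,{\Z})\to\Lambda_{K3}$ for some $K3$ surface $X$, which is projective because ${\rm NS}(X)=\phi^{-1}(L)$ is hyperbolic. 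Since $g$ fixes ${\C}\omega$, acting there by $\zeta$, it preserves this Hodge structure, so $\psi:=\phi^{-1}g\phi$ is a Hodge isometry of $H^2(X,{\Z})$; it is the identity on ${\rm NS}(X)$, hence fixes an ample class, so by the Torelli theorem $\psi=f^{\ast}$ for a unique $f\in{\aut}(X)$. Then $G=\langle f\rangle$ has order $3$ (the map $h\mapsto h^{\ast}$ is injective on ${\aut}(X)$ and $g$ has order $3$), acts on $H^0(K_X)\simeq{\C}\omega$ through $\zeta$ and therefore faithfully, and satisfies $L(X,G)=\phi^{-1}(L)$; hence $E(X,G)=\phi^{-1}(E)$ with the Eisenstein structure induced by $f^{\ast}$, i.e.\ by $\rho$, so $(E,\rho)\simeq E(X,G)$. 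I do not expect a genuine obstacle: the one thing to watch is that the Eisenstein structure, not merely the underlying lattice, is transported faithfully, and this is forced by $g|_E=\rho$; the Torelli step — usually the delicate one — is here painless because $g$ acts as the identity on the whole of ${\rm NS}(X)$, so it automatically maps the ample cone to itself and no correction by $\pm 1$ is required.
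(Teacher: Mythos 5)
Your proof is correct and follows essentially the same route as the paper's: extend $\rho$ to $\Lambda_{K3}$ by the trivial action on $L=E^{\perp}$, pick a period point $\C\omega$ in the $\zeta$-eigenspace $V$, and combine surjectivity of the period map with the Torelli theorem. The only (harmless) variations are that you take $\omega$ very general so that $NS(X)=\phi^{-1}(L)$ and the ample-cone step becomes automatic, whereas the paper only asks $\omega$ to avoid $(-2)$-vectors of $E$ and then composes $\Phi$ with $(-2)$-reflections to place an ample class in $\Phi^{-1}(L)$; and you spell out the easy ``only if'' direction, which the paper leaves implicit.
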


\begin{proof}
Let $E\subset\Lambda_{K3}$ be such an Eisenstein lattice, 
which must have signature $(2, s)$ for some even number $s$. 
Let $L=E^{\perp}\cap\Lambda_{K3}$. 
By our assumption, $\rho$ extends to an isometry of $\Lambda_{K3}$ by acting trivially on $L$. 
We shall denote that extension also by $\rho$. 

Let $E\otimes{\C}=V\oplus\overline{V}$ be the eigendecomposition for $\rho$, 
where $\rho$ acts on $V$ by $e^{2\pi i/3}$. 
We choose a point ${\C}\omega\in{\proj}V$ such that $(\omega, \bar{\omega})>0$ 
and $(\omega, \delta)\ne0$ for any $(-2)$-vector $\delta\in E$. 
Since $(\omega, \omega)=0$, by the surjectivity of the period mapping 
we can find a $K3$ surface $X$ for which we have a Hodge isometry 
$\Phi\colon H^2(X, {\Z})\to (\Lambda_{K3}, {\C}\omega)$. 
Composing $\Phi$ with some reflections with respect to $(-2)$-curves on $X$, 
we may assume that $\Phi^{-1}(L)$ contains an ample class of $X$. 
Then by the Torelli theorem we have an automorphism $g$ of $X$ with $g^{\ast}=\Phi^{-1}\circ\rho\circ\Phi$. 
By the construction, $g$ is non-symplectic of order $3$ and we have a Hodge isometry 
$\Phi\colon E(X, \langle g\rangle)\to (E, {\C}\omega)$ preserving the Eisenstein structures. 
\end{proof}

By Theorem \ref{AST classification} and Lemma \ref{condition E lattice},  
the deformation types of Eisenstein $K3$ surfaces are in one-to-one correspondence with 
the isomorphism classes of Eisenstein lattices $E$ as in Lemma \ref{condition E lattice}, 
and Figure \ref{geography} may be regarded as classifying such Eisenstein lattices. 
Moreover, the proof of Lemma \ref{condition E lattice} tells that 
for two such Eisenstein lattices $E, E'\subset\Lambda_{K3}$ with the same invariant $(r, a)$, 
there exists an isometry $\gamma\in{\rm O}(\Lambda_{K3})$ such that 
$\gamma|_E$ gives an isomorphism $E\to E'$ of Eisenstein lattices.

Here we list concrete forms of the Eisenstein lattices $E$ for each fixed $g$: 

\begin{eqnarray*}
A_2(-1)\oplus A_2^{a-1},                               &  \qquad &  g=0   \\
U^2\oplus A_2^a,                                          & \qquad  &  g=1    \\
A_2(-1)\oplus A_2^{a-1}\oplus E_8,             &  \qquad  &  g=2     \\ 
U^2\oplus A_2^a\oplus E_8,                        &  \qquad  &  g=3     \\
A_2(-1)\oplus A_2^{a-1}\oplus E_8^2,          &  \qquad  &  g=4    \\
U^2\oplus E_8^2,                                          &  \qquad  & g=5 
\end{eqnarray*}

Next we study a relationship between the invariant lattice $L(X, G)$ and the fixed locus $X^G$. 
Let $\hat{X}\rightarrow X$ be the blow-up at the isolated fixed points $p_1,\cdots, p_n$ of $G$,   
and $E_i\subset\hat{X}$ the $(-1)$-curve over $p_i$. 
The $G$-action extends to $\hat{X}$ with the fixed locus 
\begin{equation*}
\hat{X}^G = C^g + F_1 + \cdots + F_k + E_1 + \cdots + E_n.  
\end{equation*}
We shall denote $L(\hat{X}, G)=H^2(\hat{X}, {\Z})^G$, 
which is freely generated by $L(X, G)$ and $E_1,\cdots, E_n$. 
Since $\hat{X}^G$ is a curve, the quotient surface $\hat{Y}=\hat{X}/G$ is smooth. 
It is easy to see that $\hat{Y}$ is rational. 
Let $\hat{f}\colon\hat{X}\to\hat{Y}$ be the quotient morphism. 
Substituting the relation $K_{\hat{X}}\sim \sum_iE_i$ into the ramification formula for $\hat{f}$, 
we obtain 
\begin{equation}\label{eqn:relation in L/NS_Y}
-\hat{f}^{\ast}K_{\hat{Y}} \sim 2C^g+2\sum_{i=1}^{k}F_i+\sum_{j=1}^{n} E_j, 
\end{equation}
which we regard as a relation among the curves $C^g, F_i, E_j$ in 
$L(\hat{X}, G)/\hat{f}^{\ast}NS_{\hat{Y}}$.

\begin{proposition}\label{gene gene L}
The invariant lattice $L(\hat{X}, G)$ is generated by the sublattice $\hat{f}^{\ast}NS_{\hat{Y}}$ 
and the classes of the fixed curves $C^g, F_i, E_j$. 
\end{proposition}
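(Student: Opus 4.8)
\emph{Proof plan.} The plan is to reduce the statement to a computation with line bundles. First I would note that it is really a statement about $\Pic$: since $\hat{Y}$ is rational, $NS_{\hat{Y}}=\Pic(\hat{Y})$, and since $\rho^{\ast}$ acts on $H^{2,0}(\hat{X})$ by a primitive cube root of unity, every class in $L(\hat{X},G)=H^{2}(\hat{X},\Z)^{G}$ is of type $(1,1)$, hence lies in $\Pic(\hat{X})$. The idea is then: given $\alpha\in L(\hat{X},G)$, take a $G$-linearized line bundle with first Chern class $\alpha$, push it forward along $\hat{f}$, split the pushforward into character eigensheaves, and read off that $\alpha$ lies in $\hat{f}^{\ast}NS_{\hat{Y}}+\langle C^{g},F_{i},E_{j}\rangle$.

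I would begin by recording the structure of $\hat{f}$. Because $\hat{X}^{G}$ is a smooth divisor $R=C^{g}+\sum_{i}F_{i}+\sum_{j}E_{j}$ along which $G\simeq\Z/3$ acts as a pseudo-reflection ($(z,w)\mapsto(z,\zeta w)$ locally), the quotient $\hat{Y}$ is smooth and $\hat{f}\colon\hat{X}\to\hat{Y}$ is a finite flat cyclic triple cover, totally ramified over the smooth curve $\hat{B}=\hat{f}(R)$, with $\hat{f}^{\ast}\hat{B}=3R$ component by component and $\hat{f}_{\ast}\mathcal{O}_{\hat{X}}=\mathcal{O}_{\hat{Y}}\oplus\mathcal{A}\oplus\mathcal{A}^{\otimes2}$ for a line bundle $\mathcal{A}$ with $\mathcal{A}^{\otimes3}\simeq\mathcal{O}_{\hat{Y}}(-\hat{B})$. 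Writing $\hat{B}=\sum_{\nu}B_{\nu}$ for the decomposition into components and $R=\sum_{\nu}R_{\nu}$ correspondingly, the $R_{\nu}$ are precisely the fixed curves $C^{g},F_{i},E_{j}$, and $\hat{f}^{\ast}[B_{\nu}]=3[R_{\nu}]$.

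Next, for $\alpha\in L(\hat{X},G)$ I would choose a line bundle $\mathcal{L}$ with $c_{1}(\mathcal{L})=\alpha$; since $\mathcal{L}$ is $G$-invariant and the obstruction to $G$-linearizing it lies in $H^{2}(\Z/3,\C^{\times})=0$, I may fix a $G$-linearization. Then $\hat{f}_{\ast}\mathcal{L}$ is locally free with $\mathcal{O}_{\hat{Y}}$-linear $G$-action (the action on $\hat{Y}$ being trivial), so it decomposes into its three character eigensheaves $\hat{f}_{\ast}\mathcal{L}=\mathcal{F}_{0}\oplus\mathcal{F}_{1}\oplus\mathcal{F}_{2}$; a local check over $\hat{Y}\setminus\hat{B}$ (where $\hat{f}$ is \'etale) and near $\hat{B}$ (where $\hat{f}$ is $(z,w)\mapsto(z,w^{3})$) shows each $\mathcal{F}_{i}$ has rank one, so is a line bundle on $\hat{Y}$. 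By Grothendieck--Riemann--Roch for the finite map $\hat{f}$, $c_{1}(\hat{f}_{\ast}\mathcal{L})=\hat{f}_{\ast}\alpha+c_{1}(\hat{f}_{\ast}\mathcal{O}_{\hat{X}})=\hat{f}_{\ast}\alpha-[\hat{B}]$. On the other side, the $\hat{f}_{\ast}\mathcal{O}_{\hat{X}}$-module structure supplies multiplication maps $\mathcal{A}\otimes\mathcal{F}_{i}\to\mathcal{F}_{i+1}$ (indices mod $3$) that are isomorphisms over $\hat{Y}\setminus\hat{B}$, hence injections of line bundles on the integral surface $\hat{Y}$ that twist by effective divisors $D_{i}$ supported on $\hat{B}$; so $c_{1}(\mathcal{F}_{i+1})=c_{1}(\mathcal{F}_{i})+c_{1}(\mathcal{A})+[D_{i}]$. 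Adding the three relations and using $3c_{1}(\mathcal{A})=-[\hat{B}]$ yields $\sum_{i}c_{1}(\mathcal{F}_{i})\in 3\,NS_{\hat{Y}}+\langle[B_{\nu}]\rangle$, hence $\hat{f}_{\ast}\alpha=\sum_{i}c_{1}(\mathcal{F}_{i})+[\hat{B}]\in 3\,NS_{\hat{Y}}+\langle[B_{\nu}]\rangle$. Finally, writing $\hat{f}_{\ast}\alpha=3\beta+\sum_{\nu}m_{\nu}[B_{\nu}]$ with $\beta\in NS_{\hat{Y}}$ and using $\hat{f}^{\ast}\hat{f}_{\ast}\alpha=\sum_{g\in G}g^{\ast}\alpha=3\alpha$ (valid since $\alpha$ is $G$-invariant) together with $\hat{f}^{\ast}[B_{\nu}]=3[R_{\nu}]$, division by $3$ gives $\alpha=\hat{f}^{\ast}\beta+\sum_{\nu}m_{\nu}[R_{\nu}]$, exhibiting $\alpha$ in $\hat{f}^{\ast}NS_{\hat{Y}}+\langle C^{g},F_{i},E_{j}\rangle$.

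The step I expect to be the main obstacle is the local analysis of the cyclic cover along $R$ — pinning down the model $(z,w)\mapsto(z,w^{3})$, verifying that the eigensheaves $\mathcal{F}_{i}$ are genuinely line bundles, and that the module multiplication maps degenerate only along $\hat{B}$. I would also want to be careful that the argument runs uniformly over all twenty-two cases, in particular the degenerate one $(r,a)=(8,7)$ where $\hat{X}^{G}$ is just the three exceptional curves $E_{j}$ and there is no fixed curve; nothing above uses a positive-dimensional fixed curve, so it should cover that case too. A minor point is the exact form of the GRR identity for $c_{1}(\hat{f}_{\ast}\mathcal{L})$; only its class modulo $[\hat{B}]$, namely $c_{1}(\hat{f}_{\ast}\mathcal{O}_{\hat{X}})=-[\hat{B}]$, is actually used, so even a weak form of GRR (or a direct divisor-theoretic argument) suffices.
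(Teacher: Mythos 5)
Your argument is correct, but it takes a genuinely different route from the paper's. The paper argues by an index count: from $3l=\hat{f}^{\ast}\hat{f}_{\ast}l$ it deduces that $\hat{f}^{\ast}NS_{\hat{Y}}$ has finite index in $L(\hat{X},G)$, computes this index from the discriminant groups to be $3^{(r+n-a)/2}=3^{k+n}$, and then reduces the proposition to showing that the $1+k+n$ classes of fixed curves satisfy exactly one relation modulo $\hat{f}^{\ast}NS_{\hat{Y}}$ — namely the ramification relation \eqref{eqn:relation in L/NS_Y}; that uniqueness (Lemma \ref{1kodake}) is proved topologically via the homology exact sequence of the pair $(\hat{Y},\hat{X}^G)$ and the computation $h_3(\hat{Y},\hat{X}^G,\Z/3\Z)=1$ from \cite{A-S-T}. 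You instead exhibit, for each invariant class $\alpha$, an explicit decomposition $\alpha=\hat{f}^{\ast}\beta+\sum_\nu m_\nu[R_\nu]$ by splitting $\hat{f}_{\ast}\mathcal{L}$ into character eigen-line-bundles and tracking first Chern classes through the $\hat{f}_{\ast}\mathcal{O}_{\hat{X}}$-module multiplication maps; the final step $\hat{f}^{\ast}\hat{f}_{\ast}\alpha=3\alpha$ and $\hat{f}^{\ast}[B_\nu]=3[R_\nu]$ is sound. Your proof is self-contained — it uses neither the numerical coincidence $(r+n-a)/2=k+n$ coming from the classification nor the external homology computation — and it applies verbatim to any smooth cyclic quotient of a surface with $H^1(\mathcal{O})=0$, including the curveless case $(r,a)=(8,7)$. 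What it does not yield is the finer content of Lemma \ref{1kodake} (that the ramification relation is the \emph{only} relation), which the paper's method gives for free; but that is not part of the statement. Two harmless imprecisions to flag: in general $\hat{f}_{\ast}\mathcal{O}_{\hat{X}}=\mathcal{O}\oplus\mathcal{A}_1\oplus\mathcal{A}_2$ with $\mathcal{A}_2$ differing from $\mathcal{A}_1^{\otimes 2}$ by a twist supported on $\hat{B}$ (the local weights along $\hat{B}_1$ and $\hat{B}_2$ differ), and $c_1(\hat{f}_{\ast}\mathcal{O}_{\hat{X}})=-[\hat{B}]$ requires the ramification formula; as you yourself observe, both are only needed modulo the span of the $[B_\nu]$, where they follow from the same multiplication-map argument, so neither affects the conclusion.
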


\begin{proof}
First note that $\hat{f}^{\ast}NS_{\hat{Y}}$ is of finite index in $L(\hat{X}, G)$, 
because for any $l\in L(\hat{X}, G)$ we have 
$3l=\hat{f}^{\ast}\hat{f}_{\ast}l \in \hat{f}^{\ast}NS_{\hat{Y}}$. 
Both $L(\hat{X}, G)$ and $\hat{f}^{\ast}NS_{\hat{Y}}\simeq NS_{\hat{Y}}(3)$ 
have $3$-elementary discriminant groups of length $a$, ${\rm rk}(NS_{\hat{Y}})$ respectively. 
Since 
\begin{equation*}
{\rm rk}(NS_{\hat{Y}})={\rm rk}(L(\hat{X}, G))=r+n, 
\end{equation*}
the sublattice $\hat{f}^{\ast}NS_{\hat{Y}}$ is of index $3^{(r+n-a)/2}$ in $L(\hat{X}, G)$.  
We have $\frac{r+n-a}{2}=k+n$ by \eqref{eqn: (g,k,n)}, so that the assertion reduces to the following lemma. 
\end{proof}

\begin{lemma}\label{1kodake}
Up to $\pm1$, \eqref{eqn:relation in L/NS_Y} is the only relation among $\{ C^{g}, F_i, E_j \}_{i,j}$
in the vector space $L(\hat{X}, G)/\hat{f}^{\ast }NS_{\hat{Y}}$ over ${\Z}/3{\Z}$.
\end{lemma}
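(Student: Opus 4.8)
The plan is to show that the classes $C^g, F_1,\dots,F_k, E_1,\dots,E_n$ span a subspace of dimension exactly $k+n$ in $W:=L(\hat X,G)/\hat f^{\ast}NS_{\hat Y}\otimes\mathbb{Z}/3\mathbb{Z}$, which together with \eqref{eqn:relation in L/NS_Y} (already a nontrivial relation) forces the relation module to be exactly one-dimensional. Equivalently, it suffices to exhibit a linear functional, or better a pairing, on $W$ under which the images of $C^g, F_i, E_j$ are ``as independent as possible'' modulo the one known relation. First I would recall that $W$ is a $\mathbb{Z}/3\mathbb{Z}$-vector space of dimension $(r+n-a)/2=k+n$ carrying the (nondegenerate) discriminant form of $L(\hat X,G)\simeq NS_{\hat Y}(3)$ pulled back, so $W\cong A_{L(\hat X,G)}$ as a quadratic space over $\mathbb{F}_3$, and the intersection pairing on $L(\hat X,G)$ descends to a nondegenerate $\mathbb{F}_3$-valued pairing on $W$ after dividing by $3$.

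The key computation is then the table of values $\tfrac13(D,D')\bmod 3$ for $D,D'$ ranging over $C^g,F_i,E_j$, using the standard intersection numbers: $F_i^2=E_j^2=-2$, $F_i\cdot F_j=0$ and $E_i\cdot E_j=0$ for $i\ne j$, $F_i\cdot E_j=0$, $C^g\cdot F_i=0$ and $C^g\cdot E_j=1$ (the proper transform of $C^g$ does not meet the exceptional curves over the isolated fixed points, but $-\hat f^{\ast}K_{\hat Y}\cdot E_j$ and adjunction pin down $C^g\cdot E_j$; I would verify this sign/value carefully, since it is the one place an error would be fatal), and $C^g\cdot C^g=2g-2$. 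Feeding $g=(22-r-2a)/4$ and these numbers into the Gram matrix of the tuple $(C^g,F_1,\dots,F_k,E_1,\dots,E_n)$ over $\mathbb{F}_3$, one checks directly that this $(k+n+1)\times(k+n+1)$ matrix has rank exactly $k+n$: the single linear dependence among its rows/columns is precisely the coefficient vector $(2,2,\dots,2,1,\dots,1)$ of \eqref{eqn:relation in L/NS_Y}, because pairing that vector against each generator returns $\tfrac13\bigl(-\hat f^{\ast}K_{\hat Y},D\bigr)\equiv 0\bmod 3$ (the left side lies in $3L(\hat X,G)$ after pullback, as it equals $\hat f^{\ast}(\text{something})$). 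Since the pairing on $W$ is nondegenerate, $\dim\langle C^g,F_i,E_j\rangle$ in $W$ equals the rank of this Gram matrix, namely $k+n=\dim W$; hence the generators span $W$, and any relation among them lies in the kernel of the evaluation-against-all-generators map, which is one-dimensional and generated by \eqref{eqn:relation in L/NS_Y}.

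The main obstacle I anticipate is not the algebra but getting the intersection numbers $C^g\cdot E_j$ and $C^g\cdot C^g$ exactly right in all twenty-four cases at once, and making sure the Gram matrix argument is uniform rather than case-by-case: I would handle this by working abstractly with the parameters $(g,k,n)$ constrained by \eqref{eqn: (g,k,n)} and checking the rank statement as a polynomial identity in those parameters modulo $3$, rather than plugging in each $(r,a)$. A secondary subtlety is the degenerate-looking case $(r,a)=(8,7)$ with $(g,k)=(0,-1)$: there the fixed locus is three isolated points and \eqref{eqn:relation in L/NS_Y} reads $-\hat f^{\ast}K_{\hat Y}\sim E_1+E_2+E_3$, and the same rank argument applies with the $C^g$ and $F_i$ rows simply absent, so I would remark that the statement and proof specialize correctly there.
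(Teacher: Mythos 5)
Your overall strategy (show that the images of $C^g, F_i, E_j$ span the $(k+n)$-dimensional space $W=L(\hat X,G)/\hat f^{\ast}NS_{\hat Y}$, so that the relation space is exactly one-dimensional) is a legitimate reformulation, but the mechanism you propose for proving it does not exist. The pairing $\tfrac13(\,\cdot\,,\,\cdot\,)\bmod 3$ is not even integer-valued on $L(\hat X,G)$: the classes $E_j$ are the exceptional curves of the blow-up $\hat X\to X$ at the isolated fixed points, so $E_j^2=-1$ (not $-2$ as you assert), and $\tfrac13 E_j^2\notin\Z$. Nor does the plain mod-$3$ intersection form descend to $W$: for $D\in NS_{\hat Y}$ one has $(\hat f^{\ast}D,E_j)=(D,\hat f_{\ast}E_j)$, which is an arbitrary integer, so pairing against an element of $\hat f^{\ast}NS_{\hat Y}$ need not vanish mod $3$. (Your sanity check already signals this: $\bigl(-\hat f^{\ast}K_{\hat Y},E_j\bigr)=-1$, which is neither divisible by $3$ nor can be divided by $3$.) Your other numerical inputs are also off: $C^g\cdot E_j=0$, since the components of the fixed locus are pairwise disjoint and $C^g$ does not pass through the isolated fixed points, and $W$ is not isomorphic to $A_{L(\hat X,G)}$ (their orders are $3^{k+n}$ and $3^{a}$). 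With the correct numbers the generators are pairwise orthogonal, so any Gram-matrix computation among them is essentially diagonal and cannot detect the single relation \eqref{eqn:relation in L/NS_Y}; the statement is not a local intersection-theoretic fact but a global one.

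The paper's proof supplies exactly the global input your approach is missing. Given a relation $\alpha C^g+\sum_i\beta_iF_i+\sum_j\gamma_jE_j\equiv 0$ in $W$, apply $\hat f_{\ast}$ and use $\hat f_{\ast}\hat f^{\ast}NS_{\hat Y}=3NS_{\hat Y}$ to get a relation among the image curves in $NS_{\hat Y}/3NS_{\hat Y}\cong H_2(\hat Y,\Z/3\Z)$; thus the coefficient vector lies in the kernel of $\hat f_{\ast}\colon H_2(\hat X^G,\Z/3\Z)\to H_2(\hat Y,\Z/3\Z)$, where $\hat X^G$ is regarded as a curve on $\hat Y$. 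The long exact sequence of the pair $(\hat Y,\hat X^G)$ bounds this kernel by $h_3(\hat Y,\hat X^G,\Z/3\Z)$, which equals $1$ by \cite{A-S-T} Lemma 2.5; since \eqref{eqn:relation in L/NS_Y} is a nonzero relation, the relation space is exactly one-dimensional. If you want to salvage a pairing-based argument, the well-defined object is the pairing $W\times NS_{\hat Y}/3NS_{\hat Y}\to\Z/3\Z$, $(\bar l,\bar D)\mapsto(\hat f_{\ast}l,D)\bmod 3$, but evaluating it on the generators just reproduces the paper's reduction to classes in $H_2(\hat Y,\Z/3\Z)$, and you would still need the topological computation of $h_3(\hat Y,\hat X^G,\Z/3\Z)$ to conclude.
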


\begin{proof}
Let 
\begin{equation}\label{eqn: relation I}
\alpha C^g + \sum_i\beta_iF_i + \sum_j\gamma_jE_j \equiv 0, \qquad \alpha, \beta_i, \gamma_j \in {\Z}/3{\Z}, 
\end{equation}
be a relation among $C^g, F_i, E_j$ in $L(\hat{X}, G)/\hat{f}^{\ast }NS_{\hat{Y}}$. 
Since $\hat{f}_{\ast}\hat{f}^{\ast}NS_{\hat{Y}}=3NS_{\hat{Y}}$, 
we apply $\hat{f}_{\ast}$ to \eqref{eqn: relation I} to obtain 
\begin{equation}\label{eqn: relation II}
\alpha \hat{f}_{\ast}C^g + \sum_i\beta_i\hat{f}_{\ast}F_i + \sum_j\gamma_j\hat{f}_{\ast}E_j \equiv 0 \quad \textrm{in} \: \:  NS_{\hat{Y}}/3NS_{\hat{Y}}. 
\end{equation}
We can identify 
$NS_{\hat{Y}}/3NS_{\hat{Y}}$ with $H_2(\hat{Y}, {\Z}/3{\Z})$ 
by the Poincar\'e duality and the universal coefficient theorem. 
Therefore \eqref{eqn: relation II} gives an element of the kernel of the map 
\begin{equation}\label{pushforward fixed curve}
\hat{f}_{\ast} : H_2(\hat{X}^G, {\Z}/3{\Z}) \to H_2(\hat{Y}, {\Z}/3{\Z}). 
\end{equation}
Regarding $\hat{X}^G$ as a curve on $\hat{Y}$ naturally, 
\eqref{pushforward fixed curve} fits into the homology exact sequence for the pair $(\hat{Y}, \hat{X}^G)$: 
\begin{equation*}
\cdots \to H_3(\hat{Y}, \hat{X}^G, {\Z}/3{\Z}) \to H_2(\hat{X}^G, {\Z}/3{\Z}) \stackrel{\hat{f}_{\ast}}{\to} H_2(\hat{Y}, {\Z}/3{\Z}) \to \cdots . 
\end{equation*}
Then we have $h_3(\hat{Y}, \hat{X}^G, {\Z}/3{\Z})=1$ by \cite{A-S-T} Lemma 2.5. 
This proves our claim. 
\end{proof}


\subsection{Moduli spaces}\label{ssec: classification}

Let $(r, a)$ be an invariant in Figure \ref{geography}. 
We fix an Eisenstein lattice $(E, \rho)$ of signature $(2, 20-r)$ such that 
$A_E\simeq({\Z}/3{\Z})^a$ and that $\rho$ acts on $A_E$ trivially. 
Let $E\otimes{\C}=V\oplus\overline{V}$ be the eigendecomposition for $\rho$ 
where $\rho|_V=e^{2\pi i/3}$. 
The Hermitian form on $V$ defined by $(v, \bar{w})$ for $v, w\in V$,  
is isometric to $E\otimes{\R}$ up to a scaling (\S \ref{ssec: E lattice}) 
and thus has signature $(1, 10-r/2)$. 
Therefore the domain 
\begin{equation}\label{eqn: ball}
\mathcal{B}_{E} = \{ {\C}\omega\in{\proj}V, \; (\omega, \bar{\omega})>0 \} 
\end{equation}
is a complex ball of dimension $10-r/2$. 
The unitary group ${\rm U}(E)$ of $E$ acts on $\mathcal{B}_E$. 
We define a complex analytic divisor $\mathcal{H}$ in $\mathcal{B}_E$ by $\mathcal{H}=\sum_{\delta}\delta^{\perp}$ 
where $\delta$ range over $(-2)$-vectors in $E$. 
Then we consider the open set of the ball quotient (or Picard modular variety) 
\begin{equation}\label{def moduli} 
{\moduli} = {\rm U}(E) \backslash (\mathcal{B}_E - \mathcal{H}),  
\end{equation}
which is a normal quasi-projective variety of dimension $10-r/2$.

Let $(X, G)$ be an Eisenstein $K3$ surface of invariant $(r, a)$. 
By Theorem \ref{AST classification} there exists an isomorphism $\Phi\colon E(X, G)\to E$ of Eisenstein lattices. 
The ${\C}$-linear extension of $\Phi$, also denoted by $\Phi$, 
maps $H^{2,0}(X)$ to a point of $\mathcal{B}_E$.  
Then $\Phi(H^{2,0}(X))$ is contained in the complement of $\mathcal{H}$ 
(cf.~\cite{D-K0}, \cite{A-S-T}), 
and we define the period of $(X, G)$ by 
\begin{equation}\label{eqn: def of period}
\mathcal{P}(X, G)=[\Phi(H^{2,0}(X))]\in{\moduli}.
\end{equation}
This is independent of the choice of $\Phi$. 

\begin{theorem}[cf.~\cite{A-S-T}, \cite{D-K0}]\label{moduli}
The variety ${\moduli}$ is the moduli space of Eisenstein $K3$ surfaces of type $(r, a)$ 
in the following sense. 
 
$(1)$ For any family $(\mathcal{X}\to U, G)$ of such Eisenstein $K3$ surfaces over a variety $U$, 
the period map $\mathcal{P}\colon U\to{\moduli}$ is a morphism of varieties.  

$(2)$ Via the period mapping the points of ${\moduli}$ are in one-to-one correspondence with 
the isomorphism classes of such Eisenstein $K3$ surfaces. 
\end{theorem}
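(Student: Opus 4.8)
The plan is to deduce both parts from the surjectivity of the period mapping for $K3$ surfaces together with the Torelli theorem, in the standard way for moduli of lattice-polarized $K3$ surfaces carrying a finite symmetry (compare \cite{D-K0}, \cite{A-S-T}). For $(1)$, let $\pi\colon\mathcal{X}\to U$ be such a family. Since $G$ acts on $\mathcal{X}\to U$, it acts on the local system $R^{2}\pi_{\ast}{\Z}$, and the invariant and anti-invariant parts $L(\mathcal{X}_{t},G)$ and $E(\mathcal{X}_{t},G)$ form sub-local-systems which, by Theorem \ref{AST classification}, are locally isomorphic to the fixed lattices $L$ and $E$. Over a simply connected open $U_{0}\subset U$ we trivialize $E(\mathcal{X},G)|_{U_{0}}$ compatibly with the $G$-action, identifying each $E(\mathcal{X}_{t},G)$ with $E$ as an Eisenstein lattice. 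As the distinguished generator of $G$ acts on $H^{2,0}(\mathcal{X}_{t})$ by $e^{2\pi i/3}$, we have $H^{2,0}(\mathcal{X}_{t})\subset E(\mathcal{X}_{t},G)\otimes{\C}$, so this trivialization sends the Hodge line to a point of $\mathcal{B}_{E}$ varying holomorphically (Griffiths transversality) and lying off $\mathcal{H}$ (as $\mathcal{X}_{t}$ is a $K3$ surface). Two trivializations differ by an element of ${\rm U}(E)$, so the induced maps $U_{0}\to{\moduli}$ agree on overlaps and patch to a holomorphic map $\mathcal{P}\colon U\to{\moduli}$; since ${\moduli}$ is quasi-projective, Borel's theorem on the algebraicity of maps into arithmetic quotients shows $\mathcal{P}$ is a morphism.

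For $(2)$, surjectivity of the period mapping is exactly the construction in the proof of Lemma \ref{condition E lattice}: given ${\C}\omega\in\mathcal{B}_{E}-\mathcal{H}$ one obtains a marked $K3$ surface of period ${\C}\omega$, extends $\rho$ to $\Lambda_{K3}$ by the identity on $L$, corrects the marking by $(-2)$-reflections, and realizes $\rho$ by an order-$3$ automorphism $g$, so that $(X,\langle g\rangle)$ is an Eisenstein $K3$ surface of type $(r,a)$ with period $[\omega]$. For injectivity, suppose $(X,G)$ and $(X',G')$ have the same period. Regarding $E(X,G)\subset H^{2}(X,{\Z})$ and $E(X',G')\subset H^{2}(X',{\Z})$ as primitively embedded Eisenstein lattices of the same invariant $(r,a)$, the discussion following Lemma \ref{condition E lattice} furnishes an isometry $\gamma\colon H^{2}(X,{\Z})\to H^{2}(X',{\Z})$ with $\gamma|_{E(X,G)}\colon E(X,G)\to E(X',G')$ an isomorphism of Eisenstein lattices; since $\rho,\rho'$ act as the Eisenstein structures on the anti-invariant parts and trivially on the invariant parts, $\gamma$ conjugates $\rho$ to $\rho'$. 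Now $\gamma(H^{2,0}(X))$ and $H^{2,0}(X')$ are points of $\mathcal{B}_{E(X',G')}-\mathcal{H}$ lying, by $\mathcal{P}(X,G)=\mathcal{P}(X',G')$, in one ${\rm U}(E(X',G'))$-orbit; choosing $\delta\in{\rm U}(E(X',G'))$, extended by the identity on the complement, that carries the former to the latter, the composite $\psi:=\delta\circ\gamma$ is a Hodge isometry $H^{2}(X,{\Z})\to H^{2}(X',{\Z})$ still conjugating $\rho$ to $\rho'$.

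It remains to make $\psi$ effective without spoiling its equivariance. After replacing $\psi$ by $-\psi$ if necessary, $\psi$ maps the positive cone of $NS(X)$ to that of $NS(X')$; let $w$ be the unique element of the Weyl group of $NS(X')$ (generated by reflections in $(-2)$-classes) with $w\psi$ sending the ample cone of $X$ into that of $X'$. Since $\rho,\rho'$ come from automorphisms they preserve the ample cones, and from $\rho'\psi=\psi\rho$ one checks that $\rho'w(\rho')^{-1}$ satisfies the same condition as $w$, hence equals $w$; so $w\psi$ is still equivariant. By the strong Torelli theorem $w\psi$ is induced by a unique isomorphism $f\colon X'\to X$, and its equivariance forces $f^{-1}gf=g'$, so $f$ identifies $(X',G')$ with $(X,G)$. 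The only delicate point is this last reconciliation of effectivity and equivariance; it goes through precisely because $\rho,\rho'$ fix the ample cones, which renders the Weyl-group correction canonical and therefore automatically compatible with the order-$3$ actions, whereas the lattice-theoretic ingredients (the existence of $\gamma$ and surjectivity of the period mapping) are what confine the whole problem to the single ball $\mathcal{B}_{E}$.
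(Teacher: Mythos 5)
Parts $(1)$ and the surjectivity in $(2)$ follow the paper's route and are fine, and your Weyl-group step at the end (using uniqueness of the chamber-moving element $w$ and the fact that $\rho'$ preserves the ample cone to get equivariance of $w\psi$ for free) is a legitimate, arguably cleaner alternative to the paper's explicit construction of a $G$-equivariant Weyl element (Lemma \ref{Weyl action nef}, which notes that non-invariant $(-2)$-curves occur in disjoint $\rho$-orbits of length $3$).

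However, there is a genuine gap in the injectivity argument, at the words ``choosing $\delta\in{\rm U}(E(X',G'))$, extended by the identity on the complement.'' The lattice $H^2(X',{\Z})$ is a proper overlattice of $E(X',G')\oplus L(X',G')$ whenever $a>0$, glued along an isomorphism $A_{L}\simeq A_{E}(-1)$ of discriminant groups as in \eqref{eqn: isom disc form}. An isometry $\delta\oplus{\rm id}$ of the direct sum extends to $H^2(X',{\Z})$ only if it preserves the glue, i.e.\ only if $\delta$ acts trivially on $A_{E(X',G')}$ --- that is, only if $\delta\in\widetilde{{\rm U}}(E)$. But two surfaces with the same period in ${\moduli}={\rm U}(E)\backslash(\mathcal{B}_E-\mathcal{H})$ differ a priori by an arbitrary element of ${\rm U}(E)$, which in general acts nontrivially on $A_E$. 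To extend such a $\delta$ you must pair it with an isometry of $L(X',G')$ inducing the matching action on $A_L$, which is exactly the surjectivity of ${\rm O}(L)\to{\rm O}(A_L)$. This is the nontrivial lattice-theoretic input of the paper's proof: it holds by Nikulin's Theorem 1.14.2 when $r\geq a+2$, is checked directly for $(r,a)=(2,2)$, and requires the Miranda--Morrison criterion for $(r,a)=(4,3)$ and $(8,7)$. (Starting instead from the Hodge-preserving isomorphism $E(X,G)\to E(X',G')$ and trying to extend it to $H^2$, as the paper does, runs into the same obstruction --- the issue cannot be avoided by reordering the steps.) As written, your argument silently assumes this extension exists, so the injectivity claim is not yet proved.
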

 
\begin{proof}
The fact that period maps are morphisms is a consequence of Borel's extension theorem \cite{Bo}. 
The surjectivity of the period mapping is proved in \cite{D-K0} \S 11 and also in \cite{A-S-T} 
(cf.~the proof of Lemma \ref{condition E lattice}). 
Here we shall supplement the proof of the injectivity, 
which is more or less asserted in \cite{A-S-T} \S 9 without proof. 
Let us begin with the following basic lemma. 

\begin{lemma}\label{Weyl action nef}
Let $(X, G)$ be an Eisenstein $K3$ surface and 
let $W(X)$ be the Weyl group of $NS_X$ generated by $(-2)$-reflections. 
For every $l\in L(X, G)$ with $(l, l)\geq0$ there exists $w\in W(X)$ commuting with the $G$-action 
such that either $w(l)$ or $-w(l)$ is nef. 
\end{lemma}

\begin{proof}
This is analogous to \cite{B-H-P-V} Proposition VIII 21.1. 
We may assume that $(l, h_0)\geq0$ for an ample class $h_0\in NS_X$. 
Let $D\subset X$ be a $(-2)$-curve with $(l, D)<0$. 
Then for a generator $\rho\in G$ we have $(D, \rho(D))\leq0$. 
If not, the effective divisor class $C=D+\rho(D)+\rho^{-1}(D)$ in $L(X, G)$ 
would have norm $\geq0$ and satisfy $(l, C)<0$, which is a contradiction. 
Therefore $D$ is either preserved by $G$ or disjoint from $\rho(D)$. 
In the former case we apply to $l$ the reflection with respect to $D$, which commutes with the $G$-action. 
In the latter case the three curves $D$, $\rho(D)$ and $\rho^{-1}(D)$ are pairwise disjoint. 
Then we apply to $l$ the composition of the three reflections with respect to these curves, 
which also commutes with the $G$-action. 
As in \cite{B-H-P-V}, this process will terminate and $l$ will be finally mapped to a nef class. 
\end{proof}

Returning to the proof of Theorem \ref{moduli}, 
we let two Eisenstein $K3$ surfaces $(X, G), (X', G')$ of type $(r, a)$ have the same period in ${\moduli}$. 
This means that we have an isomorphism $\gamma:E(X, G)\to E(X', G')$ of Eisenstein lattices 
preserving the Hodge structures. 
We want to extend $\gamma$ to a Hodge isometry $\Phi\colon H^2(X, {\Z})\to H^2(X', {\Z})$. 
Since $L(X, G)$ and $L(X', G')$ are isometric, 
by a standard argument of discriminant group (cf. \cite{Ni0}) 
it suffices to show that the natural homomorphism 
${\rm O}(L(X, G))\to{\rm O}(A_{L(X, G)})$ is surjective. 
When $(r, a)\ne(2, 2), (4, 3), (8, 7)$, we have $r\geq a+2$ so that our claim follows from \cite{Ni0} Theorem 1.14.2. 
The case $(r, a)=(2, 2)$ is easily checked. 
For the remaining two cases, we may resort to the assertions (i), (iii) of the Theorem of \cite{M-M}.  
Thus we obtain a desired extension $\Phi$ of $\gamma$. 
By the above lemma we may compose $\Phi$ with a $G$-equivariant $w\in W(X)$ so that 
$\Phi\circ w$ preserves the ample cones. 
By the Torelli thorem we have an isomorphism $\varphi\colon X'\to X$ with $\varphi^{\ast}=\Phi\circ w$. 
Then $\varphi$ is ${\Z}/3{\Z}$-equivariant because $\varphi^{\ast}$ is so. 
Therefore $(X, G)$ is isomorphic to $(X', G')$.  
\end{proof}

We set $g=(22-r-2a)/4$ as in \eqref{eqn: (g,k,n)}. 
Let $\mathcal{M}_g$ be the moduli space of genus $g$ curves. 
When $g>0$, we have the \textit{fixed curve map} 
\begin{equation}\label{eqn: def fixed curve map}
{\moduli} \to \mathcal{M}_g, \qquad (X, G)\mapsto C^g, 
\end{equation}
where $C^g$ is the genus $g$ curve in $X^G$. 
This map will be analyzed for some ${\moduli}$ in the rest of the article.


\subsection{Marked Eisenstein $K3$ surfaces}\label{ssec: cover}

We define a Galois cover of ${\moduli}$ that will be used 
in our degree calculation of period maps (\S \ref{ssec: recipe}). 
It is also treated systematically in \cite{D-K0} \S 11.  
Let $E$ be the Eisenstein lattice used in the definition \eqref{def moduli} of ${\moduli}$. 
The natural homomorphism ${\rm U}(E) \to {\rm O}(A_E)$ is surjective 
by Corollary \ref{surj II} (for $(r, a)\ne(8, 5), (10, 4)$) and 
Propositions \ref{birat (8,5)}, \ref{birat (10,4)} (for $(r, a)=(8, 5), (10, 4)$ respectively).  
Let $\widetilde{{\rm U}}(E)$ be the kernel of ${\rm U}(E) \to {\rm O}(A_E)$. 
We consider the ball quotient   
\begin{equation}\label{def cover}
{\cover} = \widetilde{{\rm U}}(E) \backslash \mathcal{B}_E. 
\end{equation}
Its open set over ${\moduli}$ is a Galois cover of ${\moduli}$  
with Galois group ${\rm O}(A_E)/\pm1$. 
In particular, the degree of the projection ${\cover}\dashrightarrow{\moduli}$ is given by 
\begin{equation}\label{proj degree}
\left\{ \begin{array}{cl} 
                             |{\rm O}(A_E)|/2,  &    \quad        a>0,    \\
                                  1,                                          &    \quad        a=0.    \\
                               \end{array} \right. 
\end{equation}
Since $(A_E, q_E)$ is a finite quadratic form in characteristic $3$, 
we can calculate $|{\rm O}(A_E)|$ by referring to, e.g., \cite{Atlas}. 
We shall use the following standard notation for orthogonal groups in characteristic $3$: 
${\rm GO}(2m+1, 3)$, ${\rm GO}^+(2m, 3)$ and ${\rm GO}^-(2m, 3)$.

As essentially explained in \cite{D-K0} \S 10 -- \S 11,  
${\cover}$ is birationally a moduli space of Eisenstein $K3$ surfaces with marking of its invariant lattice. 
We fix an even hyperbolic $3$-elementary lattice $L$ of rank $r$ and $l(A_L)=a$, 
a primitive embedding $L\subset\Lambda_{K3}$, 
and an isometry $E\simeq L^{\perp}\cap\Lambda_{K3}$ of quadratic forms. 
We extend the ${\Z}/3{\Z}$-action on $E$ to $\Lambda_{K3}$ by the trivial action on $L$. 
Suppose that we are given an Eisenstein $K3$ surface $(X, G)$ with 
an isometry $j\colon L\to L(X, G)$ of quadratic forms. 
By the surjectivity of ${\rm U}(E) \to {\rm O}(A_E)$,\footnote{
For \S \ref{ssec:(8,5)} and \S \ref{ssec:(10,4)}: 
If the surjectivity of ${\rm U}(E) \to {\rm O}(A_E)$ is yet uncertain at this moment, one should consider only those $((X, G), j)$ such that 
$j$ \textit{can be} ${\Z}/3{\Z}$-equivariantly extended to $\Lambda_{K3}\to H^2(X, {\Z})$.
In this case, the Galois group of ${\cover}\dashrightarrow{\moduli}$ is a priori just a subgroup of ${\rm O}(A_E)/\pm1$ 
(but in fact the whole ${\rm O}(A_E)/\pm1$).}\label{footnote1}  
the embedding $j$ extends to a ${\Z}/3{\Z}$-equivariant isometry 
$\Phi\colon\Lambda_{K3}\to H^2(X, {\Z})$. 
Since the restriction of $\Phi$ to $L$ is fixed, 
the isometry $\Phi|_E\colon E\to E(X, G)$ 
is determined up to the action of $\widetilde{{\rm U}}(E)$ by \cite{Ni0}. 
Then we define the period of the Eisenstein $K3$ surface $(X, G)$ with the lattice-marking $j$ by 
\begin{equation}\label{lifted period}
\widetilde{\mathcal{P}}((X, G), j) = [ \Phi|_E^{-1}(H^{2,0}(X)) ] \in {\cover}. 
\end{equation}
Clearly, two such lattice-marked Eisenstein $K3$ surfaces $((X, G), j)$, $((X', G'), j')$  
have the same $\widetilde{\mathcal{P}}$-period in ${\cover}$ 
if and only if 
there exists a ${\Z}/3{\Z}$-equivariant Hodge isometry 
$\Psi \colon H^2(X, {\Z}) \to H^2(X', {\Z})$ 
with $\Psi\circ j = j'$. 
The open set of ${\cover}$ over ${\moduli}$ parametrizes 
such equivalence classes of Eisenstein $K3$ surfaces with lattice-marking. 



\section{Triple cover construction}

\subsection{Mixed branch}\label{ssec:mixed branch} 

We develop triple cover construction of Eisenstein $K3$ surfaces 
in a moderate generality sufficient for the proof of Theorem \ref{main}. 
We propose the notion of \textit{mixed branch} as an analogue of DPN pair \cite{A-N}, 
that is, singular branch \textit{curve} on smooth surface. 
The key idea is to distinguish 
the branch components turning to isolated fixed points 
from those components turning to fixed curves by multiplicity of divisor.  
The formality of the resolution process \eqref{resol process} works keeping this geometric idea. 

\begin{definition}\label{def:mixed branch}
Let $Y$ be a smooth rational surface. 
A mixed branch on $Y$ is 
a ${\Q}$-divisor $B=B_1+\frac{1}{2}B_2$ linearly equivalent to $-\frac{3}{2}K_Y$, 
where $B_1, B_2$ are reduced curves having no common component, 
with the following properties. 

$(1)$ ${\rm Sing}(B_1)$ are at most nodes, cusps, tacnodes and ramphoid cusps. 

$(2)$ $B_2$ is a union of rational curves, 
and its singularities (if any) are only ordinary triple points disjoint from ${\rm Sing}(B_1)$. 

$(3)$ If $B_2$ passes through a singular point $p$ of $B_1$, 
then $p$ is a node or cusp of $B_1$, and $B_1+B_2$ has more than one tangent at $p$. 
\end{definition}

We call $\frac{1}{2}B_2$ the \textit{shadow part}\footnote{
One might draw $\frac{1}{2}B_2$ as a half-transparent curve.}  
of $B$.
The condition $(1)$ comes from the demand that 
the local triple cover around $p\in{\rm Sing}(B_1)$ branched over $B_1$ has only A-D-E singularities 
(see the next \S \ref{ssec:pure branch}). 
Let us denote $(B_i)_{sm}=B_i\backslash{\rm Sing}(B_i)$. 
The multiplicity of $B$ at a singular point $p$ of $B_1+B_2$ is classified as follows: 
\begin{itemize}
\item $3/2$ \; ($p\in{\rm Sing}(B_2)\backslash B_1$ or $p\in(B_1)_{sm}\cap(B_2)_{sm}$) 
\item $2$  \; ($p\in{\rm Sing}(B_1)\backslash B_2$) 
\item $5/2$ \; ($p\in{\rm Sing}(B_2)\cap B_1$ or $p\in{\rm Sing}(B_1)\cap B_2$) 
\end{itemize}

We can resolve a mixed branch $B=B_1+\frac{1}{2}B_2$ in the following way. 
Let $Y'\to Y$ be the blow-up at a singular point $p$ of $B_1+B_2$. 
We define a mixed branch on $Y'$ by
\begin{equation}\label{resol process}
B_1'+\frac{1}{2}B_2' = 
\widetilde{B}_1+\frac{1}{2}\widetilde{B}_2+(m-\frac{3}{2})E, 
\end{equation}
where $\widetilde{B}_i$ is the strict transform of $B_i$, 
$m$ is the multiplicity of $B$ at $p$, 
and $E$ is the $(-1)$-curve over $p$.  
One checks that $B'=B_1'+\frac{1}{2}B_2'$ is linearly equivalent to $-\frac{3}{2}K_{Y'}$ 
and satisfies the conditions $(1)$--$(3)$ in Definition \ref{def:mixed branch}. 
Continuing this resolution process $\cdots\to(Y'', B'')\to(Y', B')$, 
we finally obtain a mixed branch $(\hat{Y}, \hat{B}_1+\frac{1}{2}\hat{B}_2)$ with 
$\hat{B}_1+\hat{B}_2$ smooth. 
We shall call this procedure the \textit{right resolution} of $(Y, B)$.  
Substituting the relation $2\hat{B}_1+\hat{B}_2 \sim -3K_{\hat{Y}}$ into the adjunction formula, 
we see that every rational component of $\hat{B}_1$ (resp. $\hat{B}_2$) is 
a $(-6)$-curve (resp. $(-3)$-curve). 
Since $\hat{B}_1-\hat{B}_2 \sim 3(K_{\hat{Y}}+\hat{B}_1)$, 
we can take a cyclic triple cover $\hat{f}\colon\hat{X}\to\hat{Y}$ 
branched over $\hat{B}_1+\hat{B}_2$ by the following general lemma. 

\begin{lemma}
Let $Y$ be a complex manifold and $D_1, D_2$ be disjoint smooth divisors on $Y$ 
with $D_1-D_2\in d {\Pic}(Y)$. 
Then there exists a cyclic cover $X\to Y$ of degree $d$ branched over $D_1+D_2$. 
\end{lemma}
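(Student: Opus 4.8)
The plan is to construct the cyclic cover locally and patch. Since $D_1 - D_2 \in d\,\Pic(Y)$, we may fix a line bundle $\mathcal{L}$ on $Y$ and an isomorphism $\mathcal{L}^{\otimes d} \simeq \mathcal{O}_Y(D_1 - D_2)$. First I would choose global sections: let $s_1 \in H^0(Y, \mathcal{O}_Y(D_1))$ and $s_2 \in H^0(Y, \mathcal{O}_Y(D_2))$ be sections cutting out $D_1$ and $D_2$ respectively. Then $s := s_1/s_2$ is a rational section of $\mathcal{O}_Y(D_1 - D_2) \simeq \mathcal{L}^{\otimes d}$ whose divisor is $D_1 - D_2$; equivalently it is a regular section of $\mathcal{L}^{\otimes d}$ away from $D_2$, with a zero of order $1$ along $D_1$ and we want to build the cover $X$ inside the total space of $\mathcal{L}$ by the equation $t^{\otimes d} = s$, where $t$ is the tautological section of $\pi^{*}\mathcal{L}$ on the total space $\pi \colon |\mathcal{L}| \to Y$.

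The key steps, in order: (i) Over the open set $Y \setminus D_2$, the section $s$ is a genuine regular section of $\mathcal{L}^{\otimes d}$ vanishing to order one on $D_1$, so the standard cyclic-cover construction (as in, e.g., \cite{B-H-P-V} I.17) produces a degree $d$ cover branched exactly over $D_1$, smooth because $D_1$ is smooth. (ii) Symmetrically, replacing $\mathcal{L}$ by $\mathcal{L}^{-1}$ and $s$ by $s^{-1} = s_2/s_1$ (a regular section of $\mathcal{L}^{-\otimes d}$ over $Y \setminus D_1$ vanishing to order one on $D_2$), one gets a degree $d$ cover of $Y \setminus D_1$ branched over $D_2$. (iii) On the overlap $Y \setminus (D_1 \cup D_2)$ both constructions give the \'etale cover $t^{\otimes d} = s$ (with $s$ now a nowhere-vanishing section), so they are canonically isomorphic there via $t \mapsto t^{-1} \otimes (\text{trivialization})$. (iv) Glue the two covers along this overlap to obtain $X \to Y$, which is cyclic of degree $d$, with $\mathbb{Z}/d$-action given by multiplication of $t$ by a $d$-th root of unity, and branched precisely over $D_1 + D_2$; smoothness of $X$ follows from smoothness of $D_1$ and $D_2$ and the disjointness hypothesis, which guarantees the two charts never interact over a branch point.

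Concretely one can avoid even the gluing language: set $\mathcal{A} = \bigoplus_{i=0}^{d-1} \mathcal{L}^{-i}\bigl(\lfloor i D_1 / d \rfloor\bigr)$ — but since $D_1$ is reduced and the orders of vanishing are all $1 < d$, the round-downs vanish and $\mathcal{A} = \bigoplus_{i=0}^{d-1}\mathcal{L}^{-i}$ as a sheaf, with algebra structure twisted by $s$ so that locally $X = \operatorname{Spec}_Y \mathcal{A}$ is cut out by $t^d = (\text{local equation of } D_1) \cdot (\text{local equation of } D_2)^{-1}\cdot(\text{unit})$; clearing the denominator, near a point of $D_2$ one instead uses the dual presentation. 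The only genuinely delicate point — the main obstacle — is checking that along $D_2$ the cover is branched with the correct ramification and that $X$ is smooth there: this is exactly where one must pass to the $\mathcal{L}^{-1}$ chart, because in the $\mathcal{L}$ chart the defining "section" $s$ has a pole along $D_2$ and $t^d = s$ does not directly define a subvariety of $|\mathcal{L}|$ near $D_2$. Once one observes that $D_1 \cap D_2 = \varnothing$, so that no point of $Y$ lies in the closure of both bad loci, the two local models cover all of $Y$ and patch, and the verification that each is smooth reduces to the familiar computation $t^d = u$ with $u$ a local coordinate transverse to a smooth branch divisor. I would present this by stating the two chart constructions and remarking that disjointness makes the patching automatic.
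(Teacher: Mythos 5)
Your proof is correct and is essentially the paper's argument: the paper takes a meromorphic section $s$ of $L^{\otimes d}$ with $\mathrm{div}(s)=D_1-D_2$ and defines $X$ as the divisor $\{v^{\otimes d}=s\}$ inside the compactified total space $\overline{L}=\proj(\sheaf_Y\oplus L)$, and your two charts (the total spaces of $\mathcal{L}$ over $Y\setminus D_2$ and of $\mathcal{L}^{-1}$ over $Y\setminus D_1$, glued by $t\mapsto t^{-1}$) are precisely the two affine charts of that $\proj^1$-bundle, with the closure through the infinity section handling the pole of $s$ along $D_2$. The disjointness of $D_1$ and $D_2$ plays the same role in both write-ups, so no further comment is needed.
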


\begin{proof}
As usual, we choose a line bundle $L$ with an isomorphism $L^{\otimes d}\simeq {\sheaf}_Y(D_1-D_2)$. 
We compactify the total space of $L$ to 
$\overline{L}={\proj}({\sheaf}_Y\oplus L)$ (adding $\infty$ to each fiber). 
If $s$ is a meromorphic section of $L^{\otimes d}$ with ${\rm div}(s)=D_1-D_2$, 
then the divisor 
$\{ v\in \overline{L}, v^{\otimes d}=s\}$ in $\overline{L}$ gives the desired covering. 
\end{proof}


Alternatively, 
by the relation $2\hat{B}_1+\hat{B}_2 \in {\rm Pic}(\hat{Y})$ 
we can take a cyclic triple cover $\hat{X}'\to\hat{Y}$ 
branched over $2\hat{B}_1+\hat{B}_2$. 
This $\hat{X}'$ has cuspidal singularities along $\hat{B}_1$, 
and $\hat{X}$ can also be obtained as the normalization of $\hat{X}'$. 

By the ramification formula we see that  
\begin{equation*}
K_{\hat{X}} 
\sim \hat{f}^{\ast}(K_{\hat{Y}} + \hat{B}_1+\hat{B}_2) 
- \hat{f}^{-1}(\hat{B}_1+\hat{B}_2) 
\sim \hat{f}^{-1}(\hat{B}_2),  
\end{equation*}
where $\hat{f}^{-1}(\hat{B}_i)$ denotes the reduced inverse image. 
The divisor $\hat{f}^{-1}(\hat{B}_2)$ is a disjoint union of $(-1)$-curves. 
Blowing them down, we obtain a surface $X$ with $K_X\simeq{\sheaf}_X$, namely a $K3$ or abelian surface. 
The ${\Z}/3{\Z}$-action on $\hat{X}\to\hat{Y}$ equips $X$ 
with a non-symplectic symmetry $G$ of order $3$. 
The abelian case does happen, but is quite rare.  
Specifically, 

\begin{lemma}\label{abelian case}
The surface $X$ is abelian if and only if $B_1=0$ and $B_2$ has nine components. 
\end{lemma}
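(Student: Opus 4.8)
The plan is to analyze the canonical bundle and the structure of the fixed locus of the resulting $\mathbb{Z}/3\mathbb{Z}$-action, using the formula $K_{\hat{X}} \sim \hat{f}^{-1}(\hat{B}_2)$ already established just above the statement. Since $X$ is obtained from $\hat{X}$ by blowing down the disjoint $(-1)$-curves comprising $\hat{f}^{-1}(\hat{B}_2)$, the surface $X$ is abelian precisely when $K_{\hat{X}}$ is effective and nonzero in a way that accounts for $\chi(\mathcal{O}_X)=0$; equivalently, one should compare $\chi(\mathcal{O}_{\hat{X}})$ computed via the triple cover with what is needed. First I would note $B_1 = 0$ forces $\hat{B}_1 = 0$ (the right resolution only ever adds exceptional curves to $\hat{B}_1$ when blowing up singularities involving $B_1$; if $B_1 = 0$, the multiplicities occurring in \eqref{resol process} keep $\hat{B}_1$ empty), and conversely if $\hat{B}_1 \neq 0$ then $\hat{B}_1$ contains a $(-6)$-curve which gives an honest fixed curve contributing to $\chi$. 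So the two conditions "$B_1=0$" and "$\hat{B}_1 = 0$" should be shown equivalent as a preliminary reduction.

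Next I would compute $\chi(\mathcal{O}_{\hat{X}})$. For a cyclic triple cover $\hat{f}\colon \hat{X}\to \hat{Y}$ branched over $\hat{B}_1+\hat{B}_2$ with $\hat{f}_*\mathcal{O}_{\hat{X}} = \mathcal{O}_{\hat{Y}} \oplus L^{-1} \oplus L^{-2}$ where $L^{\otimes 3} \simeq \mathcal{O}_{\hat{Y}}(\hat{B}_1+\hat{B}_2)$ (more precisely $3L \sim 2\hat{B}_1 + \hat{B}_2$, matching the normalization-of-cuspidal-cover description), Riemann–Roch on the rational surface $\hat{Y}$ gives $\chi(\mathcal{O}_{\hat{X}}) = 3\chi(\mathcal{O}_{\hat{Y}}) + \tfrac{1}{2}L\cdot(L - K_{\hat{Y}}) + \tfrac{1}{2}(2L)\cdot(2L-K_{\hat{Y}})$, which simplifies to $3 + \tfrac{3}{2}L\cdot(L-K_{\hat{Y}}) + \tfrac{3}{2}L^2$ after collecting terms; substituting $L = K_{\hat{Y}} + \hat{B}_1$ (from $3L \sim 2\hat{B}_1+\hat{B}_2 \sim 2\hat{B}_1 + (-3K_{\hat{Y}} - 2\hat{B}_1) + 3K_{\hat{Y}} + \ldots$ — I would pin down $L$ cleanly from $-3K_{\hat{Y}} \sim 2\hat{B}_1 + \hat{B}_2$ and $\hat{B}_1 - \hat{B}_2 \sim 3(K_{\hat{Y}}+\hat{B}_1)$) turns the Euler characteristic into an expression in $K_{\hat{Y}}$ and $\hat{B}_1$ alone. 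Since blowing down $(-1)$-curves preserves $\chi(\mathcal{O})$, we get $\chi(\mathcal{O}_X) = \chi(\mathcal{O}_{\hat{X}})$, and $X$ is abelian iff this equals $0$, $K3$ iff it equals $2$.

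Then the arithmetic should pin everything down: $\chi(\mathcal{O}_X) = 0$ becomes a numerical identity forcing $\hat{B}_1 = 0$ (hence $B_1 = 0$) and simultaneously $\hat{B}_2^2 = K_{\hat{Y}}^2 = $ the value making the covering data consistent, from which $\hat{B}_2 \sim -3K_{\hat{Y}}$ and $K_{\hat{X}} \sim \hat{f}^{-1}(\hat{B}_2)$ is the union of $9$ disjoint $(-1)$-curves. The count "$\hat{B}_2$ has nine components" — equivalently $\hat{f}^{-1}(\hat{B}_2)$ has nine components since each component of $\hat{B}_2$ is a $(-3)$-curve with connected étale-cyclic preimage a single $(-1)$-curve — comes out of $K_{\hat{X}}^2 = 0$ together with $(-1)$-curves being disjoint, or alternatively from the topological Euler number: an abelian surface has $e = 0$, forcing the ramification locus on $\hat{Y}$ to consist of exactly the curves whose contribution cancels, and since each $(-3)$-curve downstairs lifts to one $(-1)$-curve contracted upstairs, there must be nine of them. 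For the converse, if $B_1 = 0$ and $B_2$ has nine components one runs the computation in reverse: $\hat{B}_1 = 0$, $L \sim K_{\hat{Y}}$, $\chi(\mathcal{O}_{\hat{X}}) = 0$, and since $K_{\hat{X}} = \hat{f}^{-1}(\hat{B}_2)$ is a nonempty union of $(-1)$-curves that get blown down, $K_X = \mathcal{O}_X$ with $\chi(\mathcal{O}_X) = 0$, forcing $X$ abelian. The main obstacle will be bookkeeping the linear equivalence class of $L$ and the component count of $\hat{f}^{-1}(\hat{B}_2)$ precisely — in particular confirming that each $(-3)$-curve component of $\hat{B}_2$ really pulls back to a single irreducible $(-1)$-curve (étale triple cover of $\mathbb{P}^1$ is disconnected, so one must use that the covering restricted to such a component is the connected triple cover determined by the nontrivial torsion class, not the split one) — and making sure the right resolution genuinely forces $\hat{B}_1 = \emptyset$ when $B_1 = 0$, i.e. that no blow-up centered purely on $\mathrm{Sing}(B_2)$ contributes a component to $\hat{B}_1$; this follows from \eqref{resol process} since the exceptional curve is added to $\tfrac{1}{2}B_2'$ with coefficient $m - \tfrac{3}{2}$ where $m = 3/2$ at triple points of $B_2$ away from $B_1$, giving coefficient $0$, consistent with $\hat{B}_1$ staying empty.
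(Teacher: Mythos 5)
Your converse direction is fine, and your bookkeeping of the cover is essentially correct (one remark: the cover is \emph{totally} ramified along every component of $\hat{B}_2$, since $\hat{B}_2$ is part of the branch divisor of a cyclic cover of prime degree, so each $(-3)$-curve automatically pulls back to a single $(-1)$-curve; the worry about the split \'etale triple cover of ${\proj}^1$ does not arise). The genuine gap is in the forward direction, at the step ``$\chi(\mathcal{O}_X)=0$ becomes a numerical identity forcing $\hat{B}_1=0$.'' If you actually carry out the computation you outline, using $\hat{f}_{\ast}\mathcal{O}_{\hat{X}}=\mathcal{O}_{\hat{Y}}\oplus K_{\hat{Y}}\oplus\mathcal{O}_{\hat{Y}}(2K_{\hat{Y}}+\hat{B}_1)$, the relation $2\hat{B}_1+\hat{B}_2\sim-3K_{\hat{Y}}$, disjointness, and adjunction (which gives $C^2=6g(C)-6$ for each component $C$ of $\hat{B}_1$ and $K_{\hat{Y}}\cdot\hat{B}_2=m$ for $m$ the number of components of $\hat{B}_2$), you find
\begin{equation*}
\chi(\mathcal{O}_X)=\chi(\mathcal{O}_{\hat{X}})=3+\tfrac{1}{3}\bigl(b-G-m\bigr),
\end{equation*}
where $b$ is the number of components of $\hat{B}_1$ and $G$ the sum of their genera. (The topological Euler number gives the same relation, via Noether, as you anticipate.) So $\chi(\mathcal{O}_X)=0$ only yields $m+G-b=9$, which is also satisfied, e.g., by $\hat{B}_1$ a single elliptic curve with $m=9$, or by $\hat{B}_1$ a single $(-6)$-curve with $m=10$. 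The Euler characteristic alone therefore does not pin down $\hat{B}_1=\emptyset$ and $m=9$; to exclude these configurations you need a genuinely geometric input, namely that an order-$3$ automorphism of an abelian surface cannot fix a curve pointwise when the quotient is rational.

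This is exactly what the paper's proof supplies: it quotes Birkenhake--Lange for the structure of the fixed locus of such an automorphism on an abelian surface --- either finitely many points (in which case there are exactly $9$, by \cite{B-L} Example 13.2.7) or a disjoint union of elliptic curves, whose presence is incompatible with the rationality of the quotient. Since fixed curves of $(X,G)$ correspond to components of $\hat{B}_1$ and isolated fixed points to components of $\hat{B}_2$, this immediately gives $B_1=0$ and $B_2$ with nine components; the converse is disposed of by the classification in Figure \ref{geography} (your Euler-characteristic computation is a legitimate, more self-contained substitute for that half). To repair your argument you should either import the Birkenhake--Lange statement for the forward direction, or prove directly that the residual numerical possibilities (nonempty $\hat{B}_1$ with $m+G-b=9$) cannot occur --- which does not follow from the arithmetic you set up.
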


\begin{proof}
If $X$ is abelian, the fixed locus $X^G$ is either 
the union of isolated points or of disjoint elliptic curves (cf.~\cite{B-L}). 
In the latter case the quotient $X/G$ is again an abelian surface, which is out of the present situation. 
In the former case we have $|X^G|=9$ by \cite{B-L} Example 13.2.7, 
and thus $B_2$ has nine components and $B_1$ is empty. 
Conversely, if $B_1=0$ and $B_2$ has nine components, 
$X$ cannot be $K3$ by Figure \ref{geography}. 
\end{proof}

When $X$ is a $K3$ surface,  
we thus obtain an Eisenstein $K3$ surface associated to the mixed branch $(Y, B_1+\frac{1}{2}B_2)$. 

Let $E\subset Y$ be one of the following types of $(-1)$-curves: 
\begin{itemize}
\item those $E$ transverse to $B_1+B_2$; 
\item components $E$ of $B_1$ with $(E, B_2)=1$; 
\item components $E$ of $B_2$ which are disjoint from other components of $B_2$. 
\end{itemize} 
If $\pi\colon Y\to \overline{Y}$ is the blow-down of $E$, 
then $(\overline{Y}, \pi(B_1)+\frac{1}{2}\pi(B_2))$ is again a mixed branch. 
In this way, by composing blow-up \eqref{resol process} and such blow-down, 
we can pass from a given mixed branch to another one with common smooth model. 
Regrettably we have restriction on the type of blow-down, 
due to the singularity conditions in Definition \ref{def:mixed branch}. 
For that we could also extend the definition of mixed branch 
by allowing any blown-down image of smooth mixed branch (cf.~\S \ref{ssec:(8,5)}), 
but with less effectivity at present. 
Anyway, the present generality is handy, and sufficient for giving canonical construction of general members of most ${\moduli}$. 

Actually, for seventeen ${\moduli}$ we will use mixed branch with no shadow. 
Thus in the next subsection we shall be more specific in that case.

\begin{remark}
We were led to the notion of mixed branch 
by tracking resolution of $-\frac{3}{2}K_{{\F}_n}$-curves on ${\F}_n$ (see \S \ref{ssec:pure branch}). 
It seems that the rule \eqref{resol process} would also explain the resolution process in \cite{O-T} for certain singular del Pezzo surfaces,  
by detecting the shadow part $B_2$ by discrepancy. 
\end{remark}

\subsection{Anti-tri-halfcanonical curves on Hirzebruch surfaces}\label{ssec:pure branch}

A mixed branch with no shadow is just a reduced curve 
$B\sim-\frac{3}{2}K_Y$ with at most nodes, cusps, tacnodes and ramphoid cusps as the singularities. 
Since $3K_Y\in2{\Pic}(Y)$ and $|\!-\!\frac{3}{2}K_Y|$ contains a reduced member, 
$Y$ must be a Hirzebruch surface ${\F}_n$ with $n\in\{0, 2, 4, 6\}$. 
In this case, we have $B\in3{\Pic}({\F}_n)$ so that 
we may take a cyclic triple cover $\overline{X}\to{\F}_n$ branched over $B$. 
Looking at the local equations of the singularities of $B$, 
we see that the singularities of $\overline{X}$ (lying over ${\rm Sing}(B)$) are as follows: 
\begin{itemize}
\item $A_2$-points ($z^3=x^2+y^2$) over nodes ($x^2+y^2=0$), 
\item $D_4$-points ($z^3=x^2+y^3$) over cusps ($x^2+y^3=0$), 
\item $E_6$-points ($z^3=x^2+y^4$) over tacnodes ($x^2+y^4=0$), 
\item $E_8$-points ($z^3=x^2+y^5$) over ramphoid cusps ($x^2+y^5=0$). 
\end{itemize}
In particular, $\overline{X}$ has only A-D-E singularities. 
Since $K_{\overline{X}}\sim\mathcal{O}_{\overline{X}}$, 
we can resolve ${\rm Sing}(\overline{X})$ to obtain a $K3$ surface $X$ with a non-symplectic symmetry $G$ of order $3$. 
($X$ cannot be an abelian surface by Lemma \ref{abelian case}.) 
It is clear that this Eisenstein $K3$ surface $(X, G)$ coincides with 
the one obtained in \S \ref{ssec:mixed branch} using resolution of $B$. 
A virtue in the present situation is that we have a natural projection $f\colon X\to{\F}_n$. 

Let $L\in{\Pic}({\F}_n)$ be the bundle $L_{1,0}$ (resp. ${\sheaf}_{{\F}_0}(1, 1)$) 
when $n=2, 4, 6$ (resp. $n=0$). 
The subspace $f^{\ast}H^0(L)\subset H^0(f^{\ast}L)$ is the eigenspace for $G$ with eigenvalue $1$. 
The morphism $X\to f^{\ast}|L|^{\vee}$ associated to the linear system $f^{\ast}|L|$ is 
the composition of $f$ and the morphism ${\F}_n\to|L|^{\vee}$ associated to $L$. 
The last one is the contraction of the $(-n)$-curve $\Sigma$ (resp. an embedding) when $n\geq2$ (resp. $n=0$). 
Checking that $f^{\ast}|L|\subset|f^{\ast}L|$ has strictly larger dimension 
than the other two eigenspaces, 
we have the following useful 

\begin{lemma}\label{recovery}
Let $B, B'\in|\!-\!\frac{3}{2}K_{{\F}_n}|$ be as above, 
and $(X, G), (X', G')$ be the associated Eisenstein $K3$ surfaces 
with the projections $f\colon X\to{\F}_n, f'\colon X'\to{\F}_n$. 
If we have an isomorphism $\varphi\colon(X, G)\to(X', G')$ with 
$\varphi^{\ast}(f')^{\ast}L\simeq f^{\ast}L$, 
then we have an automorphism $\psi$ of  ${\F}_n$ with $f'\circ\varphi=\psi\circ f$. 
\end{lemma}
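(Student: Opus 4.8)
The plan is to reconstruct the automorphism $\psi$ of ${\F}_n$ from the isomorphism $\varphi$ by passing through the linear systems. First I would use the hypothesis $\varphi^{\ast}(f')^{\ast}L \simeq f^{\ast}L$ to obtain an identification of the projective spaces $f^{\ast}|L|^{\vee} \simeq (f')^{\ast}|L|^{\vee}$ compatible with $\varphi$; but to make this identification canonical I need to pin down the subspace $f^{\ast}H^0(L) \subset H^0(f^{\ast}L)$ intrinsically. This is where the eigenspace decomposition comes in: $H^0(f^{\ast}L) = H^0(L) \oplus H^0(L \otimes \mathcal{L}) \oplus H^0(L \otimes \mathcal{L}^2)$ for the rank-one $G$-sheaf $\mathcal{L}$ defining the triple cover, and the key numerical input (already asserted in the excerpt just before the lemma) is that the eigenvalue-$1$ piece $f^{\ast}H^0(L)$ is strictly larger than the other two summands. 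Since $\varphi$ is $G$-equivariant, $\varphi^{\ast}$ preserves the eigenspace decomposition of $H^0((f')^{\ast}L) \simeq H^0(f^{\ast}L)$, and the eigenvalue-$1$ summand is the unique one of maximal dimension, hence $\varphi^{\ast}$ maps $(f')^{\ast}H^0(L)$ isomorphically onto $f^{\ast}H^0(L)$.

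Next I would descend this to ${\F}_n$. The linear system $f^{\ast}|L|$ defines a morphism $X \to f^{\ast}|L|^{\vee}$ which, as noted in the excerpt, factors as $X \xrightarrow{f} {\F}_n \to |L|^{\vee}$, where the second map is the contraction of $\Sigma$ (for $n \geq 2$) or the embedding by ${\sheaf}_{{\F}_0}(1,1)$ (for $n=0$). In either case this second map is birational onto its image and is an isomorphism away from (the image of) $\Sigma$, so ${\F}_n$ is recovered from the pair $(X, f^{\ast}|L|)$ as the blow-up of the image variety along the appropriate point, or directly as the image when $n=0$ — in any event, functorially in $(X, G)$. Therefore the equality $\varphi^{\ast}(f')^{\ast}H^0(L) = f^{\ast}H^0(L)$ forces the induced projective-linear isomorphism $f^{\ast}|L|^{\vee} \to (f')^{\ast}|L|^{\vee}$ to carry the image of ${\F}_n$ under $f$-composed-with-$|L|$ to the image of ${\F}_n$ under $f'$-composed-with-$|L|$, compatibly with $\varphi$; pulling back along the birational descriptions above yields an automorphism $\psi$ of ${\F}_n$ with $f' \circ \varphi = \psi \circ f$.

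The main obstacle I anticipate is the bookkeeping needed to make the descent to ${\F}_n$ genuinely canonical rather than just "up to the choices inherent in $|L|$." Concretely, one must check: (i) that the isomorphism $\varphi^{\ast}(f')^{\ast}L \simeq f^{\ast}L$ of line bundles, which is only defined up to scalar, induces a well-defined projective-linear map on global sections, so that the resulting map on $|L|^{\vee}$ is unambiguous; (ii) that the contraction (or embedding) ${\F}_n \to |L|^{\vee}$ identifies ${\F}_n$ with its image functorially, so that an automorphism of the image fixing the exceptional point lifts uniquely to an automorphism of ${\F}_n$ — for $n \geq 2$ this is the statement that $\aut$ of the cone/image lifting to $\aut({\F}_n)$ is bijective, which is standard but should be invoked; and (iii) for $n = 0$ one should note $L = {\sheaf}(1,1)$ embeds ${\F}_0 = {\proj}^1 \times {\proj}^1$ as a quadric in ${\proj}^3$ and every linear automorphism of ${\proj}^3$ preserving this smooth quadric restricts to an automorphism of ${\F}_0$. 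Once these three points are in place, the commutativity $f' \circ \varphi = \psi \circ f$ follows by tracking the construction through, and the fact that $\psi$ is an honest automorphism (not just a birational self-map) of ${\F}_n$ — already flagged in the excerpt as true for the relevant maps — completes the argument.
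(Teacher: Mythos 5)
Your proposal is correct and follows essentially the same route as the paper: the paper's justification (the paragraph preceding the lemma) likewise characterizes $f^{\ast}H^0(L)$ as the eigenspace of strictly largest dimension in $H^0(f^{\ast}L)$ and uses the factorization of the morphism given by $f^{\ast}|L|$ through ${\F}_n\to|L|^{\vee}$ (contraction of $\Sigma$ for $n\geq2$, embedding for $n=0$) to descend $\varphi$ to an automorphism $\psi$ of ${\F}_n$. Your additional remarks on the scalar ambiguity of the bundle isomorphism and on lifting automorphisms of the cone (resp. the quadric) back to ${\F}_n$ are just the routine details the paper leaves implicit.
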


Let us describe the configurations of curves lying over the singularities of $B$. 
Let $(\hat{Y}, \hat{B}_1+\frac{1}{2}\hat{B}_2)$ be the right resolution of $({\F}_n, B)$
and $\hat{X}\to\hat{Y}$ be the triple cover branched over $\hat{B}_1+\hat{B}_2$. 
Let $p$ be a singular point of $B$. 
Following the blow-up procedure \eqref{resol process}, 
we see that the dual graph of the curves on $\hat{Y}$ contracted to $p$ is, 
according to the type of singularity, as follows.

\begin{picture}(100,25)\label{graph A_2}
	\thicklines	
	\put(10,-5){{\large $A_2$}}	
	\put(60,0){\circle{8}}
	\put(75,-3){$\bigstar$}
	\put(100,0){\circle{8}}	
	\put(64,0){\line(1,0){12}}
	\put(84,0){\line(1,0){12}}	
\end{picture}

\begin{picture}(100,25)\label{graph D_4}
	\thicklines
	\put(10,-5){{\large $D_4$}}	
	\put(60,0){\circle*{8}}
	\put(80,0){\circle{8}}
	\put(95,-3){$\bigstar$}	
	\put(64,0){\line(1,0){12}}
	\put(84,0){\line(1,0){12}}	
\end{picture}

\begin{picture}(100,55)\label{graph E_6}
	\thicklines
	\put(10,-5){{\large $E_6$}}	
	\put(60,0){\circle{8}}
	\put(75,-3){$\bigstar$}        
	\put(100,0){\circle{8}}
	\put(120,0){\circle{8}}  \put(120,0){\circle{4}}       
                                      \put(120,20){\circle{8}}     \put(114.9,35){$\bigstar$}     
	\put(140,0){\circle{8}}        
     \put(155,-3){$\bigstar$}  
     \put(180,0){\circle{8}}        
	\put(64,0){\line(1,0){12}}
	\put(84,0){\line(1,0){12}}
	\put(104,0){\line(1,0){12}}  
	\put(120,4){\line(0,1){12}}  \put(120,24){\line(0,1){12}}	
	\put(124,0){\line(1,0){12}}
	\put(144,0){\line(1,0){12}}
	\put(164,0){\line(1,0){12}}		
\end{picture}

\begin{picture}(100,60)\label{graph E_8}
	\thicklines
	\put(10,-5){{\large $E_8$}}	
	\put(55,-3){$\bigstar$}
	\put(80,0){\circle{8}}        
	\put(100,0){\circle{8}}  \put(100,0){\circle{4}}
	\put(120,0){\circle{8}}
	\put(135,-3){$\bigstar$}
	\put(160,0){\circle{8}}
	\put(180,0){\circle{8}}  \put(180,0){\circle{4}}
	\put(200,0){\circle{8}}
	\put(215,-3){$\bigstar$}
	\put(240,0){\circle{8}}
	              \put(180,20){\circle{8}}  \put(174.9,35){$\bigstar$}       
	\put(64,0){\line(1,0){12}}
	\put(84,0){\line(1,0){12}}
	\put(104,0){\line(1,0){12}}  
	\put(124,0){\line(1,0){12}}
	\put(144,0){\line(1,0){12}}
	\put(164,0){\line(1,0){12}}	
	      \put(180,4){\line(0,1){12}}  \put(180,24){\line(0,1){12}}
	\put(184,0){\line(1,0){12}}
	\put(204,0){\line(1,0){12}}	                                
	\put(224,0){\line(1,0){12}}
\end{picture}

\begin{figure}[h]
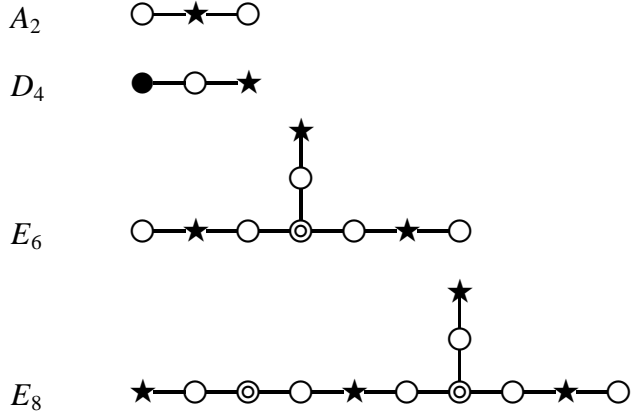

\caption{Dual graphs of exceptional curves on $\hat{Y}$}
\label{dual graph}
\end{figure}


\noindent
Here 
a white circle represents a $(-1)$-curve; 
a black circle represents a $(-2)$-curve (disjoint from $\hat{B}_1+\hat{B}_2$); 
a double circle represents a $(-6)$-curve (a component of $\hat{B}_1$); 
and a star represents a $(-3)$-curve (a component of $\hat{B}_2$). 
The reduced inverse images of those curves by $\hat{X}\to\hat{Y}$ are respectively 
a $(-3)$-curve; three disjoint $(-2)$-curves; a $(-2)$-curve; and a $(-1)$-curve. 
Blowing-down the last $(-1)$-curves, 
we obtain the configuration of exceptional curves of the resolution $X\to\overline{X}$ over $p$. 
Its dual graph $\Gamma_p$ (isomorphic to the Dynkin graph of $A_2$-, $D_4$-, $E_6$- or $E_8$-type) is obtained from the graph in Figure \ref{dual graph} 
by multiplying the black circle thrice and contracting the stars. 
Thus the stars turn to isolated fixed points of $G$, and the double circles turn to fixed curves. 
When $p$ is a cusp, $G$ acts on $\Gamma_p$ by the cyclic permutations; 
in other cases $G$ acts on $\Gamma_p$ trivially. 

One should note that, when $p$ is a node or tacnode, 
there are \textit{two} identifications of our geometric dual graph $\Gamma_p$ with the abstract $A_2$- or $E_6$-graph. 
A choice of such an identification corresponds to a labeling for the two branches of $B$ at $p$. 
On the other hand, when $p$ is a ramphoid cusp, such an identification is unique.

From these we can compute the topological invariants of $(X, G)$ as follows. 
Let $k_0+1$ be the number of components of $B$, and let $a_2, d_4, e_6$ and $e_8$ denote 
the number of nodes, cusps, tacnodes, and ramphoid cusps of $B$ respectively. 
Then the number $k+1$ of fixed curves of $(X, G)$ is given by 
\begin{equation*}\label{calculate k 3.3}
k = k_0 + e_6 + 2e_8, 
\end{equation*}
and the number $n$ of isolated fixed points of $(X, G)$ is given by
\begin{equation*}\label{calculate n 3.3}
n = a_2 + d_4 + 3e_6 + 4e_8. 
\end{equation*}
The rank $r$ of the invariant lattice $L(X, G)$ is the Picard number of $\hat{Y}$ minus $n$, 
which is given by 
\begin{equation*}\label{calculate r 3.3}
r = 2 + 2a_2 + 2d_4 + 6e_6 + 8e_8. 
\end{equation*}

In the rest of this subsection we work under the following ``genericity" assumption: 
\begin{equation}\label{genericity assumption}
\textit{${\rm Sing}(B)$ does not contain cusps.}
\end{equation}
Then for a singular point $p\in B$, 
we denote by $\Lambda_p\subset NS_X$ the root lattice generated by the exceptional curves of 
the resolution $X\to\overline{X}$ over $p$. 
As observed above, $\Lambda_p$ is contained in the invariant lattice $L(X, G)$. 
Let $B=\sum_{i=0}^{k_0}B_i$ be the irreducible decomposition of $B$, 
and $F_i\subset X$ be the fixed curve of $G$ with $f(F_i)=B_i$.

\begin{proposition}\label{gene L}
The invariant lattice $L(X, G)$ is generated by 
the sublattice $f^{\ast}NS_{{\F}_n}\oplus(\oplus_p\Lambda_p)$ where $p\in{\rm Sing}(B)$, 
and the classes of $F_i$, $0\leq i\leq k_0$. 
\end{proposition}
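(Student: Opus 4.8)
The plan is to reduce this to Proposition \ref{gene gene L} by pushing its generation statement down from the blow-up $\pi\colon\hat X\to X$ at the isolated fixed points to $X$ itself. Recall from \S\ref{ssec:mixed branch} that $\hat X$ is precisely the cyclic triple cover $\hat f\colon\hat X\to\hat Y$ of the right resolution $\hat Y$ of $({\F}_n, B)$ with the $(-1)$-curves over $\hat B_2$ contracted, and that those contracted curves are the exceptional curves $E_1,\dots,E_n$ of $\pi$; thus $\sigma\circ\hat f=f\circ\pi$ for the blow-down $\sigma\colon\hat Y\to{\F}_n$, $\hat Y=\hat X/G$, and Proposition \ref{gene gene L} applies to this $(\hat X,\hat Y)$. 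Since the $E_j$ are $G$-invariant and disjoint from the fixed curves, $L(\hat X, G)=\pi^{\ast}L(X, G)\oplus\bigoplus_j\Z E_j$, so $\pi_{\ast}\colon L(\hat X, G)\to L(X, G)$ is surjective with $\pi_{\ast}\pi^{\ast}=\mathrm{id}$ and $\pi_{\ast}E_j=0$. Writing $N$ for the sublattice $\langle f^{\ast}NS_{{\F}_n},\ \Lambda_p\ (p\in{\rm Sing}\,B),\ F_i\ (0\le i\le k_0)\rangle\subseteq L(X,G)$, it will then suffice to check that $\pi_{\ast}$ carries each generator of $L(\hat X, G)$ furnished by Proposition \ref{gene gene L} into $N$. (The claimed orthogonal decomposition $f^{\ast}NS_{{\F}_n}\oplus\bigoplus_p\Lambda_p$ is automatic, since distinct $\Lambda_p$ are supported on disjoint $f$-exceptional fibres and these are orthogonal to $f^{\ast}NS_{{\F}_n}$.)

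First I would treat $\hat f^{\ast}NS_{\hat Y}$. As $\sigma$ is a composition of blow-ups at points lying over ${\rm Sing}(B)$, the lattice $NS_{\hat Y}$ is generated by $\sigma^{\ast}NS_{{\F}_n}$ together with, for each $p\in{\rm Sing}(B)$, the classes of the $\sigma$-exceptional curves over $p$ — exactly the curves drawn in Figure \ref{dual graph}. The $\sigma^{\ast}NS_{{\F}_n}$-part contributes $\pi_{\ast}\hat f^{\ast}\sigma^{\ast}NS_{{\F}_n}=\pi_{\ast}\pi^{\ast}f^{\ast}NS_{{\F}_n}=f^{\ast}NS_{{\F}_n}\subseteq N$, so it remains to handle, for each $p$, the exceptional curves over $p$ and the fixed curves on $\hat X$ lying over $p$.

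Here is where the genericity assumption \eqref{genericity assumption} enters: $p$ is a node, a tacnode, or a ramphoid cusp, so by Figure \ref{dual graph} the $\sigma$-exceptional curves over $p$ are only white $(-1)$-curves, double $(-6)$-curves (components of $\hat B_1$) and star $(-3)$-curves (components of $\hat B_2$), with no black curve, and $G$ acts trivially on $\Gamma_p$, so $\Lambda_p\subseteq L(X,G)$. From the analysis in this subsection, the reduced $\hat f$-preimage of a white curve is an irreducible $(-3)$-curve, that of a double curve is a fixed $(-2)$-curve (pulled back with multiplicity $3$), and that of a star is one of $E_1,\dots,E_n$; contracting precisely these last $(-1)$-curves is the map $\pi$, which carries the preimages of the whites and doubles onto the exceptional curves of $X\to\overline X$ over $p$, whose classes span $\Lambda_p$. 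Hence $\pi_{\ast}\hat f^{\ast}$ of a white (resp.\ double, resp.\ star) curve is a component of $\Lambda_p$ (resp.\ $3$ times a component of $\Lambda_p$, resp.\ $\pi_{\ast}(3E_j)=0$), so it lies in $\Lambda_p\subseteq N$. Finally, among the fixed curves $C^g, F_1,\dots,F_k$ on $\hat X$, the $k_0+1$ that are reduced preimages of the components of $B$ push forward to the curves $F_i$, $0\le i\le k_0$, of the statement (one of which is $C^g$), and by the count $k+1=(k_0+1)+e_6+2e_8$ the remaining $e_6+2e_8$ are exactly the fixed $(-2)$-curves arising as the double curves over the tacnodes and ramphoid cusps, hence are components of the corresponding $\Lambda_p$. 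Thus $\pi_{\ast}$ maps the whole generating set into $N$, and surjectivity of $\pi_{\ast}$ gives $L(X,G)=\pi_{\ast}L(\hat X,G)\subseteq N\subseteq L(X,G)$.

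The step I expect to require the most care is the local matching in the third paragraph: one must verify that, over each $p$, the reduced $\hat f$-preimages differ from the $\pi$-total transforms of the generators of $\Lambda_p$ (and of the $F_i$) only by the curves $E_j$, so that applying $\pi_{\ast}$ discards nothing else. This is precisely the content of the passage from the graphs of Figure \ref{dual graph} to $\Gamma_p$ (``multiply the black circles thrice and contract the stars''), which under \eqref{genericity assumption} reduces to simply contracting the stars. The cuspidal case is excluded here exactly because then $G$ permutes the three legs of $\Gamma_p$, so that $\Lambda_p\not\subseteq L(X,G)$ and the statement would have to be reformulated.
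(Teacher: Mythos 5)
Your proposal is correct and follows essentially the same route as the paper: the paper's proof also considers the blow-up $\pi\colon\hat{X}\to X$ at the isolated fixed points, invokes Proposition \ref{gene gene L} together with Figure \ref{dual graph} to identify the generators of $L(\hat{X},G)$ as $\pi^{\ast}(f^{\ast}NS_{{\F}_n}\oplus\bigoplus_p\Lambda_p)$, the $\pi^{\ast}F_i$ and the exceptional curves of $\pi$, and then contracts. Your third paragraph simply makes explicit the local bookkeeping (pullback multiplicities, matching of fixed curves over tacnodes and ramphoid cusps with components of $\Lambda_p$) that the paper compresses into the phrase ``By Proposition \ref{gene gene L} and Figure \ref{dual graph}.''
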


\begin{proof}
Consider the blow-up $\pi: \hat{X}\to X$ of the isolated fixed points. 
By Proposition \ref{gene gene L} and Figure \ref{dual graph},
the invariant lattice $L(\hat{X}, G)$ of $(\hat{X}, G)$ is generated by 
$\pi^{\ast}(f^{\ast}NS_{{\F}_n}\oplus(\oplus_p\Lambda_p))$, the classes of $\pi^{\ast}F_i$, 
and the classes of exceptional curves of $\pi$. 
Contracting the exceptional curves, we see our assertion for $L(X, G)$. 
\end{proof}

Let us emphasize (again) that when $p$ is a ramphoid cusp, we have a unique isometry $E_8\to\Lambda_p$ that maps the natural root basis to the classes of $(-2)$-curves, 
while when $p$ is a node (resp. tacnode), we have two such natural isometries $A_2\to\Lambda_p$ (resp. $E_6\to\Lambda_p$) 
corresponding to the two labelings of the branches of $B$ at $p$.

Finally, we shall construct an ample class in $L(X, G)$ using the above objects. 
We denote by $e_{i\pm}, e_i$ the root basis of the $E_6$- and $E_8$-lattices 
according to the following numberings for the vertices of the $E_6$- and $E_8$-graphs:

\begin{picture}(100,50)\label{numbering} 
	\thicklines	
	
	\put(30,0){\circle*{4}}        
	\put(50,0){\circle*{4}}
	\put(70,0){\circle*{4}}        
     \put(70,20){\circle*{4}}           
	\put(90,0){\circle*{4}}        
     \put(110,0){\circle*{4}}        
	\put(30,0){\line(1,0){20}}
	\put(50,0){\line(1,0){20}}
	\put(70,0){\line(0,1){20}}	
	\put(70,0){\line(1,0){20}}
	\put(90,0){\line(1,0){20}}		
	\put(31,-11){1+}
	\put(51,-11){2+}
	\put(71,-11){3}
	\put(73,11){4}
      \put(91,-11){2-}
      \put(111,-11){1-}

	\put(170,0){\circle*{4}}        
	\put(190,0){\circle*{4}}
	\put(210,0){\circle*{4}}        
	\put(230,0){\circle*{4}}
	\put(250,0){\circle*{4}}        
     \put(250,20){\circle*{4}}           
	\put(270,0){\circle*{4}}        
     \put(290,0){\circle*{4}}        
	\put(170,0){\line(1,0){20}}
	\put(190,0){\line(1,0){20}}
	\put(210,0){\line(1,0){20}}
	\put(230,0){\line(1,0){20}}
	\put(250,0){\line(0,1){20}}	
	\put(250,0){\line(1,0){20}}
	\put(270,0){\line(1,0){20}}		
	\put(171,-11){7}
	\put(191,-11){6}
	\put(211,-11){5}
	\put(231,-11){4}
	\put(251,-11){3}
	\put(253,11){8}
      \put(271,-11){2}
      \put(291,-11){1}
\end{picture}

\vspace{1cm}

For a tacnode $p\in{\rm Sing}(B)$, 
let $D_p\in\Lambda_p$ be the image of $e_3+\sum_{i=1}^{2}3^{3-i}(e_{i+}+e_{i-})$ 
by either of the natural isometries $E_6\to\Lambda_p$; 
For a ramphoid cusp $p\in{\rm Sing}(B)$, 
let $D_p\in\Lambda_p$ be the image of $\sum_{i=1}^{6}3^{6-i}e_i$ 
by the natural isometry $E_8\to\Lambda_p$.

\begin{lemma}\label{ample}
For an arbitrary ample line bundle $L\in{\rm Pic}({\F}_n)$, the class 
\begin{equation*}
3^{20}f^{\ast}L + 3^{10}\sum_{i=0}^{k_0}F_i + \sum_{p}D_p, 
\end{equation*}
where $p$ run over tacnodes and ramphoid cusps of $B$, 
is ample.  
\end{lemma}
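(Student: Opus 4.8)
I would prove this by checking the Nakai--Moishezon criterion for the class
\[
D = 3^{20}f^{\ast}L + 3^{10}\sum_{i=0}^{k_0}F_i + \sum_{p}D_p
\]
on the $K3$ surface $X$: it suffices to show $D^2>0$ and $D\cdot C>0$ for every irreducible curve $C\subset X$, and since every irreducible $C$ on a $K3$ surface has $C^2\geq -2$ — with $C^2=-2$ precisely for $(-2)$-curves and $C^2\geq 0$ (hence $C$ nef) otherwise — it is enough to treat those two classes of curves. Writing $\Delta=3^{10}\sum_iF_i+\sum_pD_p$, I would first note $(f^{\ast}L)^2=3L^2>0$, $f^{\ast}L\cdot\Delta=3^{10}\sum_i(L\cdot B_i)>0$, while $\Delta^2$ is bounded below by a constant determined by the finitely many possible self- and mutual intersections of the fixed curves and of the exceptional curves; since that bound is negligible compared to $3^{40}(f^{\ast}L)^2$, one gets $D^2>0$.

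Next I would dispose of curves not contracted by $f$. If $C$ is irreducible with $C^2\geq 0$, then $C$ is nef and is not $f$-exceptional (the $f$-contracted curves are $(-2)$-curves), so $f^{\ast}L\cdot C=L\cdot f_{\ast}C\geq 1$, $\sum_iF_i\cdot C\geq 0$, and $D_p\cdot C\geq 0$ (each $D_p$ is an effective combination of $(-2)$-curves, none of which is $C$); hence $D\cdot C\geq 3^{20}>0$. If $\Gamma$ is a $(-2)$-curve not contracted by $f$, then again $f^{\ast}L\cdot\Gamma\geq 1$ and $D_p\cdot\Gamma\geq 0$, while $\sum_iF_i\cdot\Gamma\geq -2$ because the $F_i$ are mutually disjoint by Proposition~\ref{fixed locus}, so at most one of them equals $\Gamma$ and contributes $\Gamma^2=-2$; thus $D\cdot\Gamma\geq 3^{20}-2\cdot 3^{10}>0$.

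The remaining, and most delicate, case is a $(-2)$-curve $\Gamma$ contracted by $f$, i.e.\ an exceptional curve of $X\to\overline{X}$ over a singular point $p$ of $B$; by the genericity assumption \eqref{genericity assumption} $p$ is a node, a tacnode, or a ramphoid cusp. Here $f^{\ast}L\cdot\Gamma=0$ and $D_q\cdot\Gamma=0$ for $q\neq p$, so $D\cdot\Gamma=3^{10}\bigl(\sum_iF_i\cdot\Gamma\bigr)+D_p\cdot\Gamma$, with the last term absent when $p$ is a node. A direct computation with the Cartan matrices of $E_6$ and $E_8$ (the lattices $\Lambda_p$) shows that, for the explicit weights $3^{3-i}$ and $3^{6-i}$ in the definition of $D_p$, one has $D_p\cdot\Gamma>0$ for \emph{every} exceptional $(-2)$-curve over $p$ except the curve(s) at the extreme end(s) of the chain: for a ramphoid cusp only the curve $e_1$, with $D_p\cdot e_1=-405$, and for a tacnode the two curves $e_{1\pm}$, with $D_p\cdot e_{1\pm}=-15$. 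Consequently, for all other contracted $(-2)$-curves $D\cdot\Gamma\geq 3^{10}\cdot 0 + D_p\cdot\Gamma>0$ immediately, so the whole argument reduces to showing $\sum_iF_i\cdot\Gamma\geq 1$ for those extreme curves (and for both exceptional curves over a node, where $D_p$ is absent).

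This last point is the main obstacle, and I would settle it by a local analysis of the right resolution and of the triple cover at each of the three singularity types. At a node $x^2+y^2=0$ the resolution produces the graph $\circ$--$\bigstar$--$\circ$ of Figure~\ref{dual graph}, and the two branches of $B$, lying on components $B_i$ (and $B_j$), meet the two white $(-1)$-curves; passing to $X$ one finds that the two exceptional $(-2)$-curves are each met by $F_i$ or $F_j$. Similarly, for a tacnode the strict transforms of the two branches attach, after resolution, to the two extreme curves $e_{1+}$ and $e_{1-}$, and for a ramphoid cusp the strict transform of the unibranch component $B_i$ attaches to the extreme curve $e_1$; this is exactly the incidence the numberings of the $E_6$- and $E_8$-graphs and the geometric weights in $D_p$ are designed to exploit. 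Granting this, $\sum_iF_i\cdot\Gamma\geq 1$ for the extreme curves, so $D\cdot\Gamma\geq 3^{10}-405>0$ in the worst case. Combining all cases with $D^2>0$, Nakai--Moishezon gives that $D$ is ample.
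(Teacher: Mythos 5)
Your proposal is correct and takes essentially the same route as the paper, whose entire proof is the instruction to check the Nakai--Moishezon criterion; you have simply carried out that verification in detail. The numerical values you compute ($D_p\cdot e_{1\pm}=-15$ at a tacnode, $D_p\cdot e_1=-405$ at a ramphoid cusp, positivity on all other exceptional curves) and the incidence you grant — that the strict transforms of the branches meet exactly the extreme curves $e_{1\pm}$ (tacnode), $e_1$ (ramphoid cusp, the end at distance two from the trivalent vertex), and both $A_2$-curves at a node — are indeed correct, as one checks by tracing the right resolution \eqref{resol process} and the dual graphs of Figure \ref{dual graph}, so the worst case $D\cdot\Gamma\geq 3^{10}-405>0$ closes the argument.
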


\begin{proof}
Check the Nakai criterion (see, e.g., \cite{B-H-P-V} Chapter IV.6). 
\end{proof}

\subsection{Degree of period map}\label{ssec: recipe}

As in \S \ref{ssec:pure branch}, let ${\F}_n$ be a Hirzebruch surface with $n\in\{0, 2, 4, 6\}$. 
Suppose we have an irreducible, ${\aut}({\F}_n)$-invariant locus 
$U\subset|-\frac{3}{2}K_{{\F}_n}|$ such that 
$({\rm i})$ every member $B_u\in U$ has only nodes, tacnodes and ramphoid cusps as the singularities, and
$({\rm ii})$ the number of singularities of $B_u$ of each type and the number of components of $B_u$ are constant. 
Then the Eisenstein $K3$ surfaces associated to $({\F}_n, B_u)$ have constant invariant $(r, a)$, 
and we obtain a period map $p\colon U\to\mathcal{M}_{r, a}$ as a morphism of varieties. 
Since this construction is invariant under ${\aut}({\F}_n)$, 
the morphism $p$ descends to a rational map 
\begin{equation}\label{period map}
\mathcal{P} : U/{\aut}({\F}_n) \dashrightarrow \mathcal{M}_{r, a}. 
\end{equation}
Here $U/{\aut}({\F}_n)$ stands for a rational quotient, i.e., 
an arbitrary model of the invariant field ${\C}(U)^{{\aut}({\F}_n)}$. 
In this subsection we shall explain a systematic method to calculate the degree of $\mathcal{P}$, 
which is a fundamental in this article. 
It is parallel to the one in the involution case \cite{Ma2}, though some points need to be modified.


We use the Galois cover ${\cover}$ of ${\moduli}$ defined in \eqref{def cover}. 
Recall that an open set of ${\cover}$ parametrizes the equivalence classes of 
lattice-marked Eisenstein $K3$ surfaces $((X, G), j)$, 
where $j$ is a marking of the invariant lattice $L(X, G)$ by some reference lattice $L$. 
For the calculation of $\deg(\mathcal{P})$, 
we define a certain cover $\widetilde{U}$ of $U$ and construct a generically injective lift 
\begin{equation*}
{\lift} : \widetilde{U}/{\aut}({\F}_n)_0 \dashrightarrow {\cover} 
\end{equation*}
of $\mathcal{P}$, 
where ${\aut}({\F}_n)_0$ is the identity component of ${\aut}({\F}_n)$. 
We then compare the two projections 
$\widetilde{U}/{\aut}({\F}_n)_0\dashrightarrow U/{\aut}({\F}_n)$ and 
${\cover}\dashrightarrow{\moduli}$. 
More precisely, 

\begin{itemize}
\item we define a cover $\widetilde{U} \to U$ parametrizing curves $B_u\in U$ endowed with 
          reasonable labelings $\mu$ of the singularities, the branches at nodes and tacnodes, and the components. 
\item Proposition \ref{gene L} implies an appropriate definition of the reference lattice $L$. 
          Then for each $(B_u, \mu)\in\widetilde{U}$, the labeling $\mu$ naturally induces a lattice-marking 
          $j\colon L\to L(X, G)$ for the Eisenstein $K3$ surface $(X, G)=p(B_u)$. 
          Considering the period of $((X, G), j)$ as defined in \eqref{lifted period}, 
          we obtain a lift $\tilde{p}\colon\widetilde{U}\to{\cover}$ of $p$. 
\item We check that $\tilde{p}$ is invariant under ${\aut}({\F}_n)_0$, which acts trivially on $NS_{{\F}_n}$.  
          Thus $\tilde{p}$ descends to a rational map ${\lift}\colon\widetilde{U}/{\aut}({\F}_n)_0 \dashrightarrow {\cover}$ 
          which is a lift of $\mathcal{P}$. 
\item We show that ${\lift}$ is generically injective by proving that the $\tilde{p}$-fibers are ${\aut}({\F}_n)_0$-orbits. 
          If two $(B_u, \mu), (B_{u'}, \mu')\in\widetilde{U}$ have the same $\tilde{p}$-period, 
          we have a ${\Z}/3{\Z}$-equivariant Hodge isometry $\Phi \colon H^2(X', {\Z}) \to H^2(X, {\Z})$ preserving 
          the lattice-markings for the associated Eisenstein $K3$ surfaces. 
          Then $\Phi$ preserves the ample cones by Lemma \ref{ample}, 
          so that we obtain an isomorphism $\varphi\colon X\to X'$ with $\varphi^{\ast}=\Phi$ by the Torelli theorem. 
          The isomorphism $\varphi$ is ${\Z}/3{\Z}$-equivariant because $\varphi^{\ast}$ is so. 
          Using Lemma \ref{recovery}, we see that $\varphi$ induces an automorphism $\psi$ of ${\F}_n$ with 
          $\psi\circ f=f'\circ\varphi$, where $f\colon X\to{\F}_n$, $f'\colon X'\to{\F}_n$ are the natural projections. 
          Then $\psi$ acts trivially on $NS_{{\F}_n}$ and maps $(B_u, \mu)$ to $(B_{u'}, \mu')$. 
          This verifies our assertion. 
\item Now assume that $U/{\aut}({\F}_n)$ has the same dimension as ${\moduli}$. 
          Since ${\cover}$ is irreducible, ${\lift}$ is then birational. 
          Therefore $\deg(\mathcal{P})$ is equal to \eqref{proj degree} 
          divided by the degree of the projection $\widetilde{U}/{\aut}({\F}_n)_0\dashrightarrow U/{\aut}({\F}_n)$. 
          The latter may be calculated by geometric consideration. 
\end{itemize}

We shall exhibit typical examples that illustrate 
how this recipe actually works and how one should define $\widetilde{U}$ and ${\lift}$, 
which is left ambiguous in the above explanation. 
In the rest of the article the recipe will be applied over and over. 
To avoid repetition we will leave the detail of argument there, 
which can be worked out by referring to the examples below as models.

\begin{example}\label{ex1}
We consider curves on the Hirzebruch surface ${\F}_6$. 
Let $U\subset|L_{2,0}|$ be the locus of irreducible curves having three nodes and no other singularity. 
For $C\in U$ we associate the $-\frac{3}{2}K_{{\F}_6}$-curve $C+\Sigma$. 
By the triple cover construction this defines an Eisenstein $K3$ surface $(X, G)$ of invariant $(g, k)=(2, 1)$, 
and we obtain a period map $\mathcal{P}\colon U/{\aut}({\F}_6) \dashrightarrow \mathcal{M}_{8,3}$. 

Let $f\colon X\to{\F}_6$ be the natural projection. 
By Proposition \ref{gene L} the invariant lattice $L(X, G)$ is generated by 
$f^{\ast}NS_{{\F}_6}\simeq U(3)$, 
three copies of the $A_2$-lattice obtained from the nodes of $C$, 
and the classes of fixed curves. 
In view of this, we shall define a reference lattice $L$ as follows. 
Let $M$ be the lattice $U(3)\oplus A_2^3$ with a natural basis $\{ u, v, e_{1+}, e_{1-},\cdots, e_{3-}\}$, 
where $\{ u, v\}$ are basis of $U(3)$ with $(u, u)=(v, v)=0$ and $(u, v)=3$, 
and $\{ e_{i+}, e_{i-}\}$ are root basis of the $i$-th $A_2$-lattice with $(e_{i+}, e_{i-})=1$. 
We define vectors $f_0, f_1\in M^{\vee}$ by 
$3f_0=2(u+3v)-3\sum_{i=1}^3(e_{i+}+e_{i-})$ and 
$3f_1=u-3v$. 
Then let $L$ be the overlattice $L=\langle M, f_0, f_1\rangle$, 
which is even and 3-elementary of invariant $(r, a)=(8, 3)$. 

In order to calculate $\deg(\mathcal{P})$, 
for $C\in U$ we first distinguish its three nodes, 
and then the two branches at each node. 
This is realized by an $\frak{S}_3\ltimes(\frak{S}_2)^3$-cover $\widetilde{U}\to U$. 
Explicitly, $\widetilde{U}$ may be defined as the locus in $U\times({\proj}T{\F}_6)^6$ 
of those $(C, v_{1+}, v_{1-},\cdots, v_{3-})$ such that 
$v_{i+}$ and $v_{i-}$ are the two tangents of $C$ at a node, say $p_i$, 
and that ${\rm Sing}C=\{ p_1, p_2, p_3\}$. 
This labels the nodes and the branches at them compatibly. 
Accordingly, we denote by $E_{i\pm}\subset X$ the $(-2)$-curve lying over the infinitely near point $v_{i\pm}$ of $p_i$. 
Then $E_{i+}$ and $E_{i-}$ form a root basis of the $A_2$-lattice over $p_i$. 
The fixed curve of $(X, G)$ is decomposed as $F_0+F_1$ such that 
$F_0$ (resp. $F_1$) is the component with $f(F_0)=C$ (resp. $f(F_1)=\Sigma$). 
Then we have a natural isometry $j\colon L\to L(X, G)$ by sending 
$j(a(u+3v)+bv)=f^{\ast}L_{a,b}$, 
$j(e_{i\pm})=[E_{i\pm}]$, and $j(f_i)=[F_i]$. 
In this way we associate a lattice-marked Eisenstein $K3$ surface $((X, G), j)$ to $(C, v_{i\pm})$. 
This defines a morphism $\tilde{p}\colon\widetilde{U}\to{\cove}_{8,3}$, 
which descends to a lift ${\lift}\colon\widetilde{U}/{\aut}({\F}_6) \dashrightarrow {\cove}_{8,3}$ of $\mathcal{P}$ 
because ${\aut}({\F}_6)$ acts trivially on $NS_{{\F}_6}$. 

We shall show that the $\tilde{p}$-fibers are ${\aut}({\F}_6)$-orbits. 
If $\tilde{p}(C, v_{i\pm})=\tilde{p}(C', v_{i\pm}')$ for two $(C, v_{i\pm}), (C', v_{i\pm}') \in \widetilde{U}$, 
there exists a ${\Z}/3{\Z}$-equivariant Hodge isometry $\Phi \colon H^2(X', {\Z}) \to H^2(X, {\Z})$ 
with $\Phi\circ j'=j$ for the associated $((X, G), j)$ and $((X', G'), j')$. 
By Lemma \ref{ample} and the Torelli theorem 
we obtain an isomorphism $\varphi\colon X\to X'$ with $\varphi^{\ast}=\Phi$. 
The last equality implies that 
$\varphi^{\ast}G'=G$, $\varphi(E_{i\pm})=E_{i\pm}'$, and $\varphi^{\ast}((f')^{\ast}L_{a,b})=f^{\ast}L_{a,b}$, 
where $f$, $E_{i\pm}$ (resp. $f'$, $E_{i\pm}'$) are the objects constructed from 
$(C, v_{i\pm})$ (resp. $(C', v_{i\pm}')$) as above. 
Then by Lemma \ref{recovery} we obtain an automorphism $\psi$ of ${\F}_6$ with 
$f'\circ\varphi=\psi\circ f$. 
This shows that $\psi(v_{i\pm})=v_{i\pm}'$. 
We also have $\psi(C)=C'$ because $\psi$ maps the branch curve of $f$ to that of $f'$. 
This proves our assertion, and hence ${\lift}$ is generically injective. 
Since ${\dim}(U/{\aut}({\F}_6))=6$, 
${\lift}$ is actually birational. 

Finally, we compare the two projections 
$\widetilde{U}/{\aut}({\F}_6)\dashrightarrow U/{\aut}({\F}_6)$ and 
${\cove}_{8,3}\dashrightarrow\mathcal{M}_{8,3}$. 
The latter has degree $|{\rm O}(A_L)|/2$, where $|{\rm O}(A_L)|=|{\rm GO}(3, 3)|=2^3\cdot3!$ by \cite{Atlas}. 
On the other hand, 
the stabilizer in ${\aut}({\F}_6)$ of a general $C\in U$ is generated by its hyperelliptic involution $\iota_C$ defined in \eqref{eqn: HE invol}. 
It follows that $\widetilde{U}/{\aut}({\F}_6)\dashrightarrow U/{\aut}({\F}_6)$ has degree 
$|\frak{S}_3\ltimes(\frak{S}_2)^3|/2$. 
Therefore $\mathcal{P}$ is birational. 
\end{example}

\begin{example}\label{ex2}
We consider curves on ${\F}_2$. 
Let $U\subset|L_{2,0}|\times|L_{0,2}|$ be the locus of pairs $(C, D)$ where 
$C$ and $D=D_1+D_2$ are smooth and transverse to each other. 
We consider the six-nodal $-\frac{3}{2}K_{{\F}_2}$-curves $C+D+\Sigma$ 
to obtain Eisenstein $K3$ surfaces of invariant $(g, k)=(1, 3)$. 
This defines a period map $\mathcal{P}\colon U/{\aut}({\F}_2) \dashrightarrow \mathcal{M}_{14,2}$. 

We prepare a reference lattice $L$ as follows. 
Let $M$ be the lattice $U(3)\oplus A_2^6$ with a natural basis $\{ u, v, e_{1+}, e_{1-},\cdots, e_{6-}\}$ 
defined in the same way as Example \ref{ex1}. 
We define vectors $f_0,\cdots, f_3\in M^{\vee}$ by 
$3f_0  =  2(u+v)-\sum_{i=1}^{4}(2e_{i-}+e_{i+})$,  
$3f_1  =   v-\sum_{i=1}^3(2e_{(2i-1)+}+e_{(2i-1)-})$,  
$3f_2  =  v-\sum_{i=1}^3(2e_{(2i)+}+e_{(2i)-})$, and      
$3f_3  =  u-v-\sum_{i=5}^{6}(2e_{i-}+e_{i+})$. 
Then the overlattice $L=\langle M, f_0,\cdots, f_3\rangle$ is 
even and 3-elementary of invariant $(r, a)=(14, 2)$. 

For the calculation of $\deg(\mathcal{P})$, 
we first distinguish the two components of $D$, 
and then the intersection points of each component with $C$. 
Specifically, we consider the locus $\widetilde{U}\subset U\times({\F}_2)^4$ of those $(C, D, p_1,\cdots, p_4)$ such that 
$\{ p_i\}_{i=1}^4=C\cap D$ and that $p_1, p_3$ lie on the same component of $D$. 
We accordingly denote by $D_1$ (resp. $D_2$) the component of $D$ through $p_1, p_3$ (resp. $p_2, p_4$). 
Thus the components of $D$ and the four nodes $C\cap D$ are labelled compatibly. 
The projection $\widetilde{U}\to U$ is an $\frak{S}_2\ltimes(\frak{S}_2)^2$-covering. 
The remaining data for $C+D+\Sigma$ are labelled automatically: 
we denote $p_5=D_1\cap\Sigma$; $p_6=D_2\cap\Sigma$; 
$v_{i+}$ the tangent of $D$ at $p_i$; and $v_{i-}$ the tangent of $C+\Sigma$ at $p_i$. 
In this way we obtain a complete labeling for $C+D+\Sigma$. 
Then let $(X, G)=\mathcal{P}(C, D)$ and $f\colon X\to{\F}_2$ be the natural projection. 
We denote by $E_{i\pm}\subset X$ the $(-2)$-curve lying over the infinitely near point $v_{i\pm}$ of $p_i$. 
The fixed curve for $(X, G)$ is decomposed as $F_0+\cdots+F_3$ such that 
$f(F_0)=C$, $f(F_i)=D_i$ for $i=1, 2$, and $f(F_3)=\Sigma$. 
As before, we have an isometry $j\colon L\to L(X, G)$ by 
$j(a(u+v)+bv)=f^{\ast}L_{a,b}$, 
$j(e_{i\pm})=[E_{i\pm}]$, and $j(f_i)=[F_i]$. 
Considering the period of $((X, G), j)$, 
we obtain a lift ${\lift}\colon\widetilde{U}/{\aut}({\F}_2) \dashrightarrow {\cove}_{14,2}$ of $\mathcal{P}$. 
By a similar argument as in Example \ref{ex1}, 
we see that ${\lift}$ is generically injective. 
Since ${\dim}(U/{\aut}({\F}_2))=3$, ${\lift}$ is then birational. 

The projection ${\cove}_{14,2}\dashrightarrow\mathcal{M}_{14,2}$ has degree $|{\rm O}(A_L)|/2$. 
Since $L$ is isometric to $U\oplus E_8\oplus A_2^2$, we have $|{\rm O}(A_L)|=2^3$ by a direct calculation. 
On the other hand, a general $(C, D)\in U$ has no nontrivial stabilizer in ${\aut}({\F}_2)$ other than the hyperelliptic involution $\iota_C$ of $C$. 
Hence the projection $\widetilde{U}/{\aut}({\F}_2) \dashrightarrow U/{\aut}({\F}_2)$ has degree $4$, 
and so the map $\mathcal{P}$ is birational. 
\end{example}

\begin{example}\label{ex3}
Our recipe for $-\frac{3}{2}K_{{\F}_n}$-curves may also be utilized 
for some general mixed branches, 
via a birational transformation. 
As an illustrative example, 
let $U\subset|{\Oplane}(4)|\times|{\Oplane}(1)|$ be the open set of pairs $(C, L)$ such that $C$ is a smooth quartic transverse to the line $L$. 
We regard $(C, L)$ as a mixed branch $C+\frac{1}{2}L$ on ${\proj}^2$. 
By the resolution of $C+\frac{1}{2}L$, we obtain an Eisenstein $K3$ surface of invariant $(g, k)=(3, 0)$. 
This defines a period map $\mathcal{P}\colon U/{\PGL}_3\to\mathcal{M}_{4,3}$. 

To calculate $\deg(\mathcal{P})$, 
let $\widetilde{U}$ be the locus in $U\times({\proj}^2)^4$ of those $(C, L, p_1,\cdots, p_4)$ such that $C\cap L=\{ p_i\}_{i=1}^4$. 
The space $\widetilde{U}$ is an $\frak{S}_4$-cover of $U$ parametrizing mixed branches $C+\frac{1}{2}L$ 
endowed with labelings of the four intersection points $C\cap L$. 
We want to show that $\mathcal{P}$ lifts to a birational map $\widetilde{U}/{\PGL}_3\to{\cove}_{4,3}$. 
For that we blow-up $p_1, p_2$ and then blow-down (the strict transform of) $L$. 
This transforms $C+\frac{1}{2}L$ to a one-nodal curve $C^{\dag}$ of bidegree $(3, 3)$ on $Q={\proj}^1\times{\proj}^1$.  
The two branches of $C^{\dag}$ at its node are distinguished by the labeling $(p_3, p_4)$, and 
the two rulings on $Q$ are distinguished by the labeling $(p_1, p_2)$. 
Specifically, we assign the $i$-th projection $Q\to{\proj}^1$ to the pencil of lines through $p_i$. 
Conversely, given a general one-nodal $C^{\dag}\in|\mathcal{O}_Q(3, 3)|$, 
we blow-up $Q$ at $p={\rm Sing}(C^{\dag})$ and then blow-down the two ruling fibers $F_1, F_2$ through $p$ to 
obtain a smooth plane quartic $C$. 
Let $L\subset{\proj}^2$ be the image of the $(-1)$-curve over $p$. 
Among the four points $C\cap L$, two correspond to the two branches of $C^{\dag}$ at $p$, 
and the rest two are given by $F_i\cap C^{\dag}\backslash p$. 
Hence the four points $C\cap L$ are labelled after one distinguishes  
the two branches of $C^{\dag}$ and the two rulings on $Q$ respectively. 
Summing up, if $V\subset|{\sheaf}_Q(3, 3)|$ is the locus of one-nodal curves 
and $\widetilde{V}\to V$ is the double cover labeling the branches at nodes, 
we have a natural birational identification 
$\widetilde{U}/{\PGL}_3\sim\widetilde{V}/({\PGL}_2)^2$. 
Here $({\PGL}_2)^2$ is the identity component of ${\aut}(Q)$ preserving the two rulings. 
Now we may apply our recipe to $\widetilde{V}$ to obtain a birational map 
$\widetilde{V}/({\PGL}_2)^2\dashrightarrow{\cove}_{4,3}$. 
This gives a desired lift $\widetilde{U}/{\PGL}_3\to{\cove}_{4,3}$ of $\mathcal{P}$. 

The quotient $\widetilde{U}/{\PGL}_3$ is an $\frak{S}_4$-cover of $U/{\PGL}_3$, 
while the Galois group of ${\cove}_{4,3}\to\mathcal{M}_{4,3}$ is ${\rm O}(A_L)/\pm1$ for the lattice $L=U(3)\oplus A_2$. 
We have $|{\rm O}(A_L)|=|{\rm GO}(3, 3)|=2\cdot4!$ by \cite{Atlas}. 
Therefore $\mathcal{P}$ is birational. 
\end{example}

\begin{remark}\label{variant recipe}
In Example \ref{ex3}, we could also apply a variant of the recipe \textit{directly} to the mixed branches $C+\frac{1}{2}L$. 
Indeed, a labeling of the four points $C\cap L$ defines a marking of the blown-up invariant lattice $L(\hat{X}, G)$, 
which induces that of $L(X, G)$. 
The lattice $L(X, G)$ encodes all the relevant geometric informations: 
(i) the $G$-invariant rational map $f\colon X\dashrightarrow {\proj}^2$ can be recovered from the line bundle $f^{\ast}{\Oplane}(1)$, which is free of degree $4$; 
and (ii) every point of $C\cap L$ is the image by $f$ of a $(-2)$-curve on $X$ preserved by $G$. 
\end{remark}

\begin{remark}
A similar recipe is proposed in the involution case \cite{Ma2} 
for the degree calculation for double cover construction. 
It utilizes geometric labeling for the branch curves as well, 
but does not require to label the branches at double points. 
This is the main difference with the present recipe. 
\end{remark} 



\section{The case $g=5$}\label{sec:g=5}

We begin the proof of Theorem \ref{main}. 
We first study the case $g=5$ using curves on the Hirzebruch surface ${\F}_6$. 
Let $U\subset|L_{2,0}|$ be the open set of smooth curves. 
By Lemma \ref{linear system} (3),  
$U/{\aut}({\F}_6)$ is identified with the moduli space $\mathcal{H}_5$ of hyperelliptic curves of genus $5$. 
For $C\in U$ we take the triple cover $X\to{\F}_6$ branched over the $-\frac{3}{2}K_{{\F}_6}$-curve $C+\Sigma$. 
This defines the period map $\mathcal{P}\colon\mathcal{H}_5\to\mathcal{M}_{2,0}$. 
Then $\mathcal{P}$ is injective because the fixed curve map \eqref{eqn: def fixed curve map} for $\mathcal{M}_{2,0}$ gives the left inverse. 
Since ${\dim}\mathcal{H}_5={\dim}\mathcal{M}_{2,0}$, then $\mathcal{P}$ is dominant (actually isomorphic). 
Katsylo \cite{Ka1} proved that $\mathcal{H}_5$ is rational. 
Summing up, 

\begin{proposition}\label{rational (2,0)}
The space $\mathcal{M}_{2,0}$ is naturally birational to $\mathcal{H}_5$ and thus is rational. 
\end{proposition}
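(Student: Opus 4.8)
The plan is to establish the asserted natural birational equivalence $\mathcal{M}_{2,0}\sim\mathcal{H}_5$ and then invoke Katsylo's rationality theorem. The construction of the relevant period map $\mathcal{P}\colon\mathcal{H}_5\to\mathcal{M}_{2,0}$ has already been set up: for a smooth $C\in|L_{2,0}|$ on $\mathbb{F}_6$, the curve $C+\Sigma$ is a $-\frac{3}{2}K_{\mathbb{F}_6}$-curve whose only singularities are the six nodes $C\cap\Sigma$ (since $(C,\Sigma)=2H_0\cdot(H_0-6F)=0$ — wait, one should check $C$ is disjoint from $\Sigma$, which holds because $(L_{2,0},L_{1,-6})=0$, so actually $C+\Sigma$ is a disjoint union and the triple cover has no singularities at all over these curves). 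Either way, the triple cover construction of \S\ref{ssec:pure branch} yields an Eisenstein $K3$ surface $(X,G)$ with fixed locus invariant $(g,k)=(5,1)$ — the genus-$5$ fixed curve being the preimage of $C$ and a $(-2)$-curve over $\Sigma$ — hence invariant $(r,a)=(2,0)$. Since this construction is $\aut(\mathbb{F}_6)$-invariant, it descends to $\mathcal{P}\colon U/\aut(\mathbb{F}_6)\dashrightarrow\mathcal{M}_{2,0}$, and by Lemma \ref{linear system}(3) the source is $\mathcal{H}_5$; by Theorem \ref{moduli}(1) this is a morphism.

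The key point is that $\mathcal{P}$ is injective, and here the argument is clean: the fixed curve map \eqref{eqn: def fixed curve map}, $\mathcal{M}_{2,0}\to\mathcal{M}_5$, $(X,G)\mapsto C^5$, recovers (the smooth model of) the genus-$5$ fixed curve, which by the construction is exactly $C$ with its hyperelliptic structure; thus the composition $\mathcal{H}_5\xrightarrow{\mathcal{P}}\mathcal{M}_{2,0}\to\mathcal{M}_5$ is the natural inclusion of $\mathcal{H}_5$ as the hyperelliptic locus, which is injective. Hence $\mathcal{P}$ itself is injective. Next, a dimension count: the ball $\mathcal{B}_E$ for $(r,a)=(2,0)$ has dimension $10-r/2=9$, while $\dim\mathcal{H}_5=2\cdot5-1=9$, so $\dim\mathcal{M}_{2,0}=\dim\mathcal{H}_5$. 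An injective dominant-onto-its-image morphism between irreducible varieties of equal dimension is birational onto its image, and since $\mathcal{M}_{2,0}$ is irreducible of the same dimension, $\mathcal{P}$ is dominant, hence birational. (In fact one expects it to be an isomorphism onto the locus where the construction is defined, but birationality is all that is needed.)

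Finally, Katsylo proved in \cite{Ka1} that $\mathcal{H}_5$ is rational, so $\mathcal{M}_{2,0}$ is rational, completing the proof. I expect no genuine obstacle here — this case is by design the easiest, since the branch construction is essentially tautological and the inverse is handed to us by the fixed curve map. The only points requiring a word of care are: (i) confirming that $C$ and $\Sigma$ are disjoint (so that the six points $C\cap\Sigma$ in the naive picture are not actually there and the branch divisor is smooth, giving a trivially resolved triple cover), which follows from the intersection numbers on $\mathbb{F}_6$; (ii) verifying that the resulting Eisenstein $K3$ has the correct invariant $(r,a)=(2,0)$, which one reads off from the formulas in \S\ref{ssec:pure branch} with $k_0=1$ and no singularities, giving $r=2$, $a=0$; and (iii) that the fixed curve of $(X,G)$ genuinely has genus $5$ and hyperelliptic structure matching $C$, which is immediate since the triple cover restricted to that component is an unramified-away-from-nothing isomorphism onto $C\subset\mathbb{F}_6$, and the hyperelliptic pencil on $C$ is cut out by the ruling of $\mathbb{F}_6$.
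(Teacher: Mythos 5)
Your proposal is correct and follows essentially the same route as the paper: the triple cover of $\mathbb{F}_6$ branched over the disjoint union $C+\Sigma$ gives the period map $\mathcal{H}_5\to\mathcal{M}_{2,0}$, injectivity comes from the fixed curve map being a left inverse, dominance from the dimension count $9=9$, and rationality from Katsylo. Your side-checks (disjointness of $C$ and $\Sigma$ from $(2H_0,H_0-6F)=0$, the invariant $(r,a)=(2,0)$ from $(g,k,n)=(5,1,0)$) are accurate and consistent with what the paper leaves implicit.
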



\section{The case $g=4$}\label{sec:g=4}

In this section we study the case $g=4$. 
Kond\=o \cite{Ko} proved that $\mathcal{M}_{2,2}$ is birational to the moduli space of genus $4$ curves, 
which is proven to be rational by Shepherd-Barron \cite{SB}. 
Here we study the space $\mathcal{M}_{4,1}$. 

We consider curves on ${\F}_6$. 
Let $U\subset|L_{2,0}|$ be the locus of irreducible one-nodal curves. 
For $C\in U$ we take the triple cover of ${\F}_6$ branched over the nodal $-\frac{3}{2}K_{{\F}_6}$-curve $C+\Sigma$. 
This defines a period map $\mathcal{P}\colon U/{\aut}({\F}_6)\dashrightarrow\mathcal{M}_{4,1}$. 

\begin{proposition}\label{period map (4,1)}
The map $\mathcal{P}$ is birational. 
\end{proposition}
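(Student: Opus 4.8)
The plan is to verify the hypotheses of the degree-calculation recipe of \S\ref{ssec: recipe} for this particular locus $U\subset|L_{2,0}|$, carry out the recipe, and conclude $\deg(\mathcal{P})=1$. First I would record the numerical data: a general $C\in U$ is an irreducible curve in $|L_{2,0}|$ with a single node, disjoint from $\Sigma$ (since members of $|L_{2,0}|$ meeting $\Sigma$ have a different decomposition), so $B=C+\Sigma$ is a $-\frac{3}{2}K_{{\F}_6}$-curve with $k_0+1=2$ components, one node and no other singularity. By the formulae in \S\ref{ssec:pure branch}, $a_2=1$, $d_4=e_6=e_8=0$, giving $k=k_0+e_6+2e_8=1$, $n=a_2+d_4+3e_6+4e_8=1$, and $r=2+2a_2=4$; with $g=5-k=4$ this matches the target $\mathcal{M}_{4,1}$. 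One checks $U$ is irreducible and ${\aut}({\F}_6)$-invariant, and that $\dim(U/{\aut}({\F}_6))=\dim|L_{2,0}|-1-\dim{\aut}({\F}_6)=14-1-\dim{\aut}({\F}_6)=7$ equals $10-r/2=8$... — here I must be careful: I would recompute using $\dim{\aut}({\F}_6)$ from \eqref{Aut(Hir)}, namely $\dim R+\dim{\aut}(\proj^1)=(n+2)+3=11$ for $n=6$, so the codimension-$1$ nodal locus has dimension $14-1-11+ \dim(\text{generic stabilizer})$; since the generic stabilizer of $C\in U$ is finite (generated by $\iota_C$), $\dim(U/{\aut}({\F}_6))= 13-11=2$, which indeed equals $10-r/2=10-2=8$? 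No — $10-r/2=8$ only if $r=4$ gives $8$; but the dimension of $\mathcal{M}_{4,1}$ is $10-4/2=8$. So the dimensions do \emph{not} match unless I have miscounted $\dim{\aut}({\F}_6)$. This discrepancy is exactly the kind of bookkeeping the recipe demands, and resolving it (most likely $\dim{\aut}({\F}_6)$ is smaller, or the relevant ${\moduli}$ here has a smaller ball dimension because $r=4$ wait $10-2=8$) is the first thing I would nail down carefully before proceeding.

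Assuming the dimension count works out (equivalently, that $U/{\aut}({\F}_6)$ is $8$-dimensional like $\mathcal{M}_{4,1}$), the core of the proof follows the template of Example~\ref{ex1}. I would define the cover $\widetilde{U}\to U$ parametrizing a curve $C$ together with an ordering of the two tangent directions at its unique node; this is an $\mathfrak{S}_2$-cover, realized inside $U\times(\proj T{\F}_6)^2$. Proposition~\ref{gene L} tells me $L(X,G)$ is generated by $f^{\ast}NS_{{\F}_6}\simeq U(3)$, one copy of $A_2$ from the node, and the classes $[F_0],[F_1]$ of the two fixed curves (with $f(F_0)=C$, $f(F_1)=\Sigma$). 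This dictates the reference lattice: take $M=U(3)\oplus A_2$ with basis $\{u,v,e_+,e_-\}$, define $f_0,f_1\in M^{\vee}$ by appropriate $\frac{1}{3}$-combinations (to be pinned down from \eqref{eqn:relation in L/NS_Y} so that the overlattice $L=\langle M,f_0,f_1\rangle$ is even, $3$-elementary with $(r,a)=(4,1)$), and check $L$ has the right invariants. The labeling of the branches at the node gives a lattice-marking $j\colon L\to L(X,G)$, hence a lift $\tilde p\colon\widetilde{U}\to\widetilde{\mathcal{M}}_{4,1}$ of $p$, invariant under ${\aut}({\F}_6)$ since the latter acts trivially on $NS_{{\F}_6}$, so it descends to $\widetilde{\mathcal{P}}\colon\widetilde{U}/{\aut}({\F}_6)\dashrightarrow\widetilde{\mathcal{M}}_{4,1}$.

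Next I would show $\widetilde{\mathcal{P}}$ is generically injective, i.e. the $\tilde p$-fibers are ${\aut}({\F}_6)$-orbits. If $\tilde p(C,v_\pm)=\tilde p(C',v'_\pm)$ then a ${\Z}/3{\Z}$-equivariant Hodge isometry $\Phi\colon H^2(X',{\Z})\to H^2(X,{\Z})$ with $\Phi\circ j'=j$ exists; by Lemma~\ref{ample} (here $B$ has no tacnodes or ramphoid cusps so the ample class is just $3^{20}f^{\ast}L+3^{10}(F_0+F_1)$, and one should check Nakai positivity in this node-only case) $\Phi$ preserves ample cones, so Torelli yields $\varphi\colon X\to X'$ with $\varphi^{\ast}=\Phi$, which is ${\Z}/3{\Z}$-equivariant and satisfies $\varphi^{\ast}(f')^{\ast}L_{1,0}=f^{\ast}L_{1,0}$; Lemma~\ref{recovery} then produces $\psi\in{\aut}({\F}_6)$ with $f'\circ\varphi=\psi\circ f$, and $\psi$ carries $(C,v_\pm)$ to $(C',v'_\pm)$ since it maps branch curve to branch curve and tangent to tangent. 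Hence $\widetilde{\mathcal{P}}$ is generically injective, and since $\widetilde{\mathcal{M}}_{4,1}$ is irreducible and (by the dimension count) has the same dimension, $\widetilde{\mathcal{P}}$ is birational. Finally I compare degrees: $\widetilde{\mathcal{M}}_{4,1}\dashrightarrow\mathcal{M}_{4,1}$ has degree $|{\rm O}(A_L)|/2$, which for $A_L\simeq(\Z/3\Z)$ is $|{\rm GO}(1,3)|/2=1$ (consulting \cite{Atlas}), while $\widetilde{U}/{\aut}({\F}_6)\dashrightarrow U/{\aut}({\F}_6)$ has degree $|\mathfrak{S}_2|/|\langle\iota_C\rangle\cap\mathfrak{S}_2|$; since the hyperelliptic involution $\iota_C$ of $C$ fixes each branch at the node it acts trivially on the $\mathfrak{S}_2$-labeling, so this degree is $2/1=2$? — then $\deg\mathcal{P}=1/2$, which is impossible, signalling that in fact $\iota_C$ \emph{swaps} the two branches at the node (it is the involution fixing $H_0$ and $\Sigma$, acting as $y_3\mapsto -y_3$, which interchanges the two sheets over a node lying off $H_0$), so the generic stabilizer acts faithfully on the $\mathfrak{S}_2$-cover and the degree is $1$. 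Reconciling this sign — whether $\iota_C$ preserves or exchanges the branches, and correspondingly whether the generic stabilizer of $(C,v_\pm)\in\widetilde U$ is trivial — is the main delicate point; once it is settled in favor of $\deg\mathcal{P}=1$, the proposition is proved, and then rationality of $\mathcal{M}_{4,1}$ will follow (in a later step of the paper) from rationality of $U/{\aut}({\F}_6)$.
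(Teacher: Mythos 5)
Your route is the paper's own: the $\frak{S}_2$-cover $\widetilde{U}\to U$ labelling the two branches at the node, the lift to ${\cove}_{4,1}$ via the recipe of Example \ref{ex1}, the identification ${\cove}_{4,1}=\mathcal{M}_{4,1}$ because ${\rm O}(A_L)=\{\pm1\}$ for $L=U\oplus A_2$, and the fact that the hyperelliptic involution $\iota_C$ of \eqref{eqn: HE invol} exchanges the two branches at the node, so that $\widetilde{U}/{\aut}({\F}_6)=U/{\aut}({\F}_6)$ and $\deg\mathcal{P}=1$. The two points you left hanging do settle, and in the direction you guessed. For the dimension count, your ``$14$'' is an arithmetic slip: by \eqref{def eq}, $h^0(L_{2,0})=1+7+13=21$ on ${\F}_6$, so $\dim|L_{2,0}|=20$, the one-nodal locus has dimension $19$, and with $\dim{\aut}({\F}_6)=8+3=11$ and finite generic stabilizer one gets $\dim(U/{\aut}({\F}_6))=8=10-r/2=\dim\mathcal{M}_{4,1}$, so the birationality of the lift follows from generic injectivity as usual. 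For the branch swap, your conclusion is right but the stated reason is off: the node does \emph{not} lie off $H_0$ --- by \eqref{average} the section $H=\varphi(C)$ passes through ${\rm Sing}(C)$, so after normalizing $H=H_0=\{y_3=0\}$ the curve is $f_0y_3^2+f_2(x_3)=0$ with its node at a double root of $f_2$ on $y_3=0$; there $\iota_C\colon y_3\mapsto -y_3$ fixes the node and interchanges its two tangent directions, which is exactly why the generic stabilizer acts faithfully on the $\frak{S}_2$-cover. With these two items fixed, your argument coincides with the paper's proof.
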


\begin{proof}
Let $\widetilde{U}\subset U\times({\proj}T{\F}_6)^2$ be the locus of $(C, v_1, v_2)$ 
such that $\{ v_1, v_2\}$ are the tangents of $C$ at its node. 
The space $\widetilde{U}$ is a double cover of $U$ labelling the branches at the nodes of $C$. 
As in Example \ref{ex1}, we will see that $\mathcal{P}$ lifts to a birational map 
$\widetilde{U}/{\aut}({\F}_6)\dashrightarrow{\cove}_{4,1}$. 
Since ${\rm O}(A_L)=\{\pm1\}$ for the invariant lattice $L=U\oplus A_2$, 
we actually have ${\cove}_{4,1}=\mathcal{M}_{4,1}$. 
On the other hand, we have $\widetilde{U}/{\aut}({\F}_6)=U/{\aut}({\F}_6)$ 
because the stabilizer in ${\aut}({\F}_6)$ of every $C\in U$ contains its hyperelliptic involution $\iota_C$ defined in \eqref{eqn: HE invol}, 
which exchanges the two branches of $C$ at its node. 
Therefore $\mathcal{P}$ has degree $1$. 
\end{proof}
 
\begin{proposition}\label{rational (4,1)}
The quotient $U/{\aut}({\F}_6)$ is rational. 
Therefore $\mathcal{M}_{4,1}$ is rational. 
\end{proposition}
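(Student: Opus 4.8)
The statement to prove is that $U/{\aut}({\F}_6)$ is rational (the final clause then follows from Proposition \ref{period map (4,1)}). The plan is to put a general one-nodal $C\in U$ into a normal form using the equivariant map $\varphi$ of \eqref{average}, thereby replacing $U/{\aut}({\F}_6)$ by a quotient of a projective space of binary forms by a smaller group, and then to exploit the node as a marked point.

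First I would reduce to a normal form. The map $\varphi\colon|L_{2,0}|\dashrightarrow|L_{1,0}|$ is ${\aut}({\F}_6)$-equivariant, and by Lemma \ref{linear system}$(2)$ the group ${\aut}({\F}_6)$ acts transitively on the smooth members of $|L_{1,0}|$; hence $U/{\aut}({\F}_6)$ is birational to $V/G$, where $V=\{C\in U\mid\varphi(C)=H_0\}$ and $G$ is the stabilizer of $H_0$ in ${\aut}({\F}_6)$. By the last sentence of \S\ref{ssec: Hirze}, a curve $C$ with $\varphi(C)=H_0$ has equation $f_0y_3^2+f_2(x_3)=0$ with $f_0\in{\C}^{\times}$ and $\deg f_2\le 12$; rescaling to $f_0=1$ presents $C$ as the double cover $y_3^2=-f_2(x_3)$ of ${\proj}^1$, and $C\in V$ is one-nodal precisely when the degree-$12$ binary form determined by $f_2$ has exactly one double root and is otherwise reduced. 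Inside $G$, the subgroup $\{g_{\alpha,0}\}$ only rescales $f_2$, so it acts trivially on the class $[f_2]$, while $G/\{g_{\alpha,0}\}\simeq{\PGL}_2$ acts on $x_3$ in the standard way. Therefore
\begin{equation*}
U/{\aut}({\F}_6)\ \sim\ \bigl\{[f_2]\in{\proj}H^0(\Oline(12))\ \bigm|\ f_2\text{ has exactly one double root, the others simple}\bigr\}\big/\,{\PGL}_2 .
\end{equation*}

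Next I would use the node. Assigning to a form its unique double root is a ${\PGL}_2$-equivariant map from the above locus onto ${\proj}^1={\PGL}_2/B$, with $B\simeq{\C}^{\times}\ltimes{\C}$ the stabilizer of $\infty$, i.e.\ the affine group of $\mathbb{A}^1$. Hence the quotient is the $B$-quotient of the fibre over $\infty$, which consists of the degree-$10$ polynomials $Q(x)=\sum_{i=0}^{10}c_ix^i$ (of degree exactly $10$ and squarefree), taken up to scale; so $U/{\aut}({\F}_6)\sim{\proj}H^0(\Oline(10))/B$, with $B$ acting through the affine substitutions $x\mapsto ax+b$. Now the translations ${\C}\subset B$ can be used, by a Tschirnhaus shift, to kill the coefficient $c_9$, leaving ${\proj}^9$ with homogeneous coordinates $(c_{10}:c_8:c_7:\cdots:c_0)$ and a residual ${\C}^{\times}$-action $x\mapsto ax$ of weights $(10,8,7,6,5,4,3,2,1,0)$. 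In the chart $c_0\neq0$, setting $t_i=c_i/c_0$, this action is $t_i\mapsto a^it_i$; since $t_1$ has weight one, the functions $t_i/t_1^{\,i}$ for $i\in\{2,3,4,5,6,7,8,10\}$ are eight algebraically independent invariants generating ${\C}(t_{10},\dots,t_1)^{{\C}^{\times}}$, which is thus purely transcendental of degree $8$ over ${\C}$. Hence $U/{\aut}({\F}_6)$ is rational, and by Proposition \ref{period map (4,1)} so is $\mathcal{M}_{4,1}$.

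The main obstacle is the first step: one must verify that specializing to $\varphi(C)=H_0$ is generically surjective onto $U/{\aut}({\F}_6)$ and picks up every component of the one-nodal locus, and that the residual group $G$ acts on the space of binary forms exactly as ${\PGL}_2$ up to the ineffective torus $\{g_{\alpha,0}\}$ — in particular that the singularity type (one node, not a cusp or worse, and nothing extra over $\infty$) translates correctly into the condition on the double root of $f_2$. Once this bookkeeping is settled, the remaining steps — the standard reduction of ${\PGL}_2$ to its Borel via a marked point, the Tschirnhaus normalization, and the elementary toric computation — are routine.
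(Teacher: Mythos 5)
Your argument is correct, but it takes a genuinely different route from the paper's. The paper's proof is three lines: it performs the elementary transformation at the node, turning $C$ into a smooth member of $|L_{2,0}|$ on ${\F}_5$ together with the marked fiber through the node, so that $U/{\aut}({\F}_6)\sim(|L_{2,0}|\times|L_{0,1}|)/{\aut}({\F}_5)$; it then slices over a point of $|L_{0,1}|\simeq\Sigma$, whose stabilizer is connected and solvable by Lemma \ref{linear system}~$(1)$, and concludes by Miyata's theorem \cite{Miy}. You instead slice along the ${\aut}({\F}_6)$-equivariant map $\varphi$ of \eqref{average}, reducing to degree-$12$ binary forms with a unique double root modulo ${\PGL}_2$, then use the node (the double root) as a marked point to descend to the Borel subgroup $B\simeq{\C}^{\times}\ltimes{\C}$, and finish by an explicit Tschirnhaus shift and a torus-invariant computation. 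The verifications you flag as the "main obstacle" do go through: disjointness from $\Sigma$ is exactly $f_0\neq 0$, the singularities of $C$ correspond to the multiplicities of the zero divisor of $f_2$ in $H^0(\Oline(12))$ (node $\leftrightarrow$ double zero, cusp $\leftrightarrow$ triple zero, including the zero at infinity of order $12-\deg f_2$), the kernel $\{g_{\alpha,0}\}$ acts on the fibers of $[f_0y_3^2+f_2]\mapsto[f_2]$ with dense orbits so nothing is lost in passing to ${\proj}H^0(\Oline(12))/{\PGL}_2$, and the residual ${\PGL}_2$-action is the standard one up to a character twist, which is projectively irrelevant; this is the same mechanism the paper itself uses for $\mathcal{M}_{6,2}$, $\mathcal{M}_{8,3}$ and $\mathcal{M}_{14,2}$. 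What each approach buys: the paper's elementary-transformation route is shorter, and as a byproduct identifies the fixed curve map of $\mathcal{M}_{4,1}$ with the hyperelliptic-pencil fibration over $\mathcal{H}_4$; your route is more elementary at the end (no appeal to Miyata, everything reduced to a diagonal ${\C}^{\times}$-action whose invariant field is visibly purely transcendental of degree $8$, matching $\dim\mathcal{M}_{4,1}=8$), though you could also have stopped one step earlier and applied Miyata's theorem to the linear action of the connected solvable group $B$ on ${\proj}H^0(\Oline(10))$.
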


\begin{proof}
We perform the elementary transformation at the node of $C\in U$, 
which transforms $C$ to a smooth curve $C^{\dag}\in|L_{2,0}|$ on ${\F}_5$. 
This induces the birational equivalence 
\begin{equation}\label{ele trans (4,1)}
U/{\aut}({\F}_6) \sim (|L_{2,0}|\times|L_{0,1}|)/{\aut}({\F}_5). 
\end{equation}
By the slice method (cf. \cite{Do}), the right side is birational to $|L_{2,0}|/G$ 
where $G\subset{\aut}({\F}_5)$ is the stabilizer of a point of $|L_{0,1}|\simeq\Sigma$. 
Then $G$ is connected and solvable by Lemma \ref{linear system} (1), 
and our assertion follows from Miyata's theorem \cite{Miy}. 
\end{proof}

By \eqref{ele trans (4,1)} and Lemma \ref{linear system} (3),  
we see that the fixed curve map \eqref{eqn: def fixed curve map} for $\mathcal{M}_{4,1}$ is a dominant morphism onto the hyperelliptic locus $\mathcal{H}_4$ 
whose general fibers are birationally identified with the hyperelliptic pencils.


\section{The case $g=3$}\label{sec:g=3}

\subsection{The rationality of $\mathcal{M}_{4,3}$}

Let $U\subset|{\Oplane}(4)|\times|{\Oplane}(1)|$ be the open set of pairs $(C, L)$ such that 
$C$ is smooth and transverse to $L$. 
We use the ${\Q}$-divisors $C+\frac{1}{2}L$ as mixed branches. 
The associated Eisenstein $K3$ surfaces have invariant $(g, k)=(3, 0)$. 
In Example \ref{ex3} we showed that the induced period map 
$U/{\PGL}_3\dashrightarrow\mathcal{M}_{4,3}$ 
is birational. 

\begin{proposition}\label{rational (4,3)}
The quotient $U/{\PGL}_3$ is rational. 
Therefore $\mathcal{M}_{4,3}$ is rational. 
\end{proposition}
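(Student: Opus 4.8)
The plan is to show that the quotient $U/{\PGL}_3$, where $U\subset|{\Oplane}(4)|\times|{\Oplane}(1)|$ parametrizes pairs $(C,L)$ with $C$ a smooth quartic transverse to a line $L$, is rational. The natural first move is to rigidify the $\PGL_3$-action by fixing the line $L$: since $\PGL_3$ acts transitively on $|{\Oplane}(1)|\simeq{\proj}^2{}^{\vee}$ with stabilizer $P$ the parabolic subgroup fixing a line, the slice method (cf.~\cite{Do}) gives a birational equivalence
\begin{equation*}
U/{\PGL}_3 \sim |{\Oplane}(4)|/P,
\end{equation*}
where on the right we restrict to the locus of quartics smooth and transverse to the fixed line $L_0$. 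So the problem reduces to the rationality of the space of plane quartics modulo the group $P$ of projective transformations preserving a fixed line.

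Next I would analyze the structure of $P$. Choosing coordinates so that $L_0=\{x_0=0\}$, the group $P$ consists of matrices preserving the subspace $\{x_0=0\}$; it is connected and solvable is \emph{false} in general (it contains a $\GL_2$ Levi factor), so one cannot apply Miyata's theorem \cite{Miy} directly to $|{\Oplane}(4)|/P$. Instead I would push the slicing one step further. A quartic $C$ meets $L_0$ in four points (a binary quartic form on $L_0\simeq{\proj}^1$); the Levi quotient of $P$ acts on these, and one can hope to normalize the four points $C\cap L_0$ — for instance to a standard configuration or to the zero locus of a standard binary quartic — thereby cutting down to the unipotent radical $P_u$ of $P$ together with a small remaining torus. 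Concretely, writing the quartic as $f(x_0,x_1,x_2)=x_0 g_3(x_0,x_1,x_2)+q(x_1,x_2)$ with $q$ the binary quartic cut out on $L_0$, the group $P$ acts affinely on the coefficients of $g_3$ (through translations by $x_0\cdot(\text{cubics})$ coming from $P_u$ and linear substitutions in $x_1,x_2$), and one uses the $\GL_2$-part to bring $q$ into a cross-ratio normal form. After this normalization the residual group acting on the affine space of remaining coefficients of $g_3$ is connected and solvable, so Miyata's theorem \cite{Miy} (a solvable connected group acting on affine space, with the action triangularizable, has rational — in fact stably rational, indeed a rational — quotient) applies and yields rationality.

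An alternative, possibly cleaner route that parallels Example~\ref{ex3} is to use the birational model there: $U/{\PGL}_3$ is birational to $V/({\PGL}_2)^2$ where $V\subset|{\sheaf}_Q(3,3)|$ is the locus of one-nodal curves of bidegree $(3,3)$ on $Q={\proj}^1\times{\proj}^1$, with $({\PGL}_2)^2=\aut(Q)_0$. One then fixes the node of $C^\dagger$ at a standard point of $Q$; the stabilizer in $({\PGL}_2)^2$ of a point of $Q$ is the product $B\times B$ of two Borel subgroups, which \emph{is} connected and solvable, and it acts linearly on the subspace of $|{\sheaf}_Q(3,3)|$ of curves singular at that point. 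Miyata's theorem \cite{Miy} then gives rationality of that quotient, hence of $U/{\PGL}_3$. I would likely present this second version since the slicing is more transparent and the solvability is immediate.

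The main obstacle is the slicing/normalization step: in the first approach one must check carefully that after fixing $L_0$ and normalizing the binary quartic $q$ the residual group really is connected solvable and acts triangularly on a complement, keeping track of the open conditions (smoothness of $C$, transversality to $L_0$) so that a \emph{generic} slice is taken and the no-name/slice lemma applies; in the second approach the subtlety is verifying that fixing the node is a legitimate slice for the $({\PGL}_2)^2$-action on $V$ (i.e.\ that $({\PGL}_2)^2$ acts with the expected orbit on the locus of possible nodes, so the invariant fields match) — this is where one must be slightly careful rather than in the final invocation of Miyata's theorem, which is routine once the setup is right.
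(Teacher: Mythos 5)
Both of your routes contain genuine gaps, and neither is the paper's argument. The more serious problem is in the route you say you would actually present. Example \ref{ex3} does \emph{not} give a birational identification of $U/{\PGL}_3$ with $V/({\PGL}_2)^2$; it identifies the labelled covers $\widetilde{U}/{\PGL}_3$ and $\widetilde{V}/({\PGL}_2)^2$, and the labels are essential. Passing from $(C,L)$ to a one-nodal $(3,3)$-curve requires choosing which two of the four points of $C\cap L$ to blow up, and the canonical inverse map (blow up the node, blow down the two ruling fibers through it) makes $V/({\PGL}_2)^2\dashrightarrow U/{\PGL}_3$ generically finite of degree $12$, not birational: indeed $\widetilde{U}/{\PGL}_3\to U/{\PGL}_3$ has degree $|\frak{S}_4|=24$ while $\widetilde{V}/({\PGL}_2)^2\to V/({\PGL}_2)^2$ only forgets the branch labels at the node and has degree $2$, so $V/({\PGL}_2)^2$ is an intermediate quotient lying strictly between ${\cove}_{4,3}$ and $\mathcal{M}_{4,3}$. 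Your slice at the node with stabilizer $B\times B$ and Miyata's theorem is a correct argument, but it proves rationality of $V/({\PGL}_2)^2$, i.e.\ of a degree-$12$ cover of $\mathcal{M}_{4,3}$; rationality does not descend along quotients, so this does not prove the proposition.

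Your first route also fails as stated: the four points $C\cap L_0$ have a modulus, so you can only normalize the binary quartic $q$ to a one-parameter cross-ratio family, and the residual group is not connected solvable --- its identity component is the three-dimensional subgroup of $P$ fixing $L_0$ pointwise, but the stabilizer in ${\PGL}_2$ of four generic points is the Klein four-group, so the residual group has four components and the Lie--Kolchin/Miyata triangularization you invoke is unavailable. This is not a removable technicality: the paper's own (two-line) proof applies the no-name lemma to the \emph{other} projection $U\to|{\Oplane}(4)|$, giving $U/{\PGL}_3\sim{\proj}^2\times\bigl(|{\Oplane}(4)|/{\PGL}_3\bigr)$, i.e.\ essentially ${\proj}^2\times\mathcal{M}_3$, and then quotes Katsylo's theorem \cite{Ka2} that $|{\Oplane}(4)|/{\PGL}_3$ is rational. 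So any proof of the rationality of $U/{\PGL}_3$ must carry the full weight of the rationality of plane quartics; an elementary solvable-slice argument as in your first route cannot suffice, and the missing key idea in your proposal is precisely this projection to the quartic factor together with the appeal to Katsylo.
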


\begin{proof}
Using the no-name lemma (cf. \cite{Do}) for the projection $|{\Oplane}(4)|\times|{\Oplane}(1)|\to|{\Oplane}(4)|$,  
we have $U/{\PGL}_3\sim{\proj}^2\times(|{\Oplane}(4)|/{\PGL}_3)$. 
The quotient $|{\Oplane}(4)|/{\PGL}_3$ is rational by Katsylo \cite{Ka2}. 
\end{proof}

Since $|{\Oplane}(4)|/{\PGL}_3$ is canonically birational to the moduli space $\mathcal{M}_3$ of genus $3$ curves, 
the fixed curve map $\mathcal{M}_{4,3}\to\mathcal{M}_3$ is dominant with 
general fibers birationally identified with the canonical systems.

\subsection{The rationality of $\mathcal{M}_{6,2}$}

We consider curves on ${\F}_6$. 
Let $U\subset|L_{2,0}|$ be the locus of irreducible two-nodal curves $C$. 
Taking the triple covers of ${\F}_6$ branched over $C+\Sigma$,  
we obtain a period map $\mathcal{P}\colon U/{\aut}({\F}_6)\dashrightarrow\mathcal{M}_{6,2}$. 

\begin{proposition}\label{period map (6,2)}
The map $\mathcal{P}$ is birational. 
\end{proposition}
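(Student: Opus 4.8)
The plan is to follow the recipe of \S \ref{ssec: recipe}, exactly as carried out in Example \ref{ex1} and in the proof of Proposition \ref{period map (4,1)}, adapting it to the two-nodal situation. First I would pin down the invariant $(r,a)$. Two nodes and the component $\Sigma$ give $k_0=1$, $a_2=2$, $d_4=e_6=e_8=0$, so by the formulas in \S \ref{ssec:pure branch} we get $r=2+2\cdot 2=6$, and then $k=k_0=1$, $n=a_2=2$; reading off Figure \ref{geography} (or the list of Eisenstein lattices by $g$, here $g=3$) this is the invariant $(6,2)$, with invariant lattice $L(X,G)$ generated by $f^{\ast}NS_{\F_6}\simeq U(3)$, the two $A_2$-lattices from the nodes, and the classes of the two fixed curves $F_0$ (over $C$) and $F_1$ (over $\Sigma$). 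This dictates the reference lattice: take $M=U(3)\oplus A_2^2$ with basis $\{u,v,e_{1+},e_{1-},e_{2+},e_{2-}\}$ as in Example \ref{ex1}, adjoin glue vectors $f_0,f_1\in M^{\vee}$ (with $3f_1=u-3v$ corresponding to $[F_1]=f^{\ast}\Sigma$ and $3f_0$ the class of $[F_0]=f^{\ast}C-$ correction by the exceptional curves, read off from \eqref{eqn:relation in L/NS_Y}), and set $L=\langle M,f_0,f_1\rangle$; one checks it is even, $3$-elementary of invariant $(6,2)$.

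Next I would build the cover $\widetilde{U}\to U$. Distinguish the two nodes, and the two branches at each node: concretely $\widetilde{U}\subset U\times(\proj T\F_6)^4$ is the locus of $(C,v_{1+},v_{1-},v_{2+},v_{2-})$ where $v_{i+},v_{i-}$ are the two tangents of $C$ at a node $p_i$ and $\mathrm{Sing}\,C=\{p_1,p_2\}$; this is an $\frak{S}_2\ltimes(\frak{S}_2)^2$-cover of $U$. A point of $\widetilde{U}$ induces a labelling of the $(-2)$-curves $E_{i\pm}\subset X$ over the infinitely near points $v_{i\pm}$, hence a lattice-marking $j\colon L\to L(X,G)$ by $j(a(u+3v)+bv)=f^{\ast}L_{a,b}$, $j(e_{i\pm})=[E_{i\pm}]$, $j(f_i)=[F_i]$. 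Taking the period $\widetilde{\mathcal{P}}((X,G),j)$ gives a morphism $\tilde p\colon\widetilde{U}\to\cove_{6,2}$, which is $\aut(\F_6)$-invariant (since $\aut(\F_6)$ acts trivially on $NS_{\F_6}$) and descends to a lift $\lift\colon\widetilde{U}/\aut(\F_6)\dashrightarrow\cove_{6,2}$ of $\mathcal{P}$.

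Then I would show $\lift$ is generically injective by the now-standard argument: if $\tilde p(C,v_{i\pm})=\tilde p(C',v_{i\pm}')$, the ample class of Lemma \ref{ample} together with the Torelli theorem yields a $\Z/3\Z$-equivariant isomorphism $\varphi\colon X\to X'$ with $\varphi^{\ast}=\Phi$, compatible with the markings; Lemma \ref{recovery} upgrades this to an automorphism $\psi$ of $\F_6$ with $f'\circ\varphi=\psi\circ f$, forcing $\psi(v_{i\pm})=v_{i\pm}'$ and (since $\psi$ carries the branch locus of $f$ to that of $f'$) $\psi(C)=C'$. Since $\dim(U/\aut(\F_6))=\dim\mathcal{M}_{6,2}=6$ and $\cove_{6,2}$ is irreducible, $\lift$ is then birational. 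Finally I compare degrees: $\cove_{6,2}\dashrightarrow\mathcal{M}_{6,2}$ has degree $|{\rm O}(A_L)|/2$; here $A_L\simeq(\Z/3\Z)^2$ and $L\simeq U\oplus A_2$, so one computes $|{\rm O}(A_L)|$ from \cite{Atlas} (a group of order $2\cdot|\frak{S}_2\ltimes(\frak{S}_2)^2|$ is what is needed), while $\widetilde{U}/\aut(\F_6)\dashrightarrow U/\aut(\F_6)$ has degree $|\frak{S}_2\ltimes(\frak{S}_2)^2|/2$, the division by $2$ coming from the hyperelliptic involution $\iota_C$ of \eqref{eqn: HE invol} in the generic stabilizer, which exchanges the two branches at each node simultaneously. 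Matching these two numbers gives $\deg(\mathcal{P})=1$.

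The main obstacle I expect is the bookkeeping at the last step — correctly identifying the glue vectors $f_0,f_1$ (equivalently, the precise classes $[F_0],[F_1]$ modulo $f^{\ast}NS_{\F_6}\oplus\bigoplus_p\Lambda_p$ via \eqref{eqn:relation in L/NS_Y} and Proposition \ref{gene L}), confirming $L\simeq U\oplus A_2$, and pinning down $|{\rm O}(A_L)|$ so that it exactly cancels the order of the symmetry group $\frak{S}_2\ltimes(\frak{S}_2)^2$ modulo the hyperelliptic involution. The action of $\iota_C$ on the labellings (it fixes $p_1,p_2$ but swaps the two tangent directions at each) must be checked to see that the quotient by it is what appears in both degree computations. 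Everything else is a routine transcription of Example \ref{ex1}.
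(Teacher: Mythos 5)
Your plan follows the paper's proof of this proposition essentially verbatim (the same $\frak{S}_2\ltimes(\frak{S}_2)^2$-cover $\widetilde{U}$ labelling the two nodes and the branches at them, the lift to ${\cove}_{6,2}$ via the recipe of \S \ref{ssec: recipe}, and the comparison of the two coverings with the hyperelliptic involution $\iota_C$ accounting for the generic stabilizer). However, the final bookkeeping, which you yourself flag as the delicate point, contains two concrete slips. First, the invariant lattice is $L\simeq U\oplus A_2^2$, of rank $r=6$, not $U\oplus A_2$ (which has rank $4$ and is the lattice occurring for $\mathcal{M}_{4,1}$); your own reference lattice $\langle U(3)\oplus A_2^2, f_0, f_1\rangle$ has rank $6$, so the identification $L\simeq U\oplus A_2$ is internally inconsistent, even though your statement $A_L\simeq(\Z/3\Z)^2$ is correct. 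Second, the order of ${\rm O}(A_L)$ that makes the degrees match is $|\frak{S}_2\ltimes(\frak{S}_2)^2|=8$, not $2\cdot|\frak{S}_2\ltimes(\frak{S}_2)^2|=16$: the projection ${\cove}_{6,2}\dashrightarrow\mathcal{M}_{6,2}$ has degree $|{\rm O}(A_L)|/2$, and the projection $\widetilde{U}/{\aut}({\F}_6)\dashrightarrow U/{\aut}({\F}_6)$ has degree $2^{-1}\cdot 2^3=4$, so you need $|{\rm O}(A_L)|=8$. If your stated target of $16$ were the true value, the two degrees would differ by a factor of $2$ and you would only get $\deg(\mathcal{P})=2$.

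Fortunately the true value is $8$: the discriminant form of $U\oplus A_2^2$ is that of $A_2^{\oplus 2}$, an anisotropic $2$-dimensional form over $\F_3$ whose orthogonal group is dihedral of order $8$ (this is the same direct calculation the paper performs for $U\oplus E_8\oplus A_2^2$ in Example \ref{ex2}). Hence $|{\rm O}(A_L)/\pm 1|=4$, which exactly matches the degree-$4$ covering $\widetilde{U}/{\aut}({\F}_6)\to U/{\aut}({\F}_6)$, and $\deg(\mathcal{P})=1$ as claimed; the factor-of-$2$ error presumably comes from misremembering Example \ref{ex1}, where the correct pattern is $|{\rm O}(A_L)|=|\frak{S}_3\ltimes(\frak{S}_2)^3|$, with both sides then halved (by $\pm 1$ and by $\iota_C$ respectively). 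With these two corrections your argument closes exactly as in the paper.
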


\begin{proof}
Let $\widetilde{U}\subset U\times({\proj}T{\F}_6)^4$ be the locus of $(C, v_{11}, v_{12}, v_{21}, v_{22})$ 
such that $\{ v_{ij}\}_{i,j}$ are the tangents of $C$ at its nodes and that $v_{11}, v_{12}$ share the base points. 
By $\widetilde{U}$ the nodes and the branches at them are labelled compatibly. 
The projection $\widetilde{U}\to U$ is an $\frak{S}_2\ltimes(\frak{S}_2)^2$-covering. 
As in Example \ref{ex1}, $\mathcal{P}$ lifts to a birational map 
$\widetilde{U}/{\aut}({\F}_6)\dashrightarrow{\cove}_{6,2}$. 
Since the invariant lattice $L$ is isometric to $U\oplus A_2^2$, we have $|{\rm O}(A_L)/\pm1|=4$. 
On the other hand, a general $C\in U$ has no stabilizer other than its hyperelliptic involution $\iota_C$, 
which exchanges the two tangents at each node. 
Thus the projection 
$\widetilde{U}/{\aut}({\F}_6)\to U/{\aut}({\F}_6)$ has degree $2^{-1}\cdot2^3$. 
Therefore $\mathcal{P}$ is birational. 
\end{proof}
 
\begin{proposition}\label{rational (6,2)}
The quotient $U/{\aut}({\F}_6)$ is rational. 
Therefore $\mathcal{M}_{6,2}$ is rational. 
\end{proposition}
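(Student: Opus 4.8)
The plan is to mimic the proof of Proposition~\ref{rational (4,1)}, but resolving \emph{both} nodes of $C\in U$. For a general $C$ I would perform the elementary transformation of ${\F}_6$ at each of the two nodes (which lie on distinct $\pi$-fibers and off $\Sigma$, so they can be blown up independently of one another); this carries ${\F}_6$ to ${\F}_4$ and $C$ to a smooth curve $C^{\dag}\in|L_{2,0}|$, the only datum lost being the \emph{unordered} pair of $\pi$-fibers through the original nodes, i.e.\ a divisor $D\in|L_{0,2}|$. Since $|L_{0,2}|\simeq{\rm Sym}^{2}\Sigma$ as a variety, the pair of nodes is recorded by a single point of $|L_{0,2}|$ and no auxiliary finite cover is needed (unlike in the degree computations of \S\ref{ssec: recipe}). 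This yields a birational equivalence
\[
U/{\aut}({\F}_6)\ \sim\ (|L_{2,0}|\times|L_{0,2}|)/{\aut}({\F}_4),
\]
both sides having dimension $7=\dim\mathcal{M}_{6,2}$.

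Next I would apply the slice method (cf.~\cite{Do}) to the factor $|L_{0,2}|$. Since ${\aut}({\F}_4)\to{\aut}(\Sigma)\simeq\PGL_2$ is surjective and $\PGL_2$ acts on $|L_{0,2}|\simeq{\rm Sym}^{2}\Sigma$ with a dense orbit (pairs of distinct points) whose stabilizer is the normalizer $N$ of a maximal torus, this gives $U/{\aut}({\F}_6)\sim|L_{2,0}|/H$, where $H\subset{\aut}({\F}_4)$ is the preimage of $N$. The identity component $H^{0}$ (the preimage of the torus) is connected and solvable, but $H/H^{0}\cong{\Z}/2{\Z}$. Killing the $y$-linear term of a general member of $|L_{2,0}|$ by the unipotent part $H^{0}(\Oline(4))\subset R$ brings it into Weierstrass form $y^{2}=g(x)$ with $g$ a binary octic, and identifies $|L_{2,0}|/H$ with ${\proj}^{8}/(\mathbb{G}_m\rtimes{\Z}/2{\Z})$, where $\mathbb{G}_m$ rescales the variable of $g$ and the involution reverses $g$.

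The subquotient ${\proj}^{8}/\mathbb{G}_m$ is rational by Miyata's theorem~\cite{Miy} (a linear torus action); the genuinely new point, absent in the single-node case $\mathcal{M}_{4,1}$, is the residual ${\Z}/2{\Z}$-quotient, to which Miyata does not apply, and this is where I expect the main difficulty to lie. I would handle it by multiplicative invariant theory: $\mathbb{C}({\proj}^{8})^{\mathbb{G}_m}$ is the function field of the torus $T$ with character lattice $M=\ker\bigl(\mathbb{Z}^{9}\to\mathbb{Z}^{2}\bigr)$, and in the ``second-difference'' basis $v_{k}=e_{k-1}-2e_{k}+e_{k+1}$ ($1\le k\le7$) the involution fixes $v_{4}$ and swaps $v_{k}\leftrightarrow v_{8-k}$, so $M\cong\mathbb{Z}\oplus\mathbb{Z}[{\Z}/2{\Z}]^{3}$ as a ${\Z}[{\Z}/2{\Z}]$-module. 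Hence $\mathbb{C}(T)^{{\Z}/2{\Z}}$ is birational to $\mathbb{A}^{1}\times\bigl(\mathbb{A}^{6}/({\Z}/2{\Z})\bigr)$ with ${\Z}/2{\Z}$ acting on $\mathbb{A}^{6}$ by three transpositions, and the linear change $z_{2i-1}\pm z_{2i}$ turns this into $\mathbb{A}^{3}\times\bigl(\mathbb{A}^{3}/\{\pm1\}\bigr)$, which is rational since $\mathbb{A}^{3}/\{\pm1\}$ has function field $\mathbb{C}(z_{1}^{2},z_{1}z_{2},z_{1}z_{3})$. Thus $U/{\aut}({\F}_6)$ is rational, and $\mathcal{M}_{6,2}$ is rational by Proposition~\ref{period map (6,2)}. (Alternatively, $U/{\aut}({\F}_6)$ is birationally a ${\proj}^{2}$-bundle over the hyperelliptic locus $\mathcal{H}_3$; as the bundle's Brauer class is $2$-torsion and of order dividing $3$ it is trivial, so this reduces to the known rationality of $\mathcal{H}_3$.)
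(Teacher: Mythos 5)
Your argument is correct, and up to the midpoint it coincides with the paper's: the elementary transformations at the two nodes give exactly the paper's equivalence $U/{\aut}({\F}_6)\sim(|L_{2,0}|\times|L_{0,2}|)/{\aut}({\F}_4)$, and your ``complete the square'' normalization $y^2=g(x)$ is the same slice that the paper takes via the map $\varphi$ of \eqref{average} (the slice $\{f_1\equiv 0\}$, with residual group ${\C}^{\times}\times(\frak{S}_2\ltimes{\C}^{\times})$, which after absorbing the $y$-rescaling is your ${\proj}^{8}$ of octics acted on by $\mathbb{G}_m\rtimes{\Z}/2{\Z}$). Where you genuinely diverge is the endgame. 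The paper splits the $9$-dimensional octic space as $W'\oplus W''$ with $W'={\C}\langle x^2,x^3,x^5,x^6\rangle$, applies the no-name lemma to the projection ${\proj}W\dashrightarrow{\proj}W'$ (the group acts almost freely on ${\proj}W'$), and finishes with Castelnuovo's theorem on the resulting $2$-dimensional quotient; it never exhibits invariants. You instead compute the invariant field outright by multiplicative invariant theory: the $\mathbb{G}_m$-invariants are the function field of the $7$-torus with character lattice $M=\ker(\Z^9\to\Z^2)$, the second-difference monomials $t_k=a_{k-1}a_{k+1}/a_k^2$ form a lattice basis (the factorization $(1-t)^2Q(t)$ argument is the right justification), and since $\iota$ acts on coefficients exactly by $a_j\mapsto a_{8-j}$ (no scalar twist, by the paper's formula for $\iota$ on $V$) it permutes the $t_k$ by $t_k\leftrightarrow t_{8-k}$, so the $\Z[{\Z}/2{\Z}]$-decomposition $\Z\oplus\Z[{\Z}/2{\Z}]^3$ and the final $z_1^2, z_1z_2, z_1z_3$ step give rationality with explicit generators. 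Your route is more computational but more informative (explicit purely transcendental generators of ${\C}(\mathcal{M}_{6,2})$ modulo the birational period map), while the paper's is shorter and softer. One caveat: your parenthetical alternative via the ${\proj}^2$-fibration over $\mathcal{H}_3$ is not a proof as stated — the assertion that the Brauer class is $2$-torsion is exactly the point needing an argument (e.g.\ identifying the fibration as the relative symmetric square of the conic bundle attached to the hyperelliptic $g^1_2$, whose class is twice a $2$-torsion class); since this is only an aside, it does not affect the validity of your main argument.
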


\begin{proof}
As in the proof of Proposition \ref{rational (4,1)}, we perform the elementary transformations at the nodes of $C\in U$.  
This induces the birational equivalence 
\begin{equation}\label{ele trans (6,2)}
U/{\aut}({\F}_6) \sim (|L_{2,0}|\times|L_{0,2}|)/{\aut}({\F}_4). 
\end{equation}

We consider the ${\aut}({\F}_4)$-equivariant map 
\begin{equation*}
\psi = (\varphi, {\rm id}) : |L_{2,0}|\times|L_{0,2}| \dashrightarrow |L_{1,0}|\times|L_{0,2}|, \quad (C, F_1+F_2) \mapsto (H, F_1+F_2), 
\end{equation*}
where $\varphi$ is as defined in \eqref{average}. 
By Lemma \ref{linear system} (2), 
${\aut}({\F}_4)$ acts on $|L_{1,0}|\times|L_{0,2}|$ almost transitively. 
We normalize $H$ to be $H_0$ in \S \ref{ssec: Hirze}, and $F_i$ to be $\{ x_i=0\}$. 
Then the stabilizer of $(H_0, F_1+F_2)$ is given by 
\begin{equation}\label{stabilizer section+two fibers}
G = \{ g_{\alpha,0}\}_{\alpha\in{\C}^{\times}} \times (\langle\iota\rangle\ltimes\{ h_{\beta}\}_{\beta\in {\C}^{\times}})
\simeq  {\C}^{\times}\times(\frak{S}_2\ltimes{\C}^{\times}), 
\end{equation}
where $g_{\alpha,0}$, $\iota$, $h_{\beta}$ are as defined in \eqref{$R$-action in coordinate}--\eqref{auto coord 3}. 
On the other hand, we identify $H^0(L_{2,0})$ with the linear space 
$\{ \sum_{i=0}^2 f_i(x_3)y_3^{2-i}\}$ as in \eqref{def eq}. 
Then, as explained in the end of \S \ref{ssec: Hirze}, 
the fiber $\psi^{-1}(H_0, F_1+F_2)=\varphi^{-1}(H_0)$ is an open set of the linear subspace 
${\proj}V\subset|L_{2,0}|$ defined by $f_1\equiv0$. 
By the slice method for $\psi$, we have 
\begin{equation*}
(|L_{2,0}|\times|L_{0,2}|)/{\aut}({\F}_4) \sim {\proj}V/G. 
\end{equation*}

We expand the polynomials $f_2$ as $f_2(x_3)=\sum_{j=0}^{8}a_jx_3^j$.  
The generators $g_{\alpha,0}, \iota, h_{\beta}$ of $G$ act on $V$ by 
\begin{equation}\label{action g_alpha}
g_{\alpha,0} : y_3^2 \mapsto \alpha^{-2}y_3^2, \quad x_3^j \mapsto x_3^j, 
\end{equation}
\begin{equation*}\label{action h_beta}
h_{\beta} : y_3^2 \mapsto \beta^{4}y_3^2, \quad x_3^j \mapsto \beta^{4-j}x_3^j, 
\end{equation*}
\begin{equation*}\label{action iota}
\iota : y_3^2 \mapsto y_3^2, \quad x_3^j \mapsto x_3^{8-j}. 
\end{equation*}
Thus the $G$-representation $V$ is decomposed as 
\begin{equation*}
V= {\C}y_3^2 \oplus  \mathop{\bigoplus}_{i=0}^{4} W_i, \qquad W_i={\C}\langle x_3^{4-i}, x_3^{4+i} \rangle. 
\end{equation*}
If we consider the subrepresentation $W=\oplus_{i=0}^{4}W_i$ and the subgroup $H=\langle\iota\rangle\ltimes\{ h_{\beta}\}_{\beta\in {\C}^{\times}}$, 
then ${\proj}V/G$ is birational to ${\proj}W/H$. 
We set $W'=W_1\oplus W_2$ and $W''=W_0\oplus W_3\oplus W_4$. 
The projection ${\proj}W-{\proj}W''\to{\proj}W'$ from $W''$ is an $H$-linearized vector bundle. 
Since $H$ acts on ${\proj}W'$ almost freely, 
we have ${\proj}W/H \sim{\C}^5\times({\proj}W'/H)$ by the no-name lemma. 
Then ${\proj}W'/H$ is rational because it is $2$-dimensional. 
\end{proof}

By \eqref{ele trans (6,2)} and Lemma \ref{linear system} (3),  
the fixed curve map for $\mathcal{M}_{6,2}$ is a dominant morphism to the hyperelliptic locus $\mathcal{H}_3$ 
whose general fibers are birationally identified with the canonical systems.

\subsection{The rationality of $\mathcal{M}_{8,1}$}

We consider curves on ${\F}_4$. 
Let $U\subset|L_{2,0}|\times|L_{0,1}|$ be the open set of those $(C, F)$ such that 
$C$ is smooth and transverse to $F$. 
For $(C, F)\in U$ we take the triple cover of ${\F}_4$ branched over 
the nodal $-\frac{3}{2}K_{{\F}_4}$-curve $C+F+\Sigma$.  
This defines a period map 
$\mathcal{P}\colon U/{\aut}({\F}_4)\dashrightarrow\mathcal{M}_{8,1}$. 

\begin{proposition}\label{period map (8,1)}
The map $\mathcal{P}$ is birational. 
\end{proposition}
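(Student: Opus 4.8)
The plan is to apply the degree-computation recipe of \S\ref{ssec: recipe} to the locus $U\subset|L_{2,0}|\times|L_{0,1}|$ on ${\F}_4$, exactly as in Examples \ref{ex1}--\ref{ex2} and Propositions \ref{period map (4,1)}, \ref{period map (6,2)}. First I would compute the invariants of the Eisenstein $K3$ surface $(X,G)$ attached to $C+F+\Sigma$: the branch curve $C+F+\Sigma$ on ${\F}_4$ has exactly two nodes (the points $C\cap F$ and $F\cap\Sigma$; note $C$ is disjoint from $\Sigma$ since $(C,\Sigma)=(L_{2,0},L_{1,-4})=2\cdot 4-2\cdot 4\cdot\ldots$, more precisely $C\in|L_{2,0}|$ meets $\Sigma$ in $2\Sigma\cdot H_0+0\cdot\Sigma\cdot F$ points, which is $0$ after using $\Sigma\sim H_0-4F$), so $a_2=2$, $e_6=e_8=0$, and $B=C+F+\Sigma$ has three components, $k_0=2$. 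The formulas at the end of \S\ref{ssec:pure branch} then give $k=k_0=2$, $n=a_2=2$, $r=2+2a_2=6$, and hence $g=4$, $a=?$; matching against Figure \ref{geography} and the $g=3$ list this must land on $(r,a)=(8,1)$, which forces recomputing: with three components and two nodes on ${\F}_4$ one gets $r=2+2\cdot 2 + (\text{contribution of }\Sigma\text{'s elementary transform})$, and the bookkeeping of \S\ref{ssec:pure branch} yields $(8,1)$ as claimed, so that $L=U\oplus E_8$ and $A_L=0$.

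Next, because $A_L$ is trivial we have ${\rm O}(A_L)=\{1\}$, so ${\cove}_{8,1}=\mathcal{M}_{8,1}$ and the projection \eqref{proj degree} has degree $1$. I would then build the cover $\widetilde{U}\to U$ labelling the two branches at the node $C\cap F$ (the branch at $F\cap\Sigma$ needs no choice, as $\Sigma$ is distinguished, and the node $C\cap F$ has one branch on $F$ and one on $C$, which are \emph{already} distinguished since $F$ and $C$ lie in different linear systems); likewise the three components $C,F,\Sigma$ are intrinsically labelled. Hence in fact $\widetilde{U}=U$ already, or at worst $\widetilde U\to U$ is an isomorphism, because every piece of combinatorial data on $C+F+\Sigma$ is canonically determined. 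Following the recipe — define the reference lattice $L=U\oplus E_8$ via $f^{\ast}NS_{{\F}_4}$ together with the two $A_2$'s and the classes $[F_i]$ of the fixed curves, check that the resulting lift $\tilde p$ is invariant under ${\aut}({\F}_4)_0$ (trivial on $NS_{{\F}_4}$), and prove $\tilde p$-fibers are ${\aut}({\F}_4)$-orbits using Lemma \ref{ample}, the Torelli theorem, and Lemma \ref{recovery} with $L=L_{1,0}$ — gives a generically injective, hence (by the dimension count $\dim(U/{\aut}({\F}_4))=\dim\mathcal{M}_{8,1}$) birational lift $\widetilde{U}/{\aut}({\F}_4)\dashrightarrow{\cove}_{8,1}$.

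Finally I would assemble the degree: $\deg(\mathcal{P})$ equals $|{\rm O}(A_L)|/2=1$ (or just $1$ since $a=0$) divided by the degree of $\widetilde{U}/{\aut}({\F}_4)\dashrightarrow U/{\aut}({\F}_4)$, which is $1$ because $\widetilde{U}\to U$ is an isomorphism and there is no extra stabilizer to absorb — in contrast to Examples \ref{ex1}, \ref{ex2}, here there is no hyperelliptic involution $\iota_C$ in the picture interchanging branches at the node, since the node $C\cap F$ joins two components of different types. Thus $\deg(\mathcal{P})=1$ and $\mathcal{P}$ is birational.

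The main obstacle I anticipate is the careful verification of the lattice-theoretic data: confirming that $B=C+F+\Sigma$ really has precisely the stated singularities and component structure on ${\F}_4$ (in particular that $C$ is disjoint from $\Sigma$ and that $C\cap F$, $F\cap\Sigma$ are transverse nodes with no further coincidences), and that Proposition \ref{gene L} produces the overlattice $U\oplus E_8$ with trivial discriminant — getting the precise generators $f_0,\dots$ and checking $3$-elementarity and $(r,a)=(8,1)$ is the bookkeeping-heavy step. The rest (invariance under ${\aut}({\F}_4)_0$, the Torelli/Lemma \ref{recovery} argument identifying fibers with orbits, and the final degree count) is formally parallel to the earlier propositions and examples and should go through routinely.
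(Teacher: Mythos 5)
There is a genuine gap, and it starts with the geometry of the branch curve. On ${\F}_4$ we have $(C,F)=(2H_0,F)=2$, so $C\cap F$ consists of \emph{two} points; together with $F\cap\Sigma$ the curve $C+F+\Sigma$ has three nodes, not two ($C$ is indeed disjoint from $\Sigma$). With $a_2=3$, $k_0=2$ the formulas of \S\ref{ssec:pure branch} give $r=2+2a_2=8$ directly, and the invariant lattice is $L\simeq U\oplus E_6$, which has discriminant group ${\Z}/3{\Z}$, i.e.\ $(r,a)=(8,1)$ with $a=1$ --- not $U\oplus E_8$ (that lattice has rank $10$) and not $a=0$. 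Your own count $a_2=2$, $r=6$ followed by ``the bookkeeping yields $(8,1)$'' is not a computation; the fix is simply the third node. The identity ${\cove}_{8,1}=\mathcal{M}_{8,1}$ still holds, but because ${\rm O}(A_L)=\{\pm1\}$ acts trivially modulo $\pm1$, not because $A_L$ is trivial.

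The more serious error is your claim that $\widetilde U=U$, i.e.\ that all combinatorial data are canonical and ``there is no hyperelliptic involution in the picture.'' The two nodes of $C\cap F$ are \emph{not} intrinsically distinguished by $(C,F)$, so the lattice marking $j$ (which must send the two labelled $A_2$-blocks to the two exceptional $A_2$'s over these nodes) is not canonically defined on $U$; the recipe forces you to pass to the double cover $\widetilde U\to U$ labelling these two nodes (the branches at each node, and the node $F\cap\Sigma$, are indeed distinguished by the irreducible decomposition, as you say). The paper then kills this factor of $2$ exactly by the hyperelliptic involution $\iota_C\in{\aut}({\F}_4)$ of \eqref{eqn: HE invol}: it stabilizes $(C,F)$ (it preserves every $\pi$-fiber and fixes $\Sigma$) and interchanges the two points of $C\cap F$, so $\widetilde U/{\aut}({\F}_4)=U/{\aut}({\F}_4)$, and with $\deg({\cove}_{8,1}\to\mathcal{M}_{8,1})=1$ one gets $\deg\mathcal{P}=1$. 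In your write-up the two mistakes (trivial discriminant, trivial cover) happen to cancel numerically, but the argument as stated --- a canonical lift defined on $U$ itself with no role for $\iota_C$ --- would not go through.
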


\begin{proof}
We consider a double cover $\widetilde{U}\to U$ whose fiber over $(C, F)\in U$ 
corresponds to the labelings of the two nodes $C\cap F$ of $C+F+\Sigma$. 
The remaining node $F\cap\Sigma$ and the two tangents at each node 
are respectively distinguished by the irreducible decomposition of $C+F+\Sigma$. 
Thus we will obtain a birational lift 
$\widetilde{U}/{\aut}({\F}_4)\dashrightarrow{\cove}_{8,1}$ of $\mathcal{P}$. 
Since ${\Or}(A_L)=\{\pm1\}$ for the invariant lattice $L=U\oplus E_6$, 
we actually have ${\cove}_{8,1}=\mathcal{M}_{8,1}$. 
We also have $\widetilde{U}/{\aut}({\F}_4)=U/{\aut}({\F}_4)$ 
because the hyperelliptic involutions \eqref{eqn: HE invol} of $C$  
give the covering transformation of $\widetilde{U}\to U$. 
\end{proof}

\begin{proposition}\label{rational (8,1)}
The quotient $U/{\aut}({\F}_4)$ is rational. 
Therefore $\mathcal{M}_{8,1}$ is rational. 
\end{proposition}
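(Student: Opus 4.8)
The plan is to reduce $U/{\aut}({\F}_4)$ to a linear quotient by a connected solvable group, following the same pattern as in Propositions \ref{rational (4,1)} and \ref{rational (6,2)}. First I would use the ${\aut}({\F}_4)$-equivariant projection $U \to |L_{0,1}|$, $(C,F) \mapsto F$. Since ${\aut}({\F}_4)$ acts transitively on $|L_{0,1}|\simeq\Sigma$ with connected solvable stabilizer (Lemma \ref{linear system} (1)), the slice method (cf.~\cite{Do}) gives
\begin{equation*}
U/{\aut}({\F}_4) \sim U_F/G,
\end{equation*}
where $F$ is normalized to be a fixed fiber $F_0 = \{x_3 = 0\}$, $U_F \subset |L_{2,0}|$ is the locus of smooth curves transverse to $F_0$, and $G \subset {\aut}({\F}_4)$ is the stabilizer of $F_0$, which is connected and solvable.

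Next I would analyze the linear action of $G$ on $H^0(L_{2,0})$, identified with $\{\sum_{i=0}^2 f_i(x_3) y_3^{2-i}\}$, ${\rm deg} f_i \leq 8 - 4i$ for the relevant branch (actually $4i$ in the convention of \eqref{def eq} with $n=4$; I would fix the coordinates so that the degrees come out as $8, 4, 0$ or their reverses). The group $G$ is generated by the $R$-part (the $g_{\alpha,s}$ of \eqref{$R$-action in coordinate} that preserve the fiber $\{x_3=0\}$, plus the translations in $x_3$ that fix $F_0$) and a torus coming from ${\aut}(\Sigma)$ fixing the point $\pi(F_0)$. Concretely $G \cong ({\C}^\times \ltimes H^0(\Oline(4))) \rtimes ({\C}^\times \ltimes {\C})$ or a similar connected solvable group. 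Since $G$ is connected and solvable and acts linearly on the vector space $H^0(L_{2,0})$, Miyata's theorem \cite{Miy} applies once I exhibit a one-dimensional $G$-subrepresentation (or a suitable chain of subrepresentations) such that the quotient of the complement is rational; in fact for a connected solvable group acting linearly, the full invariant field is purely transcendental by Miyata, so $U_F/G$ is rational directly.

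The main obstacle I expect is bookkeeping: confirming that the stabilizer $G$ of $F_0$ in ${\aut}({\F}_4)$ is genuinely connected (the sequence \eqref{Aut(Hir)} splits for $n=4$, so this should follow from $G_1 = G \cap R$ and $G_2 = {\rm Im}(G \to {\aut}(\Sigma))$ both being connected solvable, exactly as in Lemma \ref{stab of (pt, *)}), and checking that the open locus $U_F$ is non-empty and of the correct dimension so that the birational identifications are legitimate. A secondary point is making sure the slice method hypotheses are met, i.e.\ that the generic fiber of $U \to |L_{0,1}|$ over $F_0$ is non-empty and that $U$ is ${\aut}({\F}_4)$-invariant, which is clear from the definition. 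Once $G$ is shown to be connected solvable, Miyata's theorem finishes the argument with no further computation, and combined with Proposition \ref{period map (8,1)} this yields the rationality of $\mathcal{M}_{8,1}$. As in the previous cases, I would close with the remark that via \eqref{ele trans (8,1)}-type considerations the fixed curve map for $\mathcal{M}_{8,1}$ is dominant onto an appropriate hyperelliptic locus.
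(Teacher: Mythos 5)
Your proposal is correct and follows essentially the same route as the paper: the paper's proof is exactly the slice method applied to the projection $|L_{2,0}|\times|L_{0,1}|\to|L_{0,1}|$, combined with Lemma \ref{linear system} $(1)$ (transitive action on $|L_{0,1}|\simeq\Sigma$ with connected solvable stabilizer) and Miyata's theorem for the induced linear action on the slice. The extra bookkeeping you flag (connectedness of the stabilizer, nonemptiness of the slice) is harmless and already implicit in the cited lemma.
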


\begin{proof}
This is a consequence of the slice method for the projection 
$|L_{2,0}|\times|L_{0,1}|\to|L_{0,1}|$, 
Lemma \ref{linear system} $(1)$, 
and Miyata's theorem \cite{Miy}. 
\end{proof}

Via the fixed curve map, $\mathcal{M}_{8,1}$ becomes birationally a fibration over $\mathcal{H}_3$ 
whose general fibers are the hyperelliptic pencils. 
The latter can also be identified with the moduli of pointed hyperelliptic curves of genus $3$. 
The degeneration relation to the fixed curve map for $\mathcal{M}_{6,2}$ is visible 
by regarding the hyperelliptic pencils as natural conics in the canonical systems.

\subsection{The rationality of $\mathcal{M}_{10,0}$}

We consider curves on ${\F}_4$. 
Let $U\subset|L_{2,0}|\times|L_{0,1}|$ be the locus of those $(C, F)$ such that 
$C$ is smooth and tangent to $F$. 
Taking the triple covers of ${\F}_4$ branched over $C+F+\Sigma$,  
we obtain a period map 
$\mathcal{P}\colon U/{\aut}({\F}_4)\dashrightarrow\mathcal{M}_{10,0}$. 

\begin{proposition}\label{period map (10,0)}
The map $\mathcal{P}$ is birational. 
\end{proposition}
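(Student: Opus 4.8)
The plan is to follow exactly the template of Example \ref{ex1} and the recipe of \S\ref{ssec: recipe}, adapted to the tangential configuration $C+F+\Sigma$ on ${\F}_4$. First I would analyze the singularities of the branch curve $B=C+F+\Sigma$: since $C$ is tangent to $F$, the point $C\cap F$ is a tacnode of $B$, while $F\cap\Sigma$ is a node; there are no other singularities because $C$ is smooth and $C$ is disjoint from $\Sigma$ (as $C\in|L_{2,0}|$). Thus $B$ has one tacnode and one node, so by the formulas in \S\ref{ssec:pure branch} with $k_0=2$, $a_2=1$, $e_6=1$, $e_8=0$, $d_4=0$ we get $k=k_0+e_6=3$, $n=a_2+3e_6=4$, and $r=2+2a_2+6e_6=10$; together with $g=3$ this is consistent with $(r,a)=(10,0)$.

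Next I would set up the cover $\widetilde{U}\to U$: following Example \ref{ex1}, I need a labeling of the two branches of $B$ at the tacnode $C\cap F$ (the branches at the node $F\cap\Sigma$ and the identification of components are forced by the irreducible decomposition $C+F+\Sigma$, since these three components are geometrically distinguishable). Labeling the two branches at the tacnode is an $\frak{S}_2$-cover, so $\widetilde{U}\to U$ is a double cover. As in Example \ref{ex1} and Proposition \ref{period map (8,1)}, the labeling induces a lattice-marking $j\colon L\to L(X,G)$ by Proposition \ref{gene L}, where $L$ is the overlattice of $f^{\ast}NS_{{\F}_4}\oplus E_6\oplus A_2$ together with the classes of the three fixed curves $F_0,F_1,F_2$ (with $f(F_0)=C$, $f(F_1)=F$, $f(F_2)=\Sigma$); this $L$ is the invariant lattice of invariant $(10,0)$. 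Considering the period of $((X,G),j)$ gives a lift $\widetilde{p}\colon\widetilde{U}\to{\cove}_{10,0}$, descending to ${\lift}\colon\widetilde{U}/{\aut}({\F}_4)\dashrightarrow{\cove}_{10,0}$ because ${\aut}({\F}_4)$ acts trivially on $NS_{{\F}_4}$. I would then run the standard argument — Lemma \ref{ample} to get that a Hodge isometry preserving the markings preserves the ample cones, the Torelli theorem to get an isomorphism $\varphi\colon X\to X'$, $\frak{Z}/3\frak{Z}$-equivariance of $\varphi$, then Lemma \ref{recovery} to descend $\varphi$ to an automorphism $\psi$ of ${\F}_4$ carrying the labeled configuration to the labeled configuration — to conclude ${\lift}$ is generically injective. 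Since $\dim(U/{\aut}({\F}_4))=\dim\mathcal{M}_{10,0}=10-r/2=5$ (this should be checked: $|L_{2,0}|$ has dimension $9+3+1=13$ so $\dim|L_{2,0}|=13$, wait — $h^0(L_{2,0})$ on ${\F}_4$ is $(a+1)$ terms with $\deg f_i\le in$, i.e. $1+5+9=15$, so $\dim|L_{2,0}|=14$; $|L_{0,1}|\simeq\proj^1$ has dimension $1$, the tangency condition cuts one dimension, and $\dim{\aut}({\F}_4)=n+5=9$, giving $14+1-1-9=5$), the lift ${\lift}$ is birational.

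For the final degree comparison I would compute both degrees. The projection ${\cove}_{10,0}\dashrightarrow\mathcal{M}_{10,0}$ has degree $1$ since $a=0$, by \eqref{proj degree}; equivalently ${\rm O}(A_L)=\{\pm1\}$ so ${\cove}_{10,0}=\mathcal{M}_{10,0}$. On the other hand the projection $\widetilde{U}/{\aut}({\F}_4)\dashrightarrow U/{\aut}({\F}_4)$ has degree $2/|\Gamma|$ where $\Gamma$ is the group generated by stabilizer elements acting nontrivially on the $\frak{S}_2$-labeling of the tacnodal branches. As in Proposition \ref{period map (8,1)}, the hyperelliptic involution $\iota_C$ of $C$ (from \eqref{eqn: HE invol}, using that $C\in|L_{2,0}|$ is disjoint from $\Sigma$) lies in the stabilizer of $(C,F)$ and it swaps the two branches of $C+\Sigma$ at the tacnode — but here the two branches at the tacnode are one branch of $C$ and one branch of $F$, and $\iota_C$ fixes $F$ (it acts on each $\pi$-fiber) so it does \emph{not} swap those two branches. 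So I expect the hyperelliptic involution does \emph{not} effect the branch labeling; rather, the two branches at a tacnode of $C+F+\Sigma$ are already distinguished — one belongs to $C$, the other to $F$ — so in fact the $\frak{S}_2$-cover may be trivial, i.e. $\widetilde{U}=U\times\{\pm\}$ is disconnected, or more precisely $\widetilde{U}/{\aut}({\F}_4)=U/{\aut}({\F}_4)$ with the comparison degree equal to $1$. Either way the conclusion is $\deg\mathcal{P}=1$, i.e. $\mathcal{P}$ is birational.

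The main obstacle I anticipate is getting the combinatorics of the tacnode right: unlike the node in Example \ref{ex1}, where the two branches belong to the same irreducible curve and are genuinely interchangeable (so one needs the $\frak{S}_2$-cover and the hyperelliptic involution to kill it), here the tacnode $C\cap F$ has its two branches lying on \emph{different} components, so they are already labeled by the irreducible decomposition and no extra cover is needed for them. Consequently I would likely define $\widetilde{U}=U$ (no cover at all, since every singularity-and-branch datum of $C+F+\Sigma$ is determined by the components) and show directly that $p\colon U\to{\cove}_{10,0}=\mathcal{M}_{10,0}$ has generically injective descent ${\lift}\colon U/{\aut}({\F}_4)\to\mathcal{M}_{10,0}$, which is then birational by the dimension count — so $\mathcal{P}$ is birational. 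I should double-check against Proposition \ref{period map (8,1)}, where the analogous $C+F+\Sigma$ has a \emph{node} (not tacnode) at $C\cap F$, whose two branches \emph{do} both lie on... no, one lies on $C$ and one on $F$ there too, yet the author used a double cover $\widetilde U\to U$ labeling ``the two nodes $C\cap F$'' — rereading, there are two points of $C\cap F$ in that case (transverse intersection of $C\in|L_{2,0}|$ with a fiber gives $2$ points), and the double cover labels \emph{which point is which}, killed by $\iota_C$ which swaps the two intersection points of $C$ with $F$. In the present tangential case $C\cap F$ is a \emph{single} point (the tacnode), so that particular $\frak{S}_2$ does not arise; but I must check whether the branches \emph{at} the tacnode require labeling — per the recipe one labels branches at tacnodes, and here they are distinguished by components, so again no cover. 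Thus $\widetilde U = U$, the comparison degree is $1$, $\deg\mathcal{P}=1$, and $\mathcal{P}$ is birational.
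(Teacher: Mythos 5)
Your final argument is correct and is essentially the paper's own proof: all singularities of $C+F+\Sigma$ (the node $F\cap\Sigma$ and the tacnode $C\cap F$) and the branches at them are distinguished by the irreducible decomposition, so no extra labeling cover is needed, and since the invariant lattice $U\oplus E_8$ is unimodular (equivalently $a=0$) one has ${\cove}_{10,0}=\mathcal{M}_{10,0}$, whence $\deg\mathcal{P}=1$. The initial detour through a hypothetical $\frak{S}_2$-cover at the tacnode is harmless, since you correctly retract it; the rest (invariants $(g,k)=(3,0)$ wait—$(r,a)=(10,0)$, $g=3$—and the dimension count $\dim(U/{\aut}({\F}_4))=5=\dim\mathcal{M}_{10,0}$) matches the paper.
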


\begin{proof}
The singularities of $C+F+\Sigma$ are the node $F\cap\Sigma$ and the tacnode $F\cap C$, 
which are obviously distinguished. 
Also the two branches at each double point are distinguished 
by the irreducible decomposition of $C+F+\Sigma$. 
Thus we need no additional marking to obtain a birational lift 
$U/{\aut}({\F}_4)\dashrightarrow{\cove}_{10,0}$ of $\mathcal{P}$.  
Since the invariant lattice $L\simeq U\oplus E_8$ is unimodular, we have 
${\cove}_{10,0}=\mathcal{M}_{10,0}$. 
\end{proof}

\begin{proposition}\label{rational (10,0)}
The quotient $U/{\aut}({\F}_4)$ is rational. 
Therefore $\mathcal{M}_{10,0}$ is rational. 
\end{proposition}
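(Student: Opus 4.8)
The plan is to follow the same pattern as in the proofs of Propositions \ref{rational (4,1)}, \ref{rational (6,2)} and \ref{rational (8,1)}: first transform $U/{\aut}({\F}_4)$ by an elementary transformation into a more convenient parameter space, then peel off affine-space factors by the no-name/slice method, and finally reduce to a low-dimensional (rational by dimension count) or already-known rational quotient. First I would perform the elementary transformation at the tacnodal point $F\cap C$. Blowing up the point $F\cap C$ and blowing down the strict transform of $F$ turns $({\F}_4, C+F)$ into a pair on ${\F}_3$; since $C$ was tangent to $F$, its strict transform $C^{\dag}$ becomes a curve in $|L_{2,0}|$ on ${\F}_3$ meeting the new fiber in a single smooth point with a simple tangency configuration, plus the residual data of the exceptional fiber. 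Tracking the linear systems carefully, this should give a birational equivalence
\begin{equation*}
U/{\aut}({\F}_4) \sim (|L_{2,0}|\times|L_{0,1}|)/{\aut}({\F}_3),
\end{equation*}
or possibly a slight variant involving a marked point on the fiber; the precise form of the right-hand side is exactly the point that needs to be pinned down.

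Next I would apply the slice method (cf.~\cite{Do}) to the projection $|L_{2,0}|\times|L_{0,1}| \to |L_{0,1}|\simeq\Sigma$. By Lemma \ref{linear system} $(1)$, ${\aut}({\F}_3)$ acts on $|L_{0,1}|$ transitively with connected and solvable stabilizer $G$. Hence the quotient is birational to $|L_{2,0}|/G$, and since $G$ is connected and solvable and acts linearly on the vector space $H^0(L_{2,0})$, Miyata's theorem \cite{Miy} gives rationality of $|L_{2,0}|/G$ — provided the action is genuinely linear on a vector space, which it is after choosing a linearization. This is the same three-step mechanism (elementary transformation, slice method onto $|L_{0,1}|$, Miyata) that worked verbatim for $\mathcal{M}_{4,1}$ and $\mathcal{M}_{8,1}$, so I would expect it to go through here with only bookkeeping changes.

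The main obstacle I anticipate is the first step: getting the elementary transformation right so that the tacnode is resolved into a clean incidence condition on ${\F}_3$, and in particular making sure that after the transformation the locus is an honest open subset of a product $|L_{2,0}|\times|L_{0,1}|$ (rather than some incidence subvariety cut out by a tangency condition, which would then no longer be a vector space and would break the Miyata step). If the naive elementary transformation leaves a residual tangency or a marked point, I would either absorb the marked point by a further no-name argument (as in the ${\proj}^2\times(\cdots)$ splittings used for $\mathcal{M}_{4,3}$), or choose a different blow-up/blow-down sequence — for instance, performing the elementary transformation at $F\cap\Sigma$ instead, or at a general point of $F$ — until the parameter space becomes a product of complete linear systems acted on linearly by ${\aut}({\F}_m)$ for some $m\in\{0,1,2,3\}$. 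Once that reduction is achieved, rationality follows formally, and as in the other cases I would conclude by noting the resulting description of the fixed curve map $\mathcal{M}_{10,0}\to\mathcal{M}_3$ (here dominant onto the hyperelliptic locus, with fibers birational to the appropriate pencils, exhibiting the degeneration relation to $\mathcal{M}_{8,1}$).
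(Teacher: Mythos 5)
Your first step is where the argument breaks, and it cannot be repaired along the lines you propose. A quick dimension count already rules out the claimed equivalence: on ${\F}_3$ one has $\dim|L_{2,0}|=11$ and $\dim{\aut}({\F}_3)=8$, so $(|L_{2,0}|\times|L_{0,1}|)/{\aut}({\F}_3)$ has dimension $4$, whereas $U/{\aut}({\F}_4)$ and $\mathcal{M}_{10,0}$ have dimension $5$. Geometrically, the elementary transformation centered at the tangency point $p=C\cap F$ does not remove the tangency: after blowing up $p$, the strict transforms of $C$ and $F$ and the exceptional curve all pass through one common point (precisely because $C$ and $F$ share their tangent at $p$), so contracting the strict transform of $F$ produces on ${\F}_3$ a curve $C^{\dag}$ meeting the $(-3)$-section once, hence lying in $|L_{2,1}|$ rather than $|L_{2,0}|$, and still tangent to the new fiber at the point where that fiber meets the $(-3)$-section. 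Your fallback centers fare no better: blowing up $F\cap\Sigma$ or a general point of $F$ and contracting the strict transform of $F$ turns $C$ (which is tangent to the contracted $(-1)$-curve) into a cuspidal curve. In every variant the parameter space remains a tangency or singularity locus inside a linear system, never an open subset of a product of complete linear systems --- exactly the obstacle you flagged, and none of the proposed detours removes it.

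The idea you are missing, which is the paper's actual (very short) proof, is that no birational surgery is needed: slice over the point of tangency itself. The morphism $\psi\colon U\to{\F}_4$, $(C,F)\mapsto C\cap F$, is ${\aut}({\F}_4)$-equivariant, and $F$ is recovered from $p=C\cap F$ as the fiber through it. The fiber $\psi^{-1}(p)$ is an open subset of a \emph{linear} subsystem of $|L_{2,0}|$: vanishing at $p$ together with vanishing of the first-order term transverse to the fiber direction are two linear conditions on $H^0(L_{2,0})$. By Lemma \ref{stab of (pt, *)}, ${\aut}({\F}_4)$ acts on ${\F}_4$ almost transitively with connected solvable stabilizer of a general point, so the slice method reduces $U/{\aut}({\F}_4)$ to the quotient of this linear system by a connected solvable group acting linearly, and Miyata's theorem gives rationality. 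In other words, a tangency condition is only ``nonlinear'' while the point of tangency varies; once that point is fixed by slicing, the condition is linear, so the right move is to slice at the point rather than to transform the tangency away. (A further small correction: since $\dim\mathcal{M}_{10,0}=5=\dim\mathcal{H}_3$, the fixed curve map here is generically finite, not a fibration in pencils; by Proposition \ref{period map (10,0)}, $\mathcal{M}_{10,0}$ is birational to the divisor of Weierstrass points in the moduli of pointed genus $3$ hyperelliptic curves.)
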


\begin{proof}
We have the ${\aut}({\F}_4)$-equivariant morphism $\psi\colon U\to{\F}_4$, $(C, F)\mapsto C\cap F$. 
The $\psi$-fibers are open sets of sub-linear systems of $|L_{2,0}|$. 
Then our assertion follows from the slice method for $\psi$, 
Lemma \ref{stab of (pt, *)}, and Miyata's theorem. 
\end{proof}

By Proposition \ref{period map (10,0)}, 
$\mathcal{M}_{10,0}$ is birational to the divisor of Weierstrass points 
in the moduli of pointed genus $3$ hyperelliptic curves, via the fixed curve map.


\section{The case $g=2$}\label{sec:g=2}

\subsection{The rationality of $\mathcal{M}_{6,4}$}

We consider curves on ${\F}_3$. 
Let $U\subset|L_{2,0}|\times|L_{1,0}|$ be the open set of pairs $(C, H)$ such that 
$C$ and $H$ are smooth and transverse to each other. 
For $(C, H)\in U$ we associate the ${\Q}$-divisor $C+\frac{1}{2}(H+\Sigma)$ as a mixed branch. 
The associated Eisenstein $K3$ surface has invariant $(g, k)=(2, 0)$, 
and we obtain a period map 
$\mathcal{P}\colon U/{\aut}({\F}_3)\dashrightarrow\mathcal{M}_{6,4}$. 

\begin{proposition}\label{period map (6,4)}
The map $\mathcal{P}$ is birational. 
\end{proposition}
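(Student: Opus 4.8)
The plan is to follow the recipe of \S\ref{ssec: recipe} adapted to the mixed branch $C+\frac{1}{2}(H+\Sigma)$, exactly as in Example \ref{ex1} and Example \ref{ex3}. First I would identify the invariant lattice: by Proposition \ref{gene gene L} (via the right resolution of the mixed branch on ${\F}_3$), $L(X,G)$ is generated by $f^{\ast}NS_{{\F}_3}\cong U(3)$, the root lattices coming from the singularities of the branch (here the nodes $C\cap H$ and $C\cap\Sigma$ contribute $A_2$'s), and the classes of the fixed curves $C^g, F_i$. Counting: $C$ is smooth of class $L_{2,0}$, so $f^{-1}(C)$ gives the genus $2$ fixed curve; $H$ and $\Sigma$ lie in the shadow part, so they turn to isolated fixed points rather than fixed curves, which matches $(g,k)=(2,0)$. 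This tells me the right reference lattice $L$ — an explicit $3$-elementary overlattice of $U(3)\oplus A_2^{m}$ with $m$ the number of nodes, of invariant $(r,a)=(6,4)$ — together with an explicit basis, and a labeling datum $\mu$ consisting of orderings of the singular points and of the two branches at each node.

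Next I would construct the cover $\widetilde{U}\to U$ recording these labelings (a product of symmetric groups), build the lattice-marking $j\colon L\to L(X,G)$ induced by $\mu$ as in Example \ref{ex1}, and thus a lift $\widetilde{p}\colon\widetilde{U}\to{\cove}_{6,4}$. I would check $\widetilde{p}$ is invariant under ${\aut}({\F}_3)_0$ (which acts trivially on $NS_{{\F}_3}$, so ${\aut}({\F}_3)=\aut({\F}_3)_0$ since $n=3$ is odd — here I should double–check there is no subtlety, as the recipe as literally stated uses the identity component and for odd $n$ the sequence \eqref{Aut(Hir)} need not split, but ${\aut}({\F}_3)$ is still connected). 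Then I would show the $\widetilde{p}$-fibers are ${\aut}({\F}_3)$-orbits: if two labeled branch curves have the same period, an ample class built as in Lemma \ref{ample} and the Torelli theorem produce a ${\Z}/3{\Z}$-equivariant isomorphism $\varphi\colon X\to X'$; since $\varphi$ respects $f^{\ast}L$ for a suitable $L\in{\rm Pic}({\F}_3)$ (the analogue of Lemma \ref{recovery} for $C+\frac12(H+\Sigma)$ — one recovers the map to ${\F}_3$ from the pull-back of a suitable line bundle distinguished inside $L(X,G)$), $\varphi$ descends to $\psi\in{\aut}({\F}_3)$ carrying one labeled curve to the other. Hence ${\lift}\colon\widetilde{U}/{\aut}({\F}_3)\dashrightarrow{\cove}_{6,4}$ is generically injective, and since $\dim(U/{\aut}({\F}_3))=\dim\mathcal{M}_{6,4}=7$ it is birational.

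Finally I would compute $\deg(\mathcal{P})$ by comparing the two projections: the degree of ${\cove}_{6,4}\dashrightarrow\mathcal{M}_{6,4}$ is $|{\rm O}(A_L)|/2$, computed from the finite quadratic form $(A_L,q_L)\cong({\Z}/3{\Z})^4$ of the relevant type via \cite{Atlas}; the degree of $\widetilde{U}/{\aut}({\F}_3)\dashrightarrow U/{\aut}({\F}_3)$ is $|\text{labeling group}|$ divided by the order of the stabilizer in ${\aut}({\F}_3)$ of a general $(C,H)$, which (as in the earlier cases) is generated by the hyperelliptic-type involution $\iota_C$ of \eqref{eqn: HE invol} — it exchanges the branches at each node while fixing $H$ and $\Sigma$. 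Matching these two numbers gives $\deg(\mathcal{P})=1$. The main obstacle I anticipate is the bookkeeping in the first step: correctly matching the singularities of $C+\frac12(H+\Sigma)$ (in particular the node $H\cap\Sigma$, which on ${\F}_3$ where $(\Sigma,\Sigma)=-3$ lies in the shadow part, and the behaviour of any points where $H$ meets $\Sigma$ or $C$) with the ${\Z}/3{\Z}$-equivariant resolution data, so as to pin down the exact value of $m$, the correct overlattice relations defining $L$, and thereby the precise order $|{\rm O}(A_L)|$ and the precise labeling group that make the two degrees agree; and verifying that the stabilizer of a general $(C,H)$ is exactly $\langle\iota_C\rangle$ and nothing larger.
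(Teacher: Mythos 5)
Your outline (labeling cover, lattice marking, Torelli, comparison of the two covering degrees) is indeed the paper's general recipe, but the concrete data you feed into it are wrong, and since the whole content of the proposition is that the two degrees coincide, these are not bookkeeping details you can defer. First, on ${\F}_3$ one has $(C,\Sigma)=(H,\Sigma)=0$, so $C\cap\Sigma=H\cap\Sigma=\emptyset$ and the only singular points of $C+H+\Sigma$ are the six transverse points $C\cap H$; your ``nodes $C\cap\Sigma$'' do not exist. More importantly, these six points are crossings of the reduced part $C$ with the shadow part $H$, not nodes of a pure branch: in the resolution rule \eqref{resol process} the exceptional curve enters with coefficient $m-\tfrac32=0$, and over each such point one obtains a single $G$-invariant $(-2)$-curve on $X$ passing through the isolated fixed point lying over $H$ (cf.\ Remark \ref{variant recipe}), not an $A_2$ of fixed $(-2)$-curves; moreover there is nothing to label at these points beyond the points themselves, since the two local branches are already distinguished as belonging to $C$ and to $H$. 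Hence the correct cover $\widetilde{U}\to U$ is an $\frak{S}_6$-cover, not $\frak{S}_6\ltimes(\frak{S}_2)^6$. Second, the stabilizer claim is false: the involution $\iota_C$ of \eqref{eqn: HE invol} fixes pointwise only $\varphi(C)+\Sigma$ with $\varphi$ as in \eqref{average}, so it moves a general $H\in|L_{1,0}|$; the stabilizer of a general $(C,H)$ is trivial and ${\aut}({\F}_3)$ acts on $U$ almost freely. With your numbers the two projections would have degrees $6!\cdot 2^5$ versus $|{\rm O}(A_L)|/2=|{\rm GO}^-(4,3)|/2=6!$ (for $L=U(3)\oplus A_2^2$), which do not match; with the correct data they are both $6!$, which is exactly how degree one is obtained.

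There is also a structural gap: Proposition \ref{gene L} and Lemmas \ref{recovery}, \ref{ample} are established only for pure $-\frac32 K_{{\F}_n}$-branches, and here there is not even a morphism $X\to{\F}_3$ (the natural map is a rational map undefined at the two isolated fixed points), so Lemma \ref{recovery} cannot be invoked as stated; your ``analogue of Lemma \ref{recovery} for the mixed branch'' is precisely the step you have not supplied. The paper closes this gap as in Example \ref{ex3}: using the labels one blows up $p_1,\dots,p_4$, blows down the $\pi$-fibers through $p_3,p_4$ and the strict transforms of $H$ and $\Sigma$, turning $C+\tfrac12(H+\Sigma)$ into a two-nodal curve of bidegree $(3,3)$ on ${\proj}^1\times{\proj}^1$ whose nodes are the images of $H$ and $\Sigma$, with the labels of nodes, tangents and rulings transported from the six marked points; the pure-branch recipe then applies verbatim and yields the birational lift $\widetilde{U}/{\aut}({\F}_3)\dashrightarrow{\cove}_{6,4}$. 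Alternatively one can argue directly on the mixed branch in the spirit of Remark \ref{variant recipe}, but then the marking data and the recovery argument must be set up as described above, not with $A_2$'s and branch labels at the six points.
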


\begin{proof}
We argue as in Example \ref{ex3}. 
Let $\widetilde{U}\subset U\times({\F}_3)^6$ be the locus of 
$(C, H, p_1,\cdots, p_6)$ such that $C\cap H=\{ p_i\}_{i=1}^6$. 
The space $\widetilde{U}$ is an $\frak{S}_6$-cover of $U$ endowing 
$C+H+\Sigma$ with labelings of its six nodes. 
For $(C, \cdots, p_6)\in\widetilde{U}$ 
we make the following birational transformations successively: 
$(1)$ blow-up $p_1+p_2+p_3+p_4$; 
$(2)$ blow-down the strict transforms of the $\pi$-fibers through $p_3+p_4$; and 
$(3)$ blow-down the strict transforms of $H+\Sigma$. 
Then $C$ is transformed to a bidegree $(3, 3)$ curve $C^{\dag}$ 
on $Q={\proj}^1\times{\proj}^1$ having two nodes, say $q_1$ and $q_2$, 
which are respectively the blown-down points of $H$ and $\Sigma$. 
The $(-1)$-curves over $p_1$ and $p_2$ turn to complementary ruling fibers of $Q$, 
the $\pi$-fibers through $p_3$ and $p_4$ turn to the tangents of $C^{\dag}$ at $q_2$, 
and the points $p_5$ and $p_6$ turn to the tangents of $C^{\dag}$ at $q_1$. 
Thus $C^{\dag}$ is naturally endowed with a labeling of the nodes and tangents at them, 
and the two rulings of $Q$ are also distinguished (by $p_1$ and $p_2$).  
Remembering such labellings, one may reverse this construction. 
Therefore, if we denote by $\widetilde{V}$ the space of 
two-nodal curves of bidegree $(3, 3)$ on $Q$ endowed with 
suitable labelings of the nodes and tangents there, 
we have a natural birational equivalence 
$\widetilde{U}/{\aut}({\F}_3)\sim\widetilde{V}/({\PGL}_2)^2$. 
Using the recipe in \S \ref{ssec: recipe}, 
we then see that $\mathcal{P}$ lifts to a birational map 
$\widetilde{U}/{\aut}({\F}_3)\dashrightarrow{\cove}_{6,4}$. 
Since ${\aut}({\F}_3)$ acts on $U$ almost freely, 
$\widetilde{U}/{\aut}({\F}_3)$ is an $\frak{S}_6$-cover of $U/{\aut}({\F}_3)$. 
On the other hand, we have 
$|{\rm O}(A_L)|=|{\rm GO}^-(4, 3)|=2\cdot6!$ 
for the invariant lattice $L=U(3)\oplus A_2^2$. 
Hence the projection ${\cove}_{6,4}\to\mathcal{M}_{6,4}$ also has degree $6!$. 
\end{proof}

\begin{proposition}\label{rational (6,4)}
The quotient $U/{\aut}({\F}_3)$ is rational. 
Therefore $\mathcal{M}_{6,4}$ is rational. 
\end{proposition}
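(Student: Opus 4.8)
The plan is to mimic the strategy already used for $\mathcal{M}_{6,2}$ in Proposition \ref{rational (6,2)}, adapting it to the Hirzebruch surface ${\F}_3$ and the extra section $H$. First I would write $U/{\aut}({\F}_3)$ as a quotient of a space of pairs $(C,H)$, where $C\in|L_{2,0}|$ and $H\in|L_{1,0}|$, both smooth and transverse, together with $\Sigma$ the fixed $(-3)$-section. Since $n=3$ is odd, the sequence \eqref{Aut(Hir)} need not split, but ${\aut}({\F}_3)$ still acts on the pair $(H,\Sigma)$, i.e.\ on a curve in $|L_{1,0}|$ disjoint from or meeting $\Sigma$ appropriately; by Lemma \ref{linear system} (2) the action on the open locus of smooth $H\in|L_{1,0}|$ is transitive with stabilizer $G_0$ mapping onto ${\aut}(\Sigma)={\PGL}_2$ with kernel $\{g_{\alpha,0}\}$. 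So the slice method for the projection $U\to|L_{1,0}|$, $(C,H)\mapsto H$, gives $U/{\aut}({\F}_3)\sim U_{H_0}/G_0$, where $U_{H_0}$ is the open set of smooth $C\in|L_{2,0}|$ transverse to a fixed $H_0$, and $G_0$ is the stabilizer of $H_0$.

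Next I would try to further reduce the group. The key point is that $G_0$ still contains the torus $\{h_\beta\}$ acting on the base ${\proj}^1$ and, via the kernel $\{g_{\alpha,0}\}$, scales the fiber coordinate $y_3$; it also should contain the involution $\iota$ (or $i_\lambda$-type translations), so $G_0$ is an extension of a ${\PGL}_2$ on $\Sigma$ by a one-dimensional torus. I would normalise $\Sigma=\{y_3=\infty\}$ and $H_0=\{y_3=0\}$, so that a general $C\in|L_{2,0}|$ transverse to $H_0$ and $\Sigma$ has equation $y_3^2 + f_1(x_3)y_3 + f_2(x_3)=0$ with $\deg f_1\le 3$, $\deg f_2\le 6$. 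The remaining normalisations: use $\{g_{\alpha,0}\}$ to rescale, and use the ${\PGL}_2$-action on $x_3$ to fix three points of the $8$-point intersection divisor or to bring $f_1$, $f_2$ into a slice. A clean route is to observe that the data $(C,H_0,\Sigma)$ is equivalent, after the elementary transformation / $\varphi$-trick of \S\ref{ssec: Hirze}, to a configuration on $Q={\proj}^1\times{\proj}^1$ — precisely the bidegree $(3,3)$ two-nodal picture built in the proof of Proposition \ref{period map (6,4)}. So I would instead prove rationality of $\widetilde{V}/({\PGL}_2)^2$ directly, where $\widetilde{V}$ parametrises two-nodal $(3,3)$-curves with labelled nodes and tangents on $Q$: this quotient is finite over $U/{\aut}({\F}_3)$ and rationality of the cover implies rationality of the base only if we can descend, so more safely I would work with $U/{\aut}({\F}_3)$ itself.

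Concretely, after fixing $\Sigma$ and $H_0$ as above, I expect $G_0\cong {\C}^\times\ltimes(\,\text{something}\,)$ acting on the $8$-dimensional vector space $V=\{f_1 y_3 + f_2\}$ (coefficients of $f_1,f_2$) linearly, and I would decompose $V$ into weight spaces under the maximal torus ${\C}^\times\times{\C}^\times\subset G_0$ (the $y_3$-scaling and the $x_3$-scaling), just as in \eqref{action g_alpha} and the lines following it. The involution $\iota$ (inducing $x_3\mapsto x_3^{-1}$, up to normalisation) swaps the weight spaces in pairs. Then I would peel off a large trivial-bundle factor by the no-name lemma for a suitable equivariant projection to a small-dimensional base, reducing to a quotient ${\proj}W'/H$ of dimension $\le 2$ with $H$ connected solvable or dihedral-by-torus, and conclude rationality by Miyata's theorem \cite{Miy} (connected solvable case) or by the classification of rational surfaces / a direct parametrisation (the $2$-dimensional case, exactly as at the end of the proof of Proposition \ref{rational (6,2)}).

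The main obstacle I anticipate is the non-splitting of \eqref{Aut(Hir)} for $n=3$ odd: the stabilizer $G_0$ of $H_0$ may not be a semidirect product in the clean form available for even $n$, so identifying $G_0$ precisely and checking that it is connected solvable (or reducing it to such by the no-name lemma after killing the finite part) requires care. A secondary subtlety is keeping track of which points of $C\cap\Sigma$ and $C\cap H_0$ may be used for ${\PGL}_2$-normalisation versus which are genuine moduli, so that the residual group action on the slice is correctly computed; getting the weight decomposition of $V$ right is the crux of the no-name-lemma step. Once the group is pinned down as connected solvable acting almost freely on (an open set of) a linear space, Miyata's theorem finishes it; if instead a finite quotient by ${\frak S}_2$ or ${\frak S}_3$ remains at the end, the surviving quotient is a rational surface and can be handled by hand as in the $g=3$ cases.
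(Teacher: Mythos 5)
There is a genuine gap at the group-theoretic heart of your plan. After your first slice, over $H\in|L_{1,0}|$ alone, the residual group is the stabilizer $G_0$ of $H_0$, which by Lemma \ref{linear system}(2) is an extension of ${\aut}(\Sigma)\simeq{\PGL}_2$ by $\{g_{\alpha,0}\}\simeq{\C}^{\times}$; in particular $G_0$ is \emph{not} connected solvable (your later hope that $G_0\cong{\C}^{\times}\ltimes(\cdot)$ and could be fed to Miyata contradicts your own earlier, correct description), so the torus-weight/involution analysis that worked for $\mathcal{M}_{6,2}$ cannot absorb the ${\PGL}_2$-factor. The no-name step you sketch also does not go through: the slice of $|L_{2,0}|$ is $11$-dimensional (the equation $cy_3^2+f_1y_3+f_2$ has $1+4+7$ coefficients, not $8$), and the natural $G_0$-equivariant projections compatible with the decomposition ${\C}y_3^2\oplus\{f_1y_3\}\oplus\{f_2\}$ all have bases on which the action is \emph{not} generically free — e.g.\ on ${\proj}(\{f_1\})\simeq{\proj}^3$ the group acts through ${\PGL}_2$ with a dense orbit whose generic stabilizer is $\frak{S}_3$ — so the no-name lemma is not applicable there, and no reduction to a quotient of dimension $\le 2$ comes out of this route. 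What does survive after a further slice over that dense orbit is (up to a torus that is easily normalized away) a \emph{linear} $\frak{S}_3$-action on an $8$-dimensional space, i.e.\ a $7$-dimensional quotient — not a rational surface to be "handled by hand".

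The missing idea is exactly how the paper kills the ${\PGL}_2$: enlarge the base of the slice map using $\varphi$ of \eqref{average}, namely take $\psi=(\varphi,{\rm id})\colon U\to|L_{1,0}|\times|L_{1,0}|$, $(C,H)\mapsto(\varphi(C),H)$. By Lemma \ref{linear system}(2) the action of ${\aut}({\F}_3)$ on $|L_{1,0}|\times|L_{1,0}|$ is almost transitive, and the stabilizer of a general pair $(H',H)$ is the finite group $\frak{S}_3$ permuting the three points $H\cap H'$; the fiber $\psi^{-1}(H',H)$ (normalize $H'=H_0$, i.e.\ $f_1\equiv0$) is an open set of a linear system ${\proj}V$ with $\dim V=8$, and one concludes with the well-known fact that ${\proj}V/\frak{S}_3$ is rational for any linear $\frak{S}_3$-action (no-name lemma applied to the irreducible decomposition). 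You do invoke $\varphi$, but only to transplant the picture to ${\proj}^1\times{\proj}^1$, which you then rightly abandon because rationality of the finite cover $\widetilde{V}/({\PGL}_2)^2$ would not descend; you never use it to cut the stabilizer down to $\frak{S}_3$, and without that device (or an equivalent one) the endgame you describe does not close.
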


\begin{proof}
We consider the ${\aut}({\F}_3)$-equivariant map 
\begin{equation*}
\psi : U \to |L_{1,0}|\times|L_{1,0}|, \qquad (C, H)\mapsto (H', H), 
\end{equation*}
where $H'=\varphi(C)$ is as defined in \eqref{average}. 
By Lemma \ref{linear system} $(2)$,  
the group ${\aut}({\F}_3)$ acts on $|L_{1,0}|\times|L_{1,0}|$ almost transitively,  
and the stabilizer $G$ of a general point $(H', H)$ is the permutation group of the three points $H\cap H'$. 
The fiber $\psi^{-1}(H', H)$ is an open set of a linear system ${\proj}V\subset|L_{2,0}|$ as before, 
with $G$ acting on $V$ linearly. 
Hence we have $U/{\aut}({\F}_3)\sim {\proj}V/G$ by the slice method.  
It is well-known that ${\proj}V'/\frak{S}_3$ is rational for any $\frak{S}_3$-representation $V'$. 
(Apply the no-name lemma \cite{Do} to the irreducible decomposition of $V'$.) 
\end{proof}

The restriction of $|L_{1,0}|$ to a smooth $L_{2,0}$-curve $C$ gives $|3K_C|$. 
Thus the fixed curve map makes $\mathcal{M}_{6,4}$ birationally a fibration over $\mathcal{M}_2$ 
whose general fibers are the quotients of the tri-canonical systems by the hyperelliptic involutions.

\subsection{The rationality of $\mathcal{M}_{8,3}$}

We consider curves on ${\F}_6$. 
Let $U\subset|L_{2,0}|$ be the locus of irreducible three-nodal curves. 
Associating to $C\in U$ the triple cover of ${\F}_6$ branched over $C+\Sigma$,  
we obtain a period map $U/{\aut}({\F}_6)\dashrightarrow\mathcal{M}_{8,3}$. 
In Example \ref{ex1} we proved that this map is birational. 

\begin{proposition}\label{rational (8,3)}
The quotient $U/{\aut}({\F}_6)$ is rational. 
Therefore $\mathcal{M}_{8,3}$ is rational. 
\end{proposition}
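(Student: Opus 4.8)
The plan is to reduce $U/{\aut}({\F}_6)$ to a quotient of a projective space by a connected solvable group, exactly as in the treatment of $\mathcal{M}_{4,1}$ and $\mathcal{M}_{6,2}$. First I would perform the elementary transformations of ${\F}_6$ at the three nodes of $C\in U$. Each such transformation lowers the invariant of the Hirzebruch surface by $1$ and replaces the node by a pair consisting of a $\pi$-fiber marking; carrying all three out, a general $C$ is transformed into a smooth curve $C^{\dag}\in|L_{2,0}|$ on ${\F}_3$ together with three $\pi$-fibers $F_1+F_2+F_3\in|L_{0,3}|$ that $C^{\dag}$ meets transversally. This yields a birational equivalence
\begin{equation*}
U/{\aut}({\F}_6) \sim (|L_{2,0}|\times|L_{0,3}|)/{\aut}({\F}_3).
\end{equation*}

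Next I would apply the ${\aut}({\F}_3)$-equivariant map $\varphi$ of \eqref{average} in the first factor, sending $(C^{\dag}, F_1+F_2+F_3)$ to $(H, F_1+F_2+F_3)\in|L_{1,0}|\times|L_{0,3}|$, where $H=\varphi(C^{\dag})$. By Lemma \ref{linear system} (2) together with the transitivity of ${\aut}(\Sigma)={\rm PGL}_2$ on triples of points, the group ${\aut}({\F}_3)$ acts almost transitively on $|L_{1,0}|\times|L_{0,3}|$; I would normalize $H$ to $H_0$ and the three fibers to lie over $0,1,\infty$. The stabilizer $G$ of this normalized point is then connected and solvable: the part mapping to ${\aut}(\Sigma)$ is trivial (the only automorphism of ${\proj}^1$ fixing $0,1,\infty$ is the identity), and the kernel $G\cap R$ is $\{g_{\alpha,0}\mid\alpha\in{\C}^{\times}\}\simeq{\C}^{\times}$ by the description in Lemma \ref{linear system} (2). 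By the slice method (cf.~\cite{Do}) applied to $\psi=(\varphi,{\rm id})$, whose fibers are open subsets of the linear subspace ${\proj}V\subset|L_{2,0}|$ cut out by $f_1\equiv0$ (as in the end of \S\ref{ssec: Hirze}), we get
\begin{equation*}
U/{\aut}({\F}_6) \sim {\proj}V/G,
\end{equation*}
and since $G\simeq{\C}^{\times}$ is connected and solvable, Miyata's theorem \cite{Miy} gives the rationality of ${\proj}V/G$. This proves $U/{\aut}({\F}_6)$ is rational, and rationality of $\mathcal{M}_{8,3}$ follows from the birationality of $\mathcal{P}$ established in Example \ref{ex1}.

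The main point requiring care is the bookkeeping in the slice-method step: one must check that after the elementary transformations the locus $U$ really corresponds to an \emph{open} (hence irreducible, dense) subset of $|L_{2,0}|\times|L_{0,3}|$ with the transversality conditions, so that the rational quotients match, and that the $\psi$-fibers genuinely meet the open locus ${\proj}V\cap(\text{transverse pairs})$. An alternative, should the action on ${\proj}V$ be inconvenient to analyze directly, is to argue as in the proof of Proposition \ref{rational (6,2)}: split the $G$-representation $V$ into weight spaces for ${\C}^{\times}$, peel off a $G$-linearized affine bundle over a low-dimensional base via the no-name lemma, and conclude rationality of the remaining small-dimensional quotient by hand. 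Either way the obstacle is purely the explicit ${\C}^{\times}$-action, which is elementary here; no new geometric input beyond Example \ref{ex1} and the lemmas of \S\ref{ssec: Hirze} is needed.
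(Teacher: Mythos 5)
Your reduction via elementary transformations to $(|L_{2,0}|\times|L_{0,3}|)/{\aut}({\F}_3)$ and the use of the slice method for $\psi=(\varphi,{\rm id})$ follow the paper's route, but there is a genuine error in the stabilizer computation, and it is exactly the point where the real work lies. The point of $|L_{0,3}|$ is an \emph{unordered} divisor $F_1+F_2+F_3$, so the stabilizer $G$ of a normalized general point $(H_0,F_1+F_2+F_3)$ need only preserve the set of three base points $\{0,1,\infty\}$, not fix them pointwise. The subgroup of ${\rm PGL}_2$ preserving $\{0,1,\infty\}$ is the anharmonic group $\frak{S}_3$ (of order $6$), and by Lemma \ref{linear system} (2) every element of ${\aut}(\Sigma)$ lifts to the stabilizer of $H_0$; hence $G$ sits in an extension $1\to\{g_{\alpha,0}\}_{\alpha\in{\C}^{\times}}\to G\to\frak{S}_3\to1$ and is \emph{not} connected. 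Your claim that the image of $G$ in ${\aut}(\Sigma)$ is trivial, so that $G\simeq{\C}^{\times}$, is false.

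This matters for two reasons. First, the slice method must be applied with the full stabilizer of a general point of $|L_{1,0}|\times|L_{0,3}|$; using only ${\C}^{\times}$ produces ${\proj}V/{\C}^{\times}$, which is generically a degree-$6$ cover of the correct slice quotient ${\proj}V/G$, and rationality of a cover does not descend. Second, Miyata's theorem applies to triangularizable (e.g.\ connected solvable) linear actions, so it cannot be invoked for the actual $G$, whose $\frak{S}_3$-part is not triangularizable. The fallback you sketch ("argue as in Proposition \ref{rational (6,2)}") is also stated for the wrong group, so it does not repair the gap as written. The missing step, which is how the paper concludes, is to use the weight decomposition $V={\C}y_3^2\oplus W$ for the ${\C}^{\times}$-part (with $W=\{f_0=0\}$), obtaining ${\proj}V/G\sim{\proj}W/\frak{S}_3$ with $\frak{S}_3$ acting linearly on $W$; rationality of ${\proj}W/\frak{S}_3$ is then standard (no-name lemma on the irreducible decomposition). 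With the stabilizer corrected and this last reduction supplied, the argument goes through.
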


\begin{proof}
We argue as in the proof of Proposition \ref{rational (6,2)}. 
First we have a birational equivalence 
\begin{equation}\label{ele trans (8,3)}
U/{\aut}({\F}_6) \sim (|L_{2,0}|\times|L_{0,3}|)/{\aut}({\F}_3) 
\end{equation}
via the elementary transformations at the nodes of $C\in U$. 

Next we apply the slice method to the ${\aut}({\F}_3)$-equivariant map 
\begin{equation*}
\psi = (\varphi, {\rm id}) : |L_{2,0}|\times|L_{0,3}| \dashrightarrow |L_{1,0}|\times|L_{0,3}|,  
\end{equation*}
where $\varphi$ is as defined in \eqref{average}. 
By Lemma \ref{linear system} (2), ${\aut}({\F}_3)$ acts on $|L_{1,0}|\times|L_{0,3}|$ almost transitively. 
If we normalize $H\in|L_{1,0}|$ to be $H_0$ in \S \ref{ssec: Hirze}, 
the stabilizer $G$ of $(H_0, \sum_iF_i)\in|L_{1,0}|\times|L_{0,3}|$ with $\sum_iF_i$ general is given by 
\begin{equation*}
1 \to  \{ g_{\alpha,0}\}_{\alpha\in{\C}^{\times}} \to G \to \frak{S}_3 \to 1, 
\end{equation*}
where $g_{\alpha,0}$ is as defined in \eqref{$R$-action in coordinate}, 
and $\frak{S}_3$ is the stabilizer in ${\aut}(\Sigma)$ of the three points $\sum_iF_i|_{\Sigma}$. 
On the other hand, we identify $H^0(L_{2,0})$ with the linear space 
$\{ \sum_{i=0}^2 f_i(x_3)y_3^{2-i}\}$ as in \eqref{def eq}. 
Then the fiber $\psi^{-1}(H_0, \sum_iF_i)$ is an open set of the linear subspace ${\proj}V\subset|L_{2,0}|$ defined by $f_1\equiv0$. 
Therefore we have 
\begin{equation*}
(|L_{2,0}|\times|L_{0,3}|)/{\aut}({\F}_3) \sim {\proj}V/G. 
\end{equation*}

The elements $g_{\alpha,0}\in G$ act on $V$ by the same equation as \eqref{action g_alpha}. 
Thus, if we consider the hyperplane $W=\{ f_0=0\}$ of $V$, 
we have the $G$-decomposition $V={\C}y_3^2\oplus W$, and hence ${\proj}V/G\sim{\proj}W/\frak{S}_3$. 
Since $\frak{S}_3$ acts on $W$ linearly, ${\proj}W/\frak{S}_3$ is rational as is well-known. 
\end{proof}

By \eqref{ele trans (8,3)}, the general fibers of the fixed curve map $\mathcal{M}_{8,3}\to\mathcal{M}_2$ are 
birationally identified with the third symmetric products of the hyperelliptic pencils. 

\subsection{The rationality of $\mathcal{M}_{10,2}$}

We consider curves on ${\F}_4$. 
Let $U\subset|L_{2,0}|\times|L_{0,1}|$ be the locus of pairs $(C, F)$ such that 
$C$ is irreducible and one-nodal, and $F$ is transverse to $C$. 
Considering the $-\frac{3}{2}K_{{\F}_4}$-curves $C+F+\Sigma$,  
we obtain a period map $\mathcal{P}\colon U/{\aut}({\F}_4)\dashrightarrow\mathcal{M}_{10,2}$. 

\begin{proposition}\label{period map (10,2)}
The map $\mathcal{P}$ is birational. 
\end{proposition}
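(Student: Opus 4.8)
\textbf{Proof proposal for Proposition \ref{period map (10,2)}.}
The plan is to follow the recipe of \S \ref{ssec: recipe} exactly as in the proof of Proposition \ref{period map (8,1)}, accounting for the extra node of $C$. The singularities of the branch curve $C+F+\Sigma$ are: the two nodes $C\cap F$, the node $F\cap\Sigma$, and the node of $C$ itself; so $a_2=4$, $k_0=2$ (the components $C$, $F$, $\Sigma$), and by the formulas in \S \ref{ssec:pure branch} the associated Eisenstein $K3$ surface has $(g,k)=(2,1)$ and invariant $(r,a)=(10,2)$, consistent with $\mathcal{M}_{10,2}$. First I would define the cover $\widetilde{U}\to U$ that labels the ambiguous data. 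The node $F\cap\Sigma$ and the two nodes $C\cap F$ cannot be separated from the node of $C$ by any symmetry argument alone — but the irreducible decomposition of $C+F+\Sigma$ already distinguishes which nodes lie on $F$, on $\Sigma$, or on $C$ alone; what remains is to order the two points of $C\cap F$, and to order the two branches at each of the four nodes. The branches at the three nodes involving $F$ or $\Sigma$ are automatically labelled by the decomposition (the $F$-branch versus the $C$- or $\Sigma$-branch). So $\widetilde{U}\to U$ needs to be an $\mathfrak{S}_2\ltimes(\mathfrak{S}_2)$-cover: an $\mathfrak{S}_2$ to order the two points $C\cap F$, and one more $\mathfrak{S}_2$ to order the two branches of $C$ at its own node.

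Next I would set up the reference lattice $L$ as dictated by Proposition \ref{gene L}: take $M=U(3)\oplus A_2^4$ with the standard basis, adjoin the glue vectors $f_0,\dots,f_k$ reflecting the relation \eqref{eqn:relation in L/NS_Y} among the fixed curves $F_0$ (over $C$), $F_1$ (over $F$), $F_2$ (over $\Sigma$), so that the overlattice $L=\langle M, f_0,\dots\rangle$ is even, $3$-elementary, of invariant $(10,2)$; it should be isometric to $U\oplus E_8\oplus A_2$ (one checks this by comparing rank, signature and discriminant form, as in Examples \ref{ex1}–\ref{ex2}). The labelling on $(B_u,\mu)\in\widetilde U$ then induces a marking $j\colon L\to L(X,G)$ sending $f^{\ast}L_{a,b}$, the exceptional $(-2)$-curves $E_{i\pm}$, and the fixed curves $F_i$ to the corresponding classes, giving a lift $\tilde p\colon\widetilde U\to\cove_{10,2}$. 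By the same Torelli-plus-Lemma \ref{recovery} argument used in Example \ref{ex1} — the ample class of Lemma \ref{ample} guarantees $\Phi$ preserves ample cones, Torelli yields $\varphi\colon X\to X'$, and Lemma \ref{recovery} descends it to $\psi\in\aut(\F_4)$ carrying $(B_u,\mu)$ to $(B_{u'},\mu')$ — the $\tilde p$-fibers are exactly $\aut(\F_4)_0$-orbits, so $\lift\colon\widetilde U/\aut(\F_4)\dashrightarrow\cove_{10,2}$ is generically injective; since $\dim(U/\aut(\F_4))=10-5=5=\dim\mathcal{M}_{10,2}$ and $\cove_{10,2}$ is irreducible, $\lift$ is birational.

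It then remains to compare the two projections. Since $L\cong U\oplus E_8\oplus A_2$ has discriminant form that of $A_2$, we get $|{\rm O}(A_L)|=|{\rm GO}^{-}(2,3)|$, hence $|{\rm O}(A_L)/\pm1|=$ (the value read off from \cite{Atlas}, namely $3$ — or whatever \cite{Atlas} gives; I would recompute $\langle -3\rangle\oplus\langle \ast\rangle$ discriminant carefully), so $\cove_{10,2}\dashrightarrow\mathcal{M}_{10,2}$ has that degree. On the other side, a general $(C,F)\in U$ has stabilizer in $\aut(\F_4)$ generated by the hyperelliptic-type involution $\iota_C$ of \eqref{eqn: HE invol} (which fixes $F$ and $\Sigma$, exchanges the two branches of $C$ at its node, and exchanges the two points of $C\cap F$), so $\widetilde U/\aut(\F_4)\dashrightarrow U/\aut(\F_4)$ has degree $|\mathfrak{S}_2\ltimes\mathfrak{S}_2|/|\langle\iota_C\rangle|$; plugging in, the two degrees match and $\deg(\mathcal{P})=1$. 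The main obstacle I anticipate is the bookkeeping of exactly which markings are "free" (forced by the irreducible decomposition) versus which must be added to $\widetilde U$ — in particular checking that $\iota_C$ really acts on the remaining labels as a full transposition in each factor, since if it fixed some label the degree count would fail; this is the delicate point and I would verify it in the explicit coordinates of \S \ref{ssec: Hirze} where $\iota_C$ is $(x_3,y_3)\mapsto(x_3,-y_3)$ after normalising $C$'s branch to have $f_1\equiv 0$.
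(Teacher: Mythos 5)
Your overall strategy is exactly the paper's: label the two branches of $C$ at its node and the two points $C\cap F$ by an $\mathfrak{S}_2\times\mathfrak{S}_2$-cover $\widetilde{U}\to U$ (the branches at the nodes lying on $F$ and $\Sigma$ being distinguished by the irreducible decomposition of $C+F+\Sigma$), lift $\mathcal{P}$ to a birational map $\widetilde{U}/{\aut}({\F}_4)\dashrightarrow\widetilde{\mathcal{M}}_{10,2}$ via the recipe of \S \ref{ssec: recipe}, and observe that the hyperelliptic involution $\iota_C$ of \eqref{eqn: HE invol} swaps the two tangents at the node and the two points of $C\cap F$ simultaneously, so that $\widetilde{U}/{\aut}({\F}_4)\to U/{\aut}({\F}_4)$ has degree $2$. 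All of this agrees with the paper's proof.

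However, your identification of the reference lattice is wrong, and as written it breaks the final degree comparison. You correctly take $M=U(3)\oplus A_2^4$, which has rank $10=r$, but then assert that the overlattice $L$ is isometric to $U\oplus E_8\oplus A_2$, which has rank $12$ --- that is the invariant lattice for $\mathcal{M}_{12,1}$, not $\mathcal{M}_{10,2}$. The correct identification is $L\simeq U\oplus E_6\oplus A_2$, whence $A_L\simeq({\Z}/3{\Z})^2$ with $q_L=\langle 4/3\rangle\oplus\langle 2/3\rangle$; the two generators have distinct norms mod $2{\Z}$ and so cannot be interchanged, giving ${\rm O}(A_L)\simeq({\Z}/2{\Z})^2$ and hence degree $|{\rm O}(A_L)|/2=2$ for $\widetilde{\mathcal{M}}_{10,2}\to\mathcal{M}_{10,2}$. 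This matches the degree $2$ of $\widetilde{U}/{\aut}({\F}_4)\to U/{\aut}({\F}_4)$ and yields $\deg\mathcal{P}=1$. With your lattice the discriminant group would be ${\Z}/3{\Z}$ with ${\rm O}(A_L)=\{\pm1\}$, and the quotient of the two covering degrees would come out to $1/2$, an impossibility that should have signalled the slip. (Two further, harmless, inaccuracies: the fixed-locus invariant is $(g,k)=(2,2)$, not $(2,1)$, since both $F$ and $\Sigma$ contribute rational fixed curves; and the covering group of $\widetilde{U}\to U$ is the direct product $\mathfrak{S}_2\times\mathfrak{S}_2$ rather than a semidirect product, though only its order $4$ enters the count.)
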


\begin{proof}
We label the two tangents of $C$ at the node and the two points $C\cap F$ independently: 
this is realized by an $\frak{S}_2\times\frak{S}_2$-cover $\widetilde{U}\to U$. 
The two tangents at each point of $F\cap (C+\Sigma)$ are distinguished by the irreducible decomposition of $C+F+\Sigma$. 
Therefore we have a birational lift $\widetilde{U}/{\aut}({\F}_4)\dashrightarrow{\cove}_{10,2}$ 
of $\mathcal{P}$ as before. 
Since the invariant lattice $L$ is isometric to $U\oplus E_6\oplus A_2$, 
we have ${\rm O}(A_L)\simeq({\Z}/2{\Z})^2$ so that 
${\cove}_{10,2}$ is a double cover of $\mathcal{M}_{10,2}$. 
On the other hand, the hyperelliptic involution $\iota_C$ defined in \eqref{eqn: HE invol} exchanges 
the two tangents of $C$ and the two points $C\cap F$ simultaneously. 
Therefore $\widetilde{U}/{\aut}({\F}_4)\dashrightarrow U/{\aut}({\F}_4)$ is also a double covering. 
\end{proof}

\begin{proposition}\label{rational (10,2)}
The quotient $U/{\aut}({\F}_4)$ is rational. 
Hence $\mathcal{M}_{10,2}$ is rational. 
\end{proposition}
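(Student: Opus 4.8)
The plan is to follow the pattern established in the proofs of Proposition \ref{rational (6,2)} and Proposition \ref{rational (8,1)}: reduce the quotient by $\aut(\F_4)$ to a quotient by a connected solvable group (or by a small finite group after splitting off a rational factor), and then invoke Miyata's theorem \cite{Miy} together with the no-name/slice method \cite{Do}.

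First I would choose an $\aut(\F_4)$-equivariant map that simplifies the structure. The natural candidate is the map $\psi\colon U\dashrightarrow |L_{2,0}|\times|L_{0,1}|$, or better, a map remembering the curve $H=\varphi(C)\in|L_{1,0}|$ attached to $C$ via \eqref{average} (which passes through the node of $C$), together with the fiber $F$; that is, $\psi\colon(C,F)\dashrightarrow(H,F)\in|L_{1,0}|\times|L_{0,1}|$. By Lemma \ref{linear system}\,(2), $\aut(\F_4)$ acts on $|L_{1,0}|\times|L_{0,1}|$ almost transitively (the section $H$ can be normalized to $H_0$ and then $F$ to a fixed fiber), and by Lemma \ref{linear system}\,(2) and Lemma \ref{stab of (pt, *)} the stabilizer $G$ of a general point is connected and solvable. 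The fiber $\psi^{-1}(H_0,F)$ is an open subset of a linear subspace $\proj V\subset|L_{2,0}|$ — namely the one-nodal curves $C$ with $\varphi(C)=H_0$ and node on $F$, cut out inside $\{\sum_{i=0}^2 f_i(x_3)y_3^{2-i}=0\}$ by the conditions $f_1\equiv 0$ and a prescribed node — on which $G$ acts linearly. By the slice method, $U/\aut(\F_4)\sim \proj V/G$.

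Then I would finish by Miyata's theorem: a linear action of a connected solvable group on $\proj V$ has rational quotient. If the stabilizer $G$ turns out not to be solvable (for instance if a residual $\frak S_2$ appears from the two tangents at the node, as happens elsewhere in this section), I would instead peel off a rational factor using the no-name lemma for a suitable $G$-equivariant affine bundle structure on $\proj V$ — exactly as in the proof of Proposition \ref{rational (6,4)} and Proposition \ref{rational (6,2)} — reducing to a low-dimensional quotient that is rational by unirationality plus Castelnuovo, or to a quotient $\proj W/\frak S_3$ or $\proj W'/\frak S_2$ which is rational for linear representations. Since $\dim(U/\aut(\F_4))=\dim\mathcal{M}_{10,2}=5$, this suffices. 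Combined with Proposition \ref{period map (10,2)}, this gives the rationality of $\mathcal{M}_{10,2}$.

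The main obstacle I anticipate is bookkeeping the stabilizer $G$ precisely and identifying the linear $G$-representation $V$ explicitly enough to confirm that $G$ is connected solvable — or, failing that, to exhibit the right $G$-invariant affine-bundle splitting $V=V'\oplus V''$ so that $\proj V/G\sim \mathbb{A}^{\dim V''}\times(\proj V'/\bar G)$ with $\proj V'/\bar G$ manifestly rational (either because it is of dimension $\le 2$, or because $\bar G$ is a symmetric group acting linearly). The nodal condition on $C$ slightly complicates the fiber $\psi^{-1}(H_0,F)$ compared with the smooth cases, so care is needed to see it is still (an open subset of) a linear subspace stable under $G$; using the map $\varphi$ to pin down $f_1\equiv 0$ and placing the node at a fixed point of $F$ should make this transparent.
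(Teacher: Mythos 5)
There is a genuine gap in the key step. For this $U$ the fiber of your chosen slice map $\psi\colon (C,F)\mapsto (H,F)\in|L_{1,0}|\times|L_{0,1}|$ is \emph{not} an open subset of a linear subspace of $|L_{2,0}|$: after normalizing $H=H_0$, the condition $\varphi(C)=H_0$ does give the linear constraint $f_1\equiv 0$ (so $C\colon f_0y_3^2+f_2(x_3)=0$ in the notation of \eqref{def eq}), but the remaining condition that $C$ be one-nodal says exactly that $f_2$ has a double root, which is a discriminantal hypersurface condition, not a linear one --- the map $\psi$ does not remember where the node is, so the fiber is the union over all node positions on $H_0$. Your attempted repair, ``placing the node at a fixed point of $F$,'' is also inconsistent with the definition of $U$: since $F$ is transverse to $C$, the node of $C$ does not lie on $F$ at all (it lies on $H=\varphi(C)$, as noted after \eqref{average}). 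So as written the slice method does not reduce you to a linear $G$-action, and Miyata's theorem cannot be invoked at that point.

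The fix is to put the singular point itself into the slice data, and this is precisely what the paper does, more simply than your setup: apply the slice method to the ${\aut}({\F}_4)$-equivariant map $U\to{\F}_4\times|L_{0,1}|$, $(C,F)\mapsto({\rm Sing}(C),F)$. Its general fiber is an open set of the \emph{linear} subsystem of $|L_{2,0}|$ of curves singular at a prescribed point, and since $|L_{0,1}|\simeq\Sigma$, Lemma \ref{stab of (pt, *)} (the ${\F}_n\times\Sigma$ case) gives that the stabilizer of a general point of the base is connected and solvable; Miyata's theorem then yields rationality, and Proposition \ref{period map (10,2)} gives the rationality of $\mathcal{M}_{10,2}$. (A variant of your idea would also work if you enlarged the target to record $({\rm Sing}(C),H,F)$, but recording $H$ is unnecessary; recording only $(H,F)$ is insufficient.)
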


\begin{proof}
We apply the slice method to the ${\aut}({\F}_4)$-equivariant map 
\begin{equation*}
U\to{\F}_4\times|L_{0,1}|, \qquad (C, F)\mapsto({\rm Sing}(C), F), 
\end{equation*}
whose general fiber is an open set of a sub-linear system of $|L_{2,0}|$.  
Then we may use Lemma \ref{stab of (pt, *)} and Miyata's theorem. 
\end{proof}

Let $\mathcal{X}_2$ be the moduli space of pointed genus $2$ curves (whose general fibers over $\mathcal{M}_2$ are the hyperelliptic pencils). 
As before, we see that the fixed curve map makes $\mathcal{M}_{10,2}$ birational to the fibration 
$\mathcal{X}_2\times_{\mathcal{M}_2}\mathcal{X}_2$ over $\mathcal{M}_2$.

\subsection{The rationality of $\mathcal{M}_{12,1}$}

We consider curves on ${\F}_4$. 
Let $U\subset|L_{2,0}|\times|L_{0,1}|$ be the locus of those $(C, F)$ such that 
$C$ is irreducible and one-nodal, and $F$ is tangent to $C$ at a smooth point. 
By considering the triple covers of ${\F}_4$ branched over $C+F+\Sigma$,  
we obtain a period map 
$\mathcal{P}\colon U/{\aut}({\F}_4)\dashrightarrow\mathcal{M}_{12,1}$. 

\begin{proposition}\label{period map (12,1)}
The map $\mathcal{P}$ is birational. 
\end{proposition}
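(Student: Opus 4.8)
The plan is to mimic the degree-one arguments used for the previous propositions in this section (most directly Proposition~\ref{period map (10,2)}), adapting the bookkeeping of labelings to the singularity configuration of $C+F+\Sigma$ in the present case. First I would set up the cover $\widetilde{U}\to U$ that makes the relevant geometric data rigid: here $C$ is one-nodal, so its two branches at the node must be labeled (an $\frak{S}_2$-cover), while the tacnode $F\cap C$ has a unique identification of its local $E_6$-graph with the abstract one (cf.\ the remark after Figure~\ref{dual graph}) so no extra labeling is needed there; likewise the node $F\cap\Sigma$ is already distinguished, as are the branches at $F\cap\Sigma$ and the tacnode, by the irreducible decomposition $C+F+\Sigma$. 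So $\widetilde{U}\to U$ should just be the $\frak{S}_2$-cover labeling the two branches of $C$ at its node.

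Next I would run the recipe of \S\ref{ssec: recipe}: using Proposition~\ref{gene L} (applied away from cusps, which is our situation) define the reference lattice $L$ from $f^{\ast}NS_{{\F}_4}\simeq U(3)$, the $E_6$-lattice coming from the tacnode, the two $A_2$-lattices from the node $F\cap\Sigma$ and the node of $C$, and the classes of the four fixed curves (images $C$, $F$, $\Sigma$, and the fixed curve part produced by the tacnode), obtaining an even $3$-elementary lattice of invariant $(r,a)=(12,1)$. The labeling $\mu$ on $(C,F)\in\widetilde U$ then induces a lattice-marking $j$, hence a lift $\tilde p\colon\widetilde U\to\cove_{12,1}$, which descends (since $\aut({\F}_4)$ acts trivially on $NS_{{\F}_4}$) to $\lift\colon\widetilde U/\aut({\F}_4)\dashrightarrow\cove_{12,1}$. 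Generic injectivity of $\lift$ follows from Lemma~\ref{ample} (the ample class), the Torelli theorem, and Lemma~\ref{recovery} to recover an automorphism $\psi$ of ${\F}_4$ matching the labeled branch data; together with $\dim(U/\aut({\F}_4))=\dim\mathcal{M}_{12,1}=4$, this makes $\lift$ birational.

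Finally I would compare the two projections. The invariant lattice $L$ here should be isometric to $U\oplus E_8\oplus A_2$ (the $E_6$ from the tacnode together with the relations from \eqref{eqn:relation in L/NS_Y} absorbing into $E_8$, paralleling $\mathcal{M}_{10,0}$ vs.\ $\mathcal{M}_{8,1}$), whence $A_L\simeq\Z/3\Z$ and $|{\rm O}(A_L)/\pm1|=1$, so $\cove_{12,1}=\mathcal{M}_{12,1}$; I would confirm this lattice identification by a direct discriminant-form computation. On the other side, the hyperelliptic involution $\iota_C$ of \eqref{eqn: HE invol} lies in the stabilizer of a general $(C,F)\in U$ and exchanges the two branches of $C$ at its node, so $\widetilde U/\aut({\F}_4)\to U/\aut({\F}_4)$ has degree one as well. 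Hence $\deg(\mathcal{P})=1$.

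The main obstacle I anticipate is the precise identification of $L$ (and thus $|{\rm O}(A_L)|$) together with the correct gluing vectors $f_i\in M^{\vee}$ making $L$ integral and $3$-elementary: the tacnode contributes an $E_6$-block whose interaction, via the relation \eqref{eqn:relation in L/NS_Y} among the fixed curves, with the two $A_2$-blocks and $U(3)$ must be worked out carefully, exactly as in Examples~\ref{ex1} and \ref{ex2}. If instead $A_L$ turns out nontrivial, one must check that $\iota_C$ (and the automatic labelings from the irreducible decomposition) still account for the full degree $|{\rm O}(A_L)/\pm1|$ of $\cove_{12,1}\dashrightarrow\mathcal{M}_{12,1}$; I expect this to come out even, so that $\mathcal{P}$ is birational either way.
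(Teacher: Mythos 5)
Your proposal follows essentially the same route as the paper's proof: the same $\frak{S}_2$-cover $\widetilde{U}\to U$ labeling the two branches of $C$ at its node (all other branch data being distinguished by the irreducible decomposition of $C+F+\Sigma$), the same recipe producing a birational lift to ${\cove}_{12,1}$, the identification $L\simeq U\oplus E_8\oplus A_2$ giving ${\rm O}(A_L)=\{\pm1\}$ and hence ${\cove}_{12,1}=\mathcal{M}_{12,1}$, and the hyperelliptic involution $\iota_C$ of \eqref{eqn: HE invol} showing $\widetilde{U}/{\aut}({\F}_4)=U/{\aut}({\F}_4)$, so $\deg(\mathcal{P})=1$. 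The only small slip is your aside that the tacnode's $E_6$-graph has a unique identification with the abstract graph (the paper's remark reserves uniqueness for ramphoid cusps; tacnodes admit two), but since the two branches at $F\cap C$ lie on different irreducible components they are distinguished anyway, which is the correct justification you also give, so the argument is unaffected.
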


\begin{proof}
As before, we consider a double cover $\widetilde{U}\to U$ whose fiber over $(C, F)\in U$ 
corresponds to the labelings of the two branches of $C$ at the node. 
The rest singularities of $C+F+\Sigma$ are the node $F\cap\Sigma$ and the tacnode $F\cap C$, 
where the branches of $C+F+\Sigma$ are distinguished by the irreducible decomposition of $C+F+\Sigma$. 
Following the recipe in \S \ref{ssec: recipe}, 
we will obtain a birational lift $\widetilde{U}/{\aut}({\F}_4)\dashrightarrow{\cove}_{12,1}$ of $\mathcal{P}$.
Since the invariant lattice $L$ is isometric to $U\oplus E_8\oplus A_2$, 
we have ${\rm O}(A_L)\simeq\{\pm1\}$ so that ${\cove}_{12,1}=\mathcal{M}_{12,1}$. 
We also have $\widetilde{U}/{\aut}({\F}_4)=U/{\aut}({\F}_4)$ 
because the hyperelliptic involutions \eqref{eqn: HE invol} give the covering transformation of $\widetilde{U}\to U$. 
\end{proof}

\begin{proposition}\label{rational (12,1)}
The quotient $U/{\aut}({\F}_4)$ is rational. 
Hence $\mathcal{M}_{12,1}$ is rational. 
\end{proposition}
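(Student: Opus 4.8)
The plan is to run the same kind of slice-method argument as in Propositions \ref{rational (10,0)} and \ref{rational (10,2)}, using an ${\aut}({\F}_4)$-equivariant map that records the two distinguished points attached to a pair $(C, F)\in U$: the node of $C$ and the point at which the fiber $F$ is tangent to $C$. Concretely, I would set
\begin{equation*}
\psi : U \longrightarrow {\F}_4\times{\F}_4, \qquad (C, F) \longmapsto ({\rm Sing}(C),\, p),
\end{equation*}
where $p$ is the unique point of contact of $F$ with $C$ (a smooth point of $C$); note that $F$ is recovered from $p$ as the $\pi$-fiber through $p$, so no information is lost. The map $\psi$ is manifestly ${\aut}({\F}_4)$-equivariant. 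Moreover, since $\pi^{-1}(\pi(p))=F$ already meets $C$ with multiplicity $\ge 2$ at $p$, the point ${\rm Sing}(C)$ cannot lie on $F$, so $p$ and ${\rm Sing}(C)$ always lie on distinct $\pi$-fibers, and the image of $\psi$ is contained in a dense open subset of ${\F}_4\times{\F}_4$.

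For a general point $(q, p)$ of that open subset, the fiber $\psi^{-1}(q, p)$ is an open subset of the sub-linear system ${\proj}V\subset|L_{2,0}|$ cut out by the conditions that $C$ be singular at $q$, pass through $p$, and be tangent at $p$ to the $\pi$-fiber through $p$; all of these are linear conditions on $C$, so ${\proj}V$ is genuinely a linear system and the stabilizer $G$ of $(q,p)$ in ${\aut}({\F}_4)$ acts linearly on $V$. By Lemma \ref{stab of (pt, *)}, ${\aut}({\F}_4)$ acts on ${\F}_4\times{\F}_4$ almost transitively with the stabilizer $G$ of a general point connected and solvable. The slice method (cf.~\cite{Do}) applied to $\psi$ then yields $U/{\aut}({\F}_4)\sim {\proj}V/G$, and Miyata's theorem \cite{Miy} gives that ${\proj}V/G$ is rational. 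Hence $U/{\aut}({\F}_4)$ is rational, and by Proposition \ref{period map (12,1)} so is $\mathcal{M}_{12,1}$.

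The only step that needs to be checked with any care is that a general member of ${\proj}V$ actually lies in $U$, i.e.\ that it is irreducible with a single node, located precisely at $q$, and that its intersection with the $\pi$-fiber through $p$ is of multiplicity exactly two at the smooth point $p$. This is a routine dimension count and genericity verification — the relevant dimensions already match, $\dim(U/{\aut}({\F}_4)) = 13 - 9 = 4 = \dim\mathcal{M}_{12,1}$ — and it is entirely parallel to the $g=2$ cases already treated, so I do not expect any genuine obstacle here; the whole difficulty of the statement is simply in choosing the right equivariant map $\psi$ so that the fibers become linear systems.
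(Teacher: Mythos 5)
Your proposal is correct and follows essentially the same route as the paper: the paper applies the slice method to the map $\psi\colon U\to{\F}_4\times{\F}_4$, $(C,F)\mapsto({\rm Sing}(C),\,C\cap F)$, whose general fiber is an open set of the linear system of $L_{2,0}$-curves singular at one point and branched at the other over $\Sigma$ (your tangency condition), and then invokes Lemma \ref{stab of (pt, *)} and Miyata's theorem. Your additional remarks (linearity of the tangency condition, the node lying off $F$, the dimension count) are all correct.
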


\begin{proof}
Consider the ${\aut}({\F}_4)$-equivariant map 
\begin{equation*}
\psi : U\to{\F}_4\times{\F}_4, \qquad (C, F)\mapsto({\rm Sing}(C), C\cap F). 
\end{equation*}
The $\psi$-fiber over a general $(p, q)$ is an open set of the linear system in $|L_{2,0}|$ of 
curves singular at $p$ and branched at $q$ over $\Sigma$.  
Then we apply the slice method for $\psi$, and use Lemma \ref{stab of (pt, *)} and Miyata's theorem. 
\end{proof}

Let $\mathcal{W}\subset\mathcal{X}_2$ be the divisor of Weierstrass points. 
Then the fixed curve map identifies $\mathcal{M}_{12,1}$ birationally with the fibration 
$\mathcal{X}_2\times_{\mathcal{M}_2}\mathcal{W}$ over $\mathcal{M}_2$.


\section{The case $g=1$}\label{sec:g=1}

In this section we study the case $g=1$. 
The cases $k=0, 1$ are beyond the previous method and we have to analyze symmetry by the Weyl groups $W(E_6)$, $W(F_4)$ respectively. 
When $k\geq4$, we have ${\dim}{\moduli}\leq2$ so that it is enough to give a unirational parameter space that dominates ${\moduli}$. 
But for future reference, we shall take extra effort to present degree $1$ period maps.

\subsection{The rationality of $\mathcal{M}_{8,5}$}\label{ssec:(8,5)}

Let us first recall few basic facts about cubic surfaces. 
Let $Y\subset{\proj}^3$ be a smooth cubic surface. 
For each point $p\in Y$, the tangent plane section of $Y$ at $p$ gives the unique $-K_Y$-curve $C_p$ singular at $p$. 
When $C_p$ is irreducible, it is cuspidal at $p$ if and only if 
$p$ lies on the intersection of $Y$ with its Hessian quartic; otherwise $C_p$ is nodal at $p$. 

A \textit{marking} of $Y$ is an isometry $I_{1,6}=\langle1\rangle\oplus\langle-1\rangle^6 \to NS_Y$ of lattices 
which maps $3h-\sum_{i=1}^{6}e_i$ to $-K_Y$, 
where $h, e_1,\cdots, e_6$ is a natural orthogonal basis of $I_{1,6}$. 
Such a marking realizes $Y$ as the blow-up of ${\proj}^2$ at six general points $p_1,\cdots, p_6$,  
for which the pullback of ${\Oplane}(1)$ corresponds to $h$ and 
the $(-1)$-curve over $p_i$ corresponds to $e_i$.  
By that blow-down $Y\to{\proj}^2$, the $-K_Y$-curves are mapped to plane cubics through $p_1,\cdots, p_6$. 
The stabilizer in ${\Or}(I_{1,6})$ of the vector $3h-\sum_ie_i$ is the Weyl group $W(E_6)$. 
It acts transitively on the set of markings of $Y$. 
Equivalently, $W(E_6)$ transforms the ordered point set $(p_1,\cdots, p_6)$ to another one up to ${\PGL}_3$. 
To sum up, the moduli space ${\Mmcub}$ of marked cubic surfaces is identified with the configuration space of six general points in ${\proj}^2$, 
on which $W(E_6)$ acts with the quotient the moduli space ${\Mcub}$ of smooth cubic surfaces.

Now we consider the parameter space 
$U\subset|{\Ospace}(3)|\times{\proj}^3\times|{\Ospace}(1)|$ 
of triplets $(Y, p, H)$ such that 
$({\rm i})$ $Y$ is a smooth cubic surface, 
$({\rm ii})$ $p\in Y$, 
$({\rm iii})$ the $-K_Y$-curve $C_p$ is irreducible and cuspidal at $p$, and 
$({\rm iv})$ the $-K_Y$-curve $C=H|_Y$ is smooth and tangent to $C_p$ at $p$. 
Note that $C$ and $C_p$ do not intersect outside $p$. 
To such a triplet $(Y, p, H)$ we associate the mixed branch $C+\frac{1}{2}C_p$ on $Y$. 
(Strictly speaking, this does not satisfy the conditions on singularity of mixed branch. 
But we can resolve $C+\frac{1}{2}C_p$ following the process \eqref{resol process} to pass to a smooth mixed branch. 
Thus we shall abuse the terminology.) 
By associating Eisenstein $K3$ surfaces as explained in \S \ref{ssec:mixed branch}, 
we obtain a period map 
$\mathcal{P}\colon U/{\PGL}_4 \dashrightarrow \mathcal{M}_{8,5}$.

\begin{proposition}\label{birat (8,5)}
The period map $\mathcal{P}$ is birational. 
\end{proposition}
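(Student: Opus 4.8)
The plan is to run the degree-calculation recipe of \S\ref{ssec: recipe}, with the role played in the earlier cases by a labelled branch curve now played by a \emph{marked} cubic surface. Let $E$ be the Eisenstein lattice of type $(8,5)$ and $L$ the corresponding invariant lattice, of rank $8$ with $A_L\simeq(\Z/3\Z)^5$. By Proposition \ref{gene gene L}, applied to the right resolution of the mixed branch $C+\tfrac12 C_p$ and the resulting quotient surface (which, after contracting the exceptional curves, is $Y$ itself), the lattice $L(X,G)$ is generated by the pull-back of $NS$ of that quotient, the class of the fixed elliptic curve, and the curves over $p$; since the right resolution is a \emph{determined} sequence of blow-ups, a marking of $Y$ --- an isometry $I_{1,6}\to NS_Y$ carrying $3h-\sum e_i$ to $-K_Y$, equivalently a realization of $Y$ as $\proj^2$ blown up at six general points --- propagates canonically to a lattice-marking $j\colon L\to L(X,G)$, and this $j$ extends $\Z/3\Z$-equivariantly to $\Lambda_{K3}$ because it arises from the quotient presentation. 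I record at the outset the caveat of the footnote in \S\ref{ssec: cover}: for $(r,a)=(8,5)$ the surjectivity of $\mathrm U(E)\to\mathrm O(A_E)$ is precisely what is at stake, so a priori ${\cove}_{8,5}\dashrightarrow\mathcal M_{8,5}$ has for Galois group only a subgroup $\Gamma/{\pm1}\le\mathrm O(A_E)/{\pm1}$, and ${\cove}_{8,5}$ parametrizes only those $((X,G),j)$ with $\Z/3\Z$-extendable $j$; this is upgraded at the very end.

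Accordingly I would let $\widetilde U\to U$ be the $W(E_6)$-cover whose fibre over $(Y,p,H)$ is the $W(E_6)$-torsor of markings of $Y$, and construct, exactly as in Example \ref{ex3} and Remark \ref{variant recipe}, a lift $\widetilde{\mathcal P}\colon\widetilde U/\PGL_4\dashrightarrow{\cove}_{8,5}$ of $\mathcal P$ by sending $(Y,p,H,\text{marking})$ to the period of the marked Eisenstein $K3$ surface $((X,G),j)$. Generic injectivity of $\widetilde{\mathcal P}$ is the now standard Torelli argument: if two members share a $\widetilde{\mathcal P}$-period there is a $\Z/3\Z$-equivariant Hodge isometry $\Phi$ of the two $H^2$'s compatible with the markings; an ample class in $L(X,G)$ built from $f^{\ast}(-K_Y)$, the fixed elliptic curve and the curves contracted over $p$ (the analogue of Lemma \ref{ample}) shows $\Phi$ preserves the ample cones, so by the Torelli theorem $\Phi=\varphi^{\ast}$ for a $\Z/3\Z$-equivariant isomorphism $\varphi$; since $f^{\ast}(-K_Y)=f^{\ast}\mathcal O_Y(1)$ is a marking-determined class, $\varphi$ is compatible with the $G$-invariant maps to $\proj^3$ (cf.\ Remark \ref{variant recipe} and Lemma \ref{recovery}) and descends to a projective isomorphism $\psi$ of cubic surfaces with $\psi\circ f=f'\circ\varphi$, necessarily carrying the marking to the marking and sending $p$ to $p'$ (the cusp of the unique cuspidal $-K_Y$-curve meeting the shadow locus) and $H$ to $H'$ (the hyperplane cut out by the fixed curve). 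Thus the $\widetilde{\mathcal P}$-fibres are $\PGL_4$-orbits, so $\widetilde{\mathcal P}$ is generically injective; since $\dim U/\PGL_4=19+3+3-4-15=6=\dim\mathcal M_{8,5}$ and ${\cove}_{8,5}$ is an irreducible ball quotient of the same dimension, $\widetilde{\mathcal P}$ is birational.

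Finally I compare degrees. A general cubic surface has trivial automorphism group, so a general $(Y,p,H)$ has trivial stabilizer in $\PGL_4$ and $\widetilde U/\PGL_4\dashrightarrow U/\PGL_4$ has degree $|W(E_6)|=51840$; on the other hand ${\cove}_{8,5}\dashrightarrow\mathcal M_{8,5}$ has degree $|\Gamma|/2$ with $\Gamma\le\mathrm O(A_E)\simeq\mathrm{GO}(5,3)$ and $|\mathrm{GO}(5,3)|/2=51840$ by \cite{Atlas}. As $\widetilde{\mathcal P}$ is birational, $\deg\mathcal P=(|\Gamma|/2)/51840\le1$; but $\mathcal P$ is dominant, being covered by the dominant map $\widetilde{\mathcal P}$, so $\deg\mathcal P=1$ and simultaneously $|\Gamma|=|\mathrm O(A_E)|$, i.e.\ $\mathrm U(E)\to\mathrm O(A_E)$ is surjective --- which is exactly the statement left open in the footnote. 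The step I expect to be the genuine work is the bookkeeping of the right resolution of the non-standard mixed branch $C+\tfrac12 C_p$ at the point $p$, where a cusp of $C_p$ is met tangentially by $C$: one must check that this blow-up tree is canonical and carries no symmetry beyond what $W(E_6)$ already sees, so that the marking of $Y$ determines $j$ with no residual labelling; that it produces exactly the three isolated fixed points and the curve configuration forced by $(g,k,n)=(1,0,3)$; and that the ample class needed for the Torelli step can be written down explicitly. Granting this, the remainder is the routine recipe.
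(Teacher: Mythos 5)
Your proposal is correct and follows essentially the same route as the paper: the paper also uses markings of the cubic surface (equivalently, the blow-down to $\proj^2$ with the six points $C\cap C_p\setminus\{p\}$ labelled, so that the mixed branch becomes a labelled plane branch $B_1+\frac12 B_2$ as in Remark \ref{variant recipe}) to produce a $W(E_6)$-cover $\widetilde U$, a birational lift to ${\cove}_{8,5}$, and then the comparison $|W(E_6)|=|{\rm GO}(5,3)|/2$ to get both $\deg\mathcal P=1$ and the surjectivity of ${\rm U}(E)\to{\rm O}(A_E)$ addressed in the footnote. The only point where the paper is more precise than your one-line justification "because it arises from the quotient presentation" is the extendability of the geometric markings required by the footnote, which the paper argues via the connectivity of $\widetilde U$ (all marked members are deformation equivalent preserving the markings); otherwise the arguments coincide.
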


\begin{proof}
To calculate the degree of $\mathcal{P}$, we make use of markings of $Y$ in an auxiliary way (cf.~\cite{Ma2} \S 12). 
Let $\mu$ be a marking of $Y$ and $\pi\colon Y\to{\proj}^2$ the corresponding blow-down. 
The  pair $(C, C_p)$ of $-K_Y$-curves is mapped to the pair $(B_1, B_2)=(\pi(C), \pi(C_p))$ of irreducible plane cubics such that 
$({\rm i})$ $B_2$ is cuspidal and  
$({\rm ii})$ $B_1$ is smooth, tangent to $B_2$ at its cusp, and transverse to $B_2$ elsewhere. 
The six intersection points $B_1\cap B_2\backslash{\rm Sing}(B_2)$ are the blown-up points of $\pi$ 
and hence ordered by $\mu$.  
This leads us to consider the space 
$\widetilde{U}\subset|{\Oplane}(3)|^2\times({\proj}^2)^6$ 
of those $(B_1, B_2, p_1,\cdots, p_6)$ such that 
the cubics $B_1, B_2$ satisfy the conditions (i), (ii) above and 
that $B_1\cap B_2\backslash{\rm Sing}(B_2)=\{ p_1,\cdots, p_6\}$. 
Regarding ${\Mmcub}$ as the configuration space of six points in ${\proj}^2$, 
we may identify $\widetilde{U}/{\PGL}_3$ birationally with the moduli space of 
marked cubic surfaces $(Y, \mu)$ with mixed branches $C+\frac{1}{2}C_p$ such that $(Y, p, C)\in U$. 
Thus we have a quotient map $\widetilde{U}/{\PGL}_3 \to U/{\PGL}_4$ by $W(E_6)$, 
where $W(E_6)$ acts on $\widetilde{U}/{\PGL}_3$ by the Cremona transformations. 

The point is that the period map $\mathcal{P}$ lifts to a birational map 
${\lift}\colon \widetilde{U}/{\PGL}_3 \dashrightarrow {\cove}_{8,5}$. 
Indeed, we may view $\widetilde{U}$ as parametrizing mixed branches $B_1+\frac{1}{2}B_2$ on ${\proj}^2$ 
endowed with labelings of the six points $B_1\cap B_2\backslash{\rm Sing}(B_2)$. 
The composition 
\begin{equation}\label{eqn: W(E_6)-symmetry}
\widetilde{U}/{\PGL}_3 \to U/{\PGL}_4 \stackrel{\mathcal{P}}{\to} \mathcal{M}_{8,5}
\end{equation}
associates Eisenstein $K3$ surfaces to those labelled mixed branches in the way of \S \ref{ssec:mixed branch}. 
Then we can follow the idea in Remark \ref{variant recipe}. 
The ordering of the six points induces a marking of $L(X, G)$; 
conversely, from this marking we can recover the labelled mixed branch 
by looking the $(-2)$-curves over the six points and the pullback of ${\Oplane}(1)$ to $X$ or $\hat{X}$. 
This enables us to construct a lift\footnote{
Since the surjectivity of ${\rm U}(E)\to{\Or}(A_E)$ for the Eisenstein lattice $E=U^2\oplus A_2^5$ is yet uncertain at this moment, 
here we should narrow the moduli interpretation of ${\cove}_{8,5}$ as indicated in the footnote in p.\pageref{footnote1}. 
The lattice-markings induced from our labelled mixed branches do meet the requirement there, because, e.g., 
the connectivity of $\widetilde{U}$ ensures that the Eisenstein $K3$ surfaces can be deformed to each other preserving the markings.  
} 
of \eqref{eqn: W(E_6)-symmetry} to ${\cove}_{8,5}$ 
and show that it has degree $1$.

The Galois group of ${\cove}_{8,5}\to\mathcal{M}_{8,5}$ is a subgroup of ${\rm O}(A_E)/\pm1$. 
By \cite{Atlas} we have $|{\rm O}(A_E)|=|{\rm GO}(5, 3)|=2\cdot|W(E_6)|$. 
Comparing the two coverings 
${\cove}_{8,5}\to\mathcal{M}_{8,5}$ and $\widetilde{U}/{\PGL}_3 \to U/{\PGL}_4$, 
we conclude that the Galois group is actually whole ${\rm O}(A_E)/\pm1$ and that $\mathcal{P}$ has degree $1$. 
\end{proof}

\begin{proposition}\label{rational (8,5)}
The quotient $U/{\PGL}_4$ is rational. 
Therefore $\mathcal{M}_{8,5}$ is rational. 
\end{proposition}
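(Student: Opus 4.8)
The plan is to reduce $U/\PGL_4$ to the quotient of a projective space by a connected solvable group and then invoke Miyata's theorem, just as in the earlier cases with small $g$. First I would send a general triplet $(Y,p,H)\in U$ to the complete flag $p\subset\ell_p\subset T_pY$ in $\proj^3$, where $\ell_p$ is the cuspidal tangent line of $C_p$ at $p$ (well defined on a dense open subset of $U$). This map is $\PGL_4$-equivariant and dominant onto the complete flag variety of $\proj^3$, on which $\PGL_4$ acts transitively with stabilizer a Borel subgroup $B$; so by the slice method (cf. \cite{Do}), $U/\PGL_4$ is birational to $U'/B$, where $U'$ is the fibre over the standard flag, i.e.\ the locus of pairs $(Y,H)$ with $p=[0:0:0:1]$ on $Y$, tangent plane $\{x_0=0\}$, cuspidal tangent $\{x_0=x_2=0\}$ of $C_p$ at $p$, and $H$ a plane through the line $\{x_0=x_2=0\}$.

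Next I would show that, birationally, $U'$ is a product $\proj^N\times\proj^1$ carrying a linear $B$-action. Writing $Y$ as $x_0x_3^2+x_3Q(x_0,x_1,x_2)+C(x_0,x_1,x_2)=0$, one checks that $T_pY=\{x_0=0\}$ and that the tangent cone of $C_p=Y\cap\{x_0=0\}$ at $p$ equals $\{Q(0,x_1,x_2)=0\}$; hence the three flag conditions — $p\in Y$, prescribed tangent plane, and $Q(0,x_1,x_2)$ a scalar multiple of $x_2^2$ — are linear in the coefficients of $Y$, so the admissible $Y$ form a linear subsystem $\proj^N\subset|\Ospace(3)|$, which is $B$-invariant because $B$ fixes the flag. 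Meanwhile the planes $H$ through the fixed line form a pencil $\proj^1$ on which $B$ acts through the associated Borel of $\PGL_2$, fixing the point $[T_pY]$ and acting transitively on the complement. Applying the slice method a second time to the projection $U'\to\proj^1$, and noting that the stabilizer $B_0\subset B$ of a general plane of the pencil is again connected and solvable, we obtain $U'/B\sim\proj^N/B_0$. As $B_0$ acts linearly on $\proj^N$, Miyata's theorem \cite{Miy} gives the rationality of $\proj^N/B_0$, hence of $U/\PGL_4$, and together with Proposition \ref{birat (8,5)} this proves $\mathcal{M}_{8,5}$ rational.

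The step most in need of care is the linearity and $B$-equivariance of $\proj^N$ — that cuspidality of $C_p$ at $p$ really becomes a linear condition once the tangent plane is normalized — together with the transitivity of $B$ on the open part of the pencil; the connectedness of $B_0$ is then a routine computation with upper-triangular matrices. It is worth remarking that the more direct route through the (rational) moduli space of cubic surfaces is awkward here, because on a fixed cubic $Y$ the locus of admissible base points $p$ is cut out by the Hessian and is generally not rational; the flag-plus-Miyata argument avoids this.
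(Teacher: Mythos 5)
Your proof is correct, and it runs on the same two engines as the paper's: the slice method over the variety of flags $p\in\ell\subset P\subset{\proj}^3$ (with Borel stabilizer) and Miyata's theorem. The only genuine difference is the treatment of the pencil of planes through $\ell_p$. The paper splits that pencil off first: it regards $U\to V$, where $V$ is the locus of pairs $(Y,p)$ with $C_p$ cuspidal at $p$, as the projectivization of a rank-two ${\SL}_4$-linearized bundle $\mathcal{E}$, and because the center of ${\SL}_4$ acts on $\mathcal{E}$ by $\sqrt{-1}$ it must twist by a line bundle before the no-name lemma gives $U/{\PGL}_4\sim{\proj}^1\times(V/{\PGL}_4)$; only afterwards does it slice $V$ over the flag variety and apply Miyata. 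You instead slice over the flag variety first, reducing to a Borel $B$ acting on (an open subset of) ${\proj}^N\times{\proj}^1$, and then remove the ${\proj}^1$ by a second slice; this sidesteps the linearization subtlety entirely, at the modest cost of checking that $B$ acts with a dense orbit on the pencil and that the stabilizer $B_0$ of a general plane is connected — both indeed routine in upper-triangular coordinates adapted to the flag. Your key computation is also sound: with the flag normalized and $F=ax_0x_3^2+x_3Q+C$, the tangent cone of $C_p$ at $p$ is $\{Q(0,x_1,x_2)=0\}$, so prescribing the cuspidal tangent imposes two further linear conditions, and the resulting linear system is $B$-invariant because the conditions are expressed purely in terms of the fixed flag; the dimension count $14+1+6-15=6=\dim\mathcal{M}_{8,5}$ checks out. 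Either route, combined with Proposition \ref{birat (8,5)}, yields the rationality of $\mathcal{M}_{8,5}$.
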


\begin{proof}
Let $V\subset|{\Ospace}(3)|\times{\proj}^3$ be the locus of pairs $(Y, p)$ such that 
$p$ lies on the intersection of $Y$ with its Hessian quartic. 
We have a natural projection $U\to V$, whose fibre over $(Y, p)$ is an open set of 
the pencil in $|{\Ospace}(1)|$ of planes that contain the tangent line of $C_p$ at $p$. 
Therefore $U$ is birationally the projectivization of an ${\SL}_4$-linearized vector bundle $\mathcal{E}$ over $V$. 
The element $\sqrt{-1}\in{\SL}_4$ acts on $\mathcal{E}$ by the scalar multiplication by $\sqrt{-1}$. 
We tensor $\mathcal{E}$ with the pullback $\mathcal{L}$ of the hyperplane bundle on $|{\Ospace}(3)|$, 
on which $\sqrt{-1}\in{\SL}_4$ acts by the multiplication by $-\sqrt{-1}$. 
Then $\mathcal{E}\otimes\mathcal{L}$ is ${\PGL}_4$-linearized, and 
${\proj}(\mathcal{E}\otimes\mathcal{L})$ is canonically identified with ${\proj}\mathcal{E}$. 
Since ${\PGL}_4$ acts on $V$ almost freely, we may use the no-name lemma for  
$\mathcal{E}\otimes\mathcal{L}$ to obtain 
\begin{equation*}
U/{\PGL}_4 \sim {\proj}(\mathcal{E}\otimes\mathcal{L})/{\PGL}_4 \sim {\proj}^1\times(V/{\PGL}_4). 
\end{equation*}

Next let $W$ be the space of flags $p\in l\subset P\subset{\proj}^3$, 
where $l$ is a line and $P$ is a plane. 
We have the ${\PGL}_4$-equivariant map 
\begin{equation*}\label{eqn: flag}
\varphi : V\to W, \qquad (Y, p)\mapsto(p, T_pC_p, T_pY), 
\end{equation*}
whose fiber is a linear subspace of $|{\Ospace}(3)|$. 
The group ${\SL}_4$ acts on $W$ transitively with a connected and solvable stabilizer. 
Therefore we may apply the slice method to $\varphi$ and then use Miyata's theorem to see that $V/{\PGL}_4$ is rational. 
\end{proof}

We can also use $C+C_p$ as $-2K_Y$-curves to obtain 2-elementary $K3$ surfaces with 
$(r, a, \delta)=(14, 6, 0)$ (cf.~ \cite{Ma2}). 
This turns out to be a canonical construction for 
general members of their moduli space $\mathcal{M}_{14,6,0}$. 
Thus we have a geometric birational map 
$\mathcal{M}_{8,5}\dashrightarrow\mathcal{M}_{14,6,0}$ via $U/{\PGL}_4$. 
Since $\mathcal{M}_{14,6,0}$ is proven to be rational in \cite{Ma2} by another method, this offers a second proof of the rationality of $\mathcal{M}_{8,5}$.

\subsection{The rationality of $\mathcal{M}_{10,4}$}\label{ssec:(10,4)}

We study $\mathcal{M}_{10,4}$ using cubic surfaces with Eckardt points. 
In addition to the anti-canonical model and the blown-up ${\proj}^2$ model as used in \S \ref{ssec:(8,5)},  
we will also use the Sylvester form of (general) smooth cubic surfaces $Y$:
\begin{equation}\label{Sylvester}
\sum_{i=0}^{4} \lambda_iX_i^3 = \sum_{i=0}^{4} X_i = 0, \qquad \lambda_i\in{\C}, 
\end{equation}
where $[X_0,\cdots,X_4]$ is the homogeneous coordinate of ${\proj}^4$. 
This expression of $Y$ is unique up to the permutations of $\lambda_0,\cdots, \lambda_4$ and the scalar multiplications on $(\lambda_0,\cdots, \lambda_4)$. 
For details about Eckardt points, we refer to \cite{Se}, \cite{Na} and \cite{D-G-K}.

Let $Y\subset{\proj}^3$ be a smooth cubic surface. 
A point $p\in Y$ is called an \textit{Eckardt point} if the tangent plane section $C_p=T_pY|_Y$ is a union of three lines meeting at $p$. 
In the Sylvester form \eqref{Sylvester}, $Y$ has such a point if and only if $\prod_{i<j}(\lambda_i-\lambda_j)=0$. 
For simplicity, we may assume $\lambda_3=\lambda_4$. 
Then $p=[0,0,0,1,-1]$ is an Eckardt point of $Y$. 
The surface $Y$ has an involution $\iota$, called \textit{harmonic homology}, given by $X_3\leftrightarrow X_4$ and $X_i\mapsto X_i$ for $i\leq2$. 
If $Y$ is general in the locus $\{ \lambda_3=\lambda_4 \}$, it has no other nontrivial automorphism.  

\begin{lemma}\label{harmonic homology}
The harmonic homology $\iota$ acts trivially on the linear space of anti-canonical forms vanishing at $p$. 
\end{lemma}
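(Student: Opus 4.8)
The plan is to reduce the statement to a one-line linear-algebra computation in the pentahedral coordinates of \eqref{Sylvester}. Under the Sylvester form, $Y$ is realized anti-canonically inside the hyperplane $\Pi=\{\sum_{i=0}^{4}X_i=0\}\simeq\proj^3$ of $\proj^4$, so that $H^0(Y,-K_Y)$ is identified with the space of linear forms $\ell=\sum_{i=0}^{4}a_iX_i$ on $\proj^4$ taken modulo the line spanned by $\sum_iX_i$; an anti-canonical curve is then a hyperplane section of $Y$ inside $\Pi$. In these coordinates the Eckardt point is $p=[0,0,0,1,-1]$, and the harmonic homology $\iota$ is induced by the linear involution of $\proj^4$ that exchanges $X_3$ and $X_4$ and fixes $X_0,X_1,X_2$, exactly as recalled just before the lemma.

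First I would observe that a linear form $\ell=\sum a_iX_i$ restricts to an anti-canonical form vanishing at $p$ precisely when $a_3=a_4$. Next, since $\iota$ is linear, it acts on forms by $\iota^{\ast}\ell=a_0X_0+a_1X_1+a_2X_2+a_4X_3+a_3X_4$; and when $a_3=a_4$ this is literally equal to $\ell$. Hence $\iota^{\ast}$ fixes every linear form vanishing at $p$, and since the relation $\sum_iX_i$ also satisfies $a_3=a_4$, the induced action of $\iota$ on the $3$-dimensional subspace of $H^0(-K_Y)$ cut out by vanishing at $p$ is trivial. This is the entire content of the lemma.

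The only points that need a word of justification are the two set-up facts used above: that the pentahedral embedding $Y\hookrightarrow\Pi$ is the anti-canonical one, so that anti-canonical curves are exactly the hyperplane sections in $\Pi$, and that the harmonic homology is realized by the coordinate transposition $X_3\leftrightarrow X_4$. Both are classical (see \cite{Se}, \cite{D-G-K}); concretely one checks directly that $X_3\leftrightarrow X_4$ preserves $\sum_i\lambda_iX_i^3$ when $\lambda_3=\lambda_4$ and fixes $p$, so it genuinely defines an automorphism of $Y$ fixing the Eckardt point. There is no real obstacle here: the lemma is immediate once the coordinates are in place, and the proof amounts to the computation above together with these standard recollections.
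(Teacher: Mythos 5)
Your proof is correct and follows the same route as the paper: identify $H^0(-K_Y)$ with linear forms on ${\proj}^4$ modulo ${\C}\sum_iX_i$ via the anticanonical (Sylvester) embedding, note that vanishing at $p=[0,0,0,1,-1]$ means $a_3=a_4$, and observe that the transposition $X_3\leftrightarrow X_4$ fixes every such form. You merely spell out the final computation that the paper dismisses as apparent.
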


\begin{proof}
Let $H=\sum_{i=0}^{4}X_i$. 
Since $Y\subset\{ H=0\}$ is anti-canonically embedded, we may identify $H^0(-K_Y)$ with $H^0(\mathcal{O}_{{\proj}^4}(1))/{\C}H$. 
If we express linear forms on ${\proj}^4$ as $\sum_i\alpha_iX_i$, the space in question is identified with the hyperplane $\{ \alpha_3=\alpha_4\}\subset H^0(-K_Y)$. 
Then the assertion holds apparently. 
\end{proof}

Now we consider the locus $U\subset|{\Ospace}(3)|\times{\proj}^3\times|{\Ospace}(1)|$ of triplets $(Y, p, H)$ such that 
(i) $Y$ is smooth, (ii) $p$ is an Eckardt point of $Y$, and (iii) the $-K_Y$-curve $C=H|_Y$ is smooth and passes through $p$. 
By using $C+\frac{1}{2}C_p$ as mixed branches, we obtain Eisenstein $K3$ surfaces with $(g, k)=(1, 1)$. 
We thus have a period map $\mathcal{P}\colon U/{\PGL}_4\to\mathcal{M}_{10,4}$. 

In order to show that $\mathcal{P}$ is birational, we describe $U/{\PGL}_4$ in a different way. 
Let ${\Mcub}$, ${\Mmcub}$ be the moduli spaces defined in \S \ref{ssec:(8,5)}, 
and $\pi\colon{\Mmcub}\to{\Mcub}$ be the quotient map by the Weyl group $W(E_6)$. 
We have a universal family $f\colon\mathcal{Y}\to{\Mmcub}$ of marked cubic surfaces, on which $W(E_6)$ acts equivariantly (cf. \cite{Na} \S 1, \cite{Ma2} \S 12.1). 
Let $\mathcal{E}\subset{\Mcub}$ be the codimension $1$ locus of cubic surfaces having exactly one Eckardt point. 
Then $\pi^{-1}(\mathcal{E})$ has $45$ irreducible components which are permuted transitively by $W(E_6)$. 
Let $\widetilde{\mathcal{E}}\subset\pi^{-1}(\mathcal{E})$ be either one component and $G\subset W(E_6)$ the stabilizer of $\widetilde{\mathcal{E}}$. 
($G$ is the Weyl group $W(F_4)$.) 
The center of $G$ is ${\Z}/2{\Z}$, which acts on $\widetilde{\mathcal{E}}$ trivially and on the restricted family 
\begin{equation*}
f' = f|_{f^{-1}(\widetilde{\mathcal{E}})} : f^{-1}(\widetilde{\mathcal{E}}) \to \widetilde{\mathcal{E}} 
\end{equation*} 
by the harmonic homologies. 
We consider the sub-vector bundle $\mathcal{F}\subset f'_{\ast}K_{f'}^{-1}$ whose fibers are the linear spaces of anti-canonical forms vanishing at the Eckardt points. 
Note that $\mathcal{F}$ is $G$-linearized because $f_{\ast}K_{f}^{-1}$ is $W(E_6)$-linearized. 
Forgetting the markings of cubic surfaces, we see that $U/{\PGL}_4$ is birationally identified with ${\proj}\mathcal{F}/G$. 
Now we can prove  

\begin{proposition}\label{birat (10,4)}
The period map $\mathcal{P}\colon {\proj}\mathcal{F}/G\to\mathcal{M}_{10,4}$ is birational. 
\end{proposition}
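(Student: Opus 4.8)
The strategy is to run, with the group $G=W(F_4)$ and the marked moduli space ${\cove}_{10,4}$, the same argument that established Proposition \ref{birat (8,5)}; the one genuinely new ingredient is the harmonic homology of Lemma \ref{harmonic homology}. As in \S\ref{ssec:(8,5)}, the argument will simultaneously prove the surjectivity of ${\rm U}(E)\to{\rm O}(A_E)$ for $E=U^2\oplus A_2^4$ --- the last Eisenstein lattice for which this was left open in \S\ref{ssec: E lattice}.

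First I would set up the marked parameter space. A point of ${\proj}\mathcal{F}$ is a marked cubic surface $(Y,\mu)$ lying over $\widetilde{\mathcal{E}}$, hence carrying a distinguished Eckardt point $p$ with tritangent section $C_p=T_pY|_Y$ (three concurrent lines), together with a smooth anti-canonical curve $C$ through $p$; outside $p$, the curves $C$ and $C_p$ meet in six points. Via the blow-down $\pi\colon Y\to{\proj}^2$ attached to $\mu$, the pair $(C,C_p)$ becomes a labelled mixed branch $B_1+\frac{1}{2}B_2$ on ${\proj}^2$ with $B_1=\pi(C)$, $B_2=\pi(C_p)$, and the six ordered blown-up points recording the six residual intersections; here the configuration of lines making up $B_2$ stays constant along $\widetilde{\mathcal{E}}$, since it is precisely the datum distinguishing the $45$ components of $\pi^{-1}(\mathcal{E})$. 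As in \S\ref{ssec:(8,5)} one resolves $C+\frac{1}{2}C_p$ by \eqref{resol process} to a genuine smooth mixed branch, so that \S\ref{ssec:mixed branch} attaches to each point of ${\proj}\mathcal{F}$ an Eisenstein $K3$ surface $(X,G)$ of type $(10,4)$, and the ordering of the six points induces, by the mechanism of Remark \ref{variant recipe} together with Proposition \ref{gene gene L}, a lattice-marking $j\colon L\to L(X,G)$. This yields a lift ${\lift}\colon{\proj}\mathcal{F}\dashrightarrow{\cove}_{10,4}$ of the composite ${\proj}\mathcal{F}\to{\proj}\mathcal{F}/G\stackrel{\mathcal{P}}{\to}\mathcal{M}_{10,4}$, where, the surjectivity of ${\rm U}(E)\to{\rm O}(A_E)$ being not yet available, the moduli interpretation of ${\cove}_{10,4}$ is narrowed exactly as in \S\ref{ssec:(8,5)}; connectedness of ${\proj}\mathcal{F}$ ensures that the markings obtained satisfy the narrowed requirement.

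I would then show ${\lift}$ is birational. Generic injectivity is the reverse reconstruction, as in Example \ref{ex3} and \S\ref{ssec:(8,5)}: from $((X,G),j)$ one recovers the $G$-invariant rational map $f\colon X\dashrightarrow{\proj}^3$ from the line bundle $f^{\ast}(-K_Y)$ read off $j$, hence the cubic $Y$, the marking of $NS_Y$ carried by $j$, the six ordered points, the Eckardt point $p$, and the curves $C,C_p$; combined with the Torelli theorem and Lemma \ref{recovery}, this forces two points of ${\proj}\mathcal{F}$ with equal ${\lift}$-period to coincide. Since ${\cove}_{10,4}$ is irreducible and $\dim{\proj}\mathcal{F}=5=\dim\mathcal{M}_{10,4}$, the generically injective ${\lift}$ is birational. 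Finally one compares the vertical degrees. On the source side $G=W(F_4)$ acts on ${\proj}\mathcal{F}$, but its center ${\Z}/2{\Z}$ acts trivially on $\mathcal{F}$ by Lemma \ref{harmonic homology}, so ${\proj}\mathcal{F}\to{\proj}\mathcal{F}/G$ has degree $|W(F_4)|/2$. On the target side ${\cove}_{10,4}\to\mathcal{M}_{10,4}$ is Galois with group $\Gamma\subseteq{\rm O}(A_E)/\pm1$, and $|{\rm O}(A_E)|=|{\rm GO}^{+}(4,3)|=|W(F_4)|$ by \cite{Atlas}. Since ${\lift}$ is birational, going around the square gives $(|W(F_4)|/2)\cdot\deg\mathcal{P}=|\Gamma|\le|W(F_4)|/2$, whence $\deg\mathcal{P}=1$ and $\Gamma={\rm O}(A_E)/\pm1$; the latter is the claimed surjectivity of ${\rm U}(E)\to{\rm O}(A_E)$.

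The step I expect to be the main obstacle is the reverse reconstruction together with the harmonic-homology bookkeeping: one must pin down, inside the marked Eisenstein $K3$ surface, exactly which $(-2)$-curves and divisor classes encode the Eckardt point and the specific triad of lines $C_p$, and verify that the only residual ambiguity in recovering $(Y,\mu,p,C)$ is the harmonic homology --- i.e. precisely the ${\Z}/2{\Z}$ killed on ${\proj}\mathcal{F}$ by Lemma \ref{harmonic homology}. This matching is what makes the degree count $|W(F_4)|/2=|{\rm O}(A_E)/\pm1|$ close up; the identification $|{\rm O}(A_E)|=|{\rm GO}^{+}(4,3)|$ (in particular that $A_E$ is of $+$ type) is, by contrast, only a finite computation with finite quadratic forms in characteristic $3$.
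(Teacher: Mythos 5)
Your proposal is correct and follows essentially the same route as the paper: you identify ${\proj}\mathcal{F}$ birationally with labelled mixed branches $C+\frac{1}{2}\sum_i L_i$ on ${\proj}^2$ with the six residual intersection points ordered, lift $\mathcal{P}$ to a birational map to ${\cove}_{10,4}$ exactly as in Proposition \ref{birat (8,5)} (with the narrowed interpretation of ${\cove}_{10,4}$), and close the degree count using Lemma \ref{harmonic homology} together with $|{\rm GO}^+(4,3)|/2=(4!)^2=|W(F_4)|/2$, obtaining $\deg\mathcal{P}=1$ and the surjectivity of ${\rm U}(E)\to{\rm O}(A_E)$ as a byproduct. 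One phrasing slip, which does not affect the argument: on $Y$ itself $C$ and $C_p$ meet only at the Eckardt point $p$ (the whole intersection number $(-K_Y)^2=3$ is concentrated there); the six residual intersections exist only on the blown-down plane model, where they are precisely the six blown-up points --- which is exactly the labelling your construction actually uses.
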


\begin{proof}
We show that $\mathcal{P}$ lifts to a birational map ${\proj}\mathcal{F}\to{\cove}_{10,4}$. 
Let $V\subset({\proj}^2)^6$ be the locus of six distinct points $(p_1,\cdots,p_6)$ such that 
the three lines $L_i=\overline{p_ip_{i+3}}$ ($1\leq i\leq3$) intersect at one point, say $p$. 
Regarding ${\Mmcub}$ as the configuration space of six points in ${\proj}^2$, 
we have a natural birational identification $V/{\PGL}_3\sim \widetilde{\mathcal{E}}$. 
Therefore, if $\widetilde{U}\subset V\times|{\Oplane}(3)|$ is the locus of those $(p_1,\cdots,p_6, C)$ such that 
$C$ is smooth and passes through $p_1,\cdots,p_6, p$, then ${\proj}\mathcal{F}$ is birationally identified with $\widetilde{U}/{\PGL}_3$. 
We may regard $\widetilde{U}$ as parametrizing mixed branches $C+\frac{1}{2}\sum_iL_i$ endowed with labelings of the six intersection points 
$C\cap\sum_iL_i\backslash p$ that are compatible with the irreducible decomposition of $\sum_iL_i$. 
Then the composition 
\begin{equation*}
\widetilde{U}/{\PGL}_3 \sim {\proj}\mathcal{F} \to {\proj}\mathcal{F}/G \stackrel{\mathcal{P}}{\to} \mathcal{M}_{10,4}
\end{equation*}
maps such a labelled mixed branch $C+\frac{1}{2}\sum_iL_i$ to the Eisenstein $K3$ surface associated as in \S \ref{ssec:mixed branch}. 
Hence by arguing as in the proof of Proposition \ref{birat (8,5)}, 
we will obtain a desired birational lift $\widetilde{U}/{\PGL}_3\to{\cove}_{10,4}$. 

The degree of ${\cove}_{10,4}\to\mathcal{M}_{10,4}$ divides $|{\Or}(A_E)|/2=|{\rm GO}^+(4, 3)|/2=(4!)^2$ 
for the Eisenstein lattice $E=U^2\oplus A_2^4$. 
On the other hand, ${\proj}\mathcal{F}\to{\proj}\mathcal{F}/G$ has degree $|G|/2=|W(E_6)|/90=(4!)^2$ 
because the center of $G$ acts on ${\proj}\mathcal{F}$ trivially by Lemma \ref{harmonic homology}. 
Comparing the two projections ${\cove}_{10,4}\to\mathcal{M}_{10,4}$ and ${\proj}\mathcal{F}\to{\proj}\mathcal{F}/G$, 
we find that $\mathcal{P}$ has degree $1$ and that the Galois group of the former is ${\Or}(A_E)/\pm1$. 
\end{proof} 

\begin{proposition}\label{rational (10,4)}
The quotient ${\proj}\mathcal{F}/G$ is rational. 
Therefore $\mathcal{M}_{10,4}$ is rational. 
\end{proposition}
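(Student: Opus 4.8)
The plan is to peel off the projective-bundle direction of ${\proj}\mathcal{F}\to\widetilde{\mathcal{E}}$ by the no-name lemma, reducing the rationality of ${\proj}\mathcal{F}/G$ to that of the base quotient, which I will identify with $\mathcal{E}$.

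First I would use that the centre $Z\simeq{\Z}/2{\Z}$ of $G=W(F_4)$ acts trivially on $\widetilde{\mathcal{E}}$ and, by Lemma~\ref{harmonic homology}, acts trivially on every fibre of $\mathcal{F}$ as a \emph{linear} map (not merely projectively); its generator is the harmonic homology, which fixes each anti-canonical form through the Eckardt point. Hence the $G$-linearization of the rank $3$ bundle $\mathcal{F}$ factors through $\bar{G}:=G/Z$, making $\mathcal{F}$ a genuine $\bar{G}$-linearized vector bundle over $\widetilde{\mathcal{E}}$, and ${\proj}\mathcal{F}/G={\proj}\mathcal{F}/\bar{G}$.

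Next I would observe that a general marked cubic surface lying over $\mathcal{E}$ has automorphism group exactly $Z$ (by the remark in \S\ref{ssec:(10,4)} that a general $Y$ with $\lambda_3=\lambda_4$ has no other nontrivial automorphism), so that $\bar{G}$ acts almost freely on $\widetilde{\mathcal{E}}$. The no-name lemma (cf.~\cite{Do}) then gives
\begin{equation*}
{\proj}\mathcal{F}/G \;=\; {\proj}\mathcal{F}/\bar{G} \;\sim\; (\widetilde{\mathcal{E}}/\bar{G})\times{\proj}^2 .
\end{equation*}
Since $Z$ acts trivially on $\widetilde{\mathcal{E}}$ we have $\widetilde{\mathcal{E}}/\bar{G}=\widetilde{\mathcal{E}}/G$, and since $W(E_6)$ permutes the $45$ components of $\pi^{-1}(\mathcal{E})$ transitively with $G$ the stabilizer of $\widetilde{\mathcal{E}}$, this quotient is birational to $\mathcal{E}$.

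It then remains to show $\mathcal{E}$ is rational. Writing a general member of $\mathcal{E}$ in its Sylvester form \eqref{Sylvester} with the unique coincidence normalized to $\lambda_3=\lambda_4$, and recalling that the Sylvester expression is unique up to scaling and permutation of the $\lambda_i$, the residual ambiguity is scaling together with the group $\frak{S}_3$ permuting $\lambda_0,\lambda_1,\lambda_2$ (the transposition $\lambda_3\leftrightarrow\lambda_4$ being trivial). Thus $\mathcal{E}\sim{\proj}^3/\frak{S}_3$ with $\frak{S}_3$ permuting three of the four homogeneous coordinates, and this is rational — e.g.\ on the chart $\lambda_3\neq0$ it is $\mathbb{A}^3/\frak{S}_3\simeq\mathbb{A}^3$ via elementary symmetric functions. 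Combining, ${\proj}\mathcal{F}/G\sim\mathcal{E}\times{\proj}^2$ is rational, and $\mathcal{M}_{10,4}$ is rational by Proposition~\ref{birat (10,4)}. The main point needing care is the descent of the $G$-action to $\bar{G}$ on the vector bundle $\mathcal{F}$ — this is exactly why Lemma~\ref{harmonic homology} was isolated beforehand; without it one would need a twisting argument like the $\sqrt{-1}\in{\SL}_4$ trick used in Proposition~\ref{rational (8,5)} — together with the almost-freeness of the $\bar{G}$-action; both are essentially standard.
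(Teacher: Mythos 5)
Your proposal is correct and follows essentially the same route as the paper: Lemma \ref{harmonic homology} to kill the centre and descend the linearization of $\mathcal{F}$ to $G/Z$, the no-name lemma to get ${\proj}\mathcal{F}/G\sim{\proj}^2\times\mathcal{E}$, and the Sylvester form to identify $\mathcal{E}$ birationally with ${\proj}W/\frak{S}_3$ ($W=\{\lambda_3=\lambda_4\}$), which is rational. The extra details you supply (almost-freeness of the $G/Z$-action on $\widetilde{\mathcal{E}}$ via the generic triviality of automorphisms beyond the harmonic homology, and symmetric functions for $\mathbb{A}^3/\frak{S}_3$) are sound refinements of the same argument.
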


\begin{proof} 
By Lemma \ref{harmonic homology}, the center of $G$ acts on $\mathcal{F}$ trivially. 
Replacing $G$ by its central quotient and applying the no-name lemma to the $G$-linearized vector bundle $\mathcal{F}\to\widetilde{\mathcal{E}}$, 
we have 
\begin{equation*}
{\proj}\mathcal{F}/G \sim {\proj}^2\times(\widetilde{\mathcal{E}}/G) \sim {\proj}^2\times\mathcal{E}. 
\end{equation*}
By the Sylvester form \eqref{Sylvester}, the Eckardt locus $\mathcal{E}$ is biratinal to ${\proj}W/\frak{S}_3$ 
where $W=\{ \lambda_3=\lambda_4\}\subset{\C}^5$ and $\frak{S}_3$ acts on $W$ by the permutations of $(\lambda_0, \lambda_1, \lambda_2)$. 
Therefore $\mathcal{E}$ is rational. 
\end{proof}

\subsection{The rationality of $\mathcal{M}_{12,3}$}

We consider curves on ${\F}_1$. 
Let $V\subset|L_{2,2}|$ be the locus of curves $C$ 
which have a cusp at $C\cap\Sigma$ and are smooth elsewhere. 
($C$ is the blow-up of a plane quartic with a ramphoid cusp.) 
Let $U\subset V\times|L_{1,0}|$ be the open set of pairs $(C, H)$ such that 
$H$ is smooth and transverse to $C$. 
For $(C, H)\in U$ we consider the mixed branch $C+\frac{1}{2}(H+\Sigma)$. 
The associated Eisenstein $K3$ surface has invariant $(g, k)=(1, 2)$. 
Hence we obtain a period map 
$\mathcal{P}\colon U/{\aut}({\F}_1)\dashrightarrow\mathcal{M}_{12,3}$. 

\begin{proposition}\label{period map (12,3)}
The period map $\mathcal{P}$ is birational. 
\end{proposition}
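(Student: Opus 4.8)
The plan is to follow the general recipe of \S \ref{ssec: recipe}, adapted to the mixed branch $C+\frac{1}{2}(H+\Sigma)$ on ${\F}_1$, exactly as in the proofs of Propositions \ref{period map (6,4)} and \ref{period map (12,1)}. First I would identify the correct reference lattice $L$ for the invariant lattice $L(X,G)$, which should be the hyperbolic $3$-elementary lattice with $(r,a)=(12,3)$; concretely $L\simeq U(3)\oplus A_2\oplus E_8$ from the list after Lemma \ref{condition E lattice} for $g=1$, and one checks $l(A_L)=3$, consistent with the ball quotient $\mathcal{M}_{12,3}$ having dimension $10-r/2=4=\dim(U/{\aut}({\F}_1))$. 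Using Proposition \ref{gene gene L}/\ref{gene L}, I would write $L(X,G)$ as generated by $f^\ast NS_{{\F}_1}$, the $(-2)$-curves coming from the ramphoid cusp of $C$ (an $E_8$ configuration, whose stars contract to two isolated fixed points) and from the transverse intersections, and the classes of the fixed curves $F_0$ (over $C$), $F_1$ (over $H$). The ramphoid cusp gives a \emph{unique} identification of its exceptional graph with the abstract $E_8$-graph (as emphasized after Proposition \ref{gene L}), so no marking is needed there.

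Next I would build the cover $\widetilde{U}\to U$. The singularities of $C+H+\Sigma$ are: the ramphoid cusp at $C\cap\Sigma$; the tacnodes (or the prescribed transverse points) of $C$ with $H$ and with $\Sigma$; and one should inspect which of these require labelling. As in Example \ref{ex3} and Proposition \ref{period map (6,4)}, the branches at nodes/tacnodes of $B_1$ lying on $B_2=H+\Sigma$ are automatically distinguished by the irreducible decomposition of the shadow part, so the only labelling needed is of the intersection points $C\cap H$ among themselves (an $\frak{S}_m$-cover for the appropriate number $m$ of such points). Following the recipe verbatim — Lemma \ref{ample} gives an ample class invariant under the marking so the Torelli theorem and Lemma \ref{recovery} identify $\tilde{p}$-fibres with ${\aut}({\F}_1)$-orbits — I obtain a generically injective, hence (by the dimension count) birational lift $\widetilde{U}/{\aut}({\F}_1)\dashrightarrow{\cove}_{12,3}$. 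Here one uses that a general $(C,H)$ has trivial stabilizer in ${\aut}({\F}_1)$ (unlike the ${\F}_{2N}$ cases there is no hyperelliptic involution to worry about, since $C$ is not of type $L_{2,0}$ on an even Hirzebruch surface in the relevant sense — this point needs a short verification).

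The final step is the degree comparison: $\deg(\mathcal{P})$ equals $|{\rm O}(A_L)|/2$ divided by the degree of $\widetilde{U}/{\aut}({\F}_1)\dashrightarrow U/{\aut}({\F}_1)$, the latter being $m!$ with $m=|C\cap H|$ as arranged in the construction of $\widetilde{U}$. One computes $|{\rm O}(A_L)|$ from $A_L\simeq(\Z/3\Z)^3$ with its quadratic form, referring to \cite{Atlas} — for $L\simeq U(3)\oplus A_2\oplus E_8$ the relevant group should be ${\rm GO}(3,3)$ with $|{\rm GO}(3,3)|=2\cdot 4!$, so $|{\rm O}(A_L)|/2=4!$, forcing $m=4$ and $\deg(\mathcal{P})=1$. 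The main obstacle I anticipate is precisely the bookkeeping of the intersection pattern of $C+H+\Sigma$ on ${\F}_1$ — getting the number $m$ of $C\cap H$ points right, confirming that $C\cap\Sigma$ is exactly the ramphoid cusp and that $H\cap\Sigma$ is a single transverse node distinguished automatically, and verifying that $({\rm i})$ the genericity assumption and singularity conditions for mixed branches are met after passing to the smooth model and $({\rm ii})$ the resulting invariant really is $(12,3)$ via the numerology $k=k_0+e_6+2e_8$, $n=a_2+d_4+3e_6+4e_8$, $r=2+2a_2+2d_4+6e_6+8e_8$ of \S \ref{ssec:pure branch} (suitably modified for the shadow). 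Once these combinatorial facts are pinned down, the rest is a direct application of the established machinery.
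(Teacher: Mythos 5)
Your overall strategy --- label the four points $C\cap H$ by an $\frak{S}_4$-cover, show the lift to ${\cove}_{12,3}$ is birational, and compare $4!$ with $|{\rm O}(A_L)|/2=|{\rm GO}(3,3)|/2=4!$ using that ${\aut}({\F}_1)$ acts almost freely on $U$ --- is the same as the paper's. But the execution has genuine gaps. The main one: you apply the pure-branch machinery of \S\ref{ssec:pure branch}--\S\ref{ssec: recipe} (Proposition \ref{gene L}, Lemma \ref{ample}, Lemma \ref{recovery}) verbatim on ${\F}_1$, where it is not available: $-\frac{3}{2}K_{{\F}_1}$ is not an integral class, there is no finite triple cover $X\to{\F}_1$, and those statements are proved only for $-\frac{3}{2}K_{{\F}_n}$-curves with $n\in\{0,2,4,6\}$. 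The paper's proof supplies exactly the missing step: after labelling $C\cap H$, one blows up the ``first'' and ``second'' of these nodes and blows down the strict transforms of $H$ and $\Sigma$, transforming $C$ into a bidegree $(3,3)$ curve on ${\proj}^1\times{\proj}^1$ with one node and one ramphoid cusp; the labelling of $C\cap H$ then induces the labelling of the tangents at the node and of the two rulings, and the recipe applies to this pure branch exactly as in Example \ref{ex3}. Without this birational transformation (or a worked-out direct variant as in Remark \ref{variant recipe}), your lifting step --- and hence the identification of $\tilde{p}$-fibres with orbits --- is unjustified.

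Secondly, your description of the geometry would derail the construction of the lattice marking: the ramphoid cusp sits on the plane quartic model, whereas $C\subset{\F}_1$ has an ordinary cusp at $C\cap\Sigma$ through which the shadow $\Sigma$ passes, so the exceptional configuration is not the pure $E_8$ picture of Figure \ref{dual graph}; moreover $H$ and $\Sigma$, being shadow components, yield isolated fixed points rather than a fixed curve $F_1$ over $H$ (the fixed locus consists of the genus-one curve over $C$, two exceptional rational curves, and five isolated points, matching $(g,k,n)=(1,2,5)$). Also, the number $m=|C\cap H|$ cannot be ``forced'' by the desired equality of degrees --- that is circular; it must be computed, and indeed $C\cdot H=(2H_0+2F)\cdot H_0=4$. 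Finally, the lattice you propose, $U(3)\oplus A_2\oplus E_8$, happens to be isometric to the paper's $U\oplus E_6\oplus A_2^2$ (both are the unique even hyperbolic $3$-elementary lattice with $(r,a)=(12,3)$), so the count $|{\rm O}(A_L)|=2\cdot 4!$ is unaffected, but the list after Lemma \ref{condition E lattice} from which you drew it is a list of the Eisenstein lattices $E$, not of the invariant lattices $L$.
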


\begin{proof}
This is analogous to Example \ref{ex3} and Proposition \ref{period map (6,4)}: 
we label the four nodes $C\cap H$ by an $\frak{S}_4$-cover $\widetilde{U}\to U$. 
By blowing-up the "first" and "second" nodes and then 
blowing-down the strict transforms of $H$ and $\Sigma$, 
the curve $C$ is transformed to a bidegree $(3, 3)$ curve $C^{\dag}$ on ${\proj}^1\times{\proj}^1$ 
which has a node and a ramphoid cusp. 
The given labeling of $C\cap H$ induces that of 
the tangents of $C^{\dag}$ at the node, and of the two rulings of ${\proj}^1\times{\proj}^1$. 
Then we see as in Example \ref{ex3} that $\mathcal{P}$ lifts to a birational map 
$\widetilde{U}/{\aut}({\F}_1)\dashrightarrow{\cove}_{12,3}$. 
The group ${\aut}({\F}_1)$ acts on $U$ almost freely, 
so that $\widetilde{U}/{\aut}({\F}_1)$ is an $\frak{S}_4$-cover of $U/{\aut}({\F}_1)$. 
On the other hand, we have 
${\rm O}(A_L)\simeq{\rm GO}(3, 3)$ for the invariant lattice $L=U\oplus E_6\oplus A_2^2$. 
Then $|{\rm O}(A_L)|=2\cdot4!$ by \cite{Atlas}, and hence $\mathcal{P}$ has degree $1$. 
\end{proof}

\begin{proposition}\label{period map (12,3)}
The quotient $U/{\aut}({\F}_1)$ is rational. 
Therefore $\mathcal{M}_{12,3}$ is rational. 
\end{proposition}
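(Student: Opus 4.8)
\smallskip

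The plan is to split off a projective space by the no-name lemma and then apply the slice method once to the residual surface. Recall first that $\aut(\F_1)$ is the parabolic subgroup of $\PGL_3=\aut(\proj^2)$ fixing a point $p$, that blowing down $\Sigma$ identifies a member $C\in V$ with a plane quartic having a ramphoid cusp at $p$, and that a general such quartic has no nontrivial automorphism; hence $\aut(\F_1)$ acts almost freely on $V$ and on $U$. A direct count of conditions gives $\dim V=8$, so that $V/\aut(\F_1)$ is a surface of dimension $8-6=2$ (consistent with $\dim\mathcal{M}_{12,3}=4$ and Proposition~\ref{period map (12,3)}).

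The first step is to use the projection $U\to V$, $(C,H)\mapsto C$, whose general fibre is the open locus of smooth $H\in|L_{1,0}|\simeq\proj^2$ transverse to $C$. Twisting the trivial bundle $V\times H^0(\F_1,L_{1,0})$ by a suitable power of the tautological bundle on $|L_{2,2}|\supset V$, exactly as in the proof of Proposition~\ref{rational (8,5)}, yields an $\aut(\F_1)$-linearized vector bundle over $V$ whose projectivization is $V\times\proj^2$. Since $\aut(\F_1)$ acts almost freely on $V$, the no-name lemma (cf.~\cite{Do}) gives $U/\aut(\F_1)\sim\proj^2\times(V/\aut(\F_1))$, so it remains to prove that the surface $V/\aut(\F_1)$ is rational.

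For that I would slice $V$ over the cusp point together with its cuspidal tangent. Sending $C\in V$ to $q:=C\cap\Sigma\in\Sigma$ together with the cuspidal tangent direction $\tau_C\in\proj(T_q\F_1)$ defines an $\aut(\F_1)$-equivariant map from $V$ to the rational surface $S\subset\proj(T\F_1)|_\Sigma$; arguing as in Lemma~\ref{linear system}$(1)$ and Lemma~\ref{stab of (pt, *)}, the group $\aut(\F_1)$ acts on $S$ almost transitively with connected solvable stabiliser $G$ of dimension $4$. By the slice method $V/\aut(\F_1)\sim\psi^{-1}(q,\tau)/G$ for a general $(q,\tau)\in S$, and the fibre $\psi^{-1}(q,\tau)$ is an open subset of the \emph{linear} system $\proj(W)\subset|L_{2,2}|$ cut out by the vanishing of a section to order $\ge 2$ at $q$ with tangent cone proportional to $\tau^{2}$. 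As $G$ is solvable, its projective action on $\proj(W)$ lifts to a linear action on $W$, so Miyata's theorem~\cite{Miy} shows $\proj(W)/G$ is rational. Hence $V/\aut(\F_1)$ is rational, and therefore so are $U/\aut(\F_1)$ and $\mathcal{M}_{12,3}$.

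The point requiring the most care is keeping track of which loci are genuinely linear: the locus of $L_{2,2}$-curves cuspidal at a \emph{fixed} $q\in\Sigma$ is only a quadric hypersurface (of rank three) in a projective space, not a linear subspace, which is precisely why one must also fix the cuspidal tangent before Miyata's theorem can be applied. Alternatively, one may stop after the no-name step and simply observe that $V/\aut(\F_1)$ is a unirational surface — $V$ being dominated by a linear-system bundle over $S$ — and hence rational; the reduction above, however, stays closer to the methods used elsewhere in the paper. Checking the almost-freeness of the $\aut(\F_1)$-action and the linearizability of the bundles involved is routine.
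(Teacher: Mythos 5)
Your argument is correct and reaches the same endpoint as the paper (a linear system modulo a connected solvable group, killed by Miyata's theorem), but the decomposition is organized differently. The paper never invokes the no-name lemma here: it applies the slice method directly to $U$ over $\Sigma\times|L_{1,0}|$, normalizing $H=H_0$ and ${\rm Sing}(C)=p_0$ simultaneously to get a stabilizer $G_1\simeq\C^{\times}\times(\C^{\times}\ltimes\C)$, and then slices a second time over ${\proj}T_{p_0}{\F}_1$ (exactly your observation that only after fixing the cuspidal tangent does the fibre become linear) to land on ${\proj}V/G_2$ with $G_2\simeq\C^{\times}\ltimes\C$. You instead peel off the $|L_{1,0}|$-factor by the no-name lemma and then perform a single slice of $V$ over the incidence surface $(q,\tau)$; the two routes differ only in whether $H$ is handled by normalization or by a bundle argument, and both are within the paper's standard toolkit. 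Your Castelnuovo shortcut (the residual quotient is a unirational surface, hence rational) is a legitimate further simplification that the paper does not use here, though it uses the analogous observation for $\mathcal{M}_{16,3}$ and $\mathcal{M}_{18,2}$. The one place where your write-up is more optimistic than warranted is the almost-freeness of $\aut(\F_1)$ on $V$, which your no-name step genuinely requires (the paper only asserts, and only needs, almost-freeness on $U$). It is true, but not quite "routine": for a quartic $Q$ with a ramphoid cusp the tangent line at the cusp meets $Q$ only there with multiplicity $4$, so the pullback of $\Oplane(1)$ to the normalization $(E,q')$ is forced to be $\sheaf_E(4q')$, which \emph{is} preserved by the elliptic involution; triviality of the stabilizer therefore cannot be read off from the line bundle alone and one must check that the involution does not preserve the embedding net $\langle 1,\wp,\wp^2+\alpha\wp'\rangle$ with $\alpha\neq0$ (it negates $\alpha$). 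With that point supplied, your proof is complete.
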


\begin{proof}
We first apply the slice method to the ${\aut}({\F}_1)$-equivariant map 
\begin{equation*}
\psi : U \to \Sigma \times |L_{1,0}|, \qquad (C, H)\mapsto({\rm Sing}(C), H). 
\end{equation*}
By Lemma \ref{linear system} $(2)$, ${\aut}({\F}_1)$ acts on $\Sigma \times |L_{1,0}|$ almost transitively. 
If we normalize $H$ to be $H_0$, and ${\rm Sing}(C)$ to be the point $p_0=(0, 0)$ in $U_1$, 
then the stabilizer $G_1$ of $(p_0, H_0)\in\Sigma \times |L_{1,0}|$ is 
\begin{equation*}
G_1 = \{ g_{\alpha,0}\}_{\alpha\in{\C}^{\times}} \times ( \{ h_{\beta}\}_{\alpha\in{\C}^{\times}}\ltimes\{ i_{\lambda}\}_{\lambda\in{\C}}) 
\simeq {\C}^{\times} \times ({\C}^{\times}\ltimes{\C}), 
\end{equation*}
where $g_{\alpha,0}$, $h_{\beta}$, $i_{\lambda}$ are as defined in \eqref{$R$-action in coordinate}--\eqref{auto coord 4}. 
The fiber $\psi^{-1}(p_0, H_0)$ is regarded as a (nonlinear) sublocus of $|L_{2,2}|$. 
Then we have 
$U/{\aut}({\F}_1)\sim \psi^{-1}(p_0, H_0)/G_1$. 

Next we apply the slice method to the $G_1$-equivariant map 
\begin{equation*}
\phi : \psi^{-1}(p_0, H_0)\to{\proj}T_{p_0}{\F}_1,  \quad C\mapsto T_{p_0}C, 
\end{equation*}
where $T_{p_0}C$ denotes the unique tangent of $C$ at $p_0$. 
A general $\phi$-fiber is an open set of a \textit{linear} system ${\proj}V\subset|L_{2,2}|$. 
Since $G_1$ acts on ${\proj}T_{p_0}{\F}_1$ almost transitively, 
we have $\psi^{-1}(p_0, H_0)/G_1 \sim {\proj}V/G_2$ for the stabilizer $G_2\subset G_1$ of a general point of ${\proj}T_{p_0}{\F}_1$. 
If we use $y_1^{-1}x_1$ as the inhomogeneous coordinate of ${\proj}T_{p_0}{\F}_1$, 
then $g_{\alpha,0}$ acts on ${\proj}T_{p_0}{\F}_1$ by $\alpha$, 
$h_{\beta}$ by $\beta$, and 
$i_{\lambda}$ trivially. 
This shows that $G_2$ is isomorphic to ${\C}^{\times}\ltimes{\C}$. 
Hence ${\proj}V/G_2$ is rational by Miyata's theorem. 
\end{proof}

\subsection{The rationality of $\mathcal{M}_{14,2}$}

We consider curves on ${\F}_2$. 
Let $U\subset|L_{2,0}|\times|L_{0,2}|$ be the open set of pairs $(C, F_1+F_2)$ 
such that $C$ and $F_1+F_2$ are smooth and transverse to each other. 
We associate the $-\frac{3}{2}K_{{\F}_2}$-branch $C+F_1+F_2+\Sigma$ 
to obtain a period map 
$U/{\aut}({\F}_2)\dashrightarrow\mathcal{M}_{14,2}$. 
In Example \ref{ex2} we proved that this map is birational. 

\begin{proposition}\label{period map (14,2)}
The quotient $U/{\aut}({\F}_2)$ is rational. 
Hence $\mathcal{M}_{14,2}$ is rational. 
\end{proposition}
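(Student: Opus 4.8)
The plan is to mimic the structure used in the neighboring cases (for instance Propositions~\ref{rational (6,4)}, \ref{rational (8,3)}, \ref{rational (12,3)}): realize $U/{\aut}({\F}_2)$ as a slice quotient that eventually becomes a quotient ${\proj}V/G$ with $G$ connected and solvable, then invoke Miyata's theorem~\cite{Miy}, or else peel off a projective-space factor via the no-name lemma and handle a small-dimensional residual quotient directly. Since $\mathcal{M}_{14,2}$ has dimension $10-r/2=3$, $U/{\aut}({\F}_2)$ is $3$-dimensional, and any reduction to a quotient of dimension $\le 2$ by a linear group action of the right type finishes the proof.

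First I would exploit the product structure $U\subset|L_{2,0}|\times|L_{0,2}|$. The factor $|L_{0,2}|$ is small: a member $F_1+F_2$ is a pair of $\pi$-fibers, i.e.\ an unordered pair of points on $\Sigma\simeq{\proj}^1$, so $|L_{0,2}|\simeq{\proj}^2$ (the symmetric square of ${\proj}^1$), and the pair of two distinct fibers is normalized by ${\aut}({\F}_2)$. Following the recipe in Lemma~\ref{linear system} and the pattern of Proposition~\ref{rational (6,4)}, I would apply the slice method to the ${\aut}({\F}_2)$-equivariant projection $U\to|L_{0,2}|$ (or, after composing with $\varphi$ of \eqref{average} on the first factor as in Proposition~\ref{rational (6,2)}, to $U\to|L_{1,0}|\times|L_{0,2}|$). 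The group ${\aut}({\F}_2)$ acts almost transitively on $|L_{0,2}|$, with the stabilizer $G$ of a general pair of fibers $F_0+F_\infty$ being the subgroup $\{g_{\alpha,s}\}\rtimes\langle\iota\rangle$ whose image in ${\aut}(\Sigma)\simeq{\PGL}_2$ is the normalizer of a maximal torus, hence $G$ is an extension of $\frak{S}_2$ by a connected solvable group. Thus $U/{\aut}({\F}_2)\sim W/G$, where $W\subset|L_{2,0}|$ is the (open subset of the) slice, namely the ${\proj}^5$ of $L_{2,0}$-curves with $G$ acting linearly.

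At this stage I would normalize $F_0=\{x_3=0\}$, $F_\infty=\{x_1=0\}$ and decompose the $G$-representation $H^0(L_{2,0})$ using the coordinate description \eqref{def eq}: writing a curve as $\sum_{i=0}^2 f_i(x_3)y_3^{2-i}$ with $\deg f_i\le 2i$ (since $n=2$), the torus $\{h_\beta\}$ scales monomials, $\{g_{\alpha,0}\}$ scales $y_3$, and $\iota$ exchanges $x_3\leftrightarrow x_3^{-1}$, so the weight spaces pair up into a sum of $1$- and $2$-dimensional $G$-subrepresentations. As in Propositions~\ref{rational (6,2)} and \ref{rational (8,3)}, I would split off the $\{g_{\alpha,s}\}$-isotypic pieces (the $f_0$ part and the linear-in-$x_3$ term of $f_1$) to reduce, via the no-name lemma for an equivariant vector bundle, to a quotient ${\proj}W'/(\frak{S}_2\ltimes{\C}^\times)$ of dimension $\le 2$, which is rational because a torus-and-reflection quotient of a projective space of dimension $\le 2$ is rational (two-dimensional unirational is rational, and one can also see it directly). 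Rationality of $\mathcal{M}_{14,2}$ then follows from Example~\ref{ex2}.

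The main obstacle is the bookkeeping of the $G$-action on $H^0(L_{2,0})$ and the verification that, after stripping off the manifestly Miyata-type (connected solvable) sub-bundles, the leftover quotient really has dimension $\le 2$ and is by a group acting linearly on a projective space — so that the ``two-dimensional quotients are rational'' principle applies. One has to be careful that the conditions (transversality of $C$ to $F_1+F_2$, $C$ and $F_1+F_2$ smooth, $C$ disjoint from $\Sigma$) are genuinely generic on the slice so the quotient identifications are birational, and that the $\frak{S}_2$ swapping $F_1$ and $F_2$ is accounted for exactly once. If isolating a clean linear slice proves awkward, the fallback is to first apply $\varphi:|L_{2,0}|\dashrightarrow|L_{1,0}|$ as in Proposition~\ref{rational (6,2)}, reducing to the linear subsystem $\{f_1\equiv 0\}$ with the smaller stabilizer of $(H_0,F_1+F_2)$, which is connected solvable times $\frak{S}_2$ and directly amenable to the no-name plus Miyata argument.
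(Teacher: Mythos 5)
Your fallback route is precisely the paper's proof: apply the slice method to $\psi=(\varphi,\mathrm{id})\colon U\to|L_{1,0}|\times|L_{0,2}|$, so that $U/{\aut}({\F}_2)\sim{\proj}V/G$ with $V=\{f_1\equiv 0\}$ and $G\simeq{\C}^{\times}\times(\frak{S}_2\ltimes{\C}^{\times})$ as in \eqref{stabilizer section+two fibers}, then split off the line ${\C}y_3^2$ and reduce, exactly as in Proposition~\ref{rational (6,2)}, to a quotient of dimension $\le 2$ which is rational since unirational surfaces are rational. Apart from minor slips in your primary route (the slice over $|L_{0,2}|$ alone is $|L_{2,0}|\simeq{\proj}^8$, not ${\proj}^5$, and the translations $g_{1,s}$ act unipotently so ``isotypic pieces'' is not the right mechanism there -- fixing $H=\varphi(C)$ is what kills them), your proposal is correct and essentially the same as the paper's argument.
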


\begin{proof}
As in the proof of Proposition \ref{rational (6,2)}, we apply the slice method to the ${\aut}({\F}_2)$-equivariant map 
\begin{equation*}
\psi = (\varphi, {\rm id}) : U \to |L_{1,0}|\times|L_{0,2}|, \quad (C, F_1+F_2)\mapsto(H, F_1+F_2), 
\end{equation*}
where $\varphi$ is as defined in \eqref{average}. 
By Lemma \ref{linear system} (2), ${\aut}({\F}_2)$ acts on $|L_{1,0}|\times|L_{0,2}|$ almost transitively. 
If we normalize $H=H_0$ and $F_i=\{ x_i=0\}$, 
the stabilizer $G$ of $(H_0, F_1+F_2)$ is described by the same equation as \eqref{stabilizer section+two fibers}. 
On the other hand, if we identify $H^0(L_{2,0})$ with the linear space $\{ \sum_{i=0}^2 f_i(x_3)y_3^{2-i}\}$ as in \eqref{def eq}, 
the fiber $\psi^{-1}(H_0, F_1+F_2)$ is an open set of the linear subspace ${\proj}V\subset|L_{2,0}|$ defined by $f_1\equiv0$. 
Therefore we have 
\begin{equation*}
U/{\aut}({\F}_2) \sim {\proj}V/G. 
\end{equation*}

Let $W\subset V$ be the hyperplane $\{ f_0=0\}$. 
As in the proof of Proposition \ref{rational (6,2)}, we see that the $G$-representation $V$ decomposes as $V={\C}y_3^2\oplus W$. 
If we consider the $G$-representation $W'=({\C}y_3^2)^{\vee}\otimes W$, 
then ${\proj}V/G$ is birational to $W'/G$. 
Since $W'/G\sim{\C}^{\times}\times({\proj}W'/G)$ and ${\proj}W'/G$ is $2$-dimensional, 
$W'/G$ is rational. 
\end{proof}

\subsection{The rationality of $\mathcal{M}_{16,1}$}

We consider curves on ${\F}_2$. 
Let $U\subset|L_{2,0}|\times|L_{0,1}|^2$ be the locus of triplets $(C, F_1, F_2)$ 
such that $C$ is smooth, $F_1$ is transverse to $C$, and $F_2$ is tangent to $C$. 
Considering the $-\frac{3}{2}K_{{\F}_2}$-branches $C+F_1+F_2+\Sigma$, 
we have a period map 
$\mathcal{P}\colon U/{\aut}({\F}_2)\dashrightarrow\mathcal{M}_{16,1}$. 

\begin{proposition}\label{period map (16,1)}
The map $\mathcal{P}$ is birational. 
\end{proposition}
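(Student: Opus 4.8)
The plan is to run the degree-computation recipe of \S\ref{ssec: recipe}, in close parallel with the proof of Proposition \ref{period map (12,1)}. To a general $(C, F_1, F_2)\in U$ is attached the $-\frac{3}{2}K_{{\F}_2}$-curve $B=C+F_1+F_2+\Sigma$, whose singularities are two nodes on $C\cap F_1$, one node on $F_1\cap\Sigma$, one node on $F_2\cap\Sigma$, and one tacnode at the tangency point of $C$ and $F_2$ (recall $C$ is disjoint from $\Sigma$ and $F_1\cap F_2=\emptyset$). Thus $B$ has $a_2=4$ nodes and $e_6=1$ tacnode, which by the formulas in \S\ref{ssec:pure branch} reproduces the invariant $(r,a)=(16,1)$, in agreement with $\mathcal{M}_{16,1}$. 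The first step is to define $\widetilde U\to U$ to be the double cover whose fiber over $(C,F_1,F_2)$ records an ordering of the two nodes of $C\cap F_1$.

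Next I would check that no further marking is needed: the node on $\Sigma$ lying on $F_1$ is distinguished from the one on $F_2$, the component carrying the tacnode is distinguished, and at every double point of $B$ the two branches lie on distinct components --- here it is essential that $F_1$ and $F_2$ are themselves distinguished, being respectively transverse and tangent to $C$. Consequently, invoking Proposition \ref{gene L} to fix a reference lattice $L$ assembled from $f^{\ast}NS_{{\F}_2}\simeq U(3)$, the four $A_2$-lattices over the nodes, the $E_6$-lattice over the tacnode, and the five fixed-curve classes, each point of $\widetilde U$ induces a lattice-marking $j\colon L\to L(X,G)$ of the associated Eisenstein K3 surface. Feeding this into the recipe --- the ample class of Lemma \ref{ample}, the Torelli theorem, and Lemma \ref{recovery} to descend a K3-isomorphism to an automorphism of ${\F}_2$ --- shows that the resulting lift $\widetilde{\mathcal{P}}\colon\widetilde U/{\aut}({\F}_2)\dashrightarrow{\cove}_{16,1}$ is generically injective; since $\dim U/{\aut}({\F}_2)=2=\dim\mathcal{M}_{16,1}$ and ${\cove}_{16,1}$ is irreducible, $\widetilde{\mathcal{P}}$ is birational.

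It then remains to compare the two projections. The invariant lattice is $L\simeq U\oplus E_8\oplus E_6$, whose discriminant group is ${\Z}/3{\Z}$, so ${\rm O}(A_L)=\{\pm1\}$ and ${\cove}_{16,1}=\mathcal{M}_{16,1}$ by \eqref{proj degree}. On the other hand the hyperelliptic involution $\iota_C$ of \eqref{eqn: HE invol} lies in the stabilizer of $(C,F_1,F_2)$ in ${\aut}({\F}_2)$ --- it fixes $\Sigma$ and maps each $\pi$-fiber to itself, hence preserves $F_1$ and $F_2$ --- and it interchanges the two points of $C\cap F_1$ while fixing the tacnode on $F_2$ and the nodes on $\Sigma$. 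Therefore $\iota_C$ realizes the covering transformation of $\widetilde U\to U$, so $\widetilde U/{\aut}({\F}_2)=U/{\aut}({\F}_2)$, and we conclude $\deg\mathcal{P}=1$.

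The only point requiring genuine care is the combinatorial bookkeeping just above: confirming that the single $\frak{S}_2$ of labelings (the ordering of $C\cap F_1$) is really the whole ambiguity --- i.e.\ that $F_1,F_2$ and all branches of $B$ are pinned down by the irreducible decomposition --- and, dually, that $\iota_C$ stabilizes the entire triple $(C,F_1,F_2)$ and fixes rather than moves the tacnode, so that it kills exactly that $\frak{S}_2$ and leaves no residual factor in $\deg\mathcal{P}$. Everything else is a direct application of the machinery of \S\ref{ssec: recipe}.
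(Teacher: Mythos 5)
Your proposal is correct and follows essentially the same route as the paper: the same double cover $\widetilde U\to U$ labeling the two points $C\cap F_1$, the same observation that all other data are distinguished by the irreducible decomposition (with $F_1$, $F_2$ told apart by transversality versus tangency), the same lattice $L\simeq U\oplus E_6\oplus E_8$ giving ${\rm O}(A_L)=\{\pm1\}$ and hence ${\cove}_{16,1}=\mathcal{M}_{16,1}$, and the same use of the hyperelliptic involution $\iota_C$ to identify $\widetilde U/{\aut}({\F}_2)$ with $U/{\aut}({\F}_2)$. Your extra bookkeeping (singularity count, dimension check, behavior of $\iota_C$ on the tacnode and the nodes on $\Sigma$) simply spells out what the paper leaves implicit.
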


\begin{proof}
We consider a double cover $\widetilde{U}\to U$ to label the two points $C\cap F_1$. 
The rest datum for $C+F_1+F_2+\Sigma$ are a priori labelled: 
$F_1$ and $F_2$ are distinguished by their intersection with $C$, 
and the two branches at each (tac)node of $C+F_1+F_2+\Sigma$ 
are distinguished by the irreducible decomposition of $C+F_1+F_2+\Sigma$. 
Thus we will obtain a birational lift 
$\widetilde{U}/{\aut}({\F}_2)\dashrightarrow{\cove}_{16,1}$ of $\mathcal{P}$. 
We have $\widetilde{U}/{\aut}({\F}_2)=U/{\aut}({\F}_2)$ due to the hyperelliptic involutions \eqref{eqn: HE invol} of $C$. 
We also have ${\cove}_{16,1}=\mathcal{M}_{16,1}$ 
because ${\Or}(A_L)=\{ \pm1\}$ for the invariant lattice $L=U\oplus E_6\oplus E_8$. 
\end{proof}

Since $U$ is rational and $\mathcal{M}_{16,1}$ has dimension $2$, we see that  

\begin{proposition}\label{rational (16,1)}
The space $\mathcal{M}_{16,1}$ is rational. 
\end{proposition}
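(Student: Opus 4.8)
The plan is to reduce the statement to Proposition \ref{period map (16,1)} together with the Castelnuovo rationality criterion. By Proposition \ref{period map (16,1)} the period map $\mathcal{P}$ is birational, so $\mathcal{M}_{16,1}$ is birationally equivalent to the quotient $U/{\aut}({\F}_2)$; hence it suffices to prove that $U/{\aut}({\F}_2)$ is rational.

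First I would observe that $U$ is a Zariski-open subset of the product of projective spaces $|L_{2,0}|\times|L_{0,1}|\times|L_{0,1}|$, so $U$ is rational (in particular nonempty and irreducible), and that the quotient map $U\dashrightarrow U/{\aut}({\F}_2)$ is dominant; therefore $U/{\aut}({\F}_2)$ is unirational. Next I would record the dimension count: for $(r,a)=(16,1)$ one has $\dim\mathcal{M}_{16,1}=10-r/2=2$, so $U/{\aut}({\F}_2)$ is a unirational surface. A unirational complex surface is rational by Castelnuovo's theorem, and so $U/{\aut}({\F}_2)$, and with it $\mathcal{M}_{16,1}$, is rational.

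I expect no real obstacle here: the one substantial step, the birational identification of $\mathcal{M}_{16,1}$ with the explicit quotient $U/{\aut}({\F}_2)$, is already supplied by Proposition \ref{period map (16,1)}, and the rest is formal once Castelnuovo's theorem is invoked. The only points I would be careful to check are that the parameter space $U$ is indeed irreducible (so that ``unirational'' is meaningful) and that the dimension is exactly $2$ --- otherwise one could not appeal to the surface case of the rationality criterion.
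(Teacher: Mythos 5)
Your overall strategy is exactly the paper's: Proposition \ref{period map (16,1)} identifies $\mathcal{M}_{16,1}$ birationally with $U/{\aut}({\F}_2)$, which is a unirational surface ($\dim = 10 - 16/2 = 2$), hence rational by Castelnuovo. The one flaw is your justification that $U$ is rational: $U$ is \emph{not} a Zariski-open subset of $|L_{2,0}|\times|L_{0,1}|\times|L_{0,1}|$, because the requirement that $F_2$ be tangent to $C$ is a closed, codimension-one condition --- for a fixed smooth $C$ only the finitely many fibers over the branch points of $C\to{\proj}^1$ are tangent to $C$. So $U$ is an open subset of the tangency divisor, and its rationality needs a separate (if short) argument. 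One repair: the ${\aut}({\F}_2)$-equivariant map $U\to{\F}_2$ sending $(C,F_1,F_2)$ to the tangency point $C\cap F_2$ has fibers which are open subsets of $({\proj}V)\times|L_{0,1}|$, where ${\proj}V\subset|L_{2,0}|$ is the \emph{linear} subsystem of curves passing through the given point with the fiber direction as tangent; hence $U$ is birational to a projective bundle over ${\F}_2$ and is rational. With this correction your proof is complete and coincides with the paper's, which simply asserts the rationality of $U$ without comment.
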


By associating to $(C, F_1, F_2)$ the elliptic curve $(C, F_2\cap C)$ with a point $p\in F_1\cap C$, 
we obtain a birational map from $\mathcal{M}_{16,1}$ to the Kummer modular surface for ${\SL}_2({\Z})$, 
whose projection to the modular curve gives the fixed curve map.

\subsection{The rationality of $\mathcal{M}_{18,0}$}

We consider curves on ${\F}_2$. 
Let $U\subset|L_{2,0}|\times|L_{0,2}|$ be the locus of pairs $(C, F_1+F_2)$ 
such that $C$ is smooth, $F_1\ne F_2$, and both $F_i$ are tangent to $C$. 
We obtain a period map 
$\mathcal{P}\colon U/{\aut}({\F}_2)\dashrightarrow\mathcal{M}_{18,0}$ 
by considering the $-\frac{3}{2}K_{{\F}_2}$-branches $C+F_1+F_2+\Sigma$. 

\begin{proposition}\label{period map (18,0)}
The map $\mathcal{P}$ is birational. 
\end{proposition}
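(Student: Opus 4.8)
Since the invariant lattice for $(r,a)=(18,0)$ is the even unimodular lattice $L\simeq U\oplus E_8^2$, we have $\mathrm{O}(A_L)=\{1\}$, so that ${\cove}_{18,0}=\mathcal{M}_{18,0}$ and no Galois twist enters. The plan is to run the recipe of \S\ref{ssec: recipe} essentially verbatim from Example \ref{ex2}, and then to settle the degree count by exhibiting an extra automorphism of ${\F}_2$.

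First I would introduce the $\frak{S}_2$-cover $\widetilde U\to U$ labelling the two fibres $(F_1,F_2)$. This single labelling labels all relevant data of the branch $B=C+F_1+F_2+\Sigma$: it distinguishes the two nodes $F_i\cap\Sigma$ and the two tacnodes $F_i\cap C$, while at each of these points the two local branches are already distinguished by the irreducible decomposition of $B$ (using that $\Sigma$ is the unique $(-2)$-section and that $C$ is the unique genus-$1$ fixed curve of $(X,G)$). As in Example \ref{ex2}, Proposition \ref{gene L} then dictates the reference lattice, obtained as an overlattice of $f^{\ast}NS_{{\F}_2}\oplus A_2^2\oplus E_6^2$ by adjoining the fixed-curve classes; one checks it is isometric to $U\oplus E_8^2$. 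Each $(B,\mu)\in\widetilde U$ then carries a lattice-marking $j\colon L\to L(X,G)$, and taking periods produces a lift $\widetilde{\mathcal{P}}\colon\widetilde U/{\aut}({\F}_2)\dashrightarrow{\cove}_{18,0}$ of $\mathcal{P}$. Generic injectivity of $\widetilde{\mathcal{P}}$ follows from the standard argument already used repeatedly: Lemma \ref{ample} produces an ample class preserved by any marking-compatible Hodge isometry, the Torelli theorem turns it into an isomorphism of the $K3$ surfaces, and Lemma \ref{recovery} descends that isomorphism to an automorphism of ${\F}_2$ carrying one labelled branch curve onto the other. Since $\dim U/{\aut}({\F}_2)=1=\dim\mathcal{M}_{18,0}$ and ${\cove}_{18,0}$ is irreducible, $\widetilde{\mathcal{P}}$ is birational.

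It then remains to prove $\widetilde U/{\aut}({\F}_2)=U/{\aut}({\F}_2)$, i.e.\ that a general $(C,F_1+F_2)\in U$ admits an automorphism of ${\F}_2$ fixing $C$ and interchanging $F_1$ and $F_2$; note that the hyperelliptic involution $\iota_C$ of \eqref{eqn: HE invol} will \emph{not} do, since it preserves every $\pi$-fibre. Normalise $\varphi(C)=H_0$, so that on $U_3$ the curve $C$ is given by $y_3^2=g(x_3)$ with $g$ of degree $4$ having four simple roots $b_1,b_2,b_3,b_4\in{\proj}^1$, the bases of the four $\pi$-fibres tangent to $C$, where $b_i=\pi(F_i)$ for $i=1,2$. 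Let $\bar\phi\in{\aut}(\Sigma)={\PGL}_2$ be the unique projectivity with $\bar\phi(b_1)=b_2$, $\bar\phi(b_2)=b_1$, $\bar\phi(b_3)=b_4$; it is automatically an involution with $\bar\phi(b_4)=b_3$. Because ${\F}_2={\proj}({\sheaf}\oplus{\sheaf}(2))$ and $\bar\phi^{\ast}{\sheaf}(2)\simeq{\sheaf}(2)$, the projectivity $\bar\phi$ lifts to ${\aut}({\F}_2)$, and by Lemma \ref{linear system} $(2)$ we may choose the lift $\tilde\phi$ to fix $H_0$. Then $\varphi(\tilde\phi(C))=\tilde\phi(H_0)=H_0$, so $\tilde\phi(C)$ is again in normalised form, and its set of tangent fibres is $\bar\phi(\{b_i\})=\{b_i\}$; hence $\tilde\phi(C)$ differs from $C$ only by the ${\C}^{\times}$-action $\{g_{\alpha,0}\}$ of \eqref{$R$-action in coordinate} which rescales $g$. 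Composing $\tilde\phi$ with the appropriate $g_{\alpha,0}$ yields $\phi\in{\aut}({\F}_2)$ with $\phi(C)=C$ and $\phi(F_1)=F_2$. Thus the $\frak{S}_2$-cover becomes trivial over the ${\aut}({\F}_2)$-quotient, and combining with the previous paragraph, $\deg(\mathcal{P})=1$.

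The delicate point is this last step: one must verify that $\bar\phi$ lifts to an automorphism of ${\F}_2$ genuinely preserving $C$ as a subvariety, not merely to an abstract automorphism of the curve $C$. This is exactly what the branch-set bookkeeping above accomplishes; concretely the lift fixing $H_0$ has the form $(x_3,y_3)\mapsto(\bar\phi(x_3),h(x_3)y_3)$, and the matter reduces to checking that $g(\bar\phi(x_3))/g(x_3)$ is the square of a rational function (it is proportional to $x_3^{-4}$ in suitable coordinates), so that a choice of $h$ proportional to the standard ${\F}_2$ transition factor $x_3^{-2}$ works. All remaining verifications are parallel to cases already treated, so I would keep the write-up short by referring to Example \ref{ex2} and the preceding $g=1$ propositions for the repetitive steps.
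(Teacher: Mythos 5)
Your proposal is correct and follows essentially the same route as the paper's proof: the same $\frak{S}_2$-cover labelling $(F_1,F_2)$, the same observation that unimodularity of $L=U\oplus E_8^2$ forces ${\cove}_{18,0}=\mathcal{M}_{18,0}$, and the same key point that a general $(C,F_1+F_2)$ is stabilized by an automorphism of ${\F}_2$ preserving $C$ and exchanging the two tangent fibres. Your explicit construction of that automorphism (the base involution pairing the four branch points, lifted to fix $H_0$ and corrected by $g_{\alpha,0}$) is simply a worked-out version of the paper's one-line remark that it extends a translation automorphism of the elliptic curve $C$.
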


\begin{proof}
As before, we distinguish $F_1$ and $F_2$ by a double cover $\widetilde{U}\to U$ to obtain a birational lift 
$\widetilde{U}/{\aut}({\F}_2)\dashrightarrow{\cove}_{18,0}$ of $\mathcal{P}$. 
Since the invariant lattice $L=U\oplus E_8^2$ is unimodular, ${\cove}_{18,0}$ coincides to $\mathcal{M}_{18,0}$. 
On the other hand, for each $(C, F_1+F_2)\in U$, we have an automorphism of ${\F}_2$ preserving $C$ and exchanging $F_1$ and $F_2$ 
(which is an extension of a translation automorphism of $C$).  
Hence we also have $\widetilde{U}/{\aut}({\F}_2)=U/{\aut}({\F}_2)$. 
\end{proof}

Since $U$ is rational and ${\dim}\mathcal{M}_{18,0}=1$, we have 

\begin{proposition}\label{rational (18,0)}
The space $\mathcal{M}_{18,0}$ is rational. 
\end{proposition}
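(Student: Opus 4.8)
The plan is to combine the birationality of the period map already proven in Proposition~\ref{period map (18,0)} with a direct rational parametrization of the parameter space $U$. By Proposition~\ref{period map (18,0)}, $\mathcal{M}_{18,0}$ is birational to the rational quotient $U/{\aut}({\F}_2)$, and since $(r,a)=(18,0)$ gives ${\dim}\mathcal{M}_{18,0}=10-r/2=1$, this quotient is a curve. Because the natural map $U\dashrightarrow U/{\aut}({\F}_2)$ is dominant, it is enough to show that $U$ is rational: then $U/{\aut}({\F}_2)$ is unirational of dimension one, hence rational by L\"uroth's theorem, and therefore so is $\mathcal{M}_{18,0}$.

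To prove $U$ rational I would put the curves of $|L_{2,0}|$ in hyperelliptic normal form. A smooth $C\in|L_{2,0}|$ satisfies $(C,\Sigma)=2H_0\cdot(H_0-2F)=0$, so $C$ is disjoint from $\Sigma$; equivalently the leading coefficient $f_0$ in the expansion \eqref{def eq} is a nonzero constant, which after rescaling we take to be $1$. Then $C$ is given on $U_3$ by $y_3^{2}+f_1(x_3)y_3+f_2(x_3)=0$ with ${\deg}f_1\leq 2$, ${\deg}f_2\leq 4$, and completing the square it becomes $(2y_3+f_1)^{2}=d(x_3)$ with $d=f_1^{2}-4f_2$, ${\deg}d\leq 4$. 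A $\pi$-fiber $\{x_3=a\}$ is tangent to $C$ precisely when $d(a)=0$ (the tangency along the fiber over $\infty$ corresponds to ${\deg}d<4$, a locus one can avoid using ${\aut}(\Sigma)$ or simply discard as non-generic). Hence a general member $(C,F_1+F_2)\in U$ is the same as the datum of $f_1$ together with a polynomial $d$ of degree $\leq 4$ and an unordered pair $\{a_1,a_2\}$ of distinct roots of $d$; writing $d=(x_3-a_1)(x_3-a_2)e(x_3)$ with ${\deg}e\leq 2$ gives a birational identification of $U$ with $\mathbb{A}^{3}_{f_1}\times\mathbb{A}^{3}_{e}\times({\rm Sym}^{2}\mathbb{A}^{1}\smallsetminus\Delta)$, an open subset of $\mathbb{A}^{8}$ since ${\rm Sym}^{2}\mathbb{A}^{1}\cong\mathbb{A}^{2}$. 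So $U$ is rational.

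There is no genuinely hard step in this argument; the real content lies in Proposition~\ref{period map (18,0)}, which is already established. The only thing that needs care is the routine bookkeeping showing that the correspondence above is generically one-to-one onto $U$ — that the $C$ recovered from $(f_1,e,\{a_1,a_2\})$ is smooth and tangent to exactly the two prescribed fibers, so that the branch curve $C+F_1+F_2+\Sigma$ has the constant combinatorial type (two tacnodes and two nodes) underlying the period map of \S\ref{ssec:pure branch}. All of these are open, dense conditions on $(f_1,e,\{a_1,a_2\})$, so checking them is straightforward. (Alternatively one could run the slice method with respect to $U\to|L_{0,2}|$ and Miyata's theorem as in the other cases, but the normal-form computation above gives the rationality of $U$ outright.)
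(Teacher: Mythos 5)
Your proposal is correct and follows essentially the same route as the paper: Proposition~\ref{period map (18,0)} makes $\mathcal{M}_{18,0}$ birational to $U/{\aut}({\F}_2)$, and the paper likewise concludes rationality from the rationality of $U$ together with ${\dim}\mathcal{M}_{18,0}=1$ (i.e.\ a unirational curve is rational). Your hyperelliptic normal-form parametrization of $U$ is just an explicit verification of the rationality of $U$, which the paper treats as evident, so the two arguments coincide in substance.
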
 

The two points $p_1=F_1\cap C$, $p_2=F_2\cap C$ on the elliptic curve $C$ satisfy $2(p_1-p_2)\sim0$. 
This shows that $\mathcal{M}_{18,0}$ is naturally birational to the elliptic modular curve for $\Gamma_0(2)$ through the fixed curve map.


\section{The case $g=0$}\label{sec:g=0}

In this section we study the case $g=0$. 
The space $\mathcal{M}_{8,7}$ is unirational by the constructions in \cite{A-S} and \cite{A-S-T}, 
where a complete intersection model and an elliptic fibration model for the generic member are given respectively. 
Similarly, $\mathcal{M}_{10,6}$ is unirational by the quartic model given in \cite{A-S}. 
Here we shall present another triple cover construction for those two. 
The space $\mathcal{M}_{12,5}$ is birational to the moduli space of cubic surfaces (\cite{A-C-T}, \cite{D-G-K}), which is rational as is well-known. 

Below we (re)prove that ${\moduli}$ is unirational for $k\leq0$, and rational for $k\geq2$.  
As in \S \ref{sec:g=1}, even when ${\dim}{\moduli}\leq2$, 
we make a detour to present birational period maps.

\subsection{The unirationality of $\mathcal{M}_{8,7}$}\label{ssec:(8,7)}

We construct general members of $\mathcal{M}_{8,7}$ using certain triangles of 
anti-canonical curves on quadric del Pezzo surfaces. 
To begin with, 
let $U\subset|{\Oplane}(4)|\times({\proj}^2)^3$ be the locus of 
quadruplets $(C, p_1, p_2, p_3)$ such that 
$({\rm i})$ $C$ is a smooth quartic, 
$({\rm ii})$ $p_i\in C$, and 
$({\rm iii})$ if $L_i$ is the tangent line of $C$ at $p_i$, 
then $L_1$ (resp. $L_2$, $L_3$) passes through $p_2$ (resp. $p_3$, $p_1$). 
The space $U$ is rational of dimension $14$. 
Indeed, if we use the homogeneous coordinate of ${\proj}^2$ to normalize 
$p_1=[0,0,1]$, $p_2=[0,1,0]$, $p_3=[1,0,0]$ and express quartic forms as 
$\sum_{i,j,k}a_{ijk}X^iY^jZ^k$ with $i+j+k=4$, 
then the conditions $({\rm ii})$ and $({\rm iii})$ are given by 
\begin{equation*}
a_{400}=a_{301}=0, \quad a_{040}=a_{130}=0, \quad a_{004}=a_{013}=0. 
\end{equation*}

For $(C, p_1, p_2, p_3)\in U$, 
the double cover $\pi\colon Y\to{\proj}^2$ branched over $C$ is a quadric del Pezzo surface. 
The curves $D_i=\pi^{\ast}L_i$ are nodal $-K_Y$-curves such that 
\begin{equation*}
D_1\cap D_2={\rm Sing}(D_2), \quad D_2\cap D_3={\rm Sing}(D_3), \quad D_3\cap D_1={\rm Sing}(D_1). 
\end{equation*}
Then the curve $B=D_1+D_2+D_3$ has ordinary triple points at the nodes of $D_i$. 
We consider $\frac{1}{2}B$ as a mixed branch on $Y$ with all components shadow. 
The associated Eisenstein $K3$ surface has three isolated fixed points and no fixed curve. 
Thus we obtain a period map 
$\mathcal{P}\colon U/{\PGL}_3\dashrightarrow \mathcal{M}_{8,7}$.  

\begin{proposition}
The map $\mathcal{P}$ is dominant. 
\end{proposition}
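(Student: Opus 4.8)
The plan is to use the equality of dimensions. Both sides have dimension six: $\dim(U/\PGL_3)=14-8=6$ (the group $\PGL_3$ acts generically freely on $U$, since a general plane quartic has trivial automorphism group and hence no nontrivial projective symmetry fixing $C$ and the $p_i$), while $\dim\mathcal{M}_{8,7}=10-r/2=6$. For a rational map between irreducible varieties of the same dimension, being dominant is equivalent to having a finite general fibre. So it suffices to show that a general Eisenstein $K3$ surface $(X,G)$ in the image of $\mathcal{P}$ determines the quadruplet $(C,p_1,p_2,p_3)$, up to the action of $\PGL_3$, only up to finitely many choices. (We do not claim birationality, and do not expect it: the reconstruction below is genuinely ambiguous, though finitely so; this is what is meant by saying we lack an ``effective'' construction here.)

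Here is the reconstruction. Starting from $(X,G)=\mathcal{P}(C,p_1,p_2,p_3)$, blow up the three isolated fixed points of $G$ to get $\hat{X}$ and form the quotient $\hat{Y}=\hat{X}/G$; both $\hat{X}$ and $\hat{Y}$, together with the projection $\hat{f}\colon\hat{X}\to\hat{Y}$, are canonically attached to $(X,G)$. Running the resolution process of \S\ref{ssec:mixed branch} for the mixed branch $\frac12(D_1+D_2+D_3)$ backwards, one checks that $\hat{Y}$ is exactly the quadric del Pezzo surface $Y$ blown up at the three triple points of $D_1+D_2+D_3$; that these triple points are $\pi^{-1}(p_1),\pi^{-1}(p_2),\pi^{-1}(p_3)$, lying on the ramification curve of $\pi\colon Y\to\proj^2$ (since ${\rm Sing}(D_i)=\pi^{-1}(p_i)$); and that the branch curves of $\hat{f}$ are the three disjoint $(-3)$-curves $\tilde{D}_i$, recovered intrinsically as the components of $\hat{f}(\hat{X}^G)$.

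It remains to recover $(Y;p_1,p_2,p_3;C)$ from $\hat{Y}$ together with the three distinguished curves $\tilde{D}_i$, up to $\PGL_3$ and finite ambiguity. One route: $Y$ is obtained by contracting three disjoint $(-1)$-curves $\varepsilon_i$, each characterised by meeting one $\tilde{D}_j$ with multiplicity $2$ and one other $\tilde{D}_k$ with multiplicity $1$; there are finitely many such contractions, and for each, the anticanonical morphism $Y\to\proj^2$ recovers $C$ as its branch quartic and the tangent lines $L_i=\pi(D_i)$ as the push-downs of the $\tilde{D}_i$, whence the $p_i$ as the tangency points $L_i\cdot C$. Equivalently, the Geiser involution of $Y$ fixes each $D_i$ and hence each triple point, so it lifts to an involution $\hat{\gamma}$ of $\hat{Y}$; for general data $\aut(\hat{Y})$ is finite, and the quotient $\hat{Y}/\hat{\gamma}$ is $\proj^2$ blown up at the three general points $p_1,p_2,p_3$, a del Pezzo surface of degree $6$, which therefore carries only finitely many $(-1)$-curves and hence admits only finitely many contractions to $\proj^2$; each such contraction, with the push-down of the branch curve, yields a candidate quadruplet. (One may also organise all of this in the style of \S\ref{ssec: recipe}, using a $\PGL_3$-cover $\widetilde{U}\to U$ that labels the triple points and the local branches of the $D_i$, a reference lattice modelled on $NS_{\hat{Y}}(3)$ together with the exceptional classes as dictated by Proposition \ref{gene gene L}, and the Torelli theorem with an ample class built as in Lemma \ref{ample}.)

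The hard part is precisely this finiteness. One must verify that, for a general point of $U$, the surface $\hat{Y}$ — which has $K_{\hat{Y}}^2=-1$, is not a del Pezzo surface, and a priori could carry infinitely many $(-1)$-curves — nevertheless determines $Y$, the three triple points, the three branch curves and the tangent-line configuration up to only finitely many possibilities, and that $\aut(\hat{Y})$ (or, in the first route, the set of relevant $(-1)$-curves on $\hat{Y}$) is finite for general data. Granting this, the dimension count completes the proof that $\mathcal{P}$ is dominant.
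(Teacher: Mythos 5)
There is a genuine gap, and you have in fact flagged it yourself: everything in your argument hinges on the finiteness of the general fibre, which you explicitly leave unproven (``granting this\dots''). That finiteness is not innocent here. The surface $\hat{Y}$ has $K_{\hat{Y}}^2=-1$ and $-K_{\hat{Y}}$ is not nef (it is a blow-up of ${\proj}^2$ at ten points), so it is not a generalized del Pezzo surface and may well carry infinitely many $(-1)$-curves; your first route, which counts contractions by intersection numbers with the $\tilde{D}_i$, therefore does not obviously terminate in a finite list. Your second route is circular: the Geiser involution you want to use to cut $\hat{Y}$ down to a degree~$6$ del Pezzo surface is the deck transformation of the very double cover $Y\to{\proj}^2$ you are trying to reconstruct, and the auxiliary claim that ${\aut}(\hat{Y})$ is finite for general data is likewise unproved. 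So as written the proof is incomplete at its only nontrivial point.

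The paper avoids all of this by observing that finiteness is more than is needed: since a positive-dimensional complex variety is uncountable, equality of dimensions plus \emph{countable} general fibres already forces dominance. Countability is then immediate from a single observation: the composite $g\colon\hat{Y}\to Y\to{\proj}^2$ is recovered from the line bundle $H=g^{\ast}{\Oplane}(1)$ as the morphism $\phi_H$ attached to $|H|$, and once $g$ is known one reads off $Y$ (Stein factorization), the branch quartic $C$, the lines $L_i=g(\tilde{D}_i)$ and hence the points $p_i$, all up to ${\PGL}_3$. Thus each $\mathcal{P}$-fibre receives a surjection from a subset of ${\Pic}(\hat{Y})\simeq{\Z}^{11}$, a countable group. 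Your reconstruction of $\hat{Y}$ and of the distinguished $(-3)$-curves $\tilde{D}_i$ from $(X,G)$ is correct and matches the paper's setup; the fix is simply to replace the finiteness target by countability and to parametrize the reconstructions by the class of $H$ in ${\Pic}(\hat{Y})$ rather than by contractions of $(-1)$-curves.
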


\begin{proof}
Since ${\dim}(U/{\PGL}_3)={\dim}\mathcal{M}_{8,7}$, 
it suffices to show that $\mathcal{P}$ has countable fibers. 
The natural projection $g\colon\hat{Y}\to Y\to{\proj}^2$ is recovered from the degree $2$ line bundle $H=g^{\ast}{\Oplane}(1)$ 
as the associated projective morphism $\phi_H\colon\hat{Y}\to|H|^{\vee}$. 
Hence we have surjective maps onto the $\mathcal{P}$-fibers from subsets of ${\Pic}(\hat{Y})\simeq{\Z}^{11}$. 
\end{proof}

In this way, we obtain a proof of 

\begin{corollary}[cf. \cite{A-S}, \cite{A-S-T}]
The space $\mathcal{M}_{8,7}$ is unirational. 
\end{corollary}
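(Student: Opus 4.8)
The plan is to obtain the unirationality of $\mathcal{M}_{8,7}$ as a formal consequence of the two ingredients already assembled above: the rationality of the parameter space $U$, and the dominance of the period map $\mathcal{P}\colon U/{\PGL}_3 \dashrightarrow \mathcal{M}_{8,7}$. First I would record that the rational quotient $U/{\PGL}_3$ is unirational. By construction the quotient morphism $U \dashrightarrow U/{\PGL}_3$ is dominant, hence ${\C}(U/{\PGL}_3) \hookrightarrow {\C}(U)$; and ${\C}(U)$ is purely transcendental over ${\C}$ because $U$ was exhibited as an open subset of the linear space of quartic forms cut out by the six conditions $a_{400}=a_{301}=a_{040}=a_{130}=a_{004}=a_{013}=0$ after the normalization $p_1=[0,0,1]$, $p_2=[0,1,0]$, $p_3=[1,0,0]$. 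Thus a rational model of $U$ is ${\proj}^{14}$, and $U/{\PGL}_3$ admits a dominant rational map from it.

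Next I would simply compose. Dominance of $\mathcal{P}$ gives inclusions of function fields ${\C}(\mathcal{M}_{8,7}) \hookrightarrow {\C}(U/{\PGL}_3) \hookrightarrow {\C}(U) \simeq {\C}(t_1,\dots,t_{14})$, equivalently a dominant rational map ${\proj}^{14} \dashrightarrow \mathcal{M}_{8,7}$, which is precisely the assertion that $\mathcal{M}_{8,7}$ is unirational. No comparison of dimensions is required at this stage: unirationality is preserved under dominant rational images of rational (or merely unirational) varieties, so the chain of dominant maps $U \dashrightarrow U/{\PGL}_3 \stackrel{\mathcal{P}}{\dashrightarrow} \mathcal{M}_{8,7}$ suffices.

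The one step that is not purely formal — and where I expect the genuine content to lie — is the dominance of $\mathcal{P}$, which is the preceding Proposition. There one checks that $\mathcal{P}$ has at most countable fibres: the projection $g\colon \hat{Y}\to{\proj}^2$, and with it the branch configuration $B=D_1+D_2+D_3$ together with the points $p_1,p_2,p_3$, is recovered from the class $H=g^{\ast}{\Oplane}(1)$ living in the countable lattice ${\Pic}(\hat{Y})\simeq{\Z}^{11}$; since ${\dim}(U/{\PGL}_3)={\dim}\mathcal{M}_{8,7}=6$, countable fibres force $\mathcal{P}$ to be dominant. Granting this, the corollary follows immediately from the transitivity of dominance and the stability of unirationality noted above.
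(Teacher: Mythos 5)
Your proof is correct and follows the paper's own route: the corollary is exactly the composition of the rationality of $U$ (hence of a dominant rational map ${\proj}^{14}\dashrightarrow U/{\PGL}_3$) with the dominance of $\mathcal{P}$ established in the preceding proposition via the countable-fibre and equal-dimension argument. Nothing further is needed.
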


\subsection{The unirationality of $\mathcal{M}_{10,6}$}

We consider a degeneration of our model for $\mathcal{M}_{8,5}$. 
Let $U\subset|{\Ospace}(3)|\times({\proj}^3)^2$ be the locus of triplets $(Y, p, q)$ such that 
(i) $Y$ is a smooth cubic surface containing $p$ and $q$, 
(ii) the $-K_Y$-curve $C_p=T_pY|_Y$ is irreducible and cuspidal, and 
(ii) the $-K_Y$-curve $C_q=T_qY|_Y$ is irreducible, nodal, and tangent to $C_p$ at $p$. 
Considering the mixed branches $C_q+\frac{1}{2}C_p$, we obtain Eisenstein $K3$ surfaces in $\mathcal{M}_{10,6}$. 
As before, one checks that the induced period map $U/{\PGL}_4\to\mathcal{M}_{10,6}$ is dominant. 
Since $U$ is rational, we have 

\begin{proposition}[cf. \cite{A-S}]\label{unirat (10,6)}
The space $\mathcal{M}_{10,6}$ is unirational. 
\end{proposition}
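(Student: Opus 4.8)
The plan is to follow the three-step pattern already used for $\mathcal{M}_{8,5}$ in \S\ref{ssec:(8,5)} and for $\mathcal{M}_{8,7}$ in \S\ref{ssec:(8,7)}: (i) verify that the mixed branches $C_q+\frac{1}{2}C_p$ really produce Eisenstein $K3$ surfaces of type $(10,6)$; (ii) show that the period map $\mathcal{P}\colon U/\PGL_4\dashrightarrow\mathcal{M}_{10,6}$ is dominant; (iii) prove that $U$ is a rational variety. Granting (ii) and (iii), the unirationality of $\mathcal{M}_{10,6}$ follows at once, since $\mathbb{C}(\mathcal{M}_{10,6})\hookrightarrow\mathbb{C}(U/\PGL_4)=\mathbb{C}(U)^{\PGL_4}\subset\mathbb{C}(U)$ and $\mathbb{C}(U)$ is purely transcendental.

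For (i), I would first observe that for a generic $(Y,p,q)\in U$ the curves $C_p$ and $C_q$ meet only at $p$: both are anti-canonical, so $C_p\cdot C_q=K_Y^2=3$, whereas at $p$ the smooth branch $C_q$ is tangent to the cuspidal tangent of $C_p$, which by itself forces local intersection multiplicity $3$. Hence $C_q+\frac{1}{2}C_p$ becomes a bona fide mixed branch after the normalising blow-ups of \eqref{resol process}, and the recipe of \S\ref{ssec:mixed branch} produces a surface $X$ with a non-symplectic order-$3$ automorphism $G$; Lemma \ref{abelian case} excludes the abelian case, so $X$ is $K3$. To identify the invariant I would run \eqref{resol process} over the cusp of $C_p$ (exactly as in the $\mathcal{M}_{8,5}$ model of \S\ref{ssec:(8,5)}) and over the node of $C_q$, and read off $(g,k,n)=(0,0,4)$ from \eqref{eqn: (g,k,n)}; equivalently, passing from the $\mathcal{M}_{8,5}$ construction to the present one trades the smooth anti-canonical member there for a one-nodal $C_q$, which adds one isolated fixed point (so $n\colon 3\mapsto 4$ and $r\colon 8\mapsto 10$) and lowers the genus of the fixed curve from $1$ to $0$, giving $(r,a)=(10,6)$. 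By Theorem \ref{moduli} the resulting period map is a morphism in families, and it is $\PGL_4$-invariant, so it descends to $\mathcal{P}\colon U/\PGL_4\dashrightarrow\mathcal{M}_{10,6}$.

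For (ii), since $\dim(U/\PGL_4)=5=\dim\mathcal{M}_{10,6}$ it suffices to check that $\mathcal{P}$ has countable, hence generically finite, fibres. As in \S\ref{ssec:(8,7)} and the $\mathcal{M}_{8,5}$ argument, from $(X,G)$ in the image one forms the blow-up $\hat X$ at the isolated fixed points and the rational quotient $\hat Y=\hat X/G$, and the morphism $\hat Y\to Y\subset\proj^3$ is recovered from the line bundle in the finitely generated group $\Pic(\hat Y)$ that pulls back $\sheaf_{\proj^3}(1)$; this reconstructs $Y$, then $C_p$ with its cusp $p$, then $C_q=T_qY|_Y$ with its node $q$. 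As $\Pic(\hat Y)$ is countable and $\mathcal{M}_{10,6}$ is irreducible, $\mathcal{P}$ is dominant.

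For (iii), the one delicate point is that ``$C_p$ is cuspidal'' is not a linear condition on the cubic $F$ cutting out $Y$; I would circumvent it by fibring $U$ over incidence data that already records the cuspidal tangent. Project $U$ onto the variety $B$ of quadruples $(p,q,P,P')$ with $p\ne q$ in $\proj^3$, $P$ a plane through $p$, and $P'$ a plane through the line $\overline{pq}$ (to serve as $T_pY$ and $T_qY$); $B$ is rational, being a tower of projective-space bundles over $\proj^3\times\proj^3$, and the line $\ell:=P\cap P'$ is a rational function of the base point. Over a general point of $B$ the admissible cubics $F$ are precisely those satisfying the finitely many \emph{linear} conditions $F(p)=F(q)=0$, $T_pY=P$, $T_qY=P'$, together with ``the tangent cone of $P\cap Y$ at $p$ lies in $\mathbb{C}\cdot\ell^2$'' (this being the cuspidality of $C_p$ made linear once $\ell$ is fixed); the remaining requirement that $C_q$ be tangent to $C_p$ at $p$ is then automatic, because $C_q$'s tangent at $p$ is forced to be $P\cap P'=\ell$ and $C_p$'s reduced tangent cone there is $\ell$ by construction. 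Hence $U$ is birational to the projectivisation of a vector sub-bundle of the trivial $H^0(\sheaf_{\proj^3}(3))$-bundle over the rational base $B$, so it is rational. This last bookkeeping, parallel to the proof of Proposition \ref{rational (8,5)}, is the only laborious step and is where I expect the main (if modest) difficulty to lie; everything else is formal once the numerical invariant $(r,a)=(10,6)$ is confirmed.
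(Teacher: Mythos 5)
Your proposal is correct and follows essentially the same route as the paper: the same parameter space $U$ of triplets $(Y,p,q)$ with $C_p$ cuspidal and $C_q$ nodal and tangent to $C_p$ at $p$, the same mixed branches $C_q+\tfrac{1}{2}C_p$, dominance via countable fibres recovered from $\Pic(\hat{Y})$ exactly as in the $\mathcal{M}_{8,7}$ argument, and the rationality of $U$. The only difference is that you supply details (the invariant check $(r,a)=(10,6)$ and the fibration of $U$ over the flag-type base making its rationality explicit) that the paper leaves to the reader with ``as before'' and ``since $U$ is rational''.
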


Using $C_q+C_p$ as $-2K_Y$-branches will give a canonical construction of general 2-elementary $K3$ surfaces of type $(15, 7, 1)$. 
Thus, via  $U/{\PGL}_4$ we have a natural birational map from an intermediate cover of ${\cove}_{10,6}\to\mathcal{M}_{10,6}$ 
to the orthogonal modular variety $\mathcal{M}_{15,7,1}$.

\subsection{The rationality of $\mathcal{M}_{14,4}$}

We consider curves on ${\F}_6$. 
Let $U\subset|L_{2,0}|$ be the locus of reducible curves $H_1+H_2$ such that 
$H_1, H_2$ are smooth members of $|L_{1,0}|$ transverse to each other. 
We associate the $-\frac{3}{2}K_{{\F}_6}$-curves $H_1+H_2+\Sigma$ to obtain 
a period map $\mathcal{P}\colon U/{\aut}({\F}_6)\dashrightarrow\mathcal{M}_{14,4}$. 

\begin{proposition}\label{period map (14,4)}
The map $\mathcal{P}$ is birational. 
\end{proposition}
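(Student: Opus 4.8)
The plan is to run the recipe of \S\ref{ssec: recipe} in the pattern of Example \ref{ex1}. Since $H_1,H_2\in|L_{1,0}|$ are disjoint from $\Sigma$ and transverse to each other, the branch curve $B=H_1+H_2+\Sigma$ has exactly the six points $H_1\cap H_2$, each an ordinary node, as its singularities; thus the assumption \eqref{genericity assumption} holds and, by the numerology of \S\ref{ssec:pure branch}, the associated Eisenstein $K3$ surface has $(g,k,n)=(0,2,6)$, i.e.\ invariant $(r,a)=(14,4)$. First I would form the $(\frak{S}_2\times\frak{S}_6)$-cover $\widetilde U\to U$ parametrizing $B$ together with an ordering of the pair $\{H_1,H_2\}$ and of the six points $H_1\cap H_2$; because at each of these points one branch of $B$ lies on $H_1$ and the other on $H_2$, this ordering already labels the two branches there, so no further marking is needed, and $\widetilde U$ is connected. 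By Proposition \ref{gene L} the lattice $L(X,G)$ is generated by $f^{\ast}NS_{{\F}_6}\cong U(3)$, the six copies of $A_2$ coming from the nodes, and the classes of the three fixed curves lying over $H_1$, $H_2$, $\Sigma$; this prescribes the reference lattice, which as an abstract lattice is $L\cong U\oplus E_6\oplus A_2^3$, the even hyperbolic $3$-elementary lattice of invariant $(14,4)$. The ordering on $\widetilde U$ then induces a lattice-marking $j\colon L\to L(X,G)$ of $(X,G)=\mathcal{P}(H_1+H_2)$, and the period of $((X,G),j)$ defines a lift $\widetilde{\mathcal{P}}\colon\widetilde U/{\aut}({\F}_6)\dashrightarrow{\cove}_{14,4}$ of $\mathcal{P}$; here ${\cove}_{14,4}$ exists because ${\rm U}(E)\to{\rm O}(A_E)$ is surjective by Corollary \ref{surj II}.

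Next I would show $\widetilde{\mathcal{P}}$ is generically injective by the argument of Example \ref{ex1}: if two labelled members have the same lifted period, then Lemma \ref{ample} and the Torelli theorem give a $G$-equivariant isomorphism of the two $K3$ surfaces compatible with the markings, and Lemma \ref{recovery} upgrades it to an automorphism of ${\F}_6$ carrying one labelled curve to the other, so the $\widetilde{\mathcal{P}}$-fibres are ${\aut}({\F}_6)$-orbits. Since ${\dim}(U/{\aut}({\F}_6))=14-11=3={\dim}\mathcal{M}_{14,4}$ and ${\cove}_{14,4}$ is irreducible, it follows that $\widetilde{\mathcal{P}}$ is birational.

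Finally I would compare the projections $\widetilde U/{\aut}({\F}_6)\to U/{\aut}({\F}_6)$ and ${\cove}_{14,4}\to\mathcal{M}_{14,4}$. The form $A_L$ is $3$-elementary of length $4$ and of minus type (its associated ${\F}_3$-form is $\langle 1,1,1,-1\rangle$, of non-square discriminant), so $|{\rm O}(A_L)|=|{\rm GO}^-(4,3)|=2\cdot 6!$ by \cite{Atlas} and ${\cove}_{14,4}\to\mathcal{M}_{14,4}$ has degree $6!$. On the Hirzebruch side, I expect the stabilizer in ${\aut}({\F}_6)$ of a general $H_1+H_2\in U$ to be exactly $\langle\iota_{H_1+H_2}\rangle\simeq{\Z}/2{\Z}$, where $\iota_{H_1+H_2}$ is the involution of \eqref{eqn: HE invol}; it interchanges $H_1$ and $H_2$ and fixes each of the six nodes while swapping their two branches. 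The check I have in mind: this stabilizer is contained in that of $\varphi(H_1+H_2)\in|L_{1,0}|$, a connected four-dimensional group by Lemma \ref{linear system}(2); after normalizing $\varphi(H_1+H_2)=H_0$ the members of $U$ in $\varphi^{-1}(H_0)$ are precisely the curves $\{ay_3+b(x_3)=0\}+\{ay_3-b(x_3)=0\}$ in the coordinates of \S\ref{ssec: Hirze}, and an element of ${\aut}({\F}_6)$ fixing such a general curve together with $H_0$ must permute the six zeros of $b$ and preserve the ratio $a:b$, hence be the identity. Granting this, $\widetilde U/{\aut}({\F}_6)\to U/{\aut}({\F}_6)$ also has degree $|\frak{S}_2\times\frak{S}_6|/2=6!$, so $\deg\mathcal{P}=6!/6!=1$ and $\mathcal{P}$ is birational. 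The one step that is not immediate from the recipe is this stabilizer computation, which I regard as the main point requiring care.
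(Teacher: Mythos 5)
Your proposal is correct and follows essentially the same route as the paper's proof: the $\frak{S}_2\times\frak{S}_6$-cover with branches labelled by the $H_1/H_2$ distinction, the birational lift to ${\cove}_{14,4}$, the count $|{\rm GO}^-(4,3)|/2=6!$, and the observation that the hyperelliptic involution swaps $H_1,H_2$ so that the projection $\widetilde U/{\aut}({\F}_6)\to U/{\aut}({\F}_6)$ has degree $6!$. The only difference is that you spell out the stabilizer computation the paper leaves implicit; your sketch is fine in substance (the induced element of ${\aut}(\Sigma)$ is the identity, leaving $g_{\alpha,0}$ with $\alpha=\pm1$), though the phrase ``hence be the identity'' should refer to the action on the base, not to the automorphism itself, since the stabilizer is $\langle\iota_{H_1+H_2}\rangle\simeq\Z/2\Z$.
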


\begin{proof}
We label independently the two curves $H_1, H_2$ and the six points $H_1\cap H_2$. 
This is realized by an $\frak{S}_2\times\frak{S}_6$-cover $\widetilde{U}\to U$. 
The two branches of $H_1+H_2+\Sigma$ at each of $H_1\cap H_2$ are distinguished by 
the given distinction of $H_1$ and $H_2$. 
Hence $\mathcal{P}$ lifts to a birational map 
$\widetilde{U}/{\aut}({\F}_6)\dashrightarrow{\cove}_{14,4}$ as before. 
Since ${\rm O}(A_L)\simeq {\rm GO}^-(4, 3)$ for the invariant lattice $L\simeq U\oplus E_6\oplus A_2^3$, 
the projection ${\cove}_{14,4}\to\mathcal{M}_{14,4}$ has degree $|{\rm GO}^-(4, 3)|/2=6!$ by \cite{Atlas}. 
On the other hand, the hyperelliptic involutions \eqref{eqn: HE invol} of $H_1+H_2$ exchange $H_1$ and $H_2$, 
so that the projection $\widetilde{U}/{\aut}({\F}_6)\to U/{\aut}({\F}_6)$ is an $\frak{S}_6$-covering. 
Therefore $\mathcal{P}$ has degree $1$. 
\end{proof}

\begin{proposition}\label{rational (14,4)}
The quotient $U/{\aut}({\F}_6)$ is rational. 
Therefore $\mathcal{M}_{14,4}$ is rational. 
\end{proposition}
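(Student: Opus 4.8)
The plan is to follow the pattern of Propositions \ref{rational (6,2)}, \ref{rational (8,3)} and \ref{rational (14,2)}: run the slice method on the ${\aut}({\F}_6)$-equivariant rational map $\varphi\colon|L_{2,0}|\dashrightarrow|L_{1,0}|$ from \eqref{average}, restricted to $U$. By Lemma \ref{linear system} $(2)$, ${\aut}({\F}_6)$ acts transitively on the open locus of smooth members of $|L_{1,0}|$, and the stabilizer $G$ of $H_0$ is, modulo the central subgroup $\{g_{\alpha,0}\}_{\alpha\in{\C}^{\times}}$ acting by fibrewise scaling, the group ${\aut}(\Sigma)\simeq{\PGL}_2$ (since $n=6$ is even, ${\PGL}_2$ lifts into ${\aut}({\F}_6)$ fixing $H_0$, and the kernel is central because ${\PGL}_2$ is connected, so in fact $G\simeq{\C}^{\times}\times{\PGL}_2$). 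As $\varphi|_U$ is dominant onto this locus — perturb a section $H$ to $H\pm\varepsilon$ — the slice method (cf. \cite{Do}) reduces the rationality of $U/{\aut}({\F}_6)$ to that of $(\varphi|_{U})^{-1}(H_0)/G$.

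Next I would describe the fiber explicitly. Normalizing $\varphi(C)$ to $H_0$ as at the end of \S \ref{ssec: Hirze}, the hyperelliptic involution $\iota_C$ is $(x_3,y_3)\mapsto(x_3,-y_3)$, and a curve $\sum_{i=0}^2 f_i(x_3)y_3^{2-i}=0$ lies over $H_0$ iff $f_1\equiv0$. For a member $H_1+H_2$ of $U$ this says precisely that the two graph-sections $H_i=\{y_3=p_i(x_3)\}$, $p_i\in H^0({\Oline}(6))$, are interchanged by $\iota_C$, i.e. $p_2=-p_1$; so, with $V:=H^0({\Oline}(6))\simeq{\C}^{7}$, the fiber $(\varphi|_{U})^{-1}(H_0)$ is birational to $V/\{\pm1\}$ via $p\mapsto\{y_3=p\}+\{y_3=-p\}$. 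A direct check with \eqref{$R$-action in coordinate}--\eqref{auto coord 1} shows that $g_{\alpha,0}\in G$ acts on $V$ by the scalar $\alpha$ (so the quotient by $\{\pm1\}$ is absorbed into the ${\C}^{\times}$-factor), while the ${\PGL}_2$-part acts on $V=H^0({\Oline}(6))$ through the natural action of ${\aut}({\proj}^1)$, which is ${\rm Sym}^{6}$ of the standard representation (the unique $7$-dimensional irreducible ${\PGL}_2$-module). Hence $(\varphi|_{U})^{-1}(H_0)/G\sim V/({\C}^{\times}\times{\PGL}_2)\simeq{\proj}(V)/{\PGL}_2={\proj}^{6}/{\PGL}_2$, the moduli space of six unordered points on ${\proj}^1$ — equivalently the moduli space $\mathcal{M}_2$ of genus $2$ curves — which is classically known to be rational. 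Together with Proposition \ref{period map (14,4)} this yields the rationality of $\mathcal{M}_{14,4}$.

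The point needing the most care, rather than a genuine obstruction, is the description of the fiber: unlike in Propositions \ref{rational (6,2)} and \ref{rational (14,2)}, where the analogous fiber is a linear subsystem, here $(\varphi|_{U})^{-1}(H_0)$ is only the reducibility locus inside $\{f_1\equiv0\}$ — essentially the image of the squaring map $V\to H^0({\Oline}(12))$, $p\mapsto p^2$ — so it is not linear, and one must carry the identification with $V/\{\pm1\}$ and the induced $G$-action through honestly. The only non-elementary ingredient is then the classical rationality of ${\proj}^{6}/{\PGL}_2$; everything else is a routine application of the slice method and of the coordinate calculus of \S \ref{ssec: Hirze}.
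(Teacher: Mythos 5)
Your proof is correct and follows essentially the same route as the paper: the paper also applies the slice method to $\varphi$ with stabilizer $G\simeq{\C}^{\times}\times{\PGL}_2$, identifies $\varphi^{-1}(H_0)$ with sections modulo the involution (phrased intrinsically as an open set of $\{H'+\iota_H(H')\}$ followed by the restriction map $H'\mapsto H'|_H$ to $|{\sheaf}_H(6)|$, rather than your coordinate description $V/\{\pm1\}$ with $V=H^0(\Oline(6))$), and concludes with the rationality of $|{\sheaf}_H(6)|/{\aut}(H)\sim\mathcal{M}_2$ by Igusa. Your endpoint ${\proj}(V)/{\PGL}_2$ is the same quotient of binary sextics, so the two arguments coincide in substance.
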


\begin{proof}
We consider the ${\aut}({\F}_6)$-equivariant map 
$\varphi\colon U\dashrightarrow |L_{1,0}|$ defined in \eqref{average}. 
By Lemma \ref{linear system} $(2)$, we may apply the slice method for $\varphi$ to see that 
\begin{equation*}
U/{\aut}({\F}_6)\sim \varphi^{-1}(H)/G, 
\end{equation*}
where $H\in|L_{1,0}|$ is a smooth member and 
$G\simeq {\C}^{\times}\times{\PGL}_2$ is the stabilizer of $H$ in ${\aut}({\F}_6)$. 
Let $\iota_H$ be the involution of ${\F}_6$ which on each $\pi$-fiber $F$ 
fixes the two points $H|_F$, $\Sigma|_F$. 
Then $\varphi^{-1}(H)$ is an open set of the locus 
$\{ H'+\iota_H(H'), H'\in|L_{1,0}|\}$ in $|L_{2,0}|$. 
Thus $\varphi^{-1}(H)/G$ is birational to $(|L_{1,0}|/\iota_H)/G \sim |L_{1,0}|/G$.  
It is straightforward to see that 
the natural map $|L_{1,0}|\dashrightarrow|{\sheaf}_H(6)|$, $H'\mapsto H'|_H$, 
makes $\varphi^{-1}(H)/G$ birational to $|{\sheaf}_H(6)|/{\aut}(H)$. 
Then $|{\sheaf}_H(6)|/{\aut}(H)$ is birational to the moduli $\mathcal{M}_2$ of genus $2$ curves, 
which is rational by Igusa \cite{Ig}. 
\end{proof}

By the proof, we have a natural birational map $\mathcal{M}_{14,4}\dashrightarrow\mathcal{M}_2$. 
This might be related to the Janus example in \cite{H-W} Main Theorem (i).

\subsection{The rationality of $\mathcal{M}_{16,3}$}

We consider curves on ${\F}_4$. 
Let $U\subset|L_{2,0}|\times|L_{0,1}|$ be the locus of pairs $(H_1+H_2, F)$ 
such that $H_1, H_2$ are smooth members of $|L_{1,0}|$ transverse to each other, and $F$ is transverse to $H_1+H_2$. 
Considering the nodal $-\frac{3}{2}K_{{\F}_4}$-curves $H_1+H_2+F+\Sigma$, 
we obtain a period map 
$\mathcal{P}\colon U/{\aut}({\F}_4)\dashrightarrow\mathcal{M}_{16,3}$. 

\begin{proposition}\label{period map (16,3)}
The map $\mathcal{P}$ is birational. 
\end{proposition}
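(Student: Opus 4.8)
The plan is to follow the recipe of Section~\ref{ssec: recipe} exactly as in Examples~\ref{ex1} and~\ref{ex2} and in the preceding propositions on $\mathcal{M}_{14,2}$ and $\mathcal{M}_{14,4}$, since the branch $H_1+H_2+F+\Sigma$ is an honest $-\frac{3}{2}K_{{\F}_4}$-curve with only nodes. First I would identify the singularities of $B=H_1+H_2+F+\Sigma$: the point $H_1\cap H_2$, the two points $F\cap H_1$ and $F\cap H_2$, the three points $H_i\cap\Sigma$ and $F\cap\Sigma$ — so $B$ has nine nodes and four components, giving an Eisenstein $K3$ with $(g,k)=(0,3)$, i.e. invariant $(r,a)=(16,3)$, consistent with the lattice $L\simeq U\oplus E_8\oplus A_2$ appearing in the list for $g=0$. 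Most of the branch structure is rigidified automatically by the irreducible decomposition of $B$; the only genuinely free combinatorial choice is a labeling of the two intersection points $F\cap(H_1+H_2)$, realized by a double cover $\widetilde U\to U$ (and, depending on how one sets up the reference lattice, possibly also an ordering of $H_1,H_2$ — see below).

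Next I would invoke Proposition~\ref{gene L} to write down the reference lattice $L$ as an overlattice of $f^{\ast}NS_{{\F}_4}\oplus(\oplus_p\Lambda_p)$ together with the classes of the fixed curves; here every $\Lambda_p$ is an $A_2$, and the two branches at each node are pinned down by the component structure except at the nodes $F\cap H_i$. Using the labeling data in $\widetilde U$ one obtains a natural lattice-marking $j\colon L\to L(X,G)$, hence a lift $\tilde p\colon\widetilde U\to{\cove}_{16,3}$ of $p$. By Lemma~\ref{ample} (the Nakai-criterion ample class) and the Torelli theorem, together with Lemma~\ref{recovery}, the $\tilde p$-fibers are ${\aut}({\F}_4)$-orbits, so the descended map ${\lift}\colon\widetilde U/{\aut}({\F}_4)\dashrightarrow{\cove}_{16,3}$ is generically injective; since $\dim(U/{\aut}({\F}_4))=\dim\mathcal{M}_{16,3}$ (both are small — this is a $g=0$, $k=3$ case, so $\dim=10-r/2=2$) and ${\cove}_{16,3}$ is irreducible, ${\lift}$ is birational.

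Finally I would compare degrees. For $L\simeq U\oplus E_8\oplus A_2$ we have $A_L\simeq({\Z}/3{\Z})$ and $\mathrm{O}(A_L)=\{\pm1\}$, so ${\cove}_{16,3}=\mathcal{M}_{16,3}$ and the projection ${\cove}_{16,3}\to\mathcal{M}_{16,3}$ has degree $1$. On the other side, the hyperelliptic-type involution $\iota_{H_1+H_2}$ of ${\F}_4$ defined in~\eqref{eqn: HE invol} lies in the stabilizer of a general $(H_1+H_2,F)\in U$ and exchanges the two points $F\cap H_1$ and $F\cap H_2$ (equivalently exchanges $H_1$ and $H_2$), so it provides the covering transformation of the double cover $\widetilde U\to U$; hence $\widetilde U/{\aut}({\F}_4)=U/{\aut}({\F}_4)$ and $\mathcal{P}$ has degree $1$. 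The main obstacle I anticipate is purely bookkeeping: setting up the reference lattice $L$ and the induced marking $j$ so that the branch labelings correspond exactly to the $\mathrm{O}(A_L)$-torsor structure — in particular checking that the only ambiguity not killed by the irreducible decomposition is the single transposition handled by $\iota_{H_1+H_2}$, so that the degree count closes to $1$ rather than $2$. This is the same subtlety resolved in Examples~\ref{ex1}--\ref{ex2}, and I would settle it by the same direct verification.
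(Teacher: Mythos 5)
Your overall strategy (the recipe of \S\ref{ssec: recipe}, a labeling cover, the hyperelliptic involution killing part of it) is the right one and matches the paper, but the concrete bookkeeping — which is the entire content of this proof — is wrong, and the error propagates through every numerical step.

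The root problem is the intersection theory on ${\F}_4$. For $H_1,H_2\in|L_{1,0}|$ you have $(H_0,H_0)=n=4$, so $H_1\cap H_2$ consists of \emph{four} points, not one; and $(H_0,\Sigma)=0$, so $H_i\cap\Sigma$ is \emph{empty}, not a point. The correct node count for $B=H_1+H_2+F+\Sigma$ is therefore $4+1+1+1=7$ (which is what gives $r=2+2a_2=16$; your count of $6$, or the ``nine'' you state, would give $r=14$ or $r=20$). Consequently the invariant lattice cannot be $U\oplus E_8\oplus A_2$: that has rank $12$, whereas $L(X,G)$ must have rank $r=16$ and discriminant group of length $a=3$ (the second index of $\mathcal{M}_{16,3}$ is $a$, so $A_L\simeq(\Z/3\Z)^3$, not $\Z/3\Z$). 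The paper's lattice is $L=U\oplus E_8\oplus A_2^3$, with ${\rm O}(A_L)\simeq\frak{S}_3\ltimes(\Z/2\Z)^3$ of order $48$, so ${\cove}_{16,3}\to\mathcal{M}_{16,3}$ has degree $24$ — it is emphatically not the identity, contrary to your claim that ${\cove}_{16,3}=\mathcal{M}_{16,3}$.

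Because of this, your labeling cover is also wrong. The free combinatorial data are an ordering of $H_1,H_2$ (which automatically distinguishes the two points $F\cap H_1$, $F\cap H_2$) together with a labeling of the \emph{four} points of $H_1\cap H_2$; this is the $\frak{S}_2\times\frak{S}_4$-cover $\widetilde U\to U$ used in the paper (as in Proposition \ref{period map (14,4)}). The hyperelliptic involution $\iota_{H_1+H_2}$ kills the $\frak{S}_2$ factor, so $\widetilde U/{\aut}({\F}_4)\to U/{\aut}({\F}_4)$ has degree $4!=24$, matching $|{\rm O}(A_L)|/2=24$ on the period side and closing the degree count to $1$. Your version, with only a double cover on one side and degree $1$ on the other, happens to land on the right final answer, but only because two compensating errors cancel; as written the argument does not establish the statement.
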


\begin{proof}
As in the proof of Proposition \ref{period map (14,4)}, 
we distinguish the two sections $H_1, H_2$ and the four points $H_1\cap H_2$ independently. 
This defines an $\frak{S}_2\times\frak{S}_4$-cover $\widetilde{U}\to U$. 
The rest datum for $H_1+H_2+F+\Sigma$ are then automatically labelled, 
and $\mathcal{P}$ will lift to a birational map 
$\widetilde{U}/{\aut}({\F}_4)\dashrightarrow{\cove}_{16,3}$. 
The projection $\widetilde{U}/{\aut}({\F}_4)\to U/{\aut}({\F}_4)$ is an $\frak{S}_4$-covering as before, 
while ${\cove}_{16,3}\to\mathcal{M}_{16,3}$ has degree $|{\rm O}(A_L)|/2$ 
for the invariant lattice $L=U\oplus E_8\oplus A_2^3$. 
It is straightforward to calculate that ${\rm O}(A_L)\simeq\frak{S}_3\ltimes({\Z}/2{\Z})^3$. 
\end{proof}

Since $U$ is unirational and ${\dim}\mathcal{M}_{16,3}=2$, we have 

\begin{proposition}\label{rational (16,3)}
The space $\mathcal{M}_{16,3}$ is rational. 
\end{proposition}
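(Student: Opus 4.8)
The plan is to deduce the rationality of $\mathcal{M}_{16,3}$ from Proposition \ref{period map (16,3)} together with the classical fact that a unirational complex surface is rational. By Proposition \ref{period map (16,3)} the period map $\mathcal{P}\colon U/{\aut}({\F}_4)\dashrightarrow\mathcal{M}_{16,3}$ is birational, so it suffices to show that the quotient $U/{\aut}({\F}_4)$ is a rational surface.

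First I would record that $U$, being a Zariski-open subset of the product of projective spaces $|L_{2,0}|\times|L_{0,1}|$, is rational; in particular it is unirational. Since $U/{\aut}({\F}_4)$ is, by definition, a model of the invariant field ${\C}(U)^{{\aut}({\F}_4)}$, it is the image of $U$ under a dominant rational map, hence unirational as well. Next I would pin down its dimension: since $\dim\mathcal{M}_{r,a}=10-r/2$ we have $\dim\mathcal{M}_{16,3}=2$, and as $\mathcal{P}$ is birational, $U/{\aut}({\F}_4)$ is a surface. A unirational surface over ${\C}$ is rational by Castelnuovo's criterion, so $U/{\aut}({\F}_4)$ is rational and therefore so is $\mathcal{M}_{16,3}$.

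The argument is entirely formal; the only substantive input has already been supplied in Proposition \ref{period map (16,3)}, namely the birationality of $\mathcal{P}$ — which in particular forces $\dim\bigl(U/{\aut}({\F}_4)\bigr)=2$, i.e. guarantees that the generic fibre of the quotient map $U\dashrightarrow U/{\aut}({\F}_4)$ is a single ${\aut}({\F}_4)$-orbit. No further obstacle is expected: this is exactly the mechanism already used for $\mathcal{M}_{16,1}$ and $\mathcal{M}_{18,0}$, where a rational parameter space $U$ of the correct ($2$-dimensional) size forces rationality of the moduli surface via Castelnuovo.
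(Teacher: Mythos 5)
Your argument is essentially identical to the paper's: the authors likewise observe that $U$ is unirational and $\dim\mathcal{M}_{16,3}=2$, so that Castelnuovo's criterion applies. One small correction: $U$ is \emph{not} a Zariski-open subset of $|L_{2,0}|\times|L_{0,1}|$ (it lies in the locus of reducible curves $H_1+H_2$), but it is dominated by an open subset of $|L_{1,0}|^2\times|L_{0,1}|$, so it is still unirational and your argument goes through unchanged.
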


Arguing as in the proof of Proposition \ref{rational (14,4)}, 
one will see that $U/{\aut}({\F}_4)$ is naturally birational to the Kummer modular surface for 
${\SL}_2({\Z})$.

\subsection{The rationality of $\mathcal{M}_{18,2}$}

We consider curves on ${\F}_2$. 
Let $U\subset|L_{2,0}|\times|L_{0,2}|$ be the locus of pairs $(H_1+H_2, F_1+F_2)$ 
such that $H_1, H_2\in|L_{1,0}|$ are smooth and transverse to each other, 
and $F_1, F_2\in|L_{0,1}|$ are distinct and transverse to $H_1+H_2$. 
We associate the nodal $-\frac{3}{2}K_{{\F}_2}$-curves $H_1+H_2+F_1+F_2+\Sigma$ 
to obtain a period map 
$\mathcal{P}\colon U/{\aut}({\F}_2)\dashrightarrow\mathcal{M}_{18,2}$. 

\begin{proposition}\label{period map (18,2)}
The map $\mathcal{P}$ is birational. 
\end{proposition}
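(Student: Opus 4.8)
The plan is to follow exactly the same recipe that worked for $\mathcal{M}_{14,4}$, $\mathcal{M}_{16,3}$ and the other $g=0$ cases with several rational components in the branch. First I would describe the labelling cover $\widetilde{U}\to U$: a general $(H_1+H_2,F_1+F_2)\in U$ comes with two ordered pieces of data to pin down, namely the unordered pair $\{H_1,H_2\}$ of sections and the unordered pair $\{F_1,F_2\}$ of fibers, together with the four intersection points $H_1\cap H_2$ and the four points $(H_1+H_2)\cap(F_1+F_2)$ lying on a distinguished section; concretely I would take $\widetilde{U}$ to be an $\frak{S}_2\times\frak{S}_2\times\frak{S}_4$-cover (the last $\frak{S}_4$ ordering the nodes $H_1\cap H_2$, the two transpositions ordering the $H_i$ and the $F_j$), exactly parallel to the $\widetilde{U}$ in Example \ref{ex2} and Proposition \ref{period map (18,2)}'s neighbours. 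All remaining branches at the various nodes of $H_1+H_2+F_1+F_2+\Sigma$ get distinguished automatically by the irreducible decomposition of the branch curve, as in the previous propositions.

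Next I would invoke Proposition \ref{gene L} to read off the reference lattice $L$: it is generated by $f^{\ast}NS_{\mathbb{F}_2}\simeq U(3)$, the $A_2$-lattices over the eight nodes of the branch curve (four from $H_1\cap H_2$, four from $(H_1+H_2)\cap(F_1+F_2)$; note $F_1\cap F_2=\emptyset$ since $F_1\ne F_2$, and the $F_i\cap\Sigma$ give two more), and the classes of the fixed curves (the components of $H_1+H_2+F_1+F_2+\Sigma$). One checks this lattice is even $3$-elementary with invariant $(r,a)=(18,2)$. Using the labelling $\mu$ on $\widetilde{U}$ I then get a canonical lattice-marking $j\colon L\to L(X,G)$, hence a lift $\tilde p\colon\widetilde{U}\to\widetilde{\mathcal{M}}_{18,2}$ of the period map. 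Exactly as in \S\ref{ssec: recipe}, the ample class of Lemma \ref{ample} plus the Torelli theorem plus Lemma \ref{recovery} show that the $\tilde p$-fibres are $\aut(\mathbb{F}_2)_0$-orbits, so the descended map $\widetilde{\mathcal{P}}\colon\widetilde{U}/\aut(\mathbb{F}_2)\dashrightarrow\widetilde{\mathcal{M}}_{18,2}$ is generically injective; since $\dim(U/\aut(\mathbb{F}_2))=\dim\mathcal{M}_{18,2}$ it is birational.

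It remains to compare degrees. The covering $\widetilde{\mathcal{M}}_{18,2}\to\mathcal{M}_{18,2}$ has degree $|\Or(A_L)|/2$, which I would compute from $L\simeq U\oplus E_8^2\oplus A_2^2$ (so $A_L\simeq(\Z/3)^2$ with the appropriate form) by looking up $|{\rm GO}^{\pm}(2,3)|$ in \cite{Atlas}; the discriminant form here should be the hyperbolic one of $A_2^2$, giving $|\Or(A_L)/\pm1|=4$ (as in Proposition \ref{period map (6,2)}). On the other side, the projection $\widetilde{U}/\aut(\mathbb{F}_2)\to U/\aut(\mathbb{F}_2)$ is governed by the stabilizer in $\aut(\mathbb{F}_2)$ of a general $(H_1+H_2,F_1+F_2)$: there is the hyperelliptic-type involution $\iota_{H_1+H_2}$ (which exchanges $H_1\leftrightarrow H_2$ and acts on the $\frak{S}_4$ of nodes) and possibly an involution exchanging $F_1\leftrightarrow F_2$ coming from a translation, so I would carefully determine which of the $\frak{S}_2\times\frak{S}_2\times\frak{S}_4$ symmetries are realised geometrically; the expectation, matching all the neighbouring cases, is that the ``extra'' degree exactly cancels $|\Or(A_L)|/2$ and $\mathcal{P}$ comes out of degree $1$. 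The main obstacle is precisely this bookkeeping of the stabilizer action on the labellings — one must check that $\iota_{H_1+H_2}$ together with the fiber-swap generate a group of the right order acting faithfully on $\widetilde{U}$, and that no further accidental automorphism of $\mathbb{F}_2$ stabilizes a general configuration; everything else is routine repetition of the established recipe.
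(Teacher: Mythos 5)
Your overall strategy (the recipe of \S\ref{ssec: recipe}: label, lift to ${\cove}_{18,2}$, compare the two covering degrees) is the same as the paper's, but the execution has concrete errors and the decisive step is left unproved. First, the numerics of the configuration are wrong: on ${\F}_2$ one has $H_1\cdot H_2=(H_0)^2=2$, so $H_1\cap H_2$ consists of \emph{two} points, not four, and the node count of $H_1+H_2+F_1+F_2+\Sigma$ is $2+4+2=8$ (two from $H_1\cap H_2$, four from $(H_1+H_2)\cap(F_1+F_2)$, two from $(F_1+F_2)\cap\Sigma$), not your tally. Accordingly the labelling cover is an $(\frak{S}_2)^3$-cover (order the $H_i$, the $F_j$, and the two points $H_1\cap H_2$), of degree $8$, not $\frak{S}_2\times\frak{S}_2\times\frak{S}_4$ of degree $96$. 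Second, the invariant lattice is $L\simeq U(3)\oplus E_8^2$ of rank $18$; your $U\oplus E_8^2\oplus A_2^2$ has rank $22$ and cannot occur. Consequently $A_L$ is the \emph{hyperbolic} $3$-elementary form of length $2$ (that of $U(3)$), whose orthogonal group is $({\Z}/2{\Z})^2$ of order $4$, so ${\cove}_{18,2}\to\mathcal{M}_{18,2}$ has degree $2$ — not $4$. Note that the form of $A_{A_2^2}$ (the case of Proposition \ref{period map (6,2)}) is the anisotropic plane, with group of order $8$; you conflated the two, and the distinction matters here.

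Third, and most importantly, the point that actually needs proof — that the degree of $\widetilde{U}/{\aut}({\F}_2)\dashrightarrow U/{\aut}({\F}_2)$ equals $|{\Or}(A_L)|/2$ — is exactly what you defer as an ``expectation.'' The paper settles it by exhibiting the stabilizer in ${\aut}({\F}_2)$ of a general $(H_1+H_2,F_1+F_2)$: it is $({\Z}/2{\Z})^2$, generated by the hyperelliptic involution \eqref{eqn: HE invol} of $H_1+H_2$ (which swaps $H_1\leftrightarrow H_2$, fixes each node $H_1\cap H_2$ and each fiber $F_i$) and by an automorphism exchanging the two points $H_1\cap H_2$ and the two fibers $F_1,F_2$. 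This subgroup acts faithfully on the $(\frak{S}_2)^3$-labels, so $\widetilde{U}/{\aut}({\F}_2)\to U/{\aut}({\F}_2)$ has degree $8/4=2$, matching the degree $2$ of ${\cove}_{18,2}\to\mathcal{M}_{18,2}$ and giving $\deg\mathcal{P}=1$. With your incorrect covering group and Galois degree the intended cancellation would not even come out numerically, so the argument as written does not close; correcting the counts and carrying out the stabilizer computation is the missing content.
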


\begin{proof}
We distinguish independently the two sections $H_1, H_2$, the two fibers $F_1, F_2$, 
and the two points $H_1\cap H_2$. 
This is realized by an $(\frak{S}_2)^3$-cover $\widetilde{U}\to U$. 
As before, we see that these labelings induce a birational lift 
$\widetilde{U}/{\aut}({\F}_2)\dashrightarrow{\cove}_{18,2}$ of $\mathcal{P}$. 
Then ${\cove}_{18,2}$ is a double cover of $\mathcal{M}_{18,2}$ 
because we have ${\rm O}(A_L)\simeq({\Z}/2{\Z})^2$ 
for the invariant lattice $L=U(3)\oplus E_8^2$. 
On the other hand, the stabilizer in ${\aut}({\F}_2)$ of a general 
$(\sum_iH_i, \sum_iF_i)\in U$ is $({\Z}/2{\Z})^2$ 
generated by the hyperelliptic involution \eqref{eqn: HE invol} of $H_1+H_2$ 
and by an element exchanging the two points $H_1\cap H_2$ and the two fibers $F_1, F_2$ respectively. 
Thus $\widetilde{U}/{\aut}({\F}_2)\dashrightarrow U/{\aut}({\F}_2)$ 
is also a double covering. 
\end{proof}

Since $U$ is rational and ${\dim}\mathcal{M}_{18,2}=1$, we have 

\begin{proposition}\label{rational (18,2)}
The space $\mathcal{M}_{18,2}$ is rational. 
\end{proposition}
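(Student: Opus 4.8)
The plan is to deduce rationality from the birationality of the period map proved in Proposition \ref{period map (18,2)} together with the fact that $\mathcal{M}_{18,2}$ is a curve. First I would observe that $U$, being a nonempty Zariski open subset of the product of projective spaces $|L_{2,0}|\times|L_{0,2}|$, is a rational variety. Consequently the rational quotient $U/{\aut}({\F}_2)$ is unirational, since it is dominated by $U$ via the quotient map.

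Next I would record the dimension of the target. By the formulas \eqref{eqn: (g,k,n)} (equivalently, from $\dim{\moduli}=10-r/2$) we have $\dim\mathcal{M}_{18,2}=10-18/2=1$, so by Proposition \ref{period map (18,2)} the quotient $U/{\aut}({\F}_2)$ is a unirational variety of dimension $1$. By L\"uroth's theorem every unirational curve over $\mathbb{C}$ is rational; hence $U/{\aut}({\F}_2)$ is rational, and therefore $\mathcal{M}_{18,2}$, being birational to it by Proposition \ref{period map (18,2)}, is rational as well.

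The only genuinely nontrivial input here is the birationality of $\mathcal{P}$, which is established separately in Proposition \ref{period map (18,2)} via the degree-counting recipe of \S \ref{ssec: recipe}; once that is granted, the argument above is purely formal. One could instead attempt a direct slice-method computation for $U/{\aut}({\F}_2)$ as in the higher-dimensional cases, but since the quotient is a unirational curve the appeal to L\"uroth is the most economical route, and that is what I would use.
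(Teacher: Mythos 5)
Your argument is correct and matches the paper's proof, which likewise deduces rationality from the rationality of $U$, the birationality of $\mathcal{P}$ established in Proposition \ref{period map (18,2)}, and the fact that $\dim\mathcal{M}_{18,2}=1$ (with the appeal to L\"uroth left implicit). You have simply spelled out the same one-line argument in more detail.
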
 

Let $H=\varphi(H_1+H_2)$ be the section defined by \eqref{average}. 
As in the proof of Proposition \ref{rational (14,4)}, 
considering the configuration of $2+2$ points $H_1\cap H_2$, $F_1+F_2|_H$ on $H$ 
makes $U/{\aut}({\F}_2)$ birational to the elliptic modular curve for $\Gamma_0(2)$.

For completeness, we finish the article with a comment on $\mathcal{M}_{20,1}$, which consists of one point. 
Its unique member is obtained from the curve $\sum_{i=1}^{3}F_{i+}+\sum_{i=1}^{3}F_{i-}$ on ${\proj}^1\times{\proj}^1$, 
where $F_{i+}, F_{i-}$ are ruling fibers of bidegree $(1, 0), (0, 1)$ respectively.


\begin{thebibliography}{99}

\bibitem{A-N}Alexeev, V.; Nikulin, V. V.
\textit{Del Pezzo and K3 surfaces.} 
MSJ Memoirs, \textbf{15}. Math. Soc. Japan, 2006.

\bibitem{A-C-T}Allcock, D.; Carlson, J. A.; Toledo, D.
\textit{The complex hyperbolic geometry of the moduli space of cubic surfaces.} 
J. Algebraic Geom. \textbf{11} (2002), no. 4, 659--724. 

\bibitem{A-S}Artebani, M.; Sarti, A.
\textit{Non-symplectic automorphisms of order 3 on K3 surfaces.}
Math. Ann. \textbf{342} (2008), no. 4, 903--921. 

\bibitem{A-S-T}Artebani, M.; Sarti, A.; Taki, S. 
\textit{$K3$ surfaces with non-symplectic automorphisms of prime order.} 
Math. Z. \textbf{268} (2011), no. 1-2, 507--533. 

\bibitem{B-H-P-V}Barth, W. P.; Hulek, K.; Peters, C. A. M.; Van de Ven, A.
\textit{Compact complex surfaces.} 
Springer-Verlag,  2004. 

\bibitem{B-L}Birkenhake, C.; Lange, H. 
\textit{Complex abelian varieties.} 
(Second edition) Springer-Verlag, 2004.

\bibitem{Bo}Borel, A.
\textit{Some metric properties of arithmetic quotients of symmetric spaces and an extension theorem.} 
J. Diff. Geom. \textbf{6} (1972), 543--560. 

\bibitem{Atlas}Conway, J. H.; Curtis, R. T.; Norton, S. P.; Parker, R. A.; Wilson, R. A. 
\textit{ATLAS of finite groups.} 
Oxford University Press, 1985. 

\bibitem{Do}Dolgachev, I. V. 
\textit{Rationality of fields of invariants.}  
Algebraic geometry, Bowdoin, 1985, 3--16, 
Proc. Symp. Pure Math., \textbf{46}, Part 2, Amer. Math. Soc., Providence,  1987. 

\bibitem{D-G-K}Dolgachev, I.; van Geemen, B.; Kond\=o, S. 
\textit{A complex ball uniformization of the moduli space of cubic surfaces via periods of K3 surfaces.} 
J. Reine Angew. Math. \textbf{588} (2005), 99--148. 

\bibitem{D-K0}Dolgachev, I.; Kond\=o, S. 
\textit{Moduli of K3 surfaces and complex ball quotients.} 
Arithmetic and geometry around hypergeometric functions, 43--100, 
Progr. Math., \textbf{260}, Birkh\"auser, 2007. 

\bibitem{D-K}Dolgachev, I.; Kond\=o, S. 
\textit{The rationality of the moduli spaces of Coble surfaces and of nodal Enriques surfaces.} 
Izvestiya Math \textbf{77} (3), (2013) 509--524. 

 
 
\bibitem{H-W}Hunt, B.; Weintraub, S. H. 
\textit{Janus-like algebraic varieties.} 
J. Diff. Geom. \textbf{39} (1994), no. 3, 509--557.

\bibitem{Ig}Igusa, J. 
\textit{Arithmetic variety of moduli for genus two.} 
Ann. of Math. (2) \textbf{72} (1960) 612--649. 

\bibitem{Ka1}Katsylo, P. I.
\textit{Rationality of the moduli spaces of hyperelliptic curves.} 
Izv. Akad. Nauk SSSR. \textbf{48} (1984), 705--710. 

\bibitem{Ka2}Katsylo, P. I. 
\textit{Rationality of the moduli variety of curves of genus $3$.} 
Comment. Math. Helv. \textbf{71} (1996), no. 4, 507--524.

\bibitem{Ki}Kitaoka, Y. 
\textit{Arithmetic of quadratic forms.}  
Cambridge University Press, 1993. 

\bibitem{Ko0}Kond\=o, S.
\textit{The rationality of the moduli space of Enriques surfaces.} 
Compositio Math. \textbf{91} (1994), 159--173. 

\bibitem{Ko}Kond\=o, S. 
\textit{The moduli space of curves of genus 4 and Deligne-Mostow's complex reflection groups.} 
Algebraic geometry 2000, Azumino, 383--400, 
Adv. Stud. Pure Math., 36, Math. Soc. Japan, 2002. 

\bibitem{Ma2} Ma, S. 
\textit{Rationality of the moduli spaces of $2$-elementary $K3$ surfaces.}  
arXiv:1110.5110, to appear in J. Alg. Geom.  


\bibitem{M-M}Miranda, R.; Morrison, D. R.
\textit{The number of embeddings of integral quadratic forms. II.} 
Proc. Japan Acad. Ser. A Math. Sci. \textbf{62} (1986), no. 1, 29--32. 

\bibitem{Miy}Miyata, T. 
\textit{Invariants of certain groups. I.} 
Nagoya Math. J. \textbf{41} (1971), 69--73. 

\bibitem{Na}Naruki, I. 
\textit{Cross ratio variety as a moduli space of cubic surfaces.} 
Proc. London Math. Soc. \textbf{45} (1982), no. 1, 1--30. 

\bibitem{Ni0}Nikulin, V.V. 
\textit{Integral symmetric bilinear forms and some of their applications.} 
Math. USSR Izv., \textbf{14} (1980), 103--167.


\bibitem{Ni}Nikulin, V.V.  
\textit{Factor groups of groups of automorphisms of hyperbolic forms with respect to subgroups generated by 2-reflections.} 
J. Soviet Math. \textbf{22} (1983), 1401--1476.

\bibitem{O-T}Ohashi, H.; Taki, S. 
\textit{$K3$ surfaces and log del Pezzo surfaces of index three.} 
Manuscripta Math. \textbf{139} (2012), no. 3-4, 443--471. 

\bibitem{Se}Segre, B.
\textit{The Non-singular Cubic Surfaces.} 
Oxford University Press, 1942. 

\bibitem{SB}Shepherd-Barron, N. I.
\textit{The rationality of certain spaces associated to trigonal curves.} 
Algebraic geometry, Bowdoin, 1985, 165--171, 
Proc. Symp. Pure Math., 46, Part 1, Amer. Math. Soc., Providence, RI, 1987.

\bibitem{Ta}Taki, S.
\textit{Classification of non-symplectic automorphisms of order 3 on $K3$ surfaces.} 
Math. Nachr. \textbf{284} (2011), no. 1, 124--135.

\end{thebibliography}
\end{document}